\newlength{\bibitemsep}\setlength{\bibitemsep}{.2\baselineskip plus .05\baselineskip minus .05\baselineskip}
\newlength{\bibparskip}\setlength{\bibparskip}{1pt}
\let\oldthebibliography\thebibliography
\renewcommand\thebibliography[1]{%
  \oldthebibliography{#1}%
  \setlength{\parskip}{\bibitemsep}%
  \setlength{\itemsep}{\bibparskip}%
}
\theoremstyle{plain}
\newtheorem{theorem}{Theorem}[section]
\newtheorem{lemma}[theorem]{Lemma}
\newtheorem{proposition}[theorem]{Proposition}
\newtheorem{corollary}[theorem]{Corollary}
\theoremstyle{definition}
\newtheorem{remark}[theorem]{Remark}
\newtheorem{definition}[theorem]{Definition}
\newcommand{\nc}{\newcommand}
\nc{\rnc}{\renewcommand}
\nc{\on}[1]{\operatorname{#1}}
\nc{\delete}[1]{}
\nc{\onbf}[1]{\on{\bf #1}}
\nc{\onsf}[1]{\on{\sf #1}}
\nc{\red}{\textcolor{red}}
\nc{\blue}{\textcolor{blue}}
\nc{\bb}[1]{{\mathbb #1}}
\nc{\mbf}[1]{{\mathbf #1}}
\nc{\cal}[1]{{\mathcal #1}}
\nc{\mf}[1]{{\mathfrak #1}}
\font\cyr=wncyr10
\nc{\sha}{{\mbox{\cyr X}}}
\nc{\udim}{\underline{\on{dim}}}
\nc{\guvw}{g_{U,V}^{W}}
\nc{\htt}{\on{ht}}
\nc{\gr}{\on{\bold {gr}}}
\nc{\cf}{\on{cf}}
\nc{\ad}{\on{ad }}
\nc{\Ad}{\on{Ad}}
\nc{\id}{\on{Id}}
\nc{\Fr}{\on{Fr}}
\nc{\Der}{\on{Der}}
\nc{\End}{\on{End}}
\nc{\tor}{\on{Tor}}
\nc{\Ext}{\on{Ext}}
\nc{\ext}{\on{ext}}
\nc{\Fil}{\on{Fil}}
\nc{\Hom}{\on{Hom}}
\nc{\grhom}{\on{grHom}}
\nc{\ch}{\on{ch}}
\nc{\Ch}{\on{\bf Ch}}
\nc{\ind}{\on{Ind}}
\nc{\coind}{\on{Coind}}
\nc{\Mod}{\on{-Mod}}
\nc{\biMod}{\on{-biMod}}
\nc{\Poiss}{\on{-Poiss}}
\nc{\grmod}{\on{-grMod}}
\nc{\vamod}{\on{-VAMod}}
\nc{\res}{\on{Res}}
\nc{\soc}{\on{Soc}}
\nc{\rad}{\on{Rad}}
\nc{\Aut}{\on{Aut}}
\nc{\Dist}{\on{Dist}}
\nc{\Lie}{\on{Lie}}
\nc{\Ker}{\on{Ker}}
\nc{\im}{\on{Im}}
\nc{\wt}{\on{wt}}
\nc{\st}{\on{St}}
\nc{\diag}{\on{Diag}}
\nc{\rep}{\on{rep}}
\nc{\Set}{\on{\bf Set}}
\nc{\sSet}{\on{\bf sSet}}
\nc{\smCat}{\on{\bf smCat}}
\nc{\spec}{\on{spec}}
\nc{\Sym}{\on{Sym}}
\nc{\Vecs}{\on{\bf Vec}^{s}}
\nc{\colim}{\operatornamewithlimits{\underset{\longrightarrow}{lim}}}
\nc{\graphdot}{\onwithlimits{\bullet}}
\nc{\lrarrows}{\onwithlimits{\leftrightarrows}}
\nc{\interval}[1]{\mathinner{#1}}
\nc{\blist}{\begin{list}{\rom{(\roman{enumi})}}
{\setlength{\leftmargin}{0em}
\setlength{\itemindent}{7ex}
\setlength{\labelsep}{2ex}\setlength{\listparindent}{\parindent}
\usecounter{enumi}}}
\nc{\elist}{\end{list}}
\newcommand{\cc}{\ensuremath{\mathbb{C}}}
\nc{\subsub}[1]{\noindent{\bf #1}}
\apptocmd{\thebibliography}{\setlength{\itemsep}{5pt}}{}{}
\def\l@subsection{\@tocline{2}{0pt}{1pc}{5pc}{}} \def\l@subsection{\@tocline{2}{0pt}{2pc}{6pc}{}} \makeatother
\begin{document}

\title{Differential graded vertex Lie algebras}
\author[A. Caradot]{Antoine Caradot$^1$}
\address{$^1$School of Mathematics and Statistics\\
Henan University \\
Kaifeng, Henan, CHINA}
\email{caradot@henu.edu.cn}
\author[C. Jiang]{Cuipo Jiang$^2$}
\address{$^2$School  of Mathematics\\
Shanghai Jiao Tong University \\
Shanghai, CHINA}
\email{cpjiang@math.sjtu.edu.cn}
\author[Z. Lin]{Zongzhu Lin$^3$}
\address{$^3$Department of Mathematics\\
Kansas State University \\
Manhattan, KS 66506, USA}
\email{zlin@math.ksu.edu}

%\keywords{}
\thanks{2020 {\it Mathematics Subject Classification:}
Primary 17B69, 18G35 ; Secondary 16E45}

\date{\today}

%\begin{abstract}\end{abstract}
%\normalsize

\maketitle

\begin{abstract} This is the continuation of the study of differential graded (dg) vertex algebras defined in our previous paper \cite{Caradot-Jiang-Lin-4}. The goal of this paper is to construct a functor from the category of dg vertex Lie algebras to the category of dg vertex algebras which is left adjoint to the forgetful functor. This functor not only provides an abundant number of examples of dg vertex algebras, but it is also an important step in constructing a homotopy theory (\cite{Avramov-Halperin, Quillen}) in the category of vertex algebras. Vertex Lie algebras were introduced as analogues of vertex algebras, but in which we only consider the singular part of the vertex operator map and the equalities it satisfies. In this paper, we extend the definition of vertex Lie algebras to the dg setting. We construct a pair of adjoint functors between the categories of dg vertex algebras and dg vertex Lie algebras, which leads to the explicit construction of dg vertex (operator) algebras. We will give examples based on the Virasoro algebra, the Neveu-Schwarz algebra, and dg Lie algebras. 
\end{abstract}

\tableofcontents
\addtocontents{toc}

% Computations (71), (72), and (73) have all been typed

\section{Introduction}
 This paper is a continuation of our previous work \cite{Caradot-Jiang-Lin-4} on differential graded (dg) vertex algebras. A dg vertex algebra is a super vertex algebra lifted to the symmetric tensor category of differential complexes over the field $ \bb C$. In the earlier paper, we did not provide a single example of dg vertex algebras other than the standard known ones. The focus of that paper was comparing the corresponding Poisson algebra and Zhu algebra as well as their representations. In the present paper, we focus on the universal construction of dg vertex algebras from dg Lie algebras with  invariant bilinear forms (as done in \cite{Fattori-Kac} in the super setting). This construction defines a left adjoint functor to the restriction functor from the category of dg vertex algebras to the category of dg vertex Lie algebras, which are automatically dg Lie algebras (see Theorem \ref{thm:U_L(U)-}). This not only provides a large collection of dg vertex algebras, corresponding to dg vertex Lie algebras and fixed levels, but then by passing to simple quotients, one obtains examples of simple dg vertex algebras (i.e. without nontrivial dg ideals). It also opens a new study on the construction of simplicial resolutions of vertex algebra extensions analogous to Andr\'e-Quillen (co)homology theory (see \cite[Ch. 8]{Weibel}) and establishes a homotopy theory (as well as a notion of derived vertex algebra from the view point of Lurie's derived algebraic geometry). We will not deal with the coset construction of dg vertex algebras in this paper. 

 The second objective of this paper is to construct a resolution for vertex algebras  analogous to the Tate resolution of a commutative algebra in terms of free differentially graded commutative algebras (see \cite{Tate}). The goal of studying this type of resolution is to establish a homotopy theory for the category of vertex algebras. There is a natural functor from the category of the vertex algebras to the category of Poisson algebras. The category of commutative algebras has a homotopy theory established by Andr\'e and Quillen in terms of Andr\'e-Quillen (co)homomology theory.  We refer to the work of Avramov-Halperin (\cite{Avramov-Halperin}) for more details on topological and commutative-algebraic correspondences of the rational cohomology theories. The case of a local commutative algebra contains a hidden Lie algebra called homotopy Lie algebra, with the Yoneda algebra being its universal enveloping algebra. In \cite{Caradot-Jiang-Lin-1} we compared the Yoneda algebra associated to the representation category of a vertex algebra and those of the Zhu algebra and its associated Poisson algebras. For most interesting vertex algebras, the $C_2$-algebra $ R(V)$ is a local algebra and the associated Poisson variety is simply a point. However, this exactly corresponds to the local commutative algebra case. In \cite{Caradot-Jiang-Lin-2} we studied the Yoneda algebra, which suggests richer geometric invariants for the vertex algebra, and we saw a Koszul duality of vertex algebras at least in terms of their $C_2$-algebras (see also \cite{Caradot-Jiang-Lin-1}). The computation of the Yoneda algebra is based on the constructing of a ``good" resolution of the residue field as an algebra, which was first done by Tate. It turns out this construction becomes the cornerstone for a later formulation of algebraic homotopy theory (see \cite{Quillen}).  As stated in \cite{Caradot-Jiang-Lin-4}, one would like to lift the Tate resolution for commutative algebras to vertex algebras. This was one of the motivations for introducing dg vertex algebras in \cite{Caradot-Jiang-Lin-4}. The dg world is the natural one in order for the dualities to be seen much naturally. This is exactly Quillen's principle (\cite{Avramov-Halperin}): {\em Differential graded algebras ought not to be regarded merely as a tool for the calculation of (co) homology. In fact a reasonable DGA category will
also carry a ``homotopy theory" and with it a number of other invariants.}

In \cite{Caradot-Jiang-Lin-4} we defined a dg vertex algebra with the motivation that the polynomial algebra $ \bb C[t]=H^*_{\bb C^*}(pt)$ is a graded algebra with $t$ in degree 2 in mind. In the present paper, we generalise the definition with $\bb C[t]$ as a dg algebra with $ t$ in degree $2N$. The case when $ N=0$ has appeared in the literature in various constructions. For example in the BRST construction (\cite{Arakawa}), one naturally gets a dg vertex algebra with $ N=0$. Butson has also dealt with dg vertex algebras with $N=0$ in \cite{Butson1}. 

 Vertex algebras are an algebraic formulation of 2-dimensional conformal field theory, and the structure and representation theory, from an algebraic perspective, are the main focus of study. Recently, in computing other invariants in mathematical physics and algebraic geometry, vertex algebras have appeared to be a natural structure to regroup those invariants together. Thus the results on vertex algebras and their representation theory can be natural tools to compute those invariants. The cohomologies of stacks in \cite{Joyce} and \cite{BLM} are examples of such invariants. Butson (\cite{Butson2}) has also constructed vertex algebras associated to divisors on Calabi-Yau threefolds. Differential graded vertex algebras are the lifting of the cohomology to certain differential complexes. Indeed, a dg vertex algebra automatically induces a graded vertex algebra (in the category of graded vector spaces with Koszul braiding) by forgetting the differential, which then leads to a natural super vertex algebra structure with even and odd parts of the cohomologies. The construction of the universal vertex algebra from a vertex Lie algebra will reduce the construction of dg vertex algebras to constructing dg vertex Lie algebras, which should be relatively simpler. We expect that various standard complexes computing homology/cohomology theories such as resolutions also carry standard dg vertex Lie algebra structures. 

Given a dg vertex algebra and considering the singular part, one gets a dg vertex Lie algebra. This defines a functor from the category of dg vertex algebras to the category of dg vertex Lie algebras. We will show that this functor has a left adjoint. This left adjoint functor plays the role of free object in the category of dg vertex algebras relative to dg vertex Lie algebras. Analogously, the standard functor from the category of associative dg algebras to the category of dg Lie algebras admits a left adjoint functor.

\delete{On the other hand, there are two functors from the category of dg vertex algebras to the category of associative algebras. For a dg vertex algebra $V^{[*]}$, there is associated commutative dg Poisson algebra, the $C_2$-algebra $R^{[*]}(V^{[*]})$. If the vertex algebra has a conformal structure of the vertex algebra has a filtered structure, there is also another associative algebra $A(V^{[*]})$, the Zhu algebra. However, the Zhu algebra $A(V^{[*]})$ is not a dg algebra. Instead, $A(V^{[*]})$ is a filtered differential filtered algebra structure. The bifiltered structure gives rise to three graded algebra. Describing these algebras for a general dg vertex algebra is a difficult question. However, in this paper, we will show that when the dg vertex algebras are free over the category of dg Lie algebras, then the universal enveloping algebra of a Lie algebra will appear as one of the associated dg algebras of the Zhu algebra under certain conditions. The $C_2$ algebra will always be the symmetric algebra of the dg Lie algebra. These Poisson algebras are resolving commutative dg algebras in the sense of dg schemes in \cite{Behrend}. These algebras will play the role of smooth dg schemes and thus turn out to be the associated dg scheme of the dg vertex algebra. }

The motivation for studying dg vertex algebras is to construct vertex algebra objects in a more general compactly generated closed symmetric monoidal category $\bb T$.  Let $X$ be an algebraic variety with an action of an algebraic group $G$, i.e., $[X/G]$ is quotient stack.  Our constructions of vertex algebras and vertex Lie algebras in this and earlier papers should work when the symmetric tensor category $\Ch$ of differential complexes of $\bb C$-vector spaces  is replaced by $ \bb T=\Ch(\onsf{Qcoh}^G(X))$,  the category of differential complexes of quasi-coherent $G$-equivariant sheaves on $X$ (i.e., the category of differential complexes of quasi-coherent sheaves on the stack $[X/G]$).  Our goal is to describe the vertex algebra objects on a suitable triangulated category such as the derived category $D^G(\onsf{Coh}(X))$ of $G$-equivariant coherent sheaves, or $D_c^G(X)$ the derived category of $G$-equivariant constructible sheaves on the algebraic variety $X$. Both of them carry a closed symmetric tensor category structure. Then most of the cohomological constructions would become more natural in these categories. We should remark that the variety $X$ plays a completely different role to that of the curves studied by Ben-Zvi and Frenkel  (\cite{Frenkel-Ben-Zvi}). 

The paper is organised as follows. In Section \ref{sec:2}, we briefly recall some basic properties of dg vertex algebras which will be used later on. In Section \ref{sec:3}, we follow the treatment in \cite{Lepowsky-Li} to obtain the dg version of weak vertex operator structures on the endomorphism complexes $\mathcal{E}(W)=\on{Hom}(W, W((x)))$. This will be used to establish the reconstruction theorem stating that a collection of mutually local weak vertex operators automatically generates a vertex algebra, but in the dg setting. In Section \ref{sec:4}, we define what a dg vertex Lie algebra is and state some basic properties. We remark that vertex Lie algebras are also called conformal algebras by Kac (\cite{Kac}) by taking $ \lambda=x^{-1}$. The definition and properties work for conformal algebras as well. Section \ref{sec:5} is devoted to constructing the enveloping dg vertex algebra of a dg vertex Lie algebra. This construction establishes a left adjoint to the forgetful functor from the category of dg vertex algebras to the category of dg vertex Lie algebras. In Section \ref{sec:6}, we apply the left adjoint functor to several well-known dg vertex Lie algebras to construct dg vertex algebras.

\section{Preliminaries on dg vertex algebras}\label{sec:2}
In this section we will briefly modify the definitions previously given in \cite{Caradot-Jiang-Lin-4} and extend the results we will need in the later sections.

\begin{definition}\label{def:dgVA}
A \textbf{differential graded (dg) vertex algebra} in the category $\Ch$ of differential (cochain) complexes of $\cc$-vector spaces is a cochain complex $(V^{[*]}, d_V^{[*]})$ over $ \cc$ equipped with a chain map (vertex operator map) in $ \Ch$ 
\begin{align*}
\begin{array}{cccc}
Y(\cdot, x): & V^{[*]} & \longrightarrow & \Hom^{[*]}(\bb C[t, t^{-1}], \End^{[*]}(V^{[*]}))=\on{End}^{[*]}(V^{[*]})[[x,x^{-1}]] \\
           & v & \longmapsto      & Y(v, x)=\displaystyle \sum_{n \in \mathbb{Z}}v_n x^{-n-1}
\end{array}
\end{align*}
with $x$ of cohomological degree $|x|=-2N$ for $N \in \mathbb{Z}$ (i.e., $|t|=2N$), and a particular vector $\mbf{1} \in V^{[0]}$ with $d^{[0]}(\mbf 1)=0$, the vacuum vector, satisfying the following conditions:   
\begin{enumerate}[label=(\roman*).]
\item For any $u, v \in V^{[*]}$, $u_nv=0$ for $n$ sufficiently large, i.e., $Y(u, x)v \in V^{[*]}((x))$ (Truncation property).
\item $Y(\mathbf{1},x)=\on{id}_V$ (Vacuum property).
\item $Y(v, x)\mathbf{1} \in V[[x]]$ and $\displaystyle \lim_{x \to 0} Y(v, x)\mathbf{1}=v$ (Creation property).
\item The Jacobi identity 
\begin{align}\label{eq:jacobi}
\begin{split}
 x_2^{-1} \delta(\frac{x_1-x_0}{x_2})Y(Y(u,& x_0)v, x_2)=x_0^{-1}\delta(\frac{x_1-x_2}{x_0})Y(u, x_1)Y(v, x_2) \\[5pt]
& - (-1)^{|u||v|}x_0^{-1}\delta(\frac{x_2-x_1}{-x_0})Y(v, x_2)Y(u, x_1),
\end{split}
\end{align}
where $\delta(x+y)=\displaystyle \sum_{n \in \mathbb{Z}}\sum_{m=0}^{\infty}\binom{n}{m}x^{n-m}y^m$ and $v,u\in V^{[*]}$ are homogeneous.
\end{enumerate}
In particular, as $Y(\cdot, x)$ is homogeneous of degree 0, it follows that for $v \in V^{[|v|]}$, we have
\[
|v_n| =|v|-2N(n+1).
\]
\end{definition}

In the above definition and the rest of this paper, a chain map will denote a homogeneous linear map of degree $0$ that commutes with the differential.

Given a cochain complex $U^{[*]}$ and homogeneous linear maps $f, g: U^{[*]} \longrightarrow U^{[*]}$, we write $[f, g]^s$ for the super commutator $f \circ g - (-1)^{|f||g|}g \circ f$. If $f$ (or $g$) is not homogeneous, then it is first decomposed into homogeneous components $\sum_i f_i$ (or $\sum_j g_j$) and then $[f, g]^s=\sum_i[f_i, g]^s$ (or $[f, g]^s=\sum_j[f, g_j]^s$).

\begin{proposition}[Weak dg commutativity]\label{prop:3.2.1}% Computations (72)
Let $u, v \in V^{[*]}$ be homogeneous. Then there exists $k \in \mathbb{N}$ such that
\begin{align}\label{eq:3.2.1}
(x_1-x_2)^k[Y(u, x_1), Y(v, x_2)]^s=0.
\end{align}
\end{proposition}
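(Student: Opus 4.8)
\medskip
\noindent\textbf{Proof proposal.} The plan is to deduce \eqref{eq:3.2.1} from the Jacobi identity \eqref{eq:jacobi} by the classical formal-variable argument, carrying the Koszul signs along. The first step is to produce the integer $k$: by the truncation property (Definition~\ref{def:dgVA}(i)) applied to the homogeneous pair $u,v$, there is a $k\in\mathbb{N}$ with $u_jv=0$ for all $j\ge k$, equivalently $x_0^{\,k}\,Y(Y(u,x_0)v,x_2)$ contains only nonnegative powers of $x_0$. I will show that this same $k$ works in \eqref{eq:3.2.1}.

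The second step is to apply $\on{Res}_{x_0}$ (extraction of the coefficient of $x_0^{-1}$) to both sides of \eqref{eq:jacobi}. On the right-hand side one uses $\on{Res}_{x_0}\,x_0^{-1}\delta\big(\tfrac{x_1-x_2}{x_0}\big)=\on{Res}_{x_0}\,x_0^{-1}\delta\big(\tfrac{x_2-x_1}{-x_0}\big)=1$, so that side becomes $Y(u,x_1)Y(v,x_2)-(-1)^{|u||v|}Y(v,x_2)Y(u,x_1)=[Y(u,x_1),Y(v,x_2)]^s$; observe that the sign $(-1)^{|u||v|}$ already present in \eqref{eq:jacobi} is precisely the Koszul sign needed to form the super-commutator, and that the formal variables, being of even cohomological degree $-2N$, introduce no further signs. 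Expanding $x_2^{-1}\delta\big(\tfrac{x_1-x_0}{x_2}\big)$ on the left and extracting the coefficient of $x_0^{-1}$ then gives the dg commutator formula
\begin{equation*}
[Y(u,x_1),Y(v,x_2)]^s=\sum_{0\le j<k} D_j(x_1,x_2)\,Y(u_jv,x_2),
\end{equation*}
where each $D_j(x_1,x_2)$ is a scalar multiple of $\partial_{x_1}^{\,j}\big(x_2^{-1}\delta(x_1/x_2)\big)$ and the sum is finite precisely by the choice of $k$.

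The final step is to multiply this identity by $(x_1-x_2)^k$ and invoke the standard delta-function identity $(x_1-x_2)^{m+1}\,\partial_{x_1}^{\,m}\big(x_2^{-1}\delta(x_1/x_2)\big)=0$ for all $m\ge 0$, which follows by induction on $m$ from $(x_1-x_2)\,x_2^{-1}\delta(x_1/x_2)=0$. Since every surviving term has $j<k$, hence $j+1\le k$, each product $(x_1-x_2)^k D_j(x_1,x_2)$ vanishes, and therefore $(x_1-x_2)^k[Y(u,x_1),Y(v,x_2)]^s=0$, which is \eqref{eq:3.2.1}.

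The manipulations with residues and formal delta functions are routine, so I do not expect a genuine obstacle. The points that would deserve a careful line or two are: that in the graded/super setting the expansions of the formal delta functions and the residue operation behave exactly as in the ungraded case, so that $(-1)^{|u||v|}$ is genuinely the only sign that appears; and that $Y(u,x_1)Y(v,x_2)$ is a well-defined element of $\End^{[*]}(V^{[*]})[[x_1^{\pm1},x_2^{\pm1}]]$ on which multiplication by $(x_1-x_2)^k$ and the preceding operations are legitimate — both of which are guaranteed by the truncation property. The differential $d_V^{[*]}$ does not enter this statement, so no homological input beyond \eqref{eq:jacobi} and property (i) is needed.
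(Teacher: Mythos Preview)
Your proof is correct and follows essentially the same route as the paper's: both derive the result from the Jacobi identity together with truncation, via $\on{Res}_{x_0}$. The paper's version is marginally more direct in that it multiplies the Jacobi identity by $x_0^{\,k}$ \emph{before} taking $\on{Res}_{x_0}$ --- so the two right-hand terms immediately become $(x_1-x_2)^k Y(u,x_1)Y(v,x_2)$ and $(-1)^{|u||v|}(x_1-x_2)^k Y(v,x_2)Y(u,x_1)$, while the left-hand side vanishes by truncation --- whereas you first obtain the commutator formula and then kill it with the delta-derivative identity $(x_1-x_2)^{m+1}\partial_{x_1}^{\,m}\big(x_2^{-1}\delta(x_1/x_2)\big)=0$.
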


\begin{proof}
Take $k \geq 0$, multiply the Jacobi identity by $x_0^k$, and then take $\on{Res}_{x_0}$. We obtain
\begin{align*}
(x_1-x_2)^kY(u, x_1)Y(v, x_2)-(-1)^{|u||v|}(x_1-x_2)^kY(v, x_2)Y(u, x_1)= \\
\on{Res}_{x_0}\bigg[x_2^{-1}\delta\big(\frac{x_1-x_0}{x_2} \big) x_0^k Y(Y(u, x_0)v, x_2)\bigg].
\end{align*}
Adjust $k$ such that $u_n v=0$ for $n \geq k$. Then the right hand side becomes zero. As $Y(\cdot, x)$ is a chain map, $|u|=|Y(u, x)|$ for any homogeneous $u \in V^{[*]}$, hence we get Equation~\eqref{eq:3.2.1}.
\end{proof}

We can verify that Propositions 3.2.7, 3.2.12, 3.3.1, 3.3.5, 3.3.8, 3.3.17, 3.3.19, 3.4.1, 3.4.3 and Remark 3.3.18, 3.3.20 of \cite{Lepowsky-Li} are also true in the dg setting. The statements only need to include a coefficient $(-1)^{|u||v|}$ whenever there is a permutation between $u$ and $v$. % Computations (72)

As explained in \cite[Lemma 3.8]{Caradot-Jiang-Lin-4}, a dg vertex algebra satisfies the following formulas: 
\begin{align}\label{eq:Dderivative_1}
[ \mathcal D, Y(v, x)]^s=\frac{d}{dx}Y(v, x),
\end{align}
\begin{align}\label{eq:Dderivative_2}
[ \mathcal D, Y(v, x)]^s=Y(\mathcal{D}v, x),
\end{align}
where $\mathcal{D}$ is $2N$-cocycle in $Z^{[2N]}\on{End}^{[*]}(V^{[*]})$ defined by $\mathcal{D}(v)=v_{-2}\mathbf{1}$ for all $v \in V^{[*]}$. It is also proved in \cite{Caradot-Jiang-Lin-4} that a dg vertex algebra satisfies a graded version of the skew-symmetry, i.e., for all $u, v \in V^{[*]}$ homogeneous, we have
\begin{align}\label{eq:skewsym}
Y(u, x)v=(-1)^{|u||v|}e^{x\cal D}Y(v, -x)u.
\end{align}

The following theorem is proved in the same manner as in \cite[Theorem 3.5.1]{Lepowsky-Li}.

\begin{theorem}\label{thm:3.5.1}% Computations (71)
The Jacobi identity~\eqref{eq:jacobi} for dg vertex algebras follows from the weak dg commutativity~\eqref{eq:3.2.1} in the presence axioms (i), (ii), (iii) of Definition \ref{def:dgVA}  together with the $\mathcal{D}$-bracket derivative formula~\eqref{eq:Dderivative_1}. In particular, in the definition of a dg vertex algebra, the Jacobi identity can be replaced by those properties.
\end{theorem}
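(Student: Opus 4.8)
The plan is to follow the classical strategy of Lepowsky--Li \cite[Theorem 3.5.1]{Lepowsky-Li}, inserting the signs $(-1)^{|u||v|}$ dictated by the Koszul braiding wherever $u$ and $v$ are permuted, and checking that the differential $d_V$ plays no obstructing role because $Y(\cdot,x)$ is a chain map. First I would recall that by Proposition~\ref{prop:3.2.1} (weak dg commutativity) together with the creation property (iii), one obtains the dg analogue of weak associativity: for homogeneous $u,v\in V^{[*]}$ there is $l\in\mathbb N$ such that
\[
(x_0+x_2)^l Y(u,x_0+x_2)Y(v,x_2)w=(x_0+x_2)^l Y(Y(u,x_0)v,x_2)w
\]
for all $w\in V^{[*]}$; here one applies weak dg commutativity of $u$ with both $v$ and $w$, uses $Y(v,x)\mathbf 1\in V[[x]]$ and $\lim_{x\to 0}Y(v,x)\mathbf 1=v$, and the $\mathcal D$-bracket formula~\eqref{eq:Dderivative_1} to move between $Y(u,x_0+x_2)v$ and $e^{x_0\mathcal D}$-type expressions. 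Since every map in sight is homogeneous of a definite degree and commutes with $d_V$, the insertion of the signs is forced only by the transposition $u\leftrightarrow v$, and no extra sign arises from $w$ or from $\mathcal D$ (which is even up to the shift $2N$, but $2N$ is even, so $\mathcal D$ is genuinely even as a parity operator).

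Next I would combine weak dg commutativity and weak dg associativity in the standard way: given homogeneous $u,v$ and an arbitrary $w$, choose $k$ large enough that both $(x_1-x_2)^k[Y(u,x_1),Y(v,x_2)]^s=0$ and the weak associativity relation above hold (for the relevant exponents), and then verify that the three expressions appearing in the Jacobi identity~\eqref{eq:jacobi}, applied to $w$, agree as elements of $V^{[*]}[[x_0^{\pm1},x_1^{\pm1},x_2^{\pm1}]]$. Concretely, one shows that
\[
x_0^{-1}\delta\!\Big(\frac{x_1-x_2}{x_0}\Big)Y(u,x_1)Y(v,x_2)w-(-1)^{|u||v|}x_0^{-1}\delta\!\Big(\frac{x_2-x_1}{-x_0}\Big)Y(v,x_2)Y(u,x_1)w
\]
is supported, after multiplication by a suitable power of $(x_1-x_2)$, on the ``diagonal'' $x_1=x_0+x_2$, and that on that diagonal it matches $x_2^{-1}\delta\!\big(\frac{x_1-x_0}{x_2}\big)Y(Y(u,x_0)v,x_2)w$ by weak dg associativity. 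This uses only the formal $\delta$-function identities (the same ones stated in Definition~\ref{def:dgVA}), which are insensitive to the cohomological grading, so the only bookkeeping beyond \cite{Lepowsky-Li} is the single sign $(-1)^{|u||v|}$, which appears symmetrically on both sides.

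The main obstacle, such as it is, will be verifying that the passage from weak commutativity to weak associativity is compatible with the sign rule and with $d_V$: one must make sure that in the key step where one writes $Y(u,x_0+x_2)Y(v,x_2)w$ and expands $Y(u,x_0+x_2)=e^{x_0\frac{d}{dx_2}}Y(u,x_2)$ together with the $\mathcal D$-conjugation identity $e^{x_0\mathcal D}Y(u,x_2)e^{-x_0\mathcal D}=Y(u,x_0+x_2)$ (itself a consequence of~\eqref{eq:Dderivative_1}), no spurious signs enter — and they do not, precisely because $\mathcal D$ has even parity and $Y$ is a degree-zero chain map. Once this is checked, the remainder is the formal-calculus manipulation of \cite[Theorem 3.5.1]{Lepowsky-Li}, carried out verbatim with the superscript $s$ on commutators and the sign $(-1)^{|u||v|}$ in place, as already announced in the paragraph preceding the statement (``The statements only need to include a coefficient $(-1)^{|u||v|}$ whenever there is a permutation between $u$ and $v$''). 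Hence the Jacobi identity~\eqref{eq:jacobi} follows from weak dg commutativity~\eqref{eq:3.2.1}, axioms (i)--(iii), and~\eqref{eq:Dderivative_1}, and may therefore replace it in Definition~\ref{def:dgVA}.
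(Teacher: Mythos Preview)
Your proposal is correct and follows essentially the same approach as the paper, which simply states that the proof is ``the same manner as in \cite[Theorem 3.5.1]{Lepowsky-Li}'' with the Koszul signs inserted. Your outline --- derive $Y(u,x)\mathbf 1=e^{x\mathcal D}u$ from creation and \eqref{eq:Dderivative_1}, then skew-symmetry, then weak associativity, then apply the dg analogue of \cite[Proposition 3.4.3]{Lepowsky-Li} --- is exactly the classical route, and your observation that $\mathcal D$ has even parity (so no signs arise from conjugation by $e^{x\mathcal D}$) and that $Y$ is a degree-zero chain map is the only additional bookkeeping needed. One small clarification: in your first paragraph the passage to weak associativity does not use weak commutativity of $u$ with $w$ directly, but rather proceeds via the intermediate skew-symmetry \eqref{eq:skewsym} (swapping $v$ and $w$) followed by weak commutativity of $u$ with $w$; you allude to this with the ``$e^{x_0\mathcal D}$-type expressions'' but it would be cleaner to state skew-symmetry explicitly as a lemma on the way.
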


\delete{
\begin{proof}
As $\mathcal{D}$ is of even degree, the $\mathcal{D}$-bracket formula becomes $[\mathcal{D}, Y(u, x)]^s=\frac{d}{dx}Y(u, x)$. We can then obtain the same formula as in \cite[(3.1.35)]{Lepowsky-Li}:
\[
e^{x\mathcal{D}}Y(u, x_0)e^{-x\mathcal{D}}=Y(x_0+x).
\]
We can then write $Y(u, x_0+x)\mathbf{1}=e^{x\mathcal{D}}Y(u, x_0)e^{-x\mathcal{D}}\mathbf{1}=e^{x\mathcal{D}}Y(u, x_0)\mathbf{1}$. Using the creation property, we can set $x_0=0$ and get $Y(u, x)\mathbf{1}=e^{x\mathcal{D}}v$.

We have assumed that the weak commutativity is satisfied, so take $u, v \in V^{[*]}$ homogeneous and $k \in \mathbb{N}$ such that $x^kY(v, x)u$ involves only non negative powers of $x$, and such that the weak commutativity relation is satisfied for $u$ and $v$. Then
\[
\begin{array}{rcl}
(x_1-x_2)^kY(u, x_1)Y(v, x_2)\mathbf{1} & = & (-1)^{|u||v|}(x_1-x_2)^kY(v, x_2)Y(u, x_1)\mathbf{1}, \\[5pt]
 & = & (-1)^{|u||v|}(x_1-x_2)^kY(v, x_2)e^{x_1\mathcal{D}}u, \\[5pt]
 & = & (-1)^{|u||v|}(x_1-x_2)^ke^{x_1\mathcal{D}}Y(v, x_2-x_1)u.
 \end{array}
\]
We can then set $x_2=0$ because $(x_1-x_2)^kY(v, x_2-x_1)u$ only involves non negative powers of $x_2-x_1$. By applying the creation property, we get
\[
x_1^kY(u, x_1)v=(-1)^{|u||v|}x_1^ke^{x_1\mathcal{D}}Y(v, -x_1)u.
\]
Multiplying both sides by $x_1^k$, we obtain the skew-symmetry relation for $(u, v)$.

Set $u, v, w \in V^{[*]}$ homogeneous. Using the weak commutativity and the skew-symmetry, we have
\[
\begin{array}{rcl}
(x_0+x_2)^kY(u, x_0+x_2)Y(v, x_2)w& = & (x_0+x_2)^kY(u, x_0+x_2)(-1)^{|v||w|}e^{x_2\mathcal{D}}Y(w, -x_2)v, \\[5pt]
 & = & (-1)^{|v||w|}e^{x_2\mathcal{D}}(x_0+x_2)^kY(u, x_0)Y(w, -x_2)v, \\[5pt]
 & = & (-1)^{(|u|+|v|)|w|}e^{x_2\mathcal{D}}(x_0+x_2)^kY(w, -x_2)Y(u, x_0)v, \\[5pt]
 & = & (-1)^{(|u|+|v|)|w|}(x_0+x_2)^kY(Y(u, x_0)v, x_2)w.
 \end{array}
\]
Hence we have the weak associativity. The result then follows from the dg version of \cite[Proposition 3.4.3]{Lepowsky-Li}.
\end{proof}
}

The definition of a module for a dg vertex algebra was also given in \cite{Caradot-Jiang-Lin-4}.

\begin{definition}\label{def:dgVAmod}
Let $(V^{[*]}, d_V, Y(\cdot, x), \mathbf{1})$ be a dg vertex algebra in $ \Ch$. A  dg module over $V^{[*]}$ is an object  $(M^{[*]}, d_M)$ in $\Ch$  equipped with a chain map (homogeneous of degree 0)
\begin{align*}
\begin{array}{cccc}
Y_M(\cdot, x): & V^{[*]} & \longrightarrow &\mathrm{End}^{[*]} (M^{[*]})[[x,x^{-1}]] \\
           & v & \longmapsto      & Y_M(v, x)=\displaystyle \sum_{n \in \mathbb{Z}}v_n x^{-n-1}
\end{array}
\end{align*}
such that for any $u,v \in V$, the following properties are verified:
\vspace{-\topsep}
\begin{enumerate}[label=(\roman*).]
\item For any $u \in V^{[*]}$, $w \in M^{[*]}$ , $u_n w=0$ for $n$ sufficiently large, i.e., $Y_M(u, x)w \in M^{[*]}((x))$. (Truncation property).
\item $Y_M(\mathbf{1},x)=\mathrm{id}_{| M}$. (Vacuum property).
\item The Jacobi identity 
 \begin{align}\label{eq:jacobi_mod} 
 \begin{split}
 x_2^{-1}\delta(\frac{x_1-x_0}{x_2}) Y_M(Y(u, &x_0)v, x_2)=  x_0^{-1}\delta(\frac{x_1-x_2}{x_0})Y_M(u, x_1)Y_M(v, x_2) \\
 & -(-1)^{|v||u|}x_0^{-1}\delta(\frac{x_2-x_1}{-x_0})Y_M(v, x_2)Y_M(u, x_1). 
  \end{split}
\end{align}
\end{enumerate}
\end{definition}

The restricted dual of a cochain complex $U^{[*]}$ will be written $(U^{[*]})'=\bigoplus_{n \in \mathbb{Z}} (U^{[n]})^*$ where $(U^{[n]})^*=\on{Hom}(U^{[n]}, \cc)$ (notice the difference in use between $^{*}$ and $^{[*]}$).

\begin{theorem}\label{thm:3.6.3} % Computations (72)
Let $(V^{[*]}, Y(\cdot, x), \mathbf{1})$ be a triple that satisfies axioms (i), (ii) and (iii) in Definition \ref{def:dgVA}, and in addition has the skew-symmetry~\eqref{eq:skewsym}. Let $W^{[*]}$ be a cochain complex and let $Y_W(\cdot, x): V^{[*]} \longrightarrow \on{End}^{[*]}(W^{[*]})[[x, x^{-1}]]$ be a chain map such that the conditions (i) and (ii) of Definition \ref{def:dgVAmod} are satisfied. Finally, assume that the weak associativity holds for any $u, v \in V^{[*]}$, $w \in W^{[*]}$, i.e., there exists $l \in \mathbb{N}$ (depends on $u, v, w$) such that
\begin{align}\label{eq:weakassoc}
(x_0+x_2)^lY_W(Y(u, x_0)v, x_2)w=(x_0+x_2)^lY_W(u, x_0+x_2)Y_W(v, x_2)w.
\end{align}
Then the Jacobi identity~\eqref{eq:jacobi_mod} for $Y_W(\cdot, x)$ is satisfied.
\end{theorem}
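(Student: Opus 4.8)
The plan is to adapt the proof of \cite[Theorem 3.6.3]{Lepowsky-Li} to the dg/super setting, inserting a Koszul sign $(-1)^{|u||v|}$ at every transposition of homogeneous vectors $u,v$, exactly as for Proposition \ref{prop:3.2.1}; the formal-variable machinery needed is the one already imported in the dg setting (the dg analogues of \cite[Prop. 3.2.7, 3.4.1, 3.4.3]{Lepowsky-Li}). The argument runs in three stages: (a) a $\mathcal{D}$-conjugation identity for $Y_W$; (b) deducing a weak $\mathbb{Z}_2$-graded commutativity for $Y_W$ from weak associativity together with the skew-symmetry \eqref{eq:skewsym} of $Y$; and (c) producing the Jacobi identity \eqref{eq:jacobi_mod} from weak commutativity, weak associativity, truncation, and the vacuum property, which is a formal $\delta$-function computation as in the proof of Theorem \ref{thm:3.5.1} (the module/dg analogue of \cite[Prop. 3.4.3]{Lepowsky-Li}).

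For step (a), specialize \eqref{eq:weakassoc} to $v=\mathbf 1$. By \eqref{eq:skewsym} and the vacuum property one has $Y(u,x_0)\mathbf 1=e^{x_0\mathcal{D}}u\in V^{[*]}[[x_0]]$, while $Y_W(\mathbf 1,x_2)=\on{id}$ by Definition \ref{def:dgVAmod}(ii), so both sides of \eqref{eq:weakassoc} lie in $W^{[*]}((x_2))[[x_0]]$; since $x_0+x_2$ is invertible there, the factor $(x_0+x_2)^l$ cancels and we obtain
\[
Y_W(e^{x_0\mathcal{D}}u, x_2)w = Y_W(u, x_0+x_2)w ,
\]
whence, applying $\partial_{x_0}|_{x_0=0}$, also $Y_W(\mathcal{D}u, x)=\frac{d}{dx}Y_W(u,x)$. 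For step (b), apply \eqref{eq:weakassoc} to a general triple $(u,v,w)$, rewrite $Y(u,x_0)v=(-1)^{|u||v|}e^{x_0\mathcal{D}}Y(v,-x_0)u$ via \eqref{eq:skewsym}, move $e^{x_0\mathcal{D}}$ through $Y_W$ using step (a), and then apply \eqref{eq:weakassoc} a second time to the triple $(v,u,w)$ with inner variable $-x_0$ and outer variable $x_0+x_2$. After cancelling the invertible prefactors this yields
\[
Y_W(u,x_0+x_2)Y_W(v,x_2)w=(-1)^{|u||v|}Y_W(v,x_2)Y_W(u,x_0+x_2)w
\]
in $W^{[*]}((x_2))[[x_0]]$; substituting $x_0=x_1-x_2$ and clearing denominators by multiplying by a sufficiently large power $(x_1-x_2)^k$ upgrades this to the weak commutativity $(x_1-x_2)^k[Y_W(u,x_1),Y_W(v,x_2)]^s w=0$.

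Finally, in step (c), with weak commutativity and weak associativity for $Y_W$ both available alongside truncation and the vacuum property, the Jacobi identity \eqref{eq:jacobi_mod} follows by the same formal $\delta$-function manipulation underlying Theorem \ref{thm:3.5.1}: multiply the relevant $\delta$-identities, apply $\on{Res}_{x_0}$, and use $x_2^{-1}\delta(\frac{x_1-x_0}{x_2})$ to glue the commutativity and associativity relations into \eqref{eq:jacobi_mod}, carrying the parity signs throughout. I expect the main obstacle to be bookkeeping rather than conceptual: at each substitution of formal variables — most delicately $x_0\mapsto x_1-x_2$ in step (b) and the $\delta$-function substitutions in step (c) — one must pin down the completed space of formal series in which the identity is being asserted, so that the moves ``cancel the invertible factor'' and ``clear denominators'' are legitimate, while simultaneously propagating the Koszul signs correctly through every interchange of $u$ and $v$. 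Neither difficulty introduces anything beyond \cite{Lepowsky-Li}, but both demand care.
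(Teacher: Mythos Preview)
Your outline matches the paper's in spirit---step (a) is the same, and step (c) is indeed handled by the dg version of \cite[Prop.~3.4.3]{Lepowsky-Li}---but there is a genuine gap in step (b). The move ``apply \eqref{eq:weakassoc} a second time to the triple $(v,u,w)$ with inner variable $-x_0$ and outer variable $x_0+x_2$'' is not a legal operation in the formal calculus: the weak associativity identity holds with a single formal outer variable $y_2$, and substituting the two-variable expression $x_2+x_0$ for $y_2$ into an identity involving Laurent series in $y_2$ requires justification that you do not supply. The subsequent claim that the resulting identity lives in $W^{[*]}((x_2))[[x_0]]$ is likewise not clear (for instance $Y_W(u,x_0+x_2)$, expanded with nonnegative powers of $x_2$, does not land there), and the final ``substitute $x_0=x_1-x_2$'' has the same problem. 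Your closing caveat about ``pinning down the completed space'' is exactly right, but this is not bookkeeping: it is the technical heart of the argument.

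The paper resolves this precisely by avoiding direct substitution. It passes to matrix coefficients $\langle w', - \rangle$ and works in $\cc[[x_1,x_2]][x_1^{-1},x_2^{-1},(x_1-x_2)^{-1}]$, invoking the dg versions of \cite[Prop.~3.3.17, 3.3.19]{Lepowsky-Li} to produce rational functions $f$ (for the pair $(u,v)$) and $f'$ (for $(v,u)$), and then uses skew-symmetry together with your step (a) to prove $f'=(-1)^{|u||v|}f$. This gives the formal commutativity relation required by \cite[Prop.~3.4.3]{Lepowsky-Li}. Your step (b), once made rigorous, collapses into exactly this computation; the rational-function machinery is what legitimizes the variable substitutions you want to perform.
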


\begin{proof}
For $u \in V^{[*]}$ and $w \in W^{[*]}$, by the weak associativity assumption, there exists $l \in \mathbb{N}$ such that
\[
(x_0+x_2)^lY_W(Y(u, x_0)\mathbf{1}, x_2)w=(x_0+x_2)^lY_W(u, x_0+x_2)Y_W(\mathbf{1}, x_2)w.
\]
By the creation property and the skew-symmetry on $V^{[*]}$, we have $Y(u, x)\mathbf{1}=e^{x\mathcal{D}}u$. Hence
\[
(x_0+x_2)^lY_W(e^{x_0 \mathcal{D}}u, x_2)w=(x_0+x_2)^lY_W(u, x_0+x_2)w
\]
because $Y_W(\mathbf{1}, x_2)=\on{id}_W$. By the truncation condition on $W^{[*]}$, we can choose $l$ such that $x^l Y_W(u, x)w \in W^{[*]}[[x]]$. But then we can permute $x_0$ and $x_2$ in the right hand side of the above expression, and so
\[
(x_0+x_2)^lY_W(e^{x_0 \mathcal{D}}u, x_2)w=(x_0+x_2)^lY_W(u, x_2+x_0)w.
\]
Since all factors now involve only non negative powers of $x_0$, we can multiply by $(x_2+x_0)^{-l}$ to get
\[
Y_W(e^{x_0 \mathcal{D}}u, x_2)w=Y_W(u, x_2+x_0)w=e^{x_0\frac{d}{x_2}}Y_W(u, x_2)w.
\]
It follows that $Y_W(\mathcal{D}u, x)=\frac{d}{dx}Y_W(u, x)$.

We know from \cite[Propositions 3.3.17, 3.3.19 and Remarks 3.3.18, 3.3.20]{Lepowsky-Li} that $Y_W(\cdot, x)$ satisfies the formal associativity relation (given by \eqref{eq:3.6.17} and \eqref{eq:3.6.18}). Based on the proof of \cite[Proposition 3.4.3]{Lepowsky-Li}, in order to prove the Jacobi identity, we just have to prove the following formal commutativity relation (given by \eqref{eq:3.6.14} and \eqref{eq:3.6.15}): for any $u, v \in V^{[*]}$ homogeneous, $w \in W^{[*]}$, $w' \in (W^{[*]})'$, there exists
\begin{align}\label{eq:3.6.13}
f(x_1, x_2) \in \cc[[x_1, x_2]][x_1^{-1}, x_2^{-1}, (x_1-x_2)^{-1}]
\end{align}
such that
\begin{align}
\langle w', Y_W(u, x_1)Y_W(v, x_2)w \rangle = \iota_{12}f(x_1, x_2) \label{eq:3.6.14}, \\[5pt]
(-1)^{|u||v|}\langle w', Y_W(v, x_2)Y_W(u, x_1)w \rangle = \iota_{21}f(x_1, x_2) .\label{eq:3.6.15}
\end{align}
Using \cite[Propositions 3.3.17, 3.3.19]{Lepowsky-Li}, there exists $f$ as in Equation~\eqref{eq:3.6.13} such that Equation~\eqref{eq:3.6.14} holds and such that
\begin{align}\label{eq:3.6.16}
\langle w', Y_W(Y(u, x_0)v, x_2)w \rangle = \iota_{20}f(x_0+x_2, x_2) .
\end{align}
By reversing the roles of $u$ and $v$, of $x_1$ and $x_2$, and of $x_0$ and $-x_0$, we obtain
\begin{align*}
f' \in \cc[[x_1, x_2]][x_1^{-1}, x_2^{-1}, (x_1-x_2)^{-1}]
\end{align*}
such that
\begin{align}
\langle w', Y_W(v, x_2)Y_W(u, x_1)w \rangle &= \iota_{21}f'(x_1, x_2) \label{eq:3.6.17}, \\[5pt]
(-1)^{|u||v|}\langle w', Y_W(Y(v, -x_0)u, x_1)w \rangle &= \iota_{10}f'(x_1, x_1-x_0). \label{eq:3.6.18}
\end{align}
In order to show Equation~\eqref{eq:3.6.15}, we need to prove that $f'=(-1)^{|u||v|}f$. We have
\[
\begin{array}{rcl}
\langle w', Y_W(Y(u, x_0)v, x_2)w \rangle & = & \langle w', Y_W((-1)^{|u||v|}e^{x_0 \mathcal{D}}Y(v, -x_0)u, x_2)w \rangle  \quad (\text{by } \eqref{eq:skewsym}) \\[5pt]
& = &(-1)^{|u||v|} \langle w', Y_W(Y(v, -x_0)u, x_2+x_0)w \rangle \\[5pt]
& = &(-1)^{|u||v|} (\iota_{10}f'(x_1, x_1-x_0))_{| x_1=x_2+x_0} \hfill (\text{by } \eqref{eq:3.6.18}) \\[5pt]
& = &(-1)^{|u||v|} \iota_{20}f'(x_0+x_2, x_2) \hfill (\text{by \cite[(3.3.40)]{Lepowsky-Li}}) \\[5pt]
& = &\iota_{20}f(x_0+x_2, x_2) \hfill (\text{by } \eqref{eq:3.6.16}).
\end{array}
\]
Hence $f'=(-1)^{|u||v|}f$, which concludes the proof.
\end{proof}

\begin{proposition}\label{prop:3.9.3}% Computations (71)
Let $S$ be a subset of $V^{[*]}$, and write $\langle S \rangle_{\on{VA}}$ for the smallest dg vertex subalgebra of $V^{[*]}$ containing $S$. Then
\[
\langle S \rangle_{\on{VA}} = \on{Span}\{u^{(1)}_{n_1} \cdots u^{(r)}_{n_r} \mathbf{1} \ | \ r \in \mathbb{N}, u^{(i)} \in S \cup d_V(S), n_i \in \mathbb{Z} \}.
\]
\end{proposition}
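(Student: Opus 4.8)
The plan is to prove the two inclusions separately. Let $T$ denote the right-hand side $\on{Span}\{u^{(1)}_{n_1} \cdots u^{(r)}_{n_r} \mathbf{1} \mid r \in \mathbb{N}, u^{(i)} \in S \cup d_V(S), n_i \in \mathbb{Z}\}$. The inclusion $T \subseteq \langle S \rangle_{\on{VA}}$ is the easy direction: any dg vertex subalgebra containing $S$ must, being a subcomplex, contain $d_V(S)$, and being closed under all the operations $v \mapsto u_n v$ (which are the coefficients of the vertex operator map) and containing $\mathbf{1}$, it must contain every element $u^{(1)}_{n_1} \cdots u^{(r)}_{n_r}\mathbf{1}$. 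Since $\langle S \rangle_{\on{VA}}$ is such a subalgebra, $T \subseteq \langle S \rangle_{\on{VA}}$.

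For the reverse inclusion, the strategy is to show that $T$ is itself a dg vertex subalgebra of $V^{[*]}$ containing $S$; minimality of $\langle S \rangle_{\on{VA}}$ then forces $\langle S \rangle_{\on{VA}} \subseteq T$. That $\mathbf{1} \in T$ (take $r=0$) and $S \subseteq T$ (take $r=1$, $n_1=-1$, using the creation property $u_{-1}\mathbf{1} = u$) is immediate. That $T$ is a subcomplex follows because $d_V$ is a derivation compatible with $Y$: applying $d_V$ to $u^{(1)}_{n_1}\cdots u^{(r)}_{n_r}\mathbf{1}$ and using the Leibniz rule together with $d_V(u^{(i)}) \in d_V(S) \subseteq S \cup d_V(S)$ and $d_V(\mathbf{1}) = 0$ expresses the result as a sum of elements of the same form, hence in $T$. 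The substantive point is closure under the vertex operator, i.e. $Y(a, x) b \in T((x))$ for all $a, b \in T$; equivalently $a_n b \in T$ for all $n \in \mathbb{Z}$.

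To establish closure under $Y$, I would fix $b = v^{(1)}_{m_1} \cdots v^{(s)}_{m_s}\mathbf{1} \in T$ and prove by induction on $r$ that $a_n b \in T$ for every $a = u^{(1)}_{n_1}\cdots u^{(r)}_{n_r}\mathbf{1}$ and every $n$. The base case $r=0$ is $a = \mathbf{1}$, where $Y(\mathbf{1},x) = \on{id}_V$ gives $a_n b = \delta_{n,-1} b \in T$. For the inductive step, write $a = u_k a'$ with $u = u^{(1)} \in S \cup d_V(S)$, $k = n_1$, and $a' = u^{(2)}_{n_2}\cdots u^{(r)}_{n_r}\mathbf{1} \in T$. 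The key tool is the iterate formula / associativity: the Jacobi identity (available by Definition \ref{def:dgVA}, or via Theorem \ref{thm:3.5.1}) yields, upon taking $\on{Res}_{x_1}$ after multiplying by an appropriate power of $x_1$, an expression for $Y(u_k a', x_2)$ in terms of (a sum of products of) $u_i$ and $(a')_j$ and binomial coefficients — concretely,
\[
(u_k a')_n = \sum_{i \geq 0} (-1)^i \binom{k}{i}\left( u_{k-i}(a')_{n+i\phantom{.}} \!\!\!- (-1)^{k}(-1)^{|u||a'|}(a')_{k+n-i}u_i \right),
\]
the standard commutator/iterate identity (its dg form differs only by the sign $(-1)^{|u||a'|}$, as noted after Proposition \ref{prop:3.2.1}). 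Applying both sides to $b$: each term $u_{k-i}\big((a')_{n+i} b\big)$ lies in $T$ because $(a')_{n+i} b \in T$ by the inductive hypothesis on $r$ (applied to $a'$, which has $r-1$ factors) and then prepending $u_{k-i}$ with $u \in S \cup d_V(S)$ keeps us in $T$ by definition; each term $(a')_{k+n-i}\big(u_i b\big)$ lies in $T$ because $u_i b \in T$ (as $u \in S \cup d_V(S)$, $b \in T$, and the $r=1$ case — equivalently the definition of $T$ — gives this), and then applying $(a')_{k+n-i}$, an iterated operator built from $S \cup d_V(S)$, again stays in $T$ by the inductive hypothesis applied with $b$ replaced by $u_i b$. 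Only finitely many $i$ contribute by the truncation property, so the sum is a finite sum of elements of $T$.

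The main obstacle is bookkeeping the nested induction cleanly: the inductive step reduces $a_n b$ to expressions of the form $u_j(a'_{n'} b)$ and $a'_{n'}(u_j b)$, so one is simultaneously inducting on the length of $a$ and varying $b$ over all of $T$; one must be careful that the induction hypothesis is stated as "$a'_{n'} b \in T$ for all $n'$ and all $b \in T$" so that it can be reapplied with the new second argument $u_j b \in T$. It is also worth remarking that closure under $Y(a,x)b$ for all $a,b \in T$ — together with $\mathbf{1} \in T$ — is exactly what is needed for $T$ to be a dg vertex subalgebra: the Jacobi identity, truncation, vacuum and creation properties, and skew-symmetry are all inherited from $V^{[*]}$ once $T$ is known to be a $d_V$-stable subspace closed under the $v_n$. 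Once $T$ is a dg vertex subalgebra containing $S$, the definition of $\langle S \rangle_{\on{VA}}$ as the smallest such gives the reverse inclusion and completes the proof.
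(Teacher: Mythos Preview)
Your proposal is correct and follows essentially the same approach as the paper: the paper simply states that the proof is identical to the classical case \cite[Proposition 3.9.3]{Lepowsky-Li}, with the only new point being that one must work with $S \cup d_V(S)$ so that the span is a subcomplex. You have spelled out in full the argument the paper defers to --- the iterate formula induction on the length $r$ of $a$, together with the Leibniz-rule check that $T$ is $d_V$-stable --- and correctly flagged the need to quantify the induction hypothesis over all $b \in T$.
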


\begin{proof}
The proof is identical to the classical setting (see \cite[Proposition 3.9.3]{Lepowsky-Li}). The condition $u^{(i)} \in S \cup d_V(S)$ garanties that the span is a subcomplex of $V^{[*]}$.

In case the set $S$ is not $d_V$-stable, then it can be replaced by $S \cup d_V(S)$, which is $d_V$-stable.
\end{proof}

With obvious changes, there is a version of Proposition \ref{prop:3.2.1} for modules:

\begin{proposition}\label{prop:4.2.1}% Computations (71)
Let $W^{[*]}$ be a $V^{[*]}$-module. Then, for $u, v \in V^{[*]}$  homogeneous, there exists $k \in \mathbb{N}$ such that
\begin{align}\label{eq:4.2.1}
(x_1-x_2)^k[Y_W(u, x_1), Y_W(v, x_2)]^s=0
\end{align}
\end{proposition}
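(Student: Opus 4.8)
The plan is to imitate the proof of Proposition~\ref{prop:3.2.1}, replacing the Jacobi identity~\eqref{eq:jacobi} by the module Jacobi identity~\eqref{eq:jacobi_mod}. Fix homogeneous $u, v \in V^{[*]}$. First I would pick an integer $k \geq 0$, multiply both sides of~\eqref{eq:jacobi_mod} by $x_0^k$, and apply $\on{Res}_{x_0}$. The two $x_0^{-1}\delta(\cdots)$ terms on the right-hand side collapse, after taking the residue, into a single $(x_1-x_2)^k$ prefactor, while the left-hand term produces the source term below; this yields
\begin{align*}
(x_1-x_2)^k & Y_W(u, x_1)Y_W(v, x_2) - (-1)^{|u||v|}(x_1-x_2)^k Y_W(v, x_2)Y_W(u, x_1) \\
& = \on{Res}_{x_0}\left[ x_2^{-1}\delta\!\left(\frac{x_1-x_0}{x_2}\right) x_0^k\, Y_W(Y(u, x_0)v, x_2) \right].
\end{align*}

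Next I would invoke the truncation axiom (i) of Definition~\ref{def:dgVA} on $V^{[*]}$, namely $Y(u, x_0)v \in V^{[*]}((x_0))$: choosing $k$ large enough that $u_n v = 0$ for every $n \geq k$, the expression $x_0^k\, Y(Y(u, x_0)v, x_2)$ becomes a formal series involving only non-negative powers of $x_0$. Since, under the binomial-expansion convention recorded after~\eqref{eq:jacobi}, the factor $x_2^{-1}\delta(\frac{x_1-x_0}{x_2})$ likewise expands into non-negative powers of $x_0$, the coefficient of $x_0^{-1}$ in the bracket is zero, so the entire right-hand side vanishes.

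Finally, because $Y_W(\cdot, x)$ is a chain map we have $|Y_W(u, x)| = |u|$ for homogeneous $u$ (the cohomological degree $-2N$ carried by $x$ is exactly compensated by that of the modes $u_n$), so the left-hand side above is precisely $(x_1-x_2)^k [Y_W(u, x_1), Y_W(v, x_2)]^s$, which gives~\eqref{eq:4.2.1}.

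I do not expect any genuine obstacle: the argument is entirely parallel to that of Proposition~\ref{prop:3.2.1}, and this is exactly the content hinted at by the phrase ``with obvious changes''. The only points deserving a moment's attention are the appearance of the sign $(-1)^{|u||v|}$ in the super commutator --- legitimate precisely because $|Y_W(u, x_1)| = |u|$ and $|Y_W(v, x_2)| = |v|$ --- and the bookkeeping of which powers of $x_0$ survive after taking the residue.
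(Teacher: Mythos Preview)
Your proposal is correct and follows exactly the approach the paper intends: it is a verbatim transcription of the proof of Proposition~\ref{prop:3.2.1} with $Y$ replaced by $Y_W$ and the Jacobi identity~\eqref{eq:jacobi} replaced by the module Jacobi identity~\eqref{eq:jacobi_mod}, which is precisely what the paper signals with the phrase ``with obvious changes''.
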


\begin{theorem}\label{thm:4.4.5}% Computations (72)
Let $V^{[*]}$ be a dg vertex algebra, $W^{[*]}$ a cochain complex, and $Y_W(\cdot, x)$ a chain map from $V^{[*]}$ to $\on{End}^{[*]}(W^{[*]})[[x, x^{-1}]]$ such that $Y_W(\mathbf{1}, x)=\on{id}_W$, $Y_W(u, x)w \in W^{[*]}((x))$, and such that the weak associativity holds, i.e., for any $u, v \in V^{[*]}$ homogeneous, $w \in W^{[*]}$, there exists $k \in \mathbb{N}$ such that
\begin{align}\label{eq:4.3.1}
(x_0+x_2)^kY_W(Y(u, x_0)v, x_2)w = (x_0+x_2)^kY_W(u, x_0+x_2)Y_W(v, x_2)w.
\end{align}
Then $Y_W(\cdot, x)$ satisfies the Jacobi identity and $(W^{[*]}, Y_W(\cdot, x))$ is a $V^{[*]}$-module. In particular, in the definition of a dg module, the Jacobi identity can be replaced by the weak associativity \eqref{eq:4.3.1} for $ Y_W(\cdot, x)$.
\end{theorem}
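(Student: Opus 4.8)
The plan is to deduce this from Theorem \ref{thm:3.6.3}, since that result already produces the Jacobi identity for $Y_W$ from axioms (i), (ii) of Definition \ref{def:dgVAmod} together with the skew-symmetry on $V^{[*]}$ and the weak associativity \eqref{eq:weakassoc}. The only gap between the hypotheses of Theorem \ref{thm:3.6.3} and those we are given here is that Theorem \ref{thm:3.6.3} takes as input an abstract triple $(V^{[*]}, Y(\cdot,x), \mathbf 1)$ satisfying axioms (i)--(iii) plus skew-symmetry, whereas here $V^{[*]}$ is a genuine dg vertex algebra. So the first step is simply to observe that a dg vertex algebra automatically satisfies all the hypotheses imposed on $V^{[*]}$ in Theorem \ref{thm:3.6.3}: axioms (i)--(iii) are part of Definition \ref{def:dgVA}, and the skew-symmetry \eqref{eq:skewsym} was recorded above as a consequence of the dg vertex algebra axioms (proved in \cite{Caradot-Jiang-Lin-4}). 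The hypotheses on $W^{[*]}$ and $Y_W$ in the two statements are identical ($Y_W$ a chain map, vacuum property $Y_W(\mathbf 1,x)=\on{id}_W$, truncation $Y_W(u,x)w\in W^{[*]}((x))$, and weak associativity; note \eqref{eq:4.3.1} and \eqref{eq:weakassoc} are the same relation).

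Hence the second step is to invoke Theorem \ref{thm:3.6.3} directly: it gives that $Y_W(\cdot,x)$ satisfies the Jacobi identity \eqref{eq:jacobi_mod}. Combined with the vacuum property and the truncation property — which are hypotheses here — this is exactly the list of conditions (i), (ii), (iii) in Definition \ref{def:dgVAmod}, so $(W^{[*]}, Y_W(\cdot,x))$ is by definition a $V^{[*]}$-module. The final sentence of the theorem ("in the definition of a dg module, the Jacobi identity can be replaced by the weak associativity") is then just the contrapositive reading of what has been shown, together with the trivial converse that a module satisfies weak associativity (which follows from the Jacobi identity by the standard $\on{Res}_{x_0}$ manipulation, exactly as in Proposition \ref{prop:3.2.1} but for modules, i.e. Proposition \ref{prop:4.2.1}-type reasoning).

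There is essentially no obstacle here: the theorem is a packaging result that restates Theorem \ref{thm:3.6.3} in the case where $V^{[*]}$ is known to be a dg vertex algebra, so that the hypotheses on $V^{[*]}$ become automatic. If one wanted to be careful about a single point, it would be to double-check that the sign conventions and the degree bookkeeping ($|x|=-2N$, $|v_n|=|v|-2N(n+1)$) are consistent between the two statements, so that the formal-variable identities used inside the proof of Theorem \ref{thm:3.6.3} — in particular the passage $Y_W(\mathcal D u,x)=\tfrac{d}{dx}Y_W(u,x)$ and the use of \eqref{eq:skewsym} — apply verbatim; but since $\mathcal D$ is of even degree $2N$ no extra signs intervene, and these checks are routine.
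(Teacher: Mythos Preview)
Your proposal is correct and matches the paper's own proof, which simply reads ``It is a direct consequence of Theorem \ref{thm:3.6.3}.'' You have supplied the (routine) verification that the hypotheses of Theorem \ref{thm:3.6.3} are met, which is all that is needed.
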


\begin{proof}
It is a direct consequence of Theorem \ref{thm:3.6.3}.
\end{proof}

\begin{corollary}\label{cor:4.4.7} % Computations (71)
Let $V^{[*]}$ be a dg vertex algebra, $W^{[*]}$ a cochain complex, and $Y_W(\cdot, x)$ a chain map from $V^{[*]}$ to $\on{End}^{[*]}(W^{[*]})[[x, x^{-1}]]$ such that $Y_W(\mathbf{1}, x)=\on{id}_W$, $Y_W(u, x)w \in W^{[*]}((x))$, and such that the iterate formula
\begin{align}\label{eq:4.4.3}
\begin{array}{rcl}
Y_W(Y(u, x_0)v, x_2) & = &\displaystyle \on{Res}_{x_1}\big[x_0^{-1}\delta\big(\frac{x_1-x_2}{x_0}\big)Y_W(u, x_1)Y_W(v, x_2)\\[5pt]
& & \displaystyle -(-1)^{|u||v|}x_0^{-1}\delta\big(\frac{x_2-x_1}{-x_0}\big)Y_W(v, x_2)Y_W(u, x_1)\big]
 \end{array}
\end{align}
is satisfied for any homogeneous $u, v \in V^{[*]}$. Then $Y_W(\cdot, x)$ satisfies the Jacobi identity and $(W^{[*]}, Y_W(\cdot, x))$ is a $V^{[*]}$-module.
\end{corollary}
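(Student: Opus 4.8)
The plan is to deduce the weak associativity \eqref{eq:4.3.1} from the iterate formula \eqref{eq:4.4.3}, after which Theorem \ref{thm:4.4.5} applies verbatim and yields both the Jacobi identity \eqref{eq:jacobi_mod} and the fact that $(W^{[*]}, Y_W(\cdot,x))$ is a $V^{[*]}$-module. So I would fix homogeneous $u, v \in V^{[*]}$ and $w \in W^{[*]}$, apply \eqref{eq:4.4.3} to $w$, and analyse the two terms on the right-hand side separately.

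For the first term I would rewrite $x_0^{-1}\delta\big(\frac{x_1-x_2}{x_0}\big)=x_1^{-1}\delta\big(\frac{x_0+x_2}{x_1}\big)$ and use the elementary residue identity $\on{Res}_{x_1}x_1^{-1}\delta\big(\frac{x_0+x_2}{x_1}\big)Y_W(u,x_1)m=Y_W(u,x_0+x_2)m$ (negative powers of $x_0+x_2$ expanded in nonnegative powers of $x_2$), which identifies the first term with $Y_W(u,x_0+x_2)Y_W(v,x_2)w$, i.e.\ exactly the right-hand side of \eqref{eq:4.3.1} before multiplication by a power of $x_0+x_2$. For the second term, $-(-1)^{|u||v|}\on{Res}_{x_1}\big[x_0^{-1}\delta\big(\frac{x_2-x_1}{-x_0}\big)Y_W(v,x_2)Y_W(u,x_1)w\big]$, the decisive point is that $x_0^{-1}\delta\big(\frac{x_2-x_1}{-x_0}\big)$ involves only nonnegative powers of $x_1$, while by the truncation hypothesis $Y_W(u,x_1)w\in W^{[*]}((x_1))$ has only finitely many negative powers of $x_1$; hence the residue in $x_1$ collapses to the \emph{finite} sum $-(-1)^{|u||v|}\sum_{k=0}^{N}a_k(x_0,x_2)\,Y_W(v,x_2)\,u_kw$, where $N$ depends only on $u$ and $w$ and $a_k(x_0,x_2)$ is the coefficient of $x_1^k$ in $x_0^{-1}\delta\big(\frac{x_2-x_1}{-x_0}\big)$. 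One computes $a_0(x_0,x_2)=\sum_{n\in\mathbb{Z}}(-1)^n x_2^{\,n}x_0^{-n-1}$, which satisfies $(x_0+x_2)a_0(x_0,x_2)=0$, and $a_k(x_0,x_2)=\tfrac{(-1)^k}{k!}\partial_{x_2}^{\,k}a_0(x_0,x_2)$, so that $(x_0+x_2)^{k+1}a_k(x_0,x_2)=0$ for every $k$.

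Multiplying \eqref{eq:4.4.3} applied to $w$ by $(x_0+x_2)^{L}$ with $L>N$ then turns the left-hand side into $(x_0+x_2)^{L}Y_W(Y(u,x_0)v,x_2)w$, the first right-hand term into $(x_0+x_2)^{L}Y_W(u,x_0+x_2)Y_W(v,x_2)w$, and annihilates the second term entirely since $(x_0+x_2)^{L}a_k(x_0,x_2)=0$ for all $0\le k\le N<L$; as $u,v,w$ were arbitrary this is precisely the weak associativity \eqref{eq:4.3.1}, and invoking Theorem \ref{thm:4.4.5} finishes the proof. I expect the only genuinely delicate step to be the finiteness of the $x_1$-residue in the second term — this is exactly where the truncation property of $W^{[*]}$ is used — together with the $(x_0+x_2)$-annihilation of the $a_k$; both are routine manipulations of formal delta functions, carried out just as in the ungraded case of \cite{Lepowsky-Li}, the sign $(-1)^{|u||v|}$ being merely transported along.
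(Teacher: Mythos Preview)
Your proposal is correct and follows exactly the route the paper intends: deduce the weak associativity \eqref{eq:4.3.1} from the iterate formula and then invoke Theorem~\ref{thm:4.4.5}. The paper's own proof is the single line ``a direct consequence of Theorem~\ref{thm:4.4.5}'', leaving the passage from \eqref{eq:4.4.3} to \eqref{eq:4.3.1} implicit (as in \cite{Lepowsky-Li}); you have simply written out those standard delta-function manipulations in detail, and they are all fine.
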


\begin{proof}
The reasoning is a direct consequence of Theorem \ref{thm:4.4.5}.
\end{proof}

\begin{proposition}\label{prop:4.5.14}% Computations (71)
Let $W^{[*]}$ be a $V^{[*]}$-module. For a $d_V$-stable subset $S$ of $V^{[*]}$, the annihilator $\on{Ann}_W(S)=\{w \in W^{[*]} \ | \ Y_W(v, x)w=0 \text{ for } v \in S \}$ is a $V^{[*]}$-submodule of $W^{[*]}$. Furthermore,
\[
\on{Ann}_W(S)=\on{Ann}_W((S))
\]
where $(S)$ is the ideal of $V^{[*]}$ generated by $S$.
\end{proposition}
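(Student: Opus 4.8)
The plan is to prove the two claims in order, with the first feeding directly into the second; throughout I treat the elements of $S$ as homogeneous, the general case reducing to this by splitting into homogeneous components. First I would check that $A:=\on{Ann}_W(S)$ is a subcomplex: since $Y_W(\cdot,x)$ is a chain map of degree $0$, for homogeneous $s\in S$ we have the operator identity $Y_W(d_Vs,x)=d_W\circ Y_W(s,x)-(-1)^{|s|}Y_W(s,x)\circ d_W$ on $W^{[*]}$, and applying it to $w\in A$ the term $d_W(Y_W(s,x)w)$ vanishes while $Y_W(d_Vs,x)w=0$ because $d_Vs\in S$ (this is exactly where $d_V$-stability of $S$ is used), so $Y_W(s,x)(d_Ww)=0$ for every $s$, i.e. $d_Ww\in A$. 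Next, $A$ is stable under every $a_n$ ($a\in V^{[*]}$, $n\in\mathbb{Z}$): given $w\in A$ and $s\in S$, Proposition \ref{prop:4.2.1} supplies $k\in\mathbb{N}$ with $(x_1-x_2)^k[Y_W(s,x_1),Y_W(a,x_2)]^s=0$, and applying this to $w$ and using $Y_W(s,x_1)w=0$ to kill the anticommutator term gives $(x_1-x_2)^kY_W(s,x_1)Y_W(a,x_2)w=0$. Since $Y_W(s,x_1)Y_W(a,x_2)w$ lies in $\big(W^{[*]}((x_1))\big)((x_2))$, on which multiplication by the polynomial $(x_1-x_2)^k$ is injective (compare the lowest powers of $x_2$), this forces $Y_W(s,x_1)Y_W(a,x_2)w=0$, i.e. $Y_W(s,x_1)(a_nw)=0$ for all $n$, so $a_nw\in A$. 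A subcomplex closed under all $a_n$ is a $V^{[*]}$-submodule, which settles the first claim.

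For the equality, $\on{Ann}_W((S))\subseteq A$ is immediate from $S\subseteq(S)$. For the reverse inclusion I would introduce the auxiliary set
\[
L:=\{v\in V^{[*]}\mid Y_W(v,x)w=0\ \text{for all }w\in A\}
\]
and prove that $L$ is a dg ideal of $V^{[*]}$ containing $S$; granting this, minimality of $(S)$ gives $(S)\subseteq L$, which says precisely that every $w\in A$ annihilates all of $(S)$, i.e. $A\subseteq\on{Ann}_W((S))$. Now $S\subseteq L$ by the definition of $A$, and $L$ is $d_V$-stable by the same chain-map computation as for $A$, this time using that $d_Ww\in A$ for $w\in A$ (the subcomplex property just established). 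Finally, to see $a_nL\subseteq L$ for all $a\in V^{[*]}$, $n\in\mathbb{Z}$, I would apply the iterate formula \eqref{eq:4.4.3} of Corollary \ref{cor:4.4.7} (itself a consequence of the module Jacobi identity \eqref{eq:jacobi_mod}) to $w\in A$: for homogeneous $v\in L$ the term $\on{Res}_{x_1}[x_0^{-1}\delta(\tfrac{x_1-x_2}{x_0})Y_W(a,x_1)Y_W(v,x_2)w]$ vanishes because $Y_W(v,x_2)w=0$, and the term containing $Y_W(v,x_2)Y_W(a,x_1)w$ vanishes because $Y_W(a,x_1)w\in A((x_1))$ by the submodule property of $A$, so that $v\in L$ kills $Y_W(v,x_2)$ on each coefficient; hence $Y_W(Y(a,x_0)v,x_2)w=0$, and extracting the coefficient of $x_0^{-n-1}$ yields $Y_W(a_nv,x_2)w=0$, i.e. $a_nv\in L$.

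The structural inputs are only Proposition \ref{prop:4.2.1}, Corollary \ref{cor:4.4.7}, and the chain-map property of $Y_W$, and no step presents a genuine obstacle. The points demanding a little care are the injectivity of multiplication by $(x_1-x_2)^k$ on the relevant space of formal series, the correct extraction of coefficients from the $\delta$-function residues, and — if $S$ is not assumed homogeneous — the reduction to homogeneous elements. The one organisational subtlety worth flagging is that Part 2 should be phrased through the auxiliary ideal $L$, precisely so that the submodule conclusion of Part 1 becomes available at the point where $Y_W(a,x_1)w\in A((x_1))$ is invoked.
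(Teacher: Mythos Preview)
Your proof is correct and follows essentially the same route as the paper, which (beyond the subcomplex check via $d_W(v_n w)=(d_V v)_n w+(-1)^{|v|}v_n(d_W w)$) simply refers to the classical argument of Lepowsky--Li, Proposition~4.5.14; you have written that argument out in full. One small addendum: to conclude $(S)\subseteq L$ you want $L$ to be a two-sided dg ideal, whereas your iterate-formula step only gives $a_nL\subseteq L$; it is worth noting explicitly that $\mathcal{D}L\subseteq L$ (since $Y_W(\mathcal{D}v,x)=\tfrac{d}{dx}Y_W(v,x)$), after which skew-symmetry in $V^{[*]}$ yields $L_na\subseteq L$ as well.
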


\begin{proof}
Because of the equality
\[
d_W(v_n w)=(d_V(v))_n w+(-1)^{|v|}v_n(d_W(w)),
\]
the condition that $S$ is $d_V$-stable implies that $\on{Ann}_W(S)$ is a subcomplex of $W^{[*]}$. The rest of the proof is identical to the classical setting (see \cite[Proposition 4.5.14]{Lepowsky-Li}). 
\end{proof}

\begin{definition}
Let $V^{[*]}$ be a dg vertex algebra. We say that $(W^{[*]}, Y_W(\cdot, x), \mathcal{D}_W)$ is a $V^{[*]}$-module (see \cite[Remark 4.1.4]{Lepowsky-Li}) if $(W^{[*]}, Y_W(\cdot, x))$ is a dg vertex algebra module for $V^{[*]}$ and $\mathcal{D}_W:W^{[*]} \longrightarrow W^{[*]}[2N]$ is a chain map such that, for all $v \in V^{[*]}$, we have
\[
[\mathcal{D}_W, Y_W(v, x)]^s=Y_W(\mathcal{D}(v), x).
\]
\end{definition}

\begin{definition}
Let $(W^{[*]}, Y_W(\cdot, x))$ be a $V^{[*]}$-module. An element $w \in W^{[*]}$ is called vacuum-like if
\[
Y(v, x)w \in W^{[*]}[[x]]
\]
for all $v \in V^{[*]}$.
\end{definition}

We do not make any assumption on the action of $\mathcal{D}_W$ or $d_W$ on $w$ in the definition of a vacuum-like vector.

\begin{proposition}\label{prop:4.7.4}% Computations (71)
Let $(W^{[*]}, Y_W(\cdot, x), \mathcal{D}_W)$ be a $V^{[*]}$-module and let $w \in W^{[*]}$ be such that $\mathcal{D}_W(w)=0$. Then for all $v \in V^{[*]}$, we have
\[
Y_W(v, x)w=e^{x\mathcal{D}_W}v_{-1}w.
\]
In particular, $w$ is a vacuum-like vector.
\end{proposition}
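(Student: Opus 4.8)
The plan is to mimic the classical argument (as in \cite[Proposition 4.7.4]{Lepowsky-Li}) but keeping track of signs, which here are harmless since $\mathcal{D}$ and $\mathcal{D}_W$ are of even degree $2N$. First I would note that because $\mathcal{D}_W$ is a $2N$-cocycle acting on $W^{[*]}$ with $[\mathcal{D}_W, Y_W(v,x)]^s = Y_W(\mathcal{D}(v),x)$ and $\mathcal{D}$ is even, the super-bracket reduces to the ordinary commutator, so by the same computation as for \eqref{eq:Dderivative_1} we get $[\mathcal{D}_W, Y_W(v,x)] = \frac{d}{dx}Y_W(v,x)$. Exponentiating this relation in the standard way yields
\[
e^{x\mathcal{D}_W}Y_W(v, x_0)e^{-x\mathcal{D}_W} = Y_W(v, x_0+x),
\]
valid as an identity of formal series (this is the module analogue of \cite[(3.1.35)]{Lepowsky-Li}, and its proof only uses the derivative formula).

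Next I would apply both sides to the vector $w$. Since $\mathcal{D}_W(w)=0$, we have $e^{-x\mathcal{D}_W}w = w$, so
\[
e^{x\mathcal{D}_W}Y_W(v, x_0)w = Y_W(v, x_0+x)w.
\]
Now I would like to set $x_0 = 0$. The obstacle here — and the one genuinely delicate point — is that $Y_W(v, x_0)w$ a priori lies in $W^{[*]}((x_0))$, so it may contain negative powers of $x_0$ and cannot be naively evaluated at $x_0 = 0$. The resolution is the standard one: on the left-hand side, $e^{x\mathcal{D}_W}Y_W(v,x_0)w$ still has the same (finitely many) negative powers of $x_0$ with coefficients now depending on $x$; on the right-hand side, $Y_W(v, x_0+x)w$ is obtained from $Y_W(v,x)w \in W^{[*]}((x))$ by the substitution $x \mapsto x_0 + x$ expanded in nonnegative powers of $x_0$, hence lies in $W^{[*]}((x))[[x_0]]$ and contains no negative powers of $x_0$ at all. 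Comparing the two sides forces all the negative-$x_0$ coefficients on the left to vanish, i.e. $Y_W(v, x_0)w \in W^{[*]}[[x_0]]$ — which is precisely the assertion that $w$ is vacuum-like. Having established that, both sides are now power series in $x_0$, and setting $x_0 = 0$ gives $e^{x\mathcal{D}_W} Y_W(v,0)w = Y_W(v,x)w$. Since $Y_W(v,x_0)w = \sum_{n} v_n w\, x_0^{-n-1}$ and only nonnegative powers of $x_0$ survive, the constant term $Y_W(v,0)w$ equals $v_{-1}w$, yielding $Y_W(v,x)w = e^{x\mathcal{D}_W} v_{-1} w$ as claimed, and the vacuum-like property as a byproduct.

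I would present the argument in the order: (1) derive the even-degree derivative formula $[\mathcal{D}_W, Y_W(v,x)] = \frac{d}{dx}Y_W(v,x)$; (2) exponentiate to the conjugation identity; (3) apply to $w$ using $\mathcal{D}_W w = 0$; (4) run the power-series comparison to extract both the vacuum-like property and the evaluation at $x_0=0$. The only step requiring care in the dg setting is checking that the degree bookkeeping is consistent — $Y_W(v,x)w$ shifts cohomological degree by $|v| - 2N(n+1)$ in the $x_0^{-n-1}$ component, and $e^{x\mathcal{D}_W}$ shifts by $2N$ per power of $x$, so the identity $Y_W(v,x)w = e^{x\mathcal{D}_W}v_{-1}w$ is homogeneous of the correct degree $|v|$ in its $x^0$-component — but this is automatic and needs no separate argument.
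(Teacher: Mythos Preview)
Your proposal is correct and is precisely the classical Lepowsky--Li argument that the paper defers to (the paper's proof reads in full: ``The proof is identical to the classical setting (see \cite[Proposition 4.7.4]{Lepowsky-Li})''). One minor quibble: the identity $Y_W(\mathcal{D}v, x) = \frac{d}{dx}Y_W(v,x)$ on a module is not literally ``the same computation as for \eqref{eq:Dderivative_1}'' (which is about $V$ acting on itself) but rather follows from the module Jacobi identity, as derived in the proof of Theorem~\ref{thm:3.6.3}; the conclusion is of course the same.
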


\begin{proof}
The proof is identical to the classical setting (see \cite[Proposition 4.7.4]{Lepowsky-Li}).
\end{proof}

\begin{proposition}\label{prop:4.7.7}% Computations (71)
Let $(W^{[*]}, Y_W(\cdot, x))$ be a $V^{[*]}$-module and let $w \in W^{[*]}$ be a vacuum-like vector with $d_W(w)=0$. Then the linear map
\[
\begin{array}{rccc}
f: & V^{[*]} & \longrightarrow & W[|w|]^{[*]} \\[5pt]
& v & \longmapsto & (-1)^{|w|}v_{-1}w
\end{array}
\]
is a $V^{[*]}$-module homomorphism.
\end{proposition}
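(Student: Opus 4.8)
The plan is to verify that $f$ commutes with the differentials and then intertwines the vertex operator actions. For the first point, observe that since $w$ is a cocycle ($d_W(w)=0$), the Leibniz-type formula $d_W(v_{-1}w)=(d_V v)_{-1}w+(-1)^{|v|}v_{-1}d_W(w)=(d_V v)_{-1}w$ shows that $f\circ d_V = \pm d_W\circ f$ up to the sign bookkeeping forced by the degree shift $[|w|]$; one checks the sign $(-1)^{|w|}$ in the definition of $f$ is exactly what makes $f$ a chain map of degree $0$ into $W[|w|]^{[*]}$. (This is why the factor $(-1)^{|w|}$ appears; without it the map would still be an intertwiner but not sign-compatible as a chain map.)

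The main content is the intertwining property: for all $u,v\in V^{[*]}$ homogeneous, $Y_W(u,x)f(v)=f(Y(u,x)v)$, i.e.
\[
Y_W(u,x)\bigl((-1)^{|w|}v_{-1}w\bigr)=(-1)^{|w|}\bigl(Y(u,x)v\bigr)_{-1}w,
\]
where on the right $Y(u,x)v\in V^{[*]}((x))$ and $(\,\cdot\,)_{-1}$ is applied coefficient-wise. The strategy is to use the hypothesis that $w$ is vacuum-like, so $Y_W(v,x)w\in W^{[*]}[[x]]$ for all $v$, and to combine this with the iterate/associativity formula for the module action. Concretely, I would start from the weak associativity relation (Theorem~\ref{thm:4.4.5}, or equivalently the iterate formula~\eqref{eq:4.4.3}) applied to $u$, $v$, $w$: for suitable $l\in\mathbb N$,
\[
(x_0+x_2)^l Y_W(Y(u,x_0)v,x_2)w=(x_0+x_2)^l Y_W(u,x_0+x_2)Y_W(v,x_2)w.
\]
Since $w$ is vacuum-like, $Y_W(v,x_2)w$ has no negative powers of $x_2$, and by the truncation property $Y_W(u,y)$ applied to anything lands in $W^{[*]}((y))$; one can then take $\mathrm{Res}_{x_2}$ of $x_2^{-1}$ times both sides, or more directly set $x_2=0$ after clearing denominators, to isolate the coefficient of $x_2^{0}$, which extracts $v_{-1}w$ on the right-hand side and $\bigl(Y(u,x_0)v\bigr)_{-1}w$ on the left-hand side. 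Carefully managing the powers of $(x_0+x_2)$ — choosing $l$ large enough that both sides involve only nonnegative powers of $x_2$ and then specializing $x_2\to 0$ — yields $Y_W(u,x_0)v_{-1}w=\bigl(Y(u,x_0)v\bigr)_{-1}w$, which is the desired identity up to the global sign $(-1)^{|w|}$. The graded sign $(-1)^{|u||v|}$ carried by the skew-symmetry does not intervene here because the component index $-1$ is fixed and $\mathbf 1$ plays no active role; alternatively one can invoke Proposition~\ref{prop:4.7.4} to write $Y_W(v,x)w=e^{x\mathcal D_W}v_{-1}w$ and run the same computation, although the statement only assumes $d_W(w)=0$, not $\mathcal D_W(w)=0$, so the associativity route is the safer one.

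The step I expect to be the main obstacle is the bookkeeping in the specialization $x_2\to 0$: one must justify that after multiplying by $(x_0+x_2)^l$ the equation genuinely involves only nonnegative powers of $x_2$ on both sides (using vacuum-likeness on the left via the iterate formula, and truncation on the right), so that setting $x_2=0$ is legitimate and produces an honest identity in $W^{[*]}[[x_0,x_0^{-1}]]$ rather than a formal artifact. Once that is secured, extracting the coefficient of each power $x_0^{-n-1}$ gives $u_n v_{-1}w=(u_nv)_{-1}w$ for all $n$, which is precisely the module-homomorphism condition; together with the chain-map verification this completes the proof. As the paper notes for analogous statements, the remainder of the argument is then identical to the classical case in \cite[Proposition 4.7.7]{Lepowsky-Li}.
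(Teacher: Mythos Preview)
Your proposal is correct and follows essentially the same approach as the paper: the paper verifies the degree and chain-map conditions exactly as you do, and for the intertwining identity $f(Y(u,x_0)v)=Y_W(u,x_0)f(v)$ simply appeals to the classical argument in \cite[Proposition~4.7.7]{Lepowsky-Li}, which is precisely the weak-associativity-then-set-$x_2=0$ computation you have spelled out. Your caution that Proposition~\ref{prop:4.7.4} is not available here (since only $d_W(w)=0$ is assumed, not $\mathcal D_W(w)=0$) is well placed; the associativity route is indeed the right one.
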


\begin{proof}
Like in the classical setting (see \cite[Proposition 4.7.7]{Lepowsky-Li}), for any $u, v \in V^{[*]}$ we have
\[
f(Y(u, x_0)v)=Y(u, x_0)f(v).
\]
Then we verify that for $v$ homogeneous, we have $|v_{-1}w|=|v|+|w|$ so $v_{-1}w \in W^{[|v|+|w|]}=W[|w|]^{[|v|]}$. Hence $f$ is of degree $0$. Furthermore, $f(d_V(v))=(d_V(v))_{-1}w=d_W(v_{-1}w)-(-1)^{|v|}v_{-1}(d_W(w))=d_W(v_{-1}w)=d_{W[|w|]}(f(v))$. It follows that $f$ is a chain map.
\end{proof}

\begin{proposition}\label{prop:4.7.9}% Computations (71)
Let $(W^{[*]}, Y_W(\cdot, x), \mathcal{D}_W)$ be a faithful $V^{[*]}$-module and let $w \in W^{[*]}$ be such that $w$ generates $W^{[*]}$ as a $V^{[*]}$-module and $\mathcal{D}_W(w)=d_W(w)=0$. Then the linear map
\[
\begin{array}{rccc}
f: & V^{[*]} & \longrightarrow & W[|w|]^{[*]} \\[5pt]
& v & \longmapsto & (-1)^{|w|}v_{-1}w
\end{array}
\]
is a $V^{[*]}$-module isomorphism.
\end{proposition}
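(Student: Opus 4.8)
The plan is to observe that $f$ is automatically a $V^{[*]}$-module homomorphism and then to prove separately that it is surjective, using that $w$ generates $W^{[*]}$, and injective, using that $W^{[*]}$ is faithful. First, since $\mathcal{D}_W(w)=0$, Proposition~\ref{prop:4.7.4} gives that $w$ is vacuum-like and that $Y_W(v,x)w=e^{x\mathcal{D}_W}v_{-1}w$ for all $v\in V^{[*]}$; together with $d_W(w)=0$ this lets us apply Proposition~\ref{prop:4.7.7}, so $f$ is a $V^{[*]}$-module homomorphism, in particular a degree-$0$ chain map intertwining the $Y$-actions. It then only remains to check that $f$ is bijective.

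For surjectivity, the image $\im f$ is a $V^{[*]}$-submodule of $W[|w|]^{[*]}$, being the image of a module homomorphism. By the vacuum property $\mathbf 1_{-1}=\on{id}_W$, so $f(\mathbf 1)=(-1)^{|w|}\mathbf 1_{-1}w=(-1)^{|w|}w$, and since $\im f$ is a subspace it contains $w$. As $w$ generates $W^{[*]}$ as a $V^{[*]}$-module, the submodule $\im f$ must be all of $W[|w|]^{[*]}$.

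For injectivity, take $v\in\ker f$, so that $v_{-1}w=0$ and hence $Y_W(v,x)w=e^{x\mathcal{D}_W}v_{-1}w=0$ by Proposition~\ref{prop:4.7.4}; thus $w\in\on{Ann}_W(\ker f)$. Since $f$ is a chain map, $\ker f$ is a $d_V$-stable subset of $V^{[*]}$, so Proposition~\ref{prop:4.5.14} shows that $\on{Ann}_W(\ker f)$ is a $V^{[*]}$-submodule of $W^{[*]}$; containing the generator $w$, it equals all of $W^{[*]}$. Hence $Y_W(v,x)$ annihilates every element of $W^{[*]}$, i.e. $Y_W(v,x)=0$, and the faithfulness of $W^{[*]}$ forces $v=0$. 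Therefore $\ker f=0$, and $f$ is an isomorphism.

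I expect the injectivity step to be the main obstacle: one must make sure that $\ker f$ is $d_V$-stable so that Proposition~\ref{prop:4.5.14} can be invoked (this is immediate from $f$ being a chain map, but is the crucial hypothesis that makes $\on{Ann}_W(\ker f)$ a submodule), and one must use the precise meaning of faithfulness — the injectivity of $Y_W(\cdot,x)\colon V^{[*]}\to\on{End}^{[*]}(W^{[*]})[[x,x^{-1}]]$ — in order to pass from the vanishing of the operator $Y_W(v,x)$ on all of $W^{[*]}$ to the vanishing of $v$ itself. The surjectivity argument and the identification of $f$ as a module homomorphism are then routine, modulo keeping track of the grading shift by $|w|$.
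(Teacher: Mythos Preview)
Your proof is correct and follows essentially the same approach as the paper's: both invoke Propositions~\ref{prop:4.7.4} and~\ref{prop:4.7.7} to establish that $f$ is a module homomorphism, use the generating hypothesis for surjectivity, and for injectivity show that $w$ lies in the annihilator of a suitable $d_V$-stable set, then apply Proposition~\ref{prop:4.5.14} and faithfulness. The only cosmetic difference is that you work with the full kernel $\ker f$ (which is $d_V$-stable because $f$ is a chain map), while the paper fixes a single homogeneous $v$ and uses the set $\{v, d_V(v)\}$, checking by hand via $d_W(w)=0$ that $(d_V(v))_n w=0$; your version is slightly cleaner but the substance is identical.
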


\begin{proof}
Using Propositions \ref{prop:4.7.4} and \ref{prop:4.7.7}, $f$ is a $V^{[*]}$-module homomorphism. Since $w$ generates $W^{[*]}$, $f$ is surjective. Let $v \in V^{[*]}$ be homogeneous such that $f(v)=0$. By Proposition \ref{prop:4.7.4}, $Y_W(v, x)w=0$, and so $w \in \on{Ann}_W(\{v\})$. We also see that, for any $n \in \mathbb{Z}$,
\[
0=d_W(v_n w)=(d_V(v))_n w+(-1)^{|v|}v_n(d_W(w))=(d_V(v))_n w,
\]
hence $w \in \on{Ann}_W(\{v, d_V(v)\})$. By Proposition \ref{prop:4.5.14}, we see that $\on{Ann}_W(\{v, d_V(v)\})$ is a $V^{[*]}$-submodule of $W^{[*]}$. As it contains $w$ and $w$ generates $W^{[*]}$, we get
\[
W^{[*]}=\on{Ann}_W(\{v, d_V(v)\}).
\]
Thus $Y_W(v, x)=0$ on $W^{[*]}$ and since $W^{[*]}$ is faithful, we get $v=0$ and $f$ injective.
\end{proof}

\section{dg fields and weak dg vertex algebras}\label{sec:3}

The goal of this section is to explain how to construct a dg vertex algebra by extending known results on weak vertex operators to the dg setting. With Theorem \ref{thm:5.7.1}, we will obtain a way to verify if a space is a dg vertex algebra without having to check the Jacobi identity \eqref{eq:jacobi}. In what follows, $W^{[*]}$ is a cochain complex. We will give the reference for the classical version of each result.

\subsection{Definition of dg fields and their actions}
\begin{definition}% Computations (71)
Consider the complex $\mathcal{E}^{[*]}(W^{[*]})= \on{Hom}^{[*]}\big(W^{[*]}, W^{[*]}((x))\big)$ with $|x|=-2N$, $N \in \mathbb{Z}$. A homogeneous dg field of degree $|a|$ on $W^{[*]}$ is a formal series
\[
a(x)=\sum_{n \in \mathbb{Z}}a_n x^{-n-1} \in \mathcal{E}^{[*]}(W^{[*]})
\]
such that for any $n \in \mathbb{Z}$ with $a_n \neq 0$ we have
\[
|a_n x^{-n-1}|=|a_n|+2N(n+1)=|a|.
\]
\end{definition}

\delete{
By setting $|x|=-2$, we have $|a_n x^{-n-1}|=|a_n|+2n+2$ if $a_n \neq 0$. We decompose $a(x)$ into a sum of homogeneous components of degree $|a^{(1)}|$, $\dots$, $|a^{(r)}|$. We set $a^{(i)}(x)=\sum_{n \in I_i}a_n x^{-n-1}$ for $I_i=\{n \in \mathbb{Z} \ | \ |a_n|+2n+2=|a^{(i)}|\}$. We set $(a^{(i)})_n=0$ if $n \in \mathbb{Z} \backslash I_i$. Then we can write $a^{(i)}(x)=\sum_{n \in \mathbb{Z}}(a^{(i)})_n x^{-n-1}$ and if $(a^{(i)})_n \neq 0$, we have $n \in I_i$ and $|(a^{(i)})_n x^{-n-1}|=|a_n x^{-n-1}|=|a_n|+2n+2=|a^{(i)}|$. Hence
\[
a^{(i)}(x) \in \on{End}^{[*]}(W^{[*]})[[x, x^{-1}]]^{[|a^{(i)}|]}
\]
and $a(x)=\sum_{i=1}^r a^{(i)}(x)$ is a decomposition into homogeneous dg fields. It follows that for any homogeneous $w \in W^{[*]}$, we have $a^{(i)}(x)w \in W^{[*]}((x))^{[|a^{(i)}|+|w|]}$, and so
\[
a^{(i)}(x) \in \on{Hom}^{[|a^{(i)}|]}\big(W^{[*]}, W^{[*]}((x))\big).
\]
Hence a homogeneous formal series $u(x)$ is a dg field if and only if $u(x) \in  \on{Hom}^{[|u|]}\big(W^{[*]}, W^{[*]}((x))\big)$.}

We will also need the endomorphism
\[
\mathcal{D}_{\mathcal{E}}=\frac{d}{dx}: \on{End}^{[*]}(W^{[*]})[[x, x^{-1}]] \longrightarrow \on{End}^{[*]}(W^{[*]})[[x, x^{-1}]].
\]
If $a(x)$ is a homogeneous dg field and $w \in W^{[*]}$ is homogeneous, then
\[
\mathcal{D}_{\mathcal{E}}(a(x))w=-\sum_{n \in \mathbb{Z}}n a_{n-1}w x^{-n-1} \in W^{[*]}((x)),
\]
hence $\mathcal{D}_{\mathcal{E}}(\mathcal{E}^{[*]}(W^{[*]})) \subseteq \mathcal{E}^{[*]}(W^{[*]})$. Moreover, we see that $|a_{n-1} x^{-n-1}|=|a_{n-1}|+2N(n+1)=|a|+2N$. Finally, it is also clear that $\mathcal{D}_{\mathcal{E}}$ commutes with the differential, as
\[
\begin{array}{rcl}
d_{\mathcal{E}}(a(x))(w) & = & d_{W((x))}(a(x)w)-(-1)^{|a(x)|}a(x)(d_W(w)) \\[5pt]
 & = & \displaystyle \sum_{n \in \mathbb{Z}}\big(d_W(a_nw)-(-1)^{|a(x)|}a_n(d_W(w))\big)x^{-n-1},
 \end{array}
\]
so
\[
\begin{array}{rcl}
\mathcal{D}_{\mathcal{E}}(d_{\mathcal{E}}(a(x)))(w) & = & \displaystyle \sum_{n \in \mathbb{Z}}\big(d_W(a_nw)-(-1)^{|a(x)|}a_n(d_W(w))\big)(-n-1)x^{-n-2} \\[5pt]
& = & \displaystyle \sum_{n \in \mathbb{Z}}\big(d_W(a_{n-1}w)-(-1)^{|a(x)|}a_{n-1}(d_W(w))\big)(-n)x^{-n-1} \\[5pt]
& = & d_{\mathcal{E}}(\mathcal{D}_{\mathcal{E}}a(x))(w).
 \end{array}
\]
It follows that
\[
\mathcal{D}_{\mathcal{E}} \in \on{End}^{[2N]}\left(\mathcal{E}^{[*]}(W^{[*]})\right).
\]
Let $W^{[*]}$ be a faithful dg module for a dg vertex algebra $V^{[*]}$. We have a map
\[
\begin{array}{rccl}
\iota_W: & V^{[*]} & \longrightarrow & \mathcal{E}^{[*]}(W^{[*]}) \\[5pt]
 & v & \longmapsto & Y_W(v, x).
 \end{array}
\]
The faithfulness means that $\iota_W(V^{[*]})$ has a dg vertex algebra structure with vacuum $\iota_W(\mathbf{1})=\on{id}_W$.
 We want the dg vertex algebra operator $Y_\mathcal{E}(\cdot, x_0)$ on $\iota_W(V^{[*]})$ to satisfy
 \[
 Y_\mathcal{E}(\iota_W(u), x_0)\iota_W(v)=\iota_W(Y_W(u, x_0)v),
 \]
which is equivalent to
\begin{align}\label{eq:5.2.4}
Y_\mathcal{E}(Y_W(u, x), x_0)Y_W(v, x)=Y_W(Y(u, x_0)v, x).
\end{align}
By taking $\on{Res}_{x_1}$ of the Jacobi identity for $Y_W$ and using \cite[(2.3.17)]{Lepowsky-Li}, we can rewrite the right hand side of \eqref{eq:5.2.4} and obtain
\begin{align}\label{eq:5.2.5} 
\begin{split}
Y_\mathcal{E}(Y_W(u, x), x_0)Y_W(v, x)= & \on{Res}_{x_1} \bigg[ x_0^{-1}  \delta\bigg(\frac{x_1-x}{x_0}\bigg) Y_W(u, x_1) Y_W(v, x) \\[5pt] 
 &  \quad \quad  -(-1)^{|u||v|}  x_0^{-1}  \delta\bigg(\frac{x-x_1}{-x_0}\bigg) Y_W(v, x) Y_W(u, x_1) \bigg]. 
 \end{split}
\end{align}

The above equation motivates the following definition:

\begin{definition}\label{def:3.2}% Computations (71)
Let $(W^{[*]}, d_W^{[*]})$ be a complex. We define a linear map
\[
\begin{array}{rccl}
Y_\mathcal{E}(\cdot, x_0): & \mathcal{E}^{[*]}(W^{[*]})& \longrightarrow & \on{End}^{[*]}(\mathcal{E}^{[*]}(W^{[*]}))[[x_0, x_0^{-1}]] \\[5pt]
 & a(x) & \longmapsto & \displaystyle \sum_{n \in \mathbb{Z}}a(x)_n x_0^{-n-1}
 \end{array}
\]
by writing, for any homogeneous dg fields $a(x)$, $b(x)$ on $W^{[*]}$,
\begin{align}\label{eq:5.2.6} 
\begin{split}
Y_\mathcal{E}(a(x), x_0)b(x)= & \on{Res}_{x_1} \bigg[ x_0^{-1}  \delta\bigg(\frac{x_1-x}{x_0}\bigg) a(x_1) b(x) \\[5pt] 
 &  \quad \quad  -(-1)^{|a||b|}  x_0^{-1}  \delta\bigg(\frac{x-x_1}{-x_0}\bigg) b(x) a(x_1) \bigg]. 
 \end{split}
\end{align}
\end{definition}

\begin{proposition}[{\cite[Proposition 5.2.2]{Lepowsky-Li}}]   \label{prop:5.2.2}% Computations (71)
Let $a(x), b(x)$ be homogeneous dg fields on $W^{[*]}$. Then $a(x)_n b(x)$ is again a homogeneous dg field on $W^{[*]}$ of degree $|a|+|b|-2N(n+1)$ given by
\begin{align}\label{eq:5.2.11}
a(x)_n b(x)=\on{Res}_{x_1} \big[ (x_1-x)^na(x_1) b(x) -(-1)^{|a||b|}(-x+x_1)^nb(x)a(x_1) \big].
\end{align}
\end{proposition}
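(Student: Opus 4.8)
The plan is to read off the coefficient of $x_0^{-n-1}$ from the defining formula~\eqref{eq:5.2.6} and then check the two assertions separately: that $a(x)_nb(x)$ lies in $\mathcal{E}^{[*]}(W^{[*]})$, and that it is homogeneous of degree $|a|+|b|-2N(n+1)$. This follows the proof of \cite[Proposition 5.2.2]{Lepowsky-Li} almost verbatim, the only genuinely new ingredients being the placement of the Koszul sign $(-1)^{|a||b|}$ and a short degree count. First I would expand the two formal delta functions in~\eqref{eq:5.2.6}: by the standard identities (see \cite[\S 2.3]{Lepowsky-Li}) one has $x_0^{-1}\delta\bigl(\tfrac{x_1-x}{x_0}\bigr)=\sum_{n\in\bb Z}(x_1-x)^n x_0^{-n-1}$ with $(x_1-x)^n$ expanded in nonnegative powers of $x$, and $x_0^{-1}\delta\bigl(\tfrac{x-x_1}{-x_0}\bigr)=\sum_{n\in\bb Z}(-x+x_1)^n x_0^{-n-1}$ with $(-x+x_1)^n$ expanded in nonnegative powers of $x_1$; the asymmetric notation ($x_1$ written first versus $x$ written first) is exactly what records these two expansion conventions. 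Extracting the coefficient of $x_0^{-n-1}$ on both sides of~\eqref{eq:5.2.6} then yields~\eqref{eq:5.2.11} directly.

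Next I would verify that $a(x)_nb(x)\in\mathcal{E}^{[*]}(W^{[*]})$, i.e.\ that $\bigl(a(x)_nb(x)\bigr)w\in W^{[*]}((x))$ for every $w\in W^{[*]}$. Expanding the two residue terms of~\eqref{eq:5.2.11} explicitly gives $\on{Res}_{x_1}\bigl[(x_1-x)^na(x_1)b(x)\bigr]w=\sum_{i\ge 0}\binom{n}{i}(-x)^i\,a_{n-i}\bigl(b(x)w\bigr)$ and $\on{Res}_{x_1}\bigl[(-x+x_1)^nb(x)a(x_1)\bigr]w=\sum_{i\ge 0}\binom{n}{i}(-x)^{n-i}\,b(x)\bigl(a_iw\bigr)$. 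In the first sum, $b(x)w\in W^{[*]}((x))$ has a lower bound on its powers of $x$ independent of $i$, and only finitely many $i$ contribute to each fixed power of $x$, so the sum is a well-defined element of $W^{[*]}((x))$; in the second sum, $a_iw=0$ for $i\gg 0$, so it is a finite sum of elements of $W^{[*]}((x))$ and hence again lies in $W^{[*]}((x))$. Therefore $a(x)_nb(x)\in\mathcal{E}^{[*]}(W^{[*]})$, and in particular every mode $\bigl(a(x)_nb(x)\bigr)_m$ is a well-defined operator satisfying the truncation property in $x$.

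Finally I would compute the degree. Setting $|x|=|x_0|=|x_1|=-2N$, every monomial in the delta-function expansions above is homogeneous of degree $2N$ and $a(x_1)b(x)$ is homogeneous of degree $|a|+|b|$; since taking $\on{Res}_{x_1}$ lowers the total degree by $2N$, the right-hand side of~\eqref{eq:5.2.6} is homogeneous of degree $|a|+|b|$ as a formal series in $x_0$ with coefficients in $\mathcal{E}^{[*]}(W^{[*]})$, and extracting the coefficient of $x_0^{-n-1}$ (which carries degree $2N(n+1)$) yields $|a(x)_nb(x)|=|a|+|b|-2N(n+1)$. Equivalently, using $|a_p|=|a|-2N(p+1)$ and $|b_q|=|b|-2N(q+1)$ (valid because $a(x),b(x)$ are homogeneous of degrees $|a|,|b|$), formula~\eqref{eq:5.2.11} shows that the mode $\bigl(a(x)_nb(x)\bigr)_m$ is a binomially weighted, signed sum of products $a_pb_q$ and $b_qa_p$ with $p+q=n+m$, each of degree $|a|+|b|-2N(n+m+2)=\bigl(|a|+|b|-2N(n+1)\bigr)-2N(m+1)$, which is precisely the homogeneity condition making $a(x)_nb(x)$ a dg field of degree $|a|+|b|-2N(n+1)$. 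No serious obstacle arises: everything reduces to the formal-variable bookkeeping of \cite[Proposition 5.2.2]{Lepowsky-Li}, the only points needing care being that the second delta function is expanded with $x_1$ (not $x$) as the small variable, so that the sign $(-1)^{|a||b|}$ in~\eqref{eq:5.2.11} is not accompanied by a spurious factor of $(-1)^n$, and that both residue terms genuinely truncate into $W^{[*]}((x))$.
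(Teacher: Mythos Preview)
Your proposal is correct and follows essentially the same approach as the paper: both derive \eqref{eq:5.2.11} by extracting the coefficient of $x_0^{-n-1}$ from \eqref{eq:5.2.6}, expand the two residue terms explicitly on a vector $w$, and then check truncation and homogeneity term by term. Your write-up is somewhat more detailed (you spell out the delta-function conventions and give two equivalent degree counts), but the logical structure and the key steps coincide with the paper's proof.
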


\begin{proof}
Formula~\eqref{eq:5.2.11} is obtained by multiplying Formula~\eqref{eq:5.2.6} by $x_0^n$ and taking $\on{Res}_{x_0}$. In order to show that $a(x)_n b(x)$ is a homogeneous dg field on $W^{[*]}$, we apply it to a homogeneous element $w \in W^{[*]}$ and get
\[
a(x)_n b(x)w=\bigg(\sum_{i \geq 0}\binom{n}{i}(-x)^ia_{n-i}\bigg)b(x)w -(-1)^{|a||b|}b(x) \bigg(\sum_{i \geq 0}\binom{n}{i}(-x)^{n-i}a_{i}w\bigg).
\]
We know that $b(x)w \in W^{[*]}((x))^{[|w|+|b|]}$. Furthermore,
\[
|(-x)^ia_{n-i}(b(x)w)|  =  |a|+|b|+|w|-2N(n+1)=|b(x)(-x)^{n-i}a_{i}w|.
\]
Hence $|a(x)_nb(x)w|=|a|+|b|+|w|-2N(n+1)$. As $b(x)w \in W^{[*]}((x))$ and the sum $\sum_{i \geq 0}\binom{n}{i}(-x)^{n-i}a_{i}w$ is finite, we see that
\[
a(x)_nb(x) \in \mathcal{E}^{[|a|+|b|-2N(n+1)]}(W^{[*]}).
\]
\end{proof}

\begin{remark}\label{rem:5.2.5}% Computations (71)
Suppose $W^{[*]}$ is a faithful $V^{[*]}$-module, and write $u(x)=Y_W(u, x)$ for $u \in V^{[*]}$. By Equation~\eqref{eq:5.2.4}, for any $a, b \in V^{[*]}$, we get $Y_\mathcal{E}(a(x), x_0)b(x)=Y_W(Y(a, x_0)b, x)$. Written component-wise, we obtain
\begin{align}\label{eq:5.2.14}
a(x)_nb(x)=Y_W(a_n b, x).
\end{align}
Furthermore, we have
\begin{align}\label{eq:5.2.15}
\mathbf{1}(x)=Y_W(\mathbf{1}, x)=\on{id}_W.
\end{align}
With this, we see that $\iota_W:V^{[*]} \longrightarrow \iota_W(V^{[*]})$ is a homomorphism of dg vertex algebras (we will prove that it is indeed a chain map after the proof of Proposition \ref{prop:5.3.9}). We can iterate Equation~\eqref{eq:5.2.14} and obtain
\begin{align}\label{eq:5.2.16}
Y_W(a^{(1)}_{n_1} \cdots a^{(r)}_{n_r} b, x)=a^{(1)}(x)_{n_1} \cdots a^{(r)}(x)_{n_r} b(x),
\end{align}
and by taking $b=\mathbf{1}$ we have
\begin{align}\label{eq:5.2.17}
Y_W(a^{(1)}_{n_1} \cdots a^{(r)}_{n_r} \mathbf{1}, x)=a^{(1)}(x)_{n_1} \cdots a^{(r)}(x)_{n_r} \on{id}_W.
\end{align}
\end{remark}

\subsection{The canonical weak dg vertex algebra}

\begin{proposition}% Computations (71)
The map $Y_\mathcal{E}(\cdot, x_0):  \mathcal{E}^{[*]}(W^{[*]}) \longrightarrow  \on{End}^{[*]}(\mathcal{E}^{[*]}(W^{[*]}))[[x_0, x_0^{-1}]]$ given in Definition \ref{def:3.2} is a chain map.
\end{proposition}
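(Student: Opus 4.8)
The plan is to verify separately the two properties required of a chain map in $\Ch$: that $a(x)\mapsto Y_{\mathcal E}(a(x),x_0)$ is homogeneous of degree $0$, and that it commutes with the differentials.

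Degree $0$ is immediate from Proposition \ref{prop:5.2.2}. That result shows $a(x)_n b(x)$ is a dg field on $W^{[*]}$ of degree $|a|+|b|-2N(n+1)$; since $|x_0|=-2N$, the coefficient $a(x)_n\,x_0^{-n-1}$ of $Y_{\mathcal E}(a(x),x_0)$ therefore raises degrees of dg fields by exactly $|a|$. Hence $Y_{\mathcal E}(a(x),x_0)$ is homogeneous of degree $|a|$ as an element of $\on{End}^{[*]}(\mathcal E^{[*]}(W^{[*]}))[[x_0,x_0^{-1}]]$, and so $Y_{\mathcal E}(\cdot,x_0)$ has degree $0$.

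For compatibility with the differentials, recall that $d_{\mathcal E}$ on $\mathcal E^{[*]}(W^{[*]})$ is given mode-by-mode by the odd super-bracket $[d_W,\,\cdot\,]^s$ (as in the computation preceding Definition \ref{def:3.2}), and that the differential it induces on $\on{End}^{[*]}(\mathcal E^{[*]}(W^{[*]}))$ sends an operator $T$ to $[d_{\mathcal E},T]^s$ (applied coefficient-wise to the $x_0$-series). Since $|Y_{\mathcal E}(a(x),x_0)|=|a|$, the assertion that $Y_{\mathcal E}(\cdot,x_0)$ commutes with the differential is, after evaluation on a homogeneous dg field $b(x)$, equivalent to the Leibniz identity
\[
d_{\mathcal E}\big(Y_{\mathcal E}(a(x),x_0)\,b(x)\big)=Y_{\mathcal E}\big(d_{\mathcal E}(a(x)),x_0\big)\,b(x)+(-1)^{|a|}\,Y_{\mathcal E}(a(x),x_0)\,d_{\mathcal E}(b(x)).
\]
I would prove this by applying $d_{\mathcal E}=[d_W,\,\cdot\,]^s$ to the defining formula \eqref{eq:5.2.6}, using three routine facts: (a) this differential acts on the $\on{End}^{[*]}(W^{[*]})$-coefficients of formal series, so it is $\cc$-linear over $x_0,x_1,x$ and commutes with $\on{Res}_{x_1}$ and with multiplication by the scalar distributions $x_0^{-1}\delta(\tfrac{x_1-x}{x_0})$, $x_0^{-1}\delta(\tfrac{x-x_1}{-x_0})$; (b) $[d_W,\,\cdot\,]^s$ is an odd derivation for composition of graded operators, whence $[d_W,a(x_1)b(x)]^s=[d_W,a(x_1)]^s\,b(x)+(-1)^{|a|}a(x_1)\,[d_W,b(x)]^s$ and similarly for $b(x)\,a(x_1)$; and (c) for a dg field, $[d_W,a(x_1)]^s=(d_{\mathcal E}(a))(x_1)$, i.e. the dg field $d_{\mathcal E}(a)\in\mathcal E^{[*]}(W^{[*]})$ evaluated at $x_1$. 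Substituting, one obtains four terms; regrouping those containing $d_{\mathcal E}(a)$ and those containing $d_{\mathcal E}(b)$ and comparing with \eqref{eq:5.2.6} applied to the pairs $(d_{\mathcal E}(a),b)$ and $(a,d_{\mathcal E}(b))$ produces exactly the two terms on the right-hand side above.

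The one delicate point, and the main obstacle, is the sign bookkeeping, since the factor $(-1)^{|a||b|}$ in \eqref{eq:5.2.6} must be reconciled with the shifted degrees $|d_{\mathcal E}(a)|=|a|+1$, $|d_{\mathcal E}(b)|=|b|+1$. The identities that make the regrouping close up are $(-1)^{|a||b|+|b|}=(-1)^{(|a|+1)|b|}$ for the $d_{\mathcal E}(a)$-terms and $(-1)^{|a||b|+|a|}=(-1)^{|a|(|b|+1)}$ for the $d_{\mathcal E}(b)$-terms; granting these, each group of terms is literally an instance of formula \eqref{eq:5.2.6}, and the rest is formal manipulation of residues and $\delta$-distributions. (Alternatively, the same computation can be run mode by mode from the explicit formula \eqref{eq:5.2.11}, where the signs are arguably easier to keep track of.)
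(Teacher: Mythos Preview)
Your proposal is correct and follows essentially the same approach as the paper: establish degree $0$ via Proposition~\ref{prop:5.2.2}, then reduce the chain-map condition to the Leibniz identity $d_{\mathcal E}(a(x)_n b(x))=d_{\mathcal E}(a(x))_n b(x)+(-1)^{|a|}a(x)_n d_{\mathcal E}(b(x))$ and verify it by direct computation. The only cosmetic difference is that the paper phrases the verification via the mode formula \eqref{eq:5.2.11} (``using Proposition~\ref{prop:5.2.2} several times''), whereas you work with the generating series \eqref{eq:5.2.6}; you yourself note these are interchangeable, and your more explicit sign bookkeeping is in fact more detailed than what the paper provides.
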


\begin{proof}

We have seen that $a(x)_n b(x) \in \mathcal{E}^{[|a|+|b|-2N(n+1)]}(W^{[*]})$. We can give a cochain complex structure to $\on{End}^{[*]}(\mathcal{E}^{[*]}(W^{[*]}))[[x_0, x_0^{-1}]]$ by setting $|x_0|=-2N$. Then we see that $|a(x)_n b(x)x_0^{-n-1}|=|a|+|b|$. It follows that $Y_\mathcal{E}(\cdot, x_0):  \mathcal{E}^{[*]}(W^{[*]}) \longrightarrow  \on{End}^{[*]}(\mathcal{E}^{[*]}(W^{[*]}))[[x_0, x_0^{-1}]]$ is a map of degree $0$.

The differential $d_{\mathcal{E}}$ on $\mathcal{E}^{[*]}(W^{[*]}) \subset \on{End}^{[*]}(W^{[*]})[[x_, x^{-1}]]$ is the same as the one on $ \on{End}^{[*]}(W^{[*]})[[x_, x^{-1}]]$. We can verify that $d_{W((x))}(x^n)=0$ for any $n \in \mathbb{Z}$, and so
\[
\begin{array}{rl}
d_{\mathcal{E}}(a(x))(w) & =d_{W((x))}(a(x)w)-(-1)^{|a|}a(x)(d_W(w)) \\[5pt]
&=\displaystyle \sum_{n \in \mathbb{Z}}\big( d_W(a_n w)-(-1)^{|a|}a_n d_W(w) \big)x^{-n-1},
\end{array}
\]
which implies that $d_{\mathcal{E}}(a(x)) \in \mathcal{E}^{[|a|+1]}(W^{[*]})$. We then can determine that
\[
d(Y_\mathcal{E}(a(x), x_0))(b(x))=\sum_{n \in \mathbb{Z}}\bigg(d_\mathcal{E}(a(x)_nb(x))-(-1)^{|a|}a(x)_n d_\mathcal{E}(b(x))  \bigg)x_0^{-n-1}.
\]
By using Proposition \ref{prop:5.2.2} several times, we can show that
\begin{align}\label{eq:d_epsilon}
d_\mathcal{E}(a(x)_nb(x))=d_\mathcal{E}(a(x))_nb(x)+(-1)^{|a|}a(x)_n d_\mathcal{E}(b(x)),
\end{align}
and so
\[
d(Y_\mathcal{E}(a(x), x_0))(b(x))=\sum_{n \in \mathbb{Z}}d_\mathcal{E}(a(x))_nb(x)x_0^{-n-1}=Y_\mathcal{E}(d_\mathcal{E}(a(x)), x_0)b(x)
\]
for any $a(x), b(x) \in  \mathcal{E}^{[*]}(W^{[*]})$.
\end{proof}

\begin{definition}% Computations (71)
A weak dg vertex algebra is a cochain complex $V^{[*]}$ with a chain map
\[
\begin{array}{rccl}
Y(\cdot, x): & V^{[*]} & \longrightarrow & \on{End}^{[*]}(V^{[*]})[[x, x^{-1}]] \\[5pt]
 & v & \longmapsto & \displaystyle \sum_{n \in \mathbb{Z}}v_n x^{-n-1}
 \end{array}
\]
with $|x|=-2N$, $N \in \mathbb{Z}$, and an element $\mathbf{1} \in V^{[0]}$ such that
\[
Y(\mathbf{1}, x)=\on{id}_V,
\]
\[
Y(v, x)\mathbf{1} \in V^{[*]}[[x]] \text{ and } \lim_{x \to 0} Y(v, x)\mathbf{1}=v,
\]
and such that
\[
[\mathcal{D}, Y(v, x)]^s=Y(\mathcal{D}v, x)=\frac{d}{dx}Y(v, x)
\]
where the chain map $\mathcal{D}:  V^{[*]}  \longrightarrow  V[2N]^{[*]}$ is defined by $\mathcal{D}(v)= v_{-2}\mathbf{1}$.
\end{definition}

A weak dg vertex algebra can be seen as a dg vertex algebra where there is no truncation condition (i) and no Jacobi identity (iv) (cf. Definition \ref{def:dgVA}).

For a weak dg vertex algebra, we have $Y(e^{x_0\mathcal{D}}v, x)=e^{x_0 \frac{d}{dx}}Y(v, x)=Y(v, x+x_0)$ by Taylor's theorem. We then see that for any $v \in V^{[*]}$,
\begin{align}\label{eq:5.3.6}
Y(v, x)\mathbf{1}=e^{x\mathcal{D}}v. 
\end{align}

A homomorphism of weak dg vertex algebras is a chain map $\psi: U^{[*]} \longrightarrow V^{[*]}$ such that $\psi(u_n v)=\psi(u)_n\psi(v)$ for any $u, v \in U^{[*]}$, $n \in \mathbb{Z}$, and such that $\psi(\mathbf{1}_U)=\mathbf{1}_V$. Similarly, we can define weak dg vertex subalgebras and weak dg vertex algebras generated by subsets. Given a subset $S$, we write $\langle S \rangle_{\on{WVA}}$ for the smallest weak dg vertex subalgebra containing $S$.

\begin{proposition}[{\cite[Proposition 5.3.9]{Lepowsky-Li}}] \label{prop:5.3.9}% Computations (71)
The triplet $(\mathcal{E}^{[*]}(W^{[*]}), Y_\mathcal{E}(\cdot, x_0), \on{id}_W )$ is a weak dg vertex algebra. Moreover $\mathcal{D}_{\mathcal{E}}=\mathcal{D}$. It is called the canonical weak dg vertex algebra of the complex $(W^{[*]}, d_W^{[*]})$.
\end{proposition}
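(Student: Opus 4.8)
The plan is to verify the defining axioms of a weak dg vertex algebra one at a time for the triple $(\mathcal{E}^{[*]}(W^{[*]}), Y_\mathcal{E}(\cdot, x_0), \on{id}_W)$, following the classical argument of \cite[Proposition 5.3.9]{Lepowsky-Li} and tracking the Koszul sign factors $(-1)^{|a||b|}$ that appear whenever two homogeneous dg fields are permuted. The underlying cochain complex is $\mathcal{E}^{[*]}(W^{[*]})$ with differential $d_\mathcal{E}$; the previous proposition already establishes that $Y_\mathcal{E}(\cdot, x_0)$ is a chain map, so it remains to check: (1) the vacuum property $Y_\mathcal{E}(\on{id}_W, x_0) = \on{id}$; (2) the creation property $Y_\mathcal{E}(a(x), x_0)\on{id}_W \in \mathcal{E}^{[*}(W^{[*]})[[x_0]]$ with $\lim_{x_0 \to 0} Y_\mathcal{E}(a(x), x_0)\on{id}_W = a(x)$; and (3) the $\mathcal{D}$-bracket derivative formula, for which it suffices to identify $\mathcal{D}_\mathcal{E} = \frac{d}{dx}$ with the operator $\mathcal{D}$ defined by $\mathcal{D}(a(x)) = a(x)_{-2}\on{id}_W$ and then prove $[\mathcal{D}_\mathcal{E}, Y_\mathcal{E}(a(x), x_0)]^s = Y_\mathcal{E}(\mathcal{D}_\mathcal{E} a(x), x_0) = \frac{d}{dx_0} Y_\mathcal{E}(a(x), x_0)$.

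For (1), I would substitute $a(x) = \on{id}_W$ into \eqref{eq:5.2.6}: since $\on{id}_W$ is a dg field with only the $(-1)$-component nonzero (namely $(\on{id}_W)_{-1} = \on{id}_W$, all other components zero), the expression $\on{Res}_{x_1}[x_0^{-1}\delta(\frac{x_1-x}{x_0})\on{id}_W \cdot b(x) - x_0^{-1}\delta(\frac{x-x_1}{-x_0})b(x)\cdot \on{id}_W]$ collapses, using the standard $\delta$-function identity $\on{Res}_{x_1}[x_0^{-1}\delta(\frac{x_1-x}{x_0}) - x_0^{-1}\delta(\frac{x-x_1}{-x_0})]\,b(x_1) = b(x)$ from \cite{Lepowsky-Li}, to give $Y_\mathcal{E}(\on{id}_W, x_0)b(x) = b(x)$, hence $Y_\mathcal{E}(\on{id}_W, x_0) = \on{id}$. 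For (2), formula \eqref{eq:5.2.11} with $b(x) = \on{id}_W$ gives $a(x)_n \on{id}_W = \on{Res}_{x_1}[(x_1-x)^n a(x_1) - (-1)^{|a|}(-x+x_1)^n \on{id}_W\, a(x_1)]$; for $n \geq 0$ the two binomial expansions cancel, so $a(x)_n \on{id}_W = 0$ for $n \geq 0$, which is exactly $Y_\mathcal{E}(a(x), x_0)\on{id}_W \in \mathcal{E}^{[*]}(W^{[*]})[[x_0]]$, and reading off the $n = -1$ coefficient gives $a(x)_{-1}\on{id}_W = \on{Res}_{x_1}[(x_1-x)^{-1}a(x_1) - (x_1-x)^{-1}a(x_1)]$ interpreted with the correct expansions, which equals $a(x)$, giving the creation limit. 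For (3), I would compute $\mathcal{D}_\mathcal{E}(a(x)) = \frac{d}{dx}a(x)$ directly via \eqref{eq:5.2.11} with $n = -2$, $b = \on{id}_W$: $a(x)_{-2}\on{id}_W = \on{Res}_{x_1}[(x_1-x)^{-2}a(x_1) - (-x+x_1)^{-2}a(x_1)]$, which by the two different binomial expansions of $(x_1-x)^{-2}$ picks out precisely $\sum_n (n+1) a_n x^{-n-2} = \frac{d}{dx}a(x)$. Then the bracket formula $[\mathcal{D}_\mathcal{E}, Y_\mathcal{E}(a(x), x_0)]^s = \frac{d}{dx_0}Y_\mathcal{E}(a(x), x_0)$ follows by applying $\mathcal{D}_\mathcal{E} = \frac{d}{dx}$ to \eqref{eq:5.2.6} and using $\frac{d}{dx}\delta(\frac{x_1-x}{x_0}) = -\frac{d}{dx_1}\delta(\frac{x_1-x}{x_0}) = \frac{d}{dx_0}\delta(\frac{x_1-x}{x_0})$ together with integration by parts inside $\on{Res}_{x_1}$; the Leibniz rule for $d_\mathcal{E}$ established in \eqref{eq:d_epsilon} and the compatibility of $\mathcal{D}_\mathcal{E}$ with $d_\mathcal{E}$ already proved in Section \ref{sec:3} guarantee everything stays inside the complex and the signs are consistent.

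The main obstacle is bookkeeping of the Koszul signs rather than any conceptual difficulty: one must check that each permutation of $a(x)$ past $b(x)$ in the residue manipulations contributes exactly $(-1)^{|a||b|}$ and no more, and that the sign $(-1)^{|a|}$ appearing in $[\mathcal{D}_\mathcal{E}, -]^s$ (since $|\mathcal{D}_\mathcal{E}| = 2N$ is even, this sign is in fact trivial) is consistent with the Leibniz rule \eqref{eq:d_epsilon}. Because $|x| = |x_0| = -2N$ and $\mathcal{D}_\mathcal{E}$ has even degree $2N$, most of the potentially troublesome signs collapse, so the classical computation of \cite[Proposition 5.3.9]{Lepowsky-Li} transfers essentially verbatim once degrees are assigned correctly; the only genuinely new input is the degree count $|a(x)_n b(x)| = |a| + |b| - 2N(n+1)$ from Proposition \ref{prop:5.2.2}, which ensures $Y_\mathcal{E}(\cdot, x_0)$ lands in the right graded piece. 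I would conclude by noting $\mathcal{D}_\mathcal{E} = \mathcal{D}$ as the final identification, completing the verification that $(\mathcal{E}^{[*]}(W^{[*]}), Y_\mathcal{E}(\cdot, x_0), \on{id}_W)$ is a weak dg vertex algebra.
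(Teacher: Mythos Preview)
Your approach is correct and essentially the same as the paper's, which likewise verifies the vacuum, creation, and $\mathcal{D}$-bracket axioms by direct manipulation of \eqref{eq:5.2.6} and the standard $\delta$-function identities; the only cosmetic difference is that the paper computes $Y_\mathcal{E}(a(x),x_0)\on{id}_W$ in closed form as $a(x+x_0)=e^{x_0\mathcal{D}_\mathcal{E}}a(x)$ via the three-term $\delta$-identity, obtaining the creation property and $\mathcal{D}_\mathcal{E}=\mathcal{D}$ simultaneously, whereas you extract $a(x)_n\on{id}_W$ component by component from \eqref{eq:5.2.11}. One small slip: in your formula for $a(x)_n\on{id}_W$ the sign should be $(-1)^{|a|\cdot|\on{id}_W|}=1$, not $(-1)^{|a|}$, since $|\on{id}_W|=0$; this is harmless for $n\ge 0$ but matters for the $n=-1,-2$ computations you sketch.
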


\begin{proof}
By Formula~\eqref{eq:5.2.6} and \cite[(2.3.17) and (2.3.18)]{Lepowsky-Li}, we have
\[
Y_\mathcal{E}(\on{id}_W, x_0)a(x)=a(x). 
\]
Furthermore, with the same strategy, we see that
\[
\begin{array}{rcl}
Y_\mathcal{E}(a(x), x_0)\on{id}_W & = & \on{Res}_{x_1}\big[x_1^{-1}\delta\big(\frac{x+x_0}{x_1})a(x_1) \big) \big] \\[5pt]
&=& \on{Res}_{x_1}\big[x_1^{-1}\delta\big(\frac{x+x_0}{x_1})a(x+x_0) \big) \big] \quad \quad\text{by \cite[(2.3.31)]{Lepowsky-Li}}\\[5pt]
&=&a(x+x_0)\\[5pt]
&=&e^{x_0\frac{d}{dx}}a(x)\\[5pt]
&=&e^{x_0\mathcal{D}_{\mathcal{E}}}a(x).
\end{array}
\]
It follows that $\displaystyle \lim_{x_0 \to 0} Y_\mathcal{E}(a(x), x_0)\on{id}_W=a(x)$ and $a(x)_{-2}\on{id}_W=\mathcal{D}_{\mathcal{E}}a(x)$. Hence $\mathcal{D}_{\mathcal{E}}=\mathcal{D}$.

For the last two equalities, we have
\[
\begin{array}{l}
\frac{\partial}{\partial x_0}Y_\mathcal{E}(a(x), x_0)b(x) \\[5pt]
= \on{Res}_{x_1}\big[\frac{\partial}{\partial x_0}\big(x_0^{-1}\delta\big(\frac{x_1-x}{x_0}\big)\big)a(x_1)b(x) -(-1)^{|a||b|}\frac{\partial}{\partial x_0}\big(x_0^{-1}\delta\big(\frac{x-x_1}{-x_0}\big)\big)b(x)a(x_1)\big] \\[5pt]
= \on{Res}_{x_1}\big[-\frac{\partial}{\partial x_1}\big(x_0^{-1}\delta\big(\frac{x_1-x}{x_0}\big)\big)a(x_1)b(x) +(-1)^{|a||b|}\frac{\partial}{\partial x_1}\big(x_0^{-1}\delta\big(\frac{x-x_1}{-x_0}\big)\big)b(x)a(x_1)\big] \\
\quad  \text{by \cite[(2.3.20)]{Lepowsky-Li}} \\[5pt]
=\on{Res}_{x_1}\big[x_0^{-1}\delta\big(\frac{x_1-x}{x_0}\big)\frac{\partial}{\partial x_1}(a(x_1))b(x) -(-1)^{|a||b|}x_0^{-1}\delta\big(\frac{x-x_1}{-x_0}\big)b(x)\frac{\partial}{\partial x_1}(a(x_1))\big] \\[5pt]
= Y_\mathcal{E}(\mathcal{D}_{\mathcal{E}}a(x), x_0)b(x).
\end{array}
\]
Furthermore, 
\[
\begin{array}{l}
[\mathcal{D}_{\mathcal{E}}, Y_\mathcal{E}(a(x), x_0)]^sb(x)  \\[5pt]
 = \frac{\partial}{\partial x} \big(Y_\mathcal{E}(a(x), x_0)b(x)\big)-Y_\mathcal{E}(a(x), x_0)\frac{d}{dx}b(x) \\[5pt]
= \frac{\partial}{\partial x}\bigg(\on{Res}_{x_1}\big[x_0^{-1}\delta\big(\frac{x_1-x}{x_0}\big)a(x_1)b(x) -(-1)^{|a||b|}x_0^{-1}\delta\big(\frac{x-x_1}{-x_0}\big)b(x)a(x_1)\big]\bigg) \\[5pt]
\quad -\on{Res}_{x_1}\big[x_0^{-1}\delta\big(\frac{x_1-x}{x_0}\big)a(x_1)\frac{d}{dx}b(x) -(-1)^{|a|(|b|+2)}x_0^{-1}\delta\big(\frac{x-x_1}{-x_0}\big)(\frac{d}{dx}b(x))a(x_1)\big] \\[5pt]
= \on{Res}_{x_1}\big[\frac{\partial}{\partial x} \big(x_0^{-1}\delta\big(\frac{x_1-x}{x_0}\big)\big)a(x_1)b(x) -(-1)^{|a||b|}\frac{\partial}{\partial x} \big(x_0^{-1}\delta\big(\frac{x-x_1}{-x_0}\big)\big)b(x)a(x_1)\big] \\[5pt]
=\on{Res}_{x_1}\big[\frac{\partial}{\partial x_0} \big(x_0^{-1}\delta\big(\frac{x_1-x}{x_0}\big)\big)a(x_1)b(x) -(-1)^{|a||b|}\frac{\partial}{\partial x_0} \big(x_0^{-1}\delta\big(\frac{x-x_1}{-x_0}\big)\big)b(x)a(x_1)\big] \\[5pt]
=\frac{\partial}{\partial x_0} Y_\mathcal{E}(a(x), x_0) b(x).
\end{array}
\]
\end{proof}

We now introduce the notion of dg representation of a dg vertex algebra (cf. \cite[Definition 5.3.14]{Lepowsky-Li}).
\begin{definition}% Computations (71)
A dg representation of a weak dg vertex algebra $V^{[*]}$ on a cochain complex $W^{[*]}$ is a weak dg vertex algebra homomorphism $V^{[*]} \longrightarrow \mathcal{E}^{[*]}(W^{[*]})$.
\end{definition}

\begin{theorem}[{\cite[Theorem 5.3.15]{Lepowsky-Li}}] % Computations (71)
Let $V^{[*]}$ be a dg vertex algebra and $W^{[*]}$ a cochain complex. Then
\smallbreak
\begin{center}
$W^{[*]}$ is a dg module for $V^{[*]}$ $\Longleftrightarrow$ there is a representation $V^{[*]} \longrightarrow \mathcal{E}^{[*]}(W^{[*]})$.
\end{center}
\end{theorem}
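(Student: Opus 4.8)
The plan is to shuttle between a dg module structure $Y_W(\cdot,x)$ on $W^{[*]}$ and the associated ``field realization'' map $v\mapsto Y_W(v,x)$ taking values in $\mathcal{E}^{[*]}(W^{[*]})$: I would show that the latter is precisely a representation in the sense just defined, that conversely every representation arises this way, and that the two constructions are mutually inverse. For the forward implication, starting from a dg module $(W^{[*]},Y_W(\cdot,x))$ for $V^{[*]}$, I would set $\iota_W(v)=Y_W(v,x)$ and first check that $\iota_W$ lands in $\mathcal{E}^{[*]}(W^{[*]})$: the truncation axiom of Definition \ref{def:dgVAmod} gives $Y_W(u,x)w\in W^{[*]}((x))$, and since $Y_W(\cdot,x)$ is a degree-$0$ chain map, for homogeneous $u$ the series $Y_W(u,x)$ is a homogeneous dg field of degree $|u|$; moreover $\iota_W$ is a chain map because the differential on $\mathcal{E}^{[*]}(W^{[*]})$ is the restriction of the one on $\on{End}^{[*]}(W^{[*]})[[x,x^{-1}]]$. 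Next I would use the vacuum axiom to get $\iota_W(\mathbf{1})=Y_W(\mathbf{1},x)=\on{id}_W$, which is the vacuum of $\mathcal{E}^{[*]}(W^{[*]})$ by Proposition \ref{prop:5.3.9}. Finally, taking $\on{Res}_{x_1}$ of the module Jacobi identity \eqref{eq:jacobi_mod} and comparing with the defining formula \eqref{eq:5.2.6} of $Y_\mathcal{E}$ yields \eqref{eq:5.2.4}; reading off the coefficient of $x_0^{-n-1}$ gives $Y_W(u,x)_nY_W(v,x)=Y_W(u_nv,x)$, i.e.\ $\iota_W(u)_n\iota_W(v)=\iota_W(u_nv)$ for every $n\in\mathbb{Z}$. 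Thus $\iota_W$ will be a homomorphism of weak dg vertex algebras, i.e.\ a representation $V^{[*]}\to\mathcal{E}^{[*]}(W^{[*]})$.

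For the converse, given a representation $\rho\colon V^{[*]}\to\mathcal{E}^{[*]}(W^{[*]})$, I would set $Y_W(v,x):=\rho(v)$, regarded inside $\on{End}^{[*]}(W^{[*]})[[x,x^{-1}]]$, and verify the axioms of Definition \ref{def:dgVAmod}. That $Y_W(\cdot,x)$ is a chain map is immediate since $\rho$ is one and $\mathcal{E}^{[*]}(W^{[*]})$ carries the restricted differential; $Y_W(\mathbf{1},x)=\rho(\mathbf{1}_V)=\on{id}_W$ by Proposition \ref{prop:5.3.9}; and $Y_W(u,x)w=\rho(u)w\in W^{[*]}((x))$ because $\rho(u)\in\mathcal{E}^{[*]}(W^{[*]})=\on{Hom}^{[*]}(W^{[*]},W^{[*]}((x)))$. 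The decisive step is the Jacobi identity: since the $n$-th products on $\mathcal{E}^{[*]}(W^{[*]})$ are the ones computed in Proposition \ref{prop:5.2.2}, the homomorphism condition $\rho(u_nv)=\rho(u)_n\rho(v)$ says $Y_W(u_nv,x)=Y_W(u,x)_nY_W(v,x)$; multiplying by $x_0^{-n-1}$, summing over $n$ (a sum bounded above by the truncation property of $V^{[*]}$) and invoking \eqref{eq:5.2.6} gives exactly the iterate formula \eqref{eq:4.4.3} for $Y_W$, whereupon Corollary \ref{cor:4.4.7} provides the Jacobi identity \eqref{eq:jacobi_mod} and hence the dg $V^{[*]}$-module structure. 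The assignments $Y_W(\cdot,x)\mapsto\iota_W$ and $\rho\mapsto Y_W(\cdot,x)$ are clearly inverse to one another.

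I do not expect any genuine obstacle here: the substantive analytic work---deriving the module Jacobi identity from the iterate formula, equivalently from weak associativity, in the dg setting and with the right Koszul signs---is already done in Theorems \ref{thm:3.6.3} and \ref{thm:4.4.5} and Corollary \ref{cor:4.4.7}. The only points needing care will be the bookkeeping: that $\iota_W$ respects cohomological degree and intertwines the differentials, and that the sign $(-1)^{|u||v|}$ built into $Y_\mathcal{E}$ on homogeneous dg fields in Definition \ref{def:3.2} is precisely the one appearing in \eqref{eq:jacobi_mod} and \eqref{eq:4.4.3}, so that \eqref{eq:5.2.6} matches the iterate formula term by term. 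It is also worth noting in passing that a homomorphism of weak dg vertex algebras automatically intertwines the operators $\mathcal{D}$, as $\mathcal{D}v=v_{-2}\mathbf{1}$, although this is not required by Definition \ref{def:dgVAmod}.
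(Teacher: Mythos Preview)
Your proposal is correct and follows essentially the same route as the paper: in both directions you use exactly the paper's key steps---taking $\on{Res}_{x_1}$ of the module Jacobi identity to obtain \eqref{eq:5.2.4} for the forward implication, and invoking the iterate formula \eqref{eq:4.4.3} together with Corollary \ref{cor:4.4.7} for the converse. The only cosmetic difference is that the paper writes the converse computation directly at the level of the generating series $Y_W(Y(u,x_0)v,x_2)=\phi_{x_2}(Y(u,x_0)v)=Y_\mathcal{E}(\phi_{x_2}(u),x_0)\phi_{x_2}(v)$ rather than passing through the individual modes, but this is the same argument.
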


\begin{proof}
Let $W^{[*]}$ be a dg module for the dg vertex algebra $V^{[*]}$. We define
\[
\begin{array}{rccl}
\iota_W: & V^{[*]} & \longrightarrow & \mathcal{E}^{[*]}(W^{[*]}) \\[5pt]
 & v & \longmapsto & Y_W(v, x).
 \end{array}
\]
with $\iota_W(\mathbf{1})=\on{id}_W$. Then
\[
Y_\mathcal{E}(\iota_W(u), x_0)\iota_W(v)=Y_\mathcal{E}(Y_W(u, x), x_0)Y_W(v, x)=\iota_W(Y_W(u, x_0)v)
\]
by Formulas~\eqref{eq:5.2.4} and \eqref{eq:5.2.6}. We know that by definition $Y_W(\cdot, x)$ is a map of degree $0$. We also have
\[
\begin{array}{rcl}
d_\mathcal{E}(\iota_W(u))(w) & = & d_W Y_W(u, x)(w)-(-1)^{|u|}Y_W(u, x)d_W(w) \\[5pt]
& =& \displaystyle \sum_{n \in \mathbb{Z}}\big(d_W(u_n w)-(-1)^{|u|}u_n(d_Ww) \big)x^{-n-1},
\end{array}
\]
and
\[
\iota_W(d_V u)(w)=\sum_{n \in \mathbb{Z}}(d_Vu)_n w x^{-n-1}.
\]
But as $V^{[*]}$ is a dg vertex algebra, we know that $d_W(u_n w)=(d_Vu)_n w+(-1)^{|u|}u_n d_Ww$ (cf. \cite{Caradot-Jiang-Lin-4}), hence $d_\mathcal{E}(\iota_W(u))=\iota_W(d_V u)$ and $\iota_W$ is a chain map. We can check that it is in fact a homomorphism of weak dg vertex algebras.

Conversely, let $\phi_x: V^{[*]} \longrightarrow \mathcal{E}^{[*]}(W^{[*]}) $ be a homomorphism of weak dg vertex algebras and define $Y_W(v, x)=\phi_x(v)$ for any $v \in V^{[*]}$. Then $Y_W(v, x)w \in W^{[*]}((x))$ for any $v \in V^{[*]}$, $w \in W^{[*]}$. Furthermore,
\[
Y_W(\mathbf{1}, x)=\phi_x(\mathbf{1})=\on{id}_W
\]
and
\[
\begin{array}{l}
Y_W(Y(u, x_0)v, x_2) \\[5pt]
 =  \phi_{x_2}(Y(u, x_0)v) \\[5pt]
 =  Y_\mathcal{E}(\phi_{x_2}(u), x_0)\phi_{x_2}(v) \\[5pt]
 =  \on{Res}_{x_1}\big[x_0^{-1}\delta\big(\frac{x_1-x_2}{x_0}\big)\phi_{x_1}(u)\phi_{x_2}(v) -(-1)^{|\phi_{x_1}(u)||\phi_{x_2}(v)|}x_0^{-1}\delta\big(\frac{x-x_1}{-x_0}\big)\phi_{x_2}(v)\phi_{x_1}(u)\big] \\[5pt]
 = \on{Res}_{x_1}\big[x_0^{-1}\delta\big(\frac{x_1-x_2}{x_0}\big)Y_W(u, x_1)Y_W(v, x_2) -(-1)^{|u||v|}x_0^{-1}\delta\big(\frac{x_2-x_1}{-x_0}\big)Y_W(v, x_2)Y_W(u, x_1)\big].
 \end{array}
\]
It follows that $(W^{[*]}, Y_W(\cdot, x))$ is a $V^{[*]}$-module by Corollary \ref{cor:4.4.7}.
\end{proof}

For a chain map $\mathcal{D}_W: W^{[*]} \longrightarrow W^{[*]}[2N]$, we write $\mathcal{E}^{[*]}(W^{[*]}, \mathcal{D}_W)=\{a(x) \in \mathcal{E}^{[*]}(W^{[*]}) \ | \ [\mathcal{D}_W, a(x)]^s=\frac{d}{dx}a(x)  \}$.

\begin{proposition}[{\cite[Proposition 5.4.1]{Lepowsky-Li}}] \label{prop:5.4.1}% Computations (71)
Let $\mathcal{D}_W: W^{[*]} \longrightarrow W^{[*]}[2N]$ be a chain map. Then $\mathcal{E}^{[*]}(W^{[*]}, \mathcal{D}_W)$ is a weak dg vertex subalgebra of $\mathcal{E}^{[*]}(W^{[*]})$.
\end{proposition}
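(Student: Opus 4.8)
The plan is to verify that the three defining properties of a weak dg vertex algebra (the vacuum property, the creation property, and the $\mathcal{D}$-bracket derivative formula) are inherited by the subcomplex $\mathcal{E}^{[*]}(W^{[*]}, \mathcal{D}_W)$, and that this subcomplex is closed under the products $a(x)_n b(x)$ and under the differential $d_{\mathcal{E}}$. Since $\mathcal{E}^{[*]}(W^{[*]})$ is already a weak dg vertex algebra by Proposition \ref{prop:5.3.9}, the real content is closure.

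First I would observe that $\on{id}_W$ lies in $\mathcal{E}^{[*]}(W^{[*]}, \mathcal{D}_W)$, since $[\mathcal{D}_W, \on{id}_W]^s = 0 = \frac{d}{dx}\on{id}_W$, so the vacuum vector is in the subspace; the creation property is then automatic as it only involves the vacuum and the fixed element $a(x)$. Next I would check that $\mathcal{E}^{[*]}(W^{[*]}, \mathcal{D}_W)$ is a subcomplex: if $[\mathcal{D}_W, a(x)]^s = \frac{d}{dx}a(x)$, then applying $d_{\mathcal{E}}$ and using that $\mathcal{D}_W$ is a chain map (so $d_{\mathcal{E}}$ super-commutes with $[\mathcal{D}_W, -]^s$ appropriately, with the sign bookkeeping $[\mathcal{D}_W, d_{\mathcal{E}}a(x)]^s = -(-1)^{|\mathcal{D}_W|}d_{\mathcal{E}}[\mathcal{D}_W,a(x)]^s$ handled by $|\mathcal{D}_W| = 2N$ being even) together with the fact that $\frac{d}{dx}$ commutes with $d_{\mathcal{E}}$ (shown in the text), one gets $[\mathcal{D}_W, d_{\mathcal{E}}a(x)]^s = \frac{d}{dx}d_{\mathcal{E}}a(x)$. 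Hence $d_{\mathcal{E}}a(x) \in \mathcal{E}^{[*]}(W^{[*]}, \mathcal{D}_W)$.

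The main step is closure under $Y_{\mathcal{E}}$: given $a(x), b(x) \in \mathcal{E}^{[*]}(W^{[*]}, \mathcal{D}_W)$, I must show $a(x)_n b(x) \in \mathcal{E}^{[*]}(W^{[*]}, \mathcal{D}_W)$ for all $n$, equivalently $[\mathcal{D}_W, Y_{\mathcal{E}}(a(x), x_0)b(x)]^s = \frac{d}{dx_0}Y_{\mathcal{E}}(a(x), x_0)b(x)$. The approach is to apply $\mathcal{D}_W = \mathcal{D}_W$ (acting on $\mathcal{E}^{[*]}(W^{[*]})$ by the super-bracket) to the explicit residue formula \eqref{eq:5.2.6}. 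Since $\mathcal{D}_W$ commutes with the scalar delta-function kernels in $x_0, x_1, x$, moving $[\mathcal{D}_W, -]^s$ inside the $\on{Res}_{x_1}$ and distributing it across the products $a(x_1)b(x)$ and $b(x)a(x_1)$ (with the Koszul signs, which work out because $|\mathcal{D}_W|$ is even) replaces $a(x_1)$ by $[\mathcal{D}_W, a(x_1)]^s = \frac{d}{dx_1}a(x_1)$ and $b(x)$ by $[\mathcal{D}_W, b(x)]^s = \frac{d}{dx}b(x)$ — using precisely that $a, b \in \mathcal{E}^{[*]}(W^{[*]}, \mathcal{D}_W)$. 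One then recognizes, via the identities $\frac{\partial}{\partial x_1}\big(x_0^{-1}\delta(\frac{x_1-x}{x_0})\big) = -\frac{\partial}{\partial x_0}\big(x_0^{-1}\delta(\frac{x_1-x}{x_0})\big)$ from \cite[(2.3.20)]{Lepowsky-Li} and integration by parts in $\on{Res}_{x_1}$, exactly the same manipulation already carried out in the proof of Proposition \ref{prop:5.3.9} for the formula $[\mathcal{D}_{\mathcal{E}}, Y_{\mathcal{E}}(a(x), x_0)]^s b(x) = \frac{\partial}{\partial x_0}Y_{\mathcal{E}}(a(x),x_0)b(x)$, and the two contributions assemble to $\frac{d}{dx_0}Y_{\mathcal{E}}(a(x), x_0)b(x)$.

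The step I expect to require the most care is the sign bookkeeping in distributing the super-bracket $[\mathcal{D}_W, -]^s$ over products inside the residue and confirming that the two derivative contributions (one from differentiating $a(x_1)$, one from differentiating $b(x)$) combine correctly rather than cancelling; here the evenness $|\mathcal{D}_W| = 2N$ is essential and should be invoked explicitly. Once closure under products and under $d_{\mathcal{E}}$ is established, together with $\on{id}_W \in \mathcal{E}^{[*]}(W^{[*]}, \mathcal{D}_W)$ and the inherited creation property, $\mathcal{E}^{[*]}(W^{[*]}, \mathcal{D}_W)$ satisfies all the axioms of a weak dg vertex subalgebra of $\mathcal{E}^{[*]}(W^{[*]})$, which is the claim. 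As the text notes, this mirrors \cite[Proposition 5.4.1]{Lepowsky-Li}, so beyond the dg/sign modifications the argument is routine.
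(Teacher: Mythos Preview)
Your overall strategy matches the paper's: note $\on{id}_W$ lies in $\mathcal{E}^{[*]}(W^{[*]}, \mathcal{D}_W)$, then check closure under the products $a(x)_n b(x)$ by distributing $[\mathcal{D}_W,-]^s$ through the residue formula~\eqref{eq:5.2.6}. However, you misidentify the target identity. Membership of $c(x)=a(x)_n b(x)$ in $\mathcal{E}^{[*]}(W^{[*]}, \mathcal{D}_W)$ means $[\mathcal{D}_W, c(x)]^s = \frac{d}{dx}c(x)$; summing over $n$ this is
\[
[\mathcal{D}_W, Y_{\mathcal{E}}(a(x), x_0)b(x)]^s=\frac{\partial}{\partial x}\big(Y_{\mathcal{E}}(a(x), x_0)b(x)\big),
\]
which is exactly what the paper states must be shown. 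Your claimed ``equivalently'' with $\frac{d}{dx_0}$ is false: taking the coefficient of $x_0^{-n-1}$ in your version would give $[\mathcal{D}_W, a(x)_n b(x)]^s = -n\,a(x)_{n-1}b(x)$, which is not the membership condition at all.

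If you actually run your computation, you do not land on $\frac{d}{dx_0}$. Distributing $[\mathcal{D}_W,-]^s$ yields a term with $\frac{\partial}{\partial x_1}a(x_1)$ and a term with $\frac{\partial}{\partial x}b(x)$. The first, after integration by parts in $x_1$ (as in Proposition~\ref{prop:5.3.9}), gives $\frac{\partial}{\partial x_0}Y_{\mathcal{E}}(a(x),x_0)b(x)$; the second gives $Y_{\mathcal{E}}(a(x),x_0)\mathcal{D}_{\mathcal{E}}b(x)$. Their sum is, by the $\mathcal{D}_{\mathcal{E}}$-bracket formula of Proposition~\ref{prop:5.3.9}, precisely $\mathcal{D}_{\mathcal{E}}\big(Y_{\mathcal{E}}(a(x),x_0)b(x)\big)=\frac{\partial}{\partial x}\big(Y_{\mathcal{E}}(a(x),x_0)b(x)\big)$, the correct target. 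So the method is right but your bookkeeping of which variable is being differentiated needs to be fixed.
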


\begin{proof}
Since $\on{id}_W \in \mathcal{E}^{[*]}(W^{[*]}, \mathcal{D}_W)$, we must show that
\[
 [\mathcal{D}_W, Y_\mathcal{E}(a(x), x_0)b(x)]^s=\frac{\partial}{\partial x}\bigg(  Y_\mathcal{E}(a(x), x_0)b(x) \bigg)
\]
for $a(x), b(x) \in \mathcal{E}^{[*]}(W^{[*]}, \mathcal{D}_W)$ in order to show that $a(x)_n b(x) \in \mathcal{E}^{[*]}(W^{[*]}, \mathcal{D}_W)$ for any $n \in \mathbb{Z}$. The rest of the proof is identical to the classical case (see \cite[Proposition 5.4.1]{Lepowsky-Li}).
\end{proof}

We immediately obtain:

\begin{theorem}[{\cite[Theorem 5.4.2]{Lepowsky-Li}}] \label{thm:5.4.2}% Computations (71)
Let $V^{[*]}$ be a dg vertex algebra and $W^{[*]}$ a cochain complex equipped with a chain map $\mathcal{D}_W: W^{[*]} \longrightarrow W^{[*]}[2N]$. Then giving $W^{[*]}$ a structure of $V^{[*]}$-module $(W^{[*]}, Y_W(\cdot, x), \mathcal{D}_W)$ compatible with $\mathcal{D}_W$ is equivalent to giving a weak dg vertex algebra homomorphism from $V^{[*]}$ to $\mathcal{E}^{[*]}(W^{[*]}, \mathcal{D}_W)$.
\end{theorem}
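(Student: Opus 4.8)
The plan is to bootstrap from the module--representation correspondence already obtained (the dg version of \cite[Theorem 5.3.15]{Lepowsky-Li} proved above) and merely track the extra datum $\mathcal{D}_W$. That correspondence sends a $V^{[*]}$-module structure $Y_W(\cdot,x)$ on $W^{[*]}$ to the weak dg vertex algebra homomorphism $\iota_W\colon V^{[*]}\to\mathcal{E}^{[*]}(W^{[*]})$, $v\mapsto Y_W(v,x)$, and conversely a weak dg vertex algebra homomorphism $\phi\colon V^{[*]}\to\mathcal{E}^{[*]}(W^{[*]})$ to the module structure $Y_W(v,x)=\phi(v)$. By Proposition~\ref{prop:5.4.1}, $\mathcal{E}^{[*]}(W^{[*]},\mathcal{D}_W)$ is a weak dg vertex subalgebra of $\mathcal{E}^{[*]}(W^{[*]})$, so a weak dg vertex algebra homomorphism $V^{[*]}\to\mathcal{E}^{[*]}(W^{[*]},\mathcal{D}_W)$ is exactly a homomorphism $\phi\colon V^{[*]}\to\mathcal{E}^{[*]}(W^{[*]})$ all of whose values lie in $\mathcal{E}^{[*]}(W^{[*]},\mathcal{D}_W)$, i.e.\ satisfy $[\mathcal{D}_W,Y_W(v,x)]^s=\frac{d}{dx}Y_W(v,x)$ for every $v\in V^{[*]}$.

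It therefore remains only to match ``$Y_W(v,x)\in\mathcal{E}^{[*]}(W^{[*]},\mathcal{D}_W)$ for all $v$'' with the compatibility condition ``$[\mathcal{D}_W,Y_W(v,x)]^s=Y_W(\mathcal{D}(v),x)$ for all $v$'' from the definition of a $\mathcal{D}_W$-compatible module. The bridge is the identity
\[
\frac{d}{dx}Y_W(v,x)=Y_W(\mathcal{D}(v),x),
\]
valid for every $V^{[*]}$-module $W^{[*]}$; this is precisely the intermediate identity extracted in the proof of Theorem~\ref{thm:3.6.3}. Granting it, the two conditions coincide, and both directions are immediate: a $\mathcal{D}_W$-compatible module structure yields $\iota_W$ with $[\mathcal{D}_W,Y_W(v,x)]^s=Y_W(\mathcal{D}(v),x)=\frac{d}{dx}Y_W(v,x)$, hence a homomorphism into $\mathcal{E}^{[*]}(W^{[*]},\mathcal{D}_W)$; conversely, a homomorphism $\phi$ into $\mathcal{E}^{[*]}(W^{[*]},\mathcal{D}_W)$ gives a $V^{[*]}$-module structure $Y_W(v,x)=\phi(v)$ with $[\mathcal{D}_W,Y_W(v,x)]^s=\frac{d}{dx}Y_W(v,x)=Y_W(\mathcal{D}(v),x)$, i.e.\ one compatible with $\mathcal{D}_W$.

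There is essentially no obstacle here beyond bookkeeping; the one point worth a word rather than being taken as truly ``immediate'' is the identity $\frac{d}{dx}Y_W(v,x)=Y_W(\mathcal{D}(v),x)$ for modules. If one prefers not to quote it from the proof of Theorem~\ref{thm:3.6.3}, it can be re-derived directly: apply weak associativity (equivalently the Jacobi identity~\eqref{eq:jacobi_mod}) with $v=\mathbf 1$, use $Y(u,x_0)\mathbf 1=e^{x_0\mathcal{D}}u$ (the creation property together with the skew-symmetry~\eqref{eq:skewsym} on $V^{[*]}$) and $Y_W(\mathbf 1,x)=\on{id}_W$ to obtain $Y_W(e^{x_0\mathcal{D}}u,x_2)w=Y_W(u,x_0+x_2)w=e^{x_0\frac{d}{dx_2}}Y_W(u,x_2)w$, and read off the coefficient of $x_0$. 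Finally, since $\mathcal{D}$ and $\mathcal{D}_W$ both have even degree $2N$, no Koszul signs intervene in $[\mathcal{D}_W,\cdot\,]^s$ or in this computation, so the dg case is formally identical to the classical one in \cite[Theorem 5.4.2]{Lepowsky-Li}.
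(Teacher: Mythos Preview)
Your proof is correct and follows exactly the route the paper intends: the statement is introduced with ``We immediately obtain'' right after Proposition~\ref{prop:5.4.1}, and your argument simply spells out that immediate consequence by combining the module--representation correspondence (the dg version of \cite[Theorem 5.3.15]{Lepowsky-Li}) with Proposition~\ref{prop:5.4.1} and the identity $Y_W(\mathcal{D}u,x)=\frac{d}{dx}Y_W(u,x)$ established in the proof of Theorem~\ref{thm:3.6.3}.
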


\subsection{dg local subalgebras and dg vertex subalgebras of $\mathcal{E}^{[*]}(W^{[*]})$}

\begin{definition}% Computations (71)
\begin{itemize}[leftmargin=20pt]
\item[(i)] Two dg fields $a(x)$ and $b(x)$ are mutually dg local if there exists $k \in \mathbb{N}$ such that:
\[
(x_1-x_2)^k[a(x_1), b(x_2)]^s=0.
\]

\item[(ii)] A dg field $a(x)$ is a dg vertex operator if it is dg local with itself, i.e., there exists $k \in \mathbb{N}$ such that:
\[
(x_1-x_2)^k[a(x_1), a(x_2)]^s=0.
\]
\item[(iii)] A subcomplex $A^{[*]}$ of $\mathcal{E}^{[*]}(W^{[*]})$ is dg local if any two homogeneous $a(x), b(x) \in A^{[*]}$ are mutually dg local.
\item[(iv)]  A weak dg vertex subalgebra of $\mathcal{E}^{[*]}(W^{[*]})$ is a dg local subalgebra if it is a dg local subcomplex.
\end{itemize}
\end{definition}

We say that $V^{[*]}$ is a dg vertex subalgebra of $\mathcal{E}^{[*]}(W^{[*]})$ if $(V^{[*]}, Y_V(\cdot, x_0), \on{id}_W)$ is a weak dg vertex subalgebra of $(\mathcal{E}^{[*]}(W^{[*]}), Y_\mathcal{E}(\cdot, x_0), \on{id}_W)$, i.e.,  $Y_V(a(x), x_0)=Y_\mathcal{E}(a(x), x_0)$ for any $a(x) \in V^{[*]}$, and $V^{[*]}$ is also a dg vertex algebra.

\begin{theorem}[{\cite[Theorem 5.5.11]{Lepowsky-Li}}] \label{thm:5.5.11}% Computations (71)
Let $V^{[*]}$ be a dg vertex subalgebra of $\mathcal{E}^{[*]}(W^{[*]})$. Then $V^{[*]}$ is a dg local subalgebra. Moreover, $W^{[*]}$ is a module for $V^{[*]}$ viewed as a dg vertex algebra with
\[
Y_W(a(x), x_0)=a(x_0)
\]
for $a(x) \in V^{[*]}$. In particular, $W^{[*]}$ is a faithful $V^{[*]}$-module.
\end{theorem}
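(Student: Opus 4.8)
The plan is to prove the module assertion first, since both the locality of $V^{[*]}$ and its faithfulness as a module fall out of it easily. I would set $Y_W(a(x), x_0) := a(x_0)$ for $a(x) \in V^{[*]}$; concretely $Y_W(\cdot, x_0)$ is nothing but the inclusion $V^{[*]} \hookrightarrow \mathcal{E}^{[*]}(W^{[*]}) \subset \on{End}^{[*]}(W^{[*]})[[x_0, x_0^{-1}]]$ with the formal variable renamed. First I would record the cheap facts: for homogeneous $a(x)$ the field $Y_W(a(x), x_0)$ has degree $|a|$ and $Y_W(a(x), x_0)w \in W^{[*]}((x_0))$ because $\mathcal{E}^{[*]}(W^{[*]}) = \on{Hom}^{[*]}(W^{[*]}, W^{[*]}((x)))$; that $d_W \circ a(x_0) - (-1)^{|a|} a(x_0)\circ d_W = d_{\mathcal{E}}(a)(x_0)$, so $Y_W(\cdot, x_0)$ is a chain map of degree $0$; and that the vacuum of $V^{[*]}$ is $\on{id}_W$ (since $V^{[*]}$ is a weak dg vertex subalgebra of $\mathcal{E}^{[*]}(W^{[*]})$), whose associated field evaluated at $x_0$ is $\on{id}_W$, giving $Y_W(\mathbf{1}, x_0) = \on{id}_W$.

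The crux is then the iterate formula \eqref{eq:4.4.3} for $Y_W$, and I expect this to drop out almost for free from the definition of $Y_{\mathcal{E}}$. For homogeneous $a(x), b(x) \in V^{[*]}$, the truncation axiom of the dg vertex algebra $V^{[*]}$ gives $Y_V(a(x), x_0)b(x) \in V^{[*]}((x_0))$, and since $V^{[*]}$ is a dg vertex \emph{sub}algebra one has $Y_V(a(x), x_0) = Y_{\mathcal{E}}(a(x), x_0)$, hence $Y_V(a(x), x_0)b(x) = \sum_{n}(a(x)_n b(x))\, x_0^{-n-1}$. Applying $Y_W$ coefficientwise in $x_0$ and using $Y_W(c(x), x_2) = c(x_2)$ for every field $c(x)$, I obtain
\[
Y_W\big(Y_V(a(x), x_0)b(x),\, x_2\big) = \big[\, Y_{\mathcal{E}}(a(x), x_0)b(x)\,\big]\big|_{x \to x_2}.
\]
Substituting $x \to x_2$ into formula \eqref{eq:5.2.6} of Definition \ref{def:3.2} turns the right-hand side into precisely \eqref{eq:4.4.3} with $Y_W(a(x), x_1) = a(x_1)$ and $Y_W(b(x), x_2) = b(x_2)$. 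With the three elementary properties above in hand, Corollary \ref{cor:4.4.7} then yields at once that $(W^{[*]}, Y_W(\cdot, x))$ is a $V^{[*]}$-module.

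Faithfulness is immediate: $Y_W(a(x), x_0) = a(x_0)$ vanishes exactly when $a(x) = 0$ in $\mathcal{E}^{[*]}(W^{[*]})$, so $v \mapsto Y_W(v, x)$ is the injection $V^{[*]} \hookrightarrow \mathcal{E}^{[*]}(W^{[*]})$. For the locality of $V^{[*]}$ I would invoke Proposition \ref{prop:4.2.1} for the module $W^{[*]}$: for homogeneous $a(x), b(x) \in V^{[*]}$ there is $k \in \mathbb{N}$ with $(x_1 - x_2)^k[Y_W(a(x), x_1), Y_W(b(x), x_2)]^s = 0$, which reads $(x_1 - x_2)^k[a(x_1), b(x_2)]^s = 0$; so any two homogeneous elements of $V^{[*]}$ are mutually dg local, and $V^{[*]}$, being a weak dg vertex subalgebra, is therefore a dg local subalgebra. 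The whole argument is the dg refinement of \cite[Theorem 5.5.11]{Lepowsky-Li}, and the only modifications — the sign $(-1)^{|a||b|}$ attached to transpositions of $a$ and $b$, and the tracking of cohomological degrees — are already incorporated in the results cited. If there is any obstacle at all it is the harmless point that $Y_{\mathcal{E}}(a(x), x_0)b(x)$ must be a genuine Laurent series in $x_0$ before $Y_W$ can be applied coefficientwise; this is ensured by the truncation axiom of the dg vertex algebra $V^{[*]}$, not by anything intrinsic to $\mathcal{E}^{[*]}(W^{[*]})$.
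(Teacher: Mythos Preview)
Your proposal is correct and follows essentially the same route as the paper: verify the iterate formula \eqref{eq:4.4.3} for $Y_W(a(x),x_0)=a(x_0)$ by reading off Definition~\ref{def:3.2} (equation \eqref{eq:5.2.6}) with $x$ renamed to $x_2$, invoke Corollary~\ref{cor:4.4.7} to obtain the module structure, and then deduce dg locality from Proposition~\ref{prop:4.2.1}. The paper's proof is terser about the elementary checks (degree, truncation, chain map, vacuum) but the logical skeleton is identical.
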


\begin{proof}
Since $V^{[*]} \subseteq \mathcal{E}^{[*]}(W^{[*]})$, we can use Formula~\eqref{eq:5.2.6}. Hence for any $w \in W^{[*]}$ and any homogeneous $a(x), b(x) \in V^{[*]}$, we have
\begin{align*}
\begin{split}
Y_\mathcal{E}(a(x), x_0)b(x)w= &  \on{Res}_{x_1} \bigg[ x_0^{-1}  \delta\bigg(\frac{x_1-x}{x_0}\bigg) a(x_1) b(x)w \\[5pt] 
 &  \quad \quad  -(-1)^{|a||b|}  x_0^{-1}  \delta\bigg(\frac{x-x_1}{-x_0}\bigg) b(x) a(x_1)w \bigg]. 
 \end{split}
\end{align*}
 By Corollary \ref{cor:4.4.7}, we know that $W^{[*]}$ is a dg module for $V^{[*]}$ with
 \[
Y_W(a(x), x_0)=a(x_0).
\]
In particular, Proposition \ref{prop:4.2.1} states that the weak commutativity holds for $Y_W$. Hence any homogeneous $a(x), b(x) \in V^{[*]}$ are mutually dg local. Therefore $V^{[*]}$ is a dg local subalgebra.
\end{proof}

Theorem \ref{thm:5.5.11} establishes the first half of a correspondance between dg vertex subalgebras of $\mathcal{E}^{[*]}(W^{[*]})$ with $W^{[*]}$ as a faithful module and dg local subalgebras of $\mathcal{E}^{[*]}(W^{[*]})$. The second half is obtained in Theorem \ref{thm:5.5.14}.

\begin{proposition}[{\cite[Proposition 5.5.12]{Lepowsky-Li}}] \label{prop:5.5.12}% Computations (71)
Let $a(x), b(x) \in \mathcal{E}^{[*]}(W^{[*]})$ be homogeneous and mutually dg local, and set $k \in \mathbb{N}$ such that
\begin{align}\label{eq:5.5.9}
(x_1-x_2)^k[a(x_1), b(x_2)]^s=0
\end{align}
on $W^{[*]}$. Then
\[
a(x)_n b(x)=0 \text{ for } n \geq k.
\]
In particular, if $V^{[*]}$ is a dg local subalgebra of $\mathcal{E}^{[*]}(W^{[*]})$, then for any homogeneous $a(x), b(x) \in V^{[*]}$, we have
\[
a(x)_n b(x)=0 \text{ for } n \gg 0.
\]
\end{proposition}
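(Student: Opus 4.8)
The plan is to mimic the classical argument for \cite[Proposition 5.5.12]{Lepowsky-Li}, adjusting signs in the obvious way. The starting point is the explicit formula \eqref{eq:5.2.11} from Proposition \ref{prop:5.2.2}, namely
\[
a(x)_n b(x)=\on{Res}_{x_1}\big[(x_1-x)^n a(x_1)b(x)-(-1)^{|a||b|}(-x+x_1)^n b(x)a(x_1)\big].
\]
For $n\geq k$ one writes $(x_1-x)^n=(x_1-x)^{n-k}(x_1-x)^k$ and $(-x+x_1)^n=(-x+x_1)^{n-k}(-x+x_1)^k$. Since $n\geq k$, the factors $(x_1-x)^{n-k}$ and $(-x+x_1)^{n-k}$ are honest polynomials in $x_1$ and $x$ with no negative powers, and — crucially — $(x_1-x)^{n-k}=(-x+x_1)^{n-k}$ as such polynomials, so the two terms in the bracket share this common polynomial factor. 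Hence
\[
a(x)_n b(x)=\on{Res}_{x_1}\Big[(x_1-x)^{n-k}\big((x_1-x)^k a(x_1)b(x)-(-1)^{|a||b|}(-x+x_1)^k b(x)a(x_1)\big)\Big],
\]
and the inner bracket is exactly $(x_1-x)^k[a(x_1),b(x)]^s$ after expanding $(-x+x_1)^k=(x_1-x)^k$ as polynomials (no sign issue since $k\in\mathbb N$), which vanishes by the dg locality hypothesis \eqref{eq:5.5.9}. Therefore the whole expression is zero and $a(x)_n b(x)=0$ for $n\geq k$.

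The only point requiring a little care is the one place where the super setting differs from the classical one: the sign $(-1)^{|a||b|}$ attached to the second term must be carried consistently, and one must check that after factoring out the common polynomial $(x_1-x)^{n-k}$ the remaining bracket is genuinely the super commutator $(x_1-x)^k[a(x_1),b(x)]^s$ rather than some sign-twisted variant. This is immediate because the sign $(-1)^{|a||b|}$ in \eqref{eq:5.2.11} is precisely the one in the definition of $[\cdot,\cdot]^s$, and factoring out a scalar (even) polynomial does not introduce Koszul signs. One must also recall that $(x_1-x)^k$ and $(-x+x_1)^k$ denote the same polynomial expansion (in nonnegative powers of both variables) when $k\in\mathbb N$, which is what allows the locality relation to be applied; this is the usual delta-function / binomial-expansion convention already in force in the paper.

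For the ``in particular'' clause, let $V^{[*]}$ be a dg local subalgebra of $\mathcal{E}^{[*]}(W^{[*]})$ and take homogeneous $a(x),b(x)\in V^{[*]}$. By the definition of a dg local subcomplex, $a(x)$ and $b(x)$ are mutually dg local, so there exists $k\in\mathbb N$ with $(x_1-x_2)^k[a(x_1),b(x_2)]^s=0$; applying the first part of the proposition gives $a(x)_n b(x)=0$ for all $n\geq k$, i.e. $a(x)_n b(x)=0$ for $n\gg 0$, as claimed.

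I do not expect any genuine obstacle here: the statement is a direct formal consequence of Proposition \ref{prop:5.2.2} together with the locality hypothesis, and the ``main obstacle'' is merely bookkeeping of the Koszul sign, which causes no real difficulty because it coincides with the sign built into the super commutator.
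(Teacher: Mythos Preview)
Your proof is correct and follows exactly the same approach as the paper, which simply states that the result ``is clear from Equations~\eqref{eq:5.2.11} and~\eqref{eq:5.5.9}''. You have merely spelled out the one-line observation that for $n\geq k\geq 0$ both binomial expansions $(x_1-x)^n$ and $(-x+x_1)^n$ are the same polynomial, so the integrand is $(x_1-x)^{n-k}\cdot(x_1-x)^k[a(x_1),b(x)]^s=0$.
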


\begin{proof}
It is clear from Equations~\eqref{eq:5.2.11} and \eqref{eq:5.5.9}.
\end{proof}

\begin{proposition}[{\cite[Proposition 5.5.13]{Lepowsky-Li}}] \label{prop:5.5.13}% Computations (71)
Let $a(x), b(x) \in \mathcal{E}^{[*]}(W^{[*]})$ be homogeneous and mutually dg local, and set $k \in \mathbb{N}$ such that
\begin{align}\label{eq:5.5.11}
(x_1-x_2)^k[a(x_1), b(x_2)]^s=0
\end{align}
on $W^{[*]}$. Then
\begin{align}\label{eq:5.5.12}
(x_1-x_2)^k[Y_\mathcal{E}(a(x), x_1), Y_\mathcal{E}(b(x), x_2)]^s=0
\end{align}
on $\mathcal{E}^{[*]}(W^{[*]})$. In particular, if $V^{[*]}$ is a dg local subalgebra of $\mathcal{E}^{[*]}(W^{[*]})$, then the weak commutativity holds for $Y_\mathcal{E}(\cdot, x_0)$ on $V^{[*]}$.
\end{proposition}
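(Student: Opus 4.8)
The plan is to adapt the classical proof of \cite[Proposition 5.5.13]{Lepowsky-Li} to the dg setting, the only genuinely new input being the bookkeeping of Koszul signs $(-1)^{|a||b|}$ that arise from the super commutator. Throughout, fix homogeneous $a(x), b(x) \in \mathcal{E}^{[*]}(W^{[*]})$ that are mutually dg local, and fix $k$ as in \eqref{eq:5.5.11}. By Proposition \ref{prop:5.5.12} we already know $a(x)_n b(x)=0$ for $n \geq k$, so $Y_\mathcal{E}(a(x), x_0)b(x) \in \mathcal{E}^{[*]}(W^{[*]})[x_0^{-1}]$ involves only powers $x_0^{-n-1}$ with $n < k$. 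The first step is to rewrite the desired relation \eqref{eq:5.5.12} in terms of a third field $c(x) \in \mathcal{E}^{[*]}(W^{[*]})$: applying both sides to an arbitrary homogeneous $c(x)$ and unwinding $Y_\mathcal{E}$ via Definition \ref{def:3.2} reduces everything to an identity among iterated residues of $\delta$-functions acting on $W^{[*]}$. The key will be that all of these can be evaluated on a vector $w \in W^{[*]}$, where the truncation $a(x)w, b(x)w, c(x)w \in W^{[*]}((x))$ holds and the formal manipulations are legitimate.

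Concretely, I would proceed as in Lepowsky--Li: first establish the pointwise (on $W^{[*]}$) version of associativity and commutativity for the three mutually local fields $a(x_1), b(x_2), c(x_3)$, namely that for suitable $l, m \in \mathbb{N}$ (depending on the pairwise locality bounds) the expressions $(x_1-x_3)^l(x_2-x_3)^l(x_1-x_2)^k \, a(x_1)b(x_2)c(x_3)w$ and its reorderings all agree after clearing denominators, since each reordering costs a sign $(-1)^{|a||b|}$, $(-1)^{|a||c|}$, or $(-1)^{|b||c|}$ exactly matching the signs in the formula \eqref{eq:5.2.6}. Then, taking appropriate $\on{Res}_{x_1}$ (or $\on{Res}_{x_2}$) against $x_0^{-1}\delta\big(\frac{x_1-x_2}{x_0}\big)$-type kernels converts these pointwise statements into the statement that $(x_1-x_2)^k$ annihilates the super commutator $[Y_\mathcal{E}(a(x), x_1), Y_\mathcal{E}(b(x), x_2)]^s$ when applied to $c(x)$; since $W^{[*]}$ is a faithful module over the relevant subalgebra (or simply because the identity is tested on all $w$), vanishing on all $w$ gives vanishing of the operator on $\mathcal{E}^{[*]}(W^{[*]})$. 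The ``in particular'' clause for a dg local subalgebra $V^{[*]}$ is then immediate: any two homogeneous elements of $V^{[*]}$ are mutually dg local by definition, and $Y_V = Y_\mathcal{E}$ on $V^{[*]}$, so \eqref{eq:5.5.12} is precisely the weak commutativity \eqref{eq:3.2.1} for $Y_\mathcal{E}$ restricted to $V^{[*]}$.

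The main obstacle I anticipate is not conceptual but combinatorial: verifying that the Koszul signs produced by each transposition of fields in the triple products are globally consistent, i.e.\ that when one expands $Y_\mathcal{E}(a(x),x_1)Y_\mathcal{E}(b(x),x_2)c(x)$ and $(-1)^{|a||b|}Y_\mathcal{E}(b(x),x_2)Y_\mathcal{E}(a(x),x_1)c(x)$ into quadruple residues, the sign attached to each of the four resulting terms in one expansion matches the sign of the corresponding term in the other after reindexing. This is exactly where the dg case could silently diverge from the classical one, so I would be careful to track $(-1)^{|a||b|}$, $(-1)^{|a||c|}$, $(-1)^{|b||c|}$ and the fact that $|a(x)_n b(x)| = |a|+|b| \pmod 2$ (from Proposition \ref{prop:5.2.2}) at every step. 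Given that the paper has already checked the dg analogues of \cite[Propositions 3.3.17, 3.3.19]{Lepowsky-Li}, I expect the sign bookkeeping to close cleanly, and I would state the proof succinctly by citing \cite[Proposition 5.5.13]{Lepowsky-Li} for the structure of the argument and remarking that the additional signs $(-1)^{|a||b|}$ appearing under each permutation are consistent with those in \eqref{eq:5.2.6}, exactly as for Proposition \ref{prop:3.2.1} and Theorem \ref{thm:3.5.1}.
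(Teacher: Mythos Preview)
Your sketch contains a genuine gap: in establishing the ``pointwise'' reorderings you assume that the test field $c(x)$ is mutually local with $a(x)$ and $b(x)$, introducing factors $(x_1-x_3)^l(x_2-x_3)^l$. But the proposition asserts \eqref{eq:5.5.12} on all of $\mathcal{E}^{[*]}(W^{[*]})$, where $c(x)$ is an arbitrary homogeneous field and need not be local with anything. Your argument therefore proves at most the ``in particular'' clause (where $c\in V^{[*]}$ is local with $a,b$ by definition of a dg local subalgebra), not the main statement. The reordering manoeuvre you describe is in fact closer in spirit to Dong's lemma (Proposition~\ref{prop:5.5.15}) than to the actual proof of \cite[Proposition 5.5.13]{Lepowsky-Li}.

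The paper's (and Lepowsky--Li's) argument avoids any locality hypothesis on $c$. One expands $Y_\mathcal{E}(a(x),x_1)Y_\mathcal{E}(b(x),x_2)c(x)$ directly from \eqref{eq:5.2.6} into a double residue $\on{Res}_{x_3}\on{Res}_{x_4}$ of four terms of the form $\pm\, a(x_3)b(x_4)c(x)$, $\pm\, a(x_3)c(x)b(x_4)$, etc., and likewise for the other composite. The key step is the purely formal $\delta$-function identity (from \cite[(2.3.56)]{Lepowsky-Li})
\[
(x_1-x_2)^k\, x_1^{-1}\delta\Big(\frac{x_3-x}{x_1}\Big)x_2^{-1}\delta\Big(\frac{x_4-x}{x_2}\Big)
=(x_3-x_4)^k\, x_1^{-1}\delta\Big(\frac{x_3-x}{x_1}\Big)x_2^{-1}\delta\Big(\frac{x_4-x}{x_2}\Big),
\]
which transfers the prefactor $(x_1-x_2)^k$ to the internal variables $x_3,x_4$. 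At that point the hypothesis $(x_3-x_4)^k[a(x_3),b(x_4)]^s=0$ applies termwise, and the Koszul-sign bookkeeping (which you correctly flag) makes the four terms of each composite pair up and cancel. No reordering involving $c$ is ever needed.
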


\begin{proof}
Set $c(x) \in \mathcal{E}^{[|c|]}(W^{[*]})$. Using Equation~\eqref{eq:5.2.6}, we can show that
\[
\begin{array}{l}
Y_\mathcal{E}(a(x), x_1) Y_\mathcal{E}(b(x), x_2)c(x)  \\[5pt]
 = \displaystyle \on{Res}_{x_3}\on{Res}_{x_4} \bigg[ x_1^{-1}  \delta\bigg(\frac{x_3-x}{x_1}\bigg)  x_2^{-1}  \delta\bigg(\frac{x_4-x}{x_2}\bigg)a(x_3)b(x_4)c(x)  \bigg] \\[5pt]
\quad \displaystyle  -(-1)^{|b||c|}\on{Res}_{x_3}\on{Res}_{x_4} \bigg[ x_1^{-1}  \delta\bigg(\frac{x_3-x}{x_1}\bigg)  x_2^{-1}  \delta\bigg(\frac{x-x_4}{-x_2}\bigg)a(x_3)c(x)b(x_4)  \bigg] \\[5pt]
\quad \displaystyle  -(-1)^{|a|(|b|+|c|)}\on{Res}_{x_3}\on{Res}_{x_4} \bigg[ x_1^{-1}  \delta\bigg(\frac{x-x_3}{-x_1}\bigg)  x_2^{-1}  \delta\bigg(\frac{x_4-x}{x_2}\bigg)b(x_4)c(x)a(x_3)  \bigg] \\[5pt]
\quad \displaystyle  +(-1)^{|a|(|b|+|c|)}(-1)^{|b||c|}\on{Res}_{x_3}\on{Res}_{x_4} \bigg[ x_1^{-1}  \delta\bigg(\frac{x-x_3}{-x_1}\bigg)  x_2^{-1}  \delta\bigg(\frac{x-x_4}{-x_2}\bigg)c(x)b(x_4)a(x_3)  \bigg].
\end{array}
\]
We can do the same for $Y_\mathcal{E}(b(x), x_2) Y_\mathcal{E}(a(x), x_1)c(x)$. From \cite[(2.3.56)]{Lepowsky-Li}, it can be proved that
\begin{align}\label{eq:5.5.14'}
\begin{array}{l}
\displaystyle (x_1-x_2)^k x_1^{-1}  \delta\bigg(\frac{x_3-x}{x_1}\bigg)  x_2^{-1}  \delta\bigg(\frac{x_4-x}{x_2}\bigg) \\[5pt]
\displaystyle \quad =(x_3-x_4)^k x_1^{-1}  \delta\bigg(\frac{x_3-x}{x_1}\bigg)  x_2^{-1}  \delta\bigg(\frac{x_4-x}{x_2}\bigg).
\end{array}
\end{align}
By applying Equations~\eqref{eq:5.5.11} and \eqref{eq:5.5.14'} to the formulas we just computed, we obtain Equation \eqref{eq:5.5.12}.
\end{proof}

\begin{theorem}[{\cite[Theorem 5.5.14]{Lepowsky-Li}}] \label{thm:5.5.14}% Computations (71)
Any dg local subalgebra $V^{[*]}$ of $\mathcal{E}^{[*]}(W^{[*]})$ is a dg vertex algebra and $W^{[*]}$ is a faithful dg module where $Y_W(a(x), x_0)=a(x_0)$ for $a(x) \in V^{[*]}$. In particular, the dg local subalgebras of $\mathcal{E}^{[*]}(W^{[*]})$ are precisely the dg vertex algebras.
\end{theorem}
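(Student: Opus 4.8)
The plan is to verify that a dg local subalgebra $V^{[*]} \subseteq \mathcal{E}^{[*]}(W^{[*]})$ satisfies every axiom of Definition \ref{def:dgVA} by reducing, step by step, to results already established in this section. First I would recall that $V^{[*]}$ is by hypothesis a weak dg vertex subalgebra of $\mathcal{E}^{[*]}(W^{[*]})$, so it is a cochain complex with chain map $Y_V(\cdot, x_0) = Y_\mathcal{E}(\cdot, x_0)|_{V^{[*]}}$ and vacuum $\mathbf{1}_V = \on{id}_W$, and it already satisfies the vacuum property, the creation property, and the $\mathcal{D}$-bracket derivative formula \eqref{eq:Dderivative_1} (these are exactly the conditions in the definition of a weak dg vertex algebra, verified for $\mathcal{E}^{[*]}(W^{[*]})$ in Proposition \ref{prop:5.3.9} and inherited by any weak dg vertex subalgebra). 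So axioms (ii) and (iii) of Definition \ref{def:dgVA} are automatic, as is the derivative formula needed to invoke Theorem \ref{thm:3.5.1}.

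Next I would check the truncation axiom (i): for homogeneous $a(x), b(x) \in V^{[*]}$ we need $a(x)_n b(x) = 0$ for $n$ large. This is precisely the content of Proposition \ref{prop:5.5.12}: since $V^{[*]}$ is dg local, $a(x)$ and $b(x)$ are mutually dg local, so there is $k \in \mathbb{N}$ with $(x_1-x_2)^k[a(x_1),b(x_2)]^s = 0$, and then $a(x)_n b(x) = 0$ for $n \geq k$ by the formula \eqref{eq:5.2.11}. Then for the Jacobi identity (iv), I would invoke Theorem \ref{thm:3.5.1}: the Jacobi identity follows from weak dg commutativity \eqref{eq:3.2.1} together with axioms (i), (ii), (iii) and the derivative formula \eqref{eq:Dderivative_1}. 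All of (i), (ii), (iii), \eqref{eq:Dderivative_1} are now in hand, and weak dg commutativity for $Y_\mathcal{E}(\cdot, x_0)$ on $V^{[*]}$ is exactly the conclusion of Proposition \ref{prop:5.5.13}. Hence $V^{[*]}$ is a dg vertex algebra.

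Finally, to show $W^{[*]}$ is a faithful dg module over $V^{[*]}$ with $Y_W(a(x), x_0) = a(x_0)$: faithfulness is immediate from the fact that $a(x)_0 \on{id}_W = a(x)$-type reconstruction (or, more directly, that $Y_W(a(x), x_0)w = a(x_0)w$ determines $a(x)$ uniquely from its action). For the module structure I would use Corollary \ref{cor:4.4.7}: it suffices to exhibit the iterate formula \eqref{eq:4.4.3} for $Y_W(\cdot, x_0)$, which is nothing but Definition \ref{def:3.2}, equation \eqref{eq:5.2.6}, read off componentwise after noting $Y_\mathcal{E}(a(x), x_0)b(x) = Y_W(Y_V(a(x), x_0)b(x), x) $ — indeed \eqref{eq:5.2.6} literally says $Y_W(a(x)_n b(x), x_0)$ is given by the residue expression in $Y_W(a,x_1)$ and $Y_W(b,x_2)$ that \eqref{eq:4.4.3} requires. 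Together with $Y_W(\on{id}_W, x_0) = \on{id}_W$ and the truncation $Y_W(a(x),x_0)w = a(x_0)w \in W^{[*]}((x))$, Corollary \ref{cor:4.4.7} gives the Jacobi identity for $Y_W$, so $W^{[*]}$ is a $V^{[*]}$-module. The last sentence of the theorem — that dg local subalgebras are \emph{precisely} the dg vertex algebras inside $\mathcal{E}^{[*]}(W^{[*]})$ — is the combination of this theorem with Theorem \ref{thm:5.5.11}, which gives the converse inclusion.

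The main obstacle is bookkeeping rather than mathematics: one must be careful that each cited proposition is being applied with the correct hypotheses — in particular that ``dg local subalgebra'' already packages in ``weak dg vertex subalgebra,'' so the weak-vertex-algebra axioms and the $\mathcal D$-formula come for free, and that the $(-1)^{|a||b|}$ Koszul signs in \eqref{eq:5.2.6} match those in the module Jacobi identity \eqref{eq:jacobi_mod} and iterate formula \eqref{eq:4.4.3}. Once the citations are lined up, the proof is a short assembly. I would also remark that, since $V^{[*]}$ is a subcomplex, its differential is the restriction of $d_\mathcal{E}$, so no compatibility of $d_V$ with $Y_V$ needs separate checking beyond what Proposition \ref{prop:5.3.9} already provides.
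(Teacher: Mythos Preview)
Your proposal is correct and follows essentially the same approach as the paper: invoke Proposition \ref{prop:5.3.9} for the weak dg vertex algebra axioms and the $\mathcal{D}$-bracket formula, Proposition \ref{prop:5.5.12} for truncation, Proposition \ref{prop:5.5.13} for weak dg commutativity, and then Theorem \ref{thm:3.5.1} to conclude the Jacobi identity. The only cosmetic difference is that for the module statement the paper simply cites Theorem \ref{thm:5.5.11} (whose proof is precisely your Corollary \ref{cor:4.4.7} argument), whereas you rederive it; once you have shown $V^{[*]}$ is a dg vertex algebra, it is a dg vertex subalgebra of $\mathcal{E}^{[*]}(W^{[*]})$ by definition, so Theorem \ref{thm:5.5.11} applies verbatim and gives both the module structure and faithfulness in one stroke.
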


\begin{proof}
By Proposition \ref{prop:5.5.13}, we know that $Y_\mathcal{E}(\cdot, x_0)$ satisfies the weak commutativity on $V^{[*]}$. By Propositions \ref{prop:5.3.9} and \ref{prop:5.5.12}, it also satisfies the $\mathcal{D}$-bracket derivative formulas, as well as the other axioms of a dg vertex algebra, minus the Jacobi identity. Then Theorem \ref{thm:3.5.1} implies the $V^{[*]}$ is a dg vertex algebra. Finally, the rest of the statement is obtained from Theorem \ref{thm:5.5.11}.
\end{proof}

There exists a dg version of Dong's lemma:

\begin{proposition}[{\cite[Proposition 5.5.15]{Lepowsky-Li}}] \label{prop:5.5.15}% Computations (71)
Let $a(x), b(x), c(x)$ be homogeneous and pairwise mutually dg local dg fields on $W^{[*]}$. Then $a(x)_n b(x)$ and $c(x)$ are mutually dg local for any $n \in \mathbb{Z}$.
\end{proposition}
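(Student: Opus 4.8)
The plan is to mimic the classical proof of Dong's lemma (\cite[Proposition 5.5.15]{Lepowsky-Li}), adapting each step to keep track of the Koszul signs that appear whenever two homogeneous fields are transposed. The key computational input is the explicit formula \eqref{eq:5.2.11} for $a(x)_n b(x)$ together with the delta-function identity \eqref{eq:5.5.14'} (or its relatives from \cite[\S 2.3]{Lepowsky-Li}). First I would fix homogeneous pairwise dg-local fields $a(x), b(x), c(x)$ and choose a single $k \in \mathbb{N}$ large enough that
\[
(x_1-x_2)^k[a(x_1),b(x_2)]^s = (x_1-x_2)^k[a(x_1),c(x_2)]^s = (x_1-x_2)^k[b(x_1),c(x_2)]^s = 0
\]
simultaneously on $W^{[*]}$. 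Then I would aim to show that $(x_1-x_2)^{3k}[Y_\mathcal{E}(a(x)_n b(x), x_1), Y_\mathcal{E}(c(x), x_2)]^s = 0$, which by definition of mutual dg locality is what we want; by Proposition \ref{prop:5.5.13} it is enough to prove the locality relation at the level of the fields themselves, i.e. $(x_1-x_2)^{3k}[\,(a_n b)(x_1), c(x_2)\,]^s = 0$ acting on $W^{[*]}$.

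The second step is the heart of the matter. Writing $(a_n b)(x_1) = \on{Res}_{x_0}\big[(x_0-x_1)^n a(x_0)b(x_1) - (-1)^{|a||b|}(-x_1+x_0)^n b(x_1)a(x_0)\big]$ from \eqref{eq:5.2.11}, I would substitute this into the super-commutator with $c(x_2)$, expand into (at most) four double products of the shape $a(x_0)b(x_1)c(x_2)$ in various orders, and carefully record the sign prefactors, which for a term obtained by moving $c$ past both $a$ and $b$ is $(-1)^{(|a|+|b|)|c|}$. The decisive move, exactly as in the classical argument, is to multiply by the polynomial $(x_1-x_2)^{3k}$ and, using $(x_1-x_2)^{3k} = \big((x_1-x_0) + (x_0-x_2)\big)^{3k}$ expanded binomially, observe that in every one of the $3k+1$ resulting monomials either $(x_1-x_2)$ appears raised to a power $\geq k$ next to the $[b,c]$-type pair, or a shifted version $(x_0-x_2)^{\geq k}$ appears next to the $[a,c]$-type pair (once one commutes $c$ appropriately using the $[a,b]$ relation, which requires another factor of $(x_0-x_1)^k$ absorbed from the same expansion — this is why $3k$ rather than $2k$ is safe). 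Each such factor kills the corresponding super-commutator by the chosen locality of the pair $(b,c)$, $(a,c)$, or $(a,b)$ respectively, exactly as in \cite{Lepowsky-Li}, the only difference being that each cancellation is now an identity of signed operators and the signs on the two halves of each super-commutator match precisely because $[\,\cdot,\cdot\,]^s$ is defined with the Koszul sign and $|a(x)_n b(x)| = |a| + |b| - 2N(n+1)$ by Proposition \ref{prop:5.2.2}, so moving $(a_n b)(x)$ past $c(x)$ produces the sign $(-1)^{(|a|+|b|)|c|}$, which is what the expansion delivers.

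Finally I would note that $\on{Res}_{x_0}$ commutes with everything in sight, so the residue of a vanishing expression vanishes, giving $(x_1-x_2)^{3k}[\,(a_nb)(x_1), c(x_2)\,]^s = 0$ on $W^{[*]}$, hence $a(x)_n b(x)$ and $c(x)$ are mutually dg local for every $n \in \mathbb{Z}$. I expect the main obstacle to be purely bookkeeping: ensuring that in each of the four expanded terms the sign produced by transposing $c(x_2)$ across the factored-out residue integrand agrees with the sign $(-1)^{(|a|+|b|)|c|}$ demanded by the super-commutator $[(a_nb)(x_1), c(x_2)]^s$, and that the binomial redistribution of $(x_1-x_2)^{3k}$ simultaneously supplies the $k$-th power needed for the $(a,b)$-locality used to re-order factors and the $k$-th power needed for the $(a,c)$- or $(b,c)$-locality used for the final cancellation. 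There are no new analytic difficulties beyond those already handled in Propositions \ref{prop:5.5.12} and \ref{prop:5.5.13}; the dg/super structure only contributes signs, all of which are forced and consistent.
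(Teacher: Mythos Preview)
Your overall strategy is exactly the one the paper uses: substitute \eqref{eq:5.2.11} into the super-commutator with $c$, multiply by a large power of the difference of the two outer variables, redistribute that power binomially, and then invoke the three pairwise localities, checking at the end that the sign produced is $(-1)^{(|a|+|b|)|c|}$. However, two concrete details in your sketch would fail as written.

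First, after writing $(x_1-x_2)^{3k}=\big((x_1-x_0)+(x_0-x_2)\big)^{3k}$ and expanding completely, no monomial contains any power of $(x_1-x_2)$ at all, so your claim that ``$(x_1-x_2)$ appears raised to a power $\geq k$ next to the $[b,c]$-type pair'' is simply false; you never obtain the factor needed for $(b,c)$-locality. The paper repairs this by taking the exponent to be $4r$ and writing $(x-x_2)^{4r}=\big((x-x_1)+(x_1-x_2)\big)^{3r}(x-x_2)^{r}$: only the $3r$-part is expanded binomially, while the unexpanded factor $(x-x_2)^r$ (your $(x_1-x_2)^r$) is kept aside precisely to supply $(b,c)$-locality in every surviving term. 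With only $3k$ and a full expansion, the argument cannot close.

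Second, you fix $k$ depending only on the three locality orders, but the paper also requires $r\geq -n$. This is essential: for $n<0$ the two formal expansions $(x_0-x_1)^n$ and $(-x_1+x_0)^n$ in \eqref{eq:5.2.11} are genuinely different series, and they only coincide (as the polynomial $(x_0-x_1)^{m+n}$) after multiplication by $(x_1-x_0)^m$ with $m+n\geq 0$. In the range of the binomial index where you want the two halves of the integrand to combine into $[a(x_0),b(x_1)]^s$, you need the residual power of $(x_1-x_0)$ plus $n$ to be at least $r$, and this forces $r\geq -n$. Without that condition the low-$s$ terms do not cancel. Once you incorporate these two fixes, your sign analysis (which is correct) and the rest of your outline reproduce the paper's proof verbatim.
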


\begin{proof}
Set $n \in \mathbb{Z}$ and consider $r \in \mathbb{N}$ sucht that $r \geq -n$ and
\[
\left\{\begin{array}{l}
(x_1-x_2)^r[a(x_1), b(x_2)]^s=0, \\[5pt]
(x_1-x_2)^r[a(x_1), c(x_2)]^s=0, \\[5pt]
(x_1-x_2)^r[b(x_1), c(x_2)]^s=0.
\end{array}\right.
\]
From Equation~\eqref{eq:5.2.11}, we know that
\[
a(x)_n b(x)=\on{Res}_{x_1} \big[ (x_1-x)^na(x_1) b(x) -(-1)^{|a||b|}(-x+x_1)^nb(x)a(x_1) \big].
\]
Since $(x-x_2)^{4r}=((x-x_1)+(x_1-x_2))^{3r}(x-x_2)^r$, we have
\[
\begin{array}{l}
(x-x_2)^{4r} \big( (x_1-x)^na(x_1) b(x)c(x_2) -(-1)^{|a||b|}(-x+x_1)^nb(x)a(x_1)c(x_2) \big) \\[5pt]
= \displaystyle \sum_{s=0}^{3r}\binom{3r}{s}(x-x_1)^{3r-s}(x_1-x_2)^{s}(x-x_2)^r \\
\quad \times \bigg( (x_1-x)^na(x_1) b(x)c(x_2) -(-1)^{|a||b|}(-x+x_1)^nb(x)a(x_1)c(x_2) \bigg) \\[5pt]
= \displaystyle \sum_{s=r+1}^{3r}\binom{3r}{s}(x-x_1)^{3r-s}(x_1-x_2)^{s}(x-x_2)^r \\
\quad \times \bigg( (x_1-x)^na(x_1) b(x)c(x_2) -(-1)^{|a||b|}(-x+x_1)^nb(x)a(x_1)c(x_2) \bigg)
\end{array}
\]
because for $0 \leq s \leq r$, we have $3r-s+n \geq r$ and so
\[
(x_1-x)^{3r-s+n}[a(x_1), b(x)]^s=0.
\]
Then by using the assumption on $r$, we get
\[
\begin{array}{l}
(x-x_2)^{4r} \big( (x_1-x)^na(x_1) b(x)c(x_2) -(-1)^{|a||b|}(-x+x_1)^nb(x)a(x_1)c(x_2) \big) \\[5pt]
= \displaystyle \sum_{s=r+1}^{3r}\binom{3r}{s}(x-x_1)^{3r-s}(x_1-x_2)^{s}(x-x_2)^r \bigg( (-1)^{(|a|+|b|)|c|}(x_1-x)^nc(x_2) a(x_1) b(x) \\[5pt]
\hfill -(-1)^{|a||b|}(-1)^{|b||c|}(-1)^{|a||c|}(-x+x_1)^nc(x_2) b(x)a(x_1) \bigg) \\[5pt]
= (-1)^{(|a|+|b|)|c|}(x-x_2)^{4r} \Big( (x_1-x)^nc(x_2)a(x_1) b(x) \\[5pt]
\hfill -(-1)^{|a||b|}(-x+x_1)^nc(x_2)b(x)a(x_1) \Big).
\end{array}
\]
We then take $\on{Res}_{x_1}$ on both sides and obtain
\[
(x-x_2)^{4r} (a(x)_n b(x))c(x_2)=(-1)^{(|a|+|b|)|c|}(x-x_2)^{4r} c(x_2)(a(x)_n b(x)).
\]
But we know that $|a(x)_n b(x)|=|a|+|b|-2N(n+1)$ so $(-1)^{(|a(x)_n b(x)|)|c(x)|}=(-1)^{(|a|+|b|)|c|}$. Thus
\[
(x-x_2)^{4r}[a(x)_n b(x), c(x_2)]^s=0.
\]
\end{proof}

\begin{theorem}[{\cite[Theorem 5.5.17]{Lepowsky-Li}}] \label{thm:5.5.17}% Computations (71)
Any maximal dg local subcomplex of $\mathcal{E}^{[*]}(W^{[*]})$ is a dg vertex algebra with $W^{[*]}$ as a faithful dg module.
\end{theorem}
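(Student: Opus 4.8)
The plan is to combine Zorn's lemma with the dg version of Dong's lemma (Proposition \ref{prop:5.5.15}) and the identification of dg local subalgebras with dg vertex algebras (Theorem \ref{thm:5.5.14}). Concretely, let $V^{[*]}$ be a maximal dg local subcomplex of $\mathcal{E}^{[*]}(W^{[*]})$. First I would check that such maximal objects exist: the collection of dg local subcomplexes of $\mathcal{E}^{[*]}(W^{[*]})$ is nonempty (it contains $\cc\,\on{id}_W$, noting $\on{id}_W$ is dg local with itself since it is central), and it is closed under unions of chains — given a chain of dg local subcomplexes, its union is again a subcomplex (degreewise union of subspaces, stable under $d_{\mathcal{E}}$), and any two homogeneous elements of the union lie in a common member of the chain, hence are mutually dg local. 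So Zorn's lemma applies and a maximal dg local subcomplex $V^{[*]}$ exists.

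Next I would show $V^{[*]}$ is a weak dg vertex subalgebra, i.e., that it is closed under all the products $a(x)_n b(x)$ and contains the vacuum $\on{id}_W$. Since $\cc\,\on{id}_W$ is itself a dg local subcomplex and $V^{[*]}$ is maximal among such, we may replace $V^{[*]}$ by $V^{[*]} + \cc\,\on{id}_W$, which is still dg local ($\on{id}_W$ commutes with everything), and by maximality $\on{id}_W \in V^{[*]}$. For closure under products, take homogeneous $a(x), b(x) \in V^{[*]}$ and $n \in \mathbb{Z}$, and consider the subspace $V^{[*]} + \cc\, a(x)_n b(x)$ (more precisely its degreewise span, together with the $d_{\mathcal{E}}$-image of $a(x)_n b(x)$ to keep it a subcomplex, using the Leibniz rule \eqref{eq:d_epsilon} which expresses $d_{\mathcal{E}}(a(x)_n b(x))$ in terms of products of elements of $V^{[*]}$). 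By Proposition \ref{prop:5.5.15} (dg Dong's lemma), $a(x)_n b(x)$ is mutually dg local with every homogeneous $c(x) \in V^{[*]}$, and $a(x)_n b(x)$ is dg local with itself (apply the lemma with $c(x) = a(x)$, respectively iterate); hence $V^{[*]} + \cc\, a(x)_n b(x)$ (enlarged to a subcomplex as above) is again a dg local subcomplex containing $V^{[*]}$. Maximality forces $a(x)_n b(x) \in V^{[*]}$. Therefore $V^{[*]}$ is closed under all products $a(x)_n b(x)$; together with the creation and $\mathcal{D}_{\mathcal{E}}$-bracket properties inherited from the ambient canonical weak dg vertex algebra $(\mathcal{E}^{[*]}(W^{[*]}), Y_\mathcal{E}(\cdot, x_0), \on{id}_W)$ of Proposition \ref{prop:5.3.9}, this makes $V^{[*]}$ a weak dg vertex subalgebra.

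Finally, $V^{[*]}$ is a dg local subalgebra of $\mathcal{E}^{[*]}(W^{[*]})$ in the sense of the definition preceding Theorem \ref{thm:5.5.11} (it is a weak dg vertex subalgebra that is also a dg local subcomplex), so Theorem \ref{thm:5.5.14} immediately gives that $V^{[*]}$ is a dg vertex algebra and that $W^{[*]}$ is a faithful dg module via $Y_W(a(x), x_0) = a(x_0)$. This completes the proof.

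I expect the only genuinely delicate point to be the bookkeeping in the maximality argument: one must be careful that the enlarged space $V^{[*]} + \cc\, a(x)_n b(x)$ is a \emph{subcomplex} (not merely a graded subspace), which is why one adds the $d_{\mathcal{E}}$-image and invokes \eqref{eq:d_epsilon} to see this image already lies in $V^{[*]}$ plus products of elements of $V^{[*]}$; and that it is \emph{dg local}, which is exactly where Proposition \ref{prop:5.5.15} is used, including the self-locality of $a(x)_n b(x)$. Everything else — existence via Zorn, membership of $\on{id}_W$, and the final invocation of Theorem \ref{thm:5.5.14} — is routine.
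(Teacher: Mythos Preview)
Your proposal is correct and follows essentially the same route as the paper's proof: show $\on{id}_W\in V^{[*]}$ by maximality, use Dong's lemma (Proposition~\ref{prop:5.5.15}) together with the Leibniz rule \eqref{eq:d_epsilon} to see that adjoining $a(x)_n b(x)$ and its differential still yields a dg local subcomplex, conclude $a(x)_n b(x)\in V^{[*]}$ by maximality, and then invoke Theorem~\ref{thm:5.5.14}. The Zorn's lemma paragraph establishing existence is superfluous here (the statement concerns \emph{any} maximal dg local subcomplex), and the paper is a bit more explicit than you are in verifying that $d_{\mathcal{E}}(a(x)_n b(x))$ is mutually dg local with $a(x)_n b(x)$ itself, but you correctly flag this as the delicate point and your sketch of how to handle it is sound.
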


\begin{proof}
Let $A^{[*]}$ be such a subcomplex. Since $\on{id}_W$ is dg local with any dg field on $W^{[*]}$ and $d_\mathcal{E}(\on{id}_W)=0$,  it follows that $A+\cc\on{id}_W$ is a dg local subcomplex of $\mathcal{E}^{[*]}(W^{[*]})$. As $A^{[*]}$ is maximal, it follows that $\on{id}_W \in A^{[*]}$.

Set $a(x), b(x) \in A^{[*]}$ homogeneous. By Proposition \ref{prop:5.5.15}, for any $n \in \mathbb{Z}$, we know that $a(x)_n b(x)$ is dg local with any dg vertex operator in $A^{[*]}$. In particular, $a(x)_n b(x)$ is dg local with $a(x)$ and $b(x)$, so it is dg local with itself. Thus the subcomplex generated by $A^{[*]}$ and $a(x)_n b(x)$ is dg local. Furthermore, Equation~\eqref{eq:d_epsilon} states that
\[
d_\mathcal{E}(a(x)_n b(x))=d_\mathcal{E}(a(x))_nb(x)+(-1)^{|a|}a(x)_n d_\mathcal{E}(b(x)).
\]
As $A^{[*]}$ is a complex, $d_\mathcal{E}(a(x))$ and $d_\mathcal{E}(b(x))$ are also in $A^{[*]}$. With the same reasoning as above, we see that $d_\mathcal{E}(a(x))_n b(x)$ is dg local with $A^{[*]}$ and itself. Hence $d_\mathcal{E}(a(x))_n b(x)$ is dg local with $a(x)$ and $d_\mathcal{E}(b(x))$, so it is dg local with $a(x)_n d_\mathcal{E}(b(x))$. Thus $d_\mathcal{E}(a(x))_n b(x)$ and $a(x)_n d_\mathcal{E}(b(x))$ are mutually dg local dg vertex operators of degree $|a|+|b|+1-2N(n+1)$. It follows that $d_\mathcal{E}(a(x)_n b(x))$ is a dg vertex operator and it is dg local with $A^{[*]}$. Therefore it is dg local with $a(x)_n b(x)$. This means that the subcomplex generated by $A^{[*]}$ and $a(x)_n b(x)$ is dg local. As $A^{[*]}$ is maximal, we get that $a(x)_n b(x) \in A^{[*]}$.

Because of the previous paragraph, $A^{[*]}$ is a weak dg vertex algebra. So it is a dg local subalgebra of $\mathcal{E}^{[*]}(W^{[*]})$, and by Theorem \ref{thm:5.5.14}, it is then a dg vertex algebra with $W^{[*]}$ as a dg module.
\end{proof} 

\begin{theorem}[{\cite[Theorem 5.5.18]{Lepowsky-Li}}] \label{thm:5.5.18}% Computations (71)
Let $S$ be a $d_\mathcal{E}$-stable set of pairwise mutually dg local dg vertex operators on $W^{[*]}$. Then $S$ can be embedded in a dg vertex subalgebra of $\mathcal{E}^{[*]}(W^{[*]})$, and in fact, the weak dg vertex algebra $\langle S \rangle_{\on{WVA}}$ generated by $S$ is a dg vertex algebra with $W^{[*]}$ as a natural faithful module. Furthermore, $\langle S \rangle_{\on{WVA}}$ is the linear span of the elements of the form
\[
a^{(1)}(x)_{n_1} \cdots a^{(r)}(x)_{n_r} \on{id}_W
\]
for $a^{(i)}(x) \in S$, $n_i \in \mathbb{Z}$ and $r \geq 0$.
\end{theorem}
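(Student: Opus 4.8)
The plan is to imitate the classical argument from \cite{Lepowsky-Li}: embed $S$ into a \emph{maximal} dg local subcomplex, apply Theorem \ref{thm:5.5.17} to turn that into a dg vertex algebra, and then locate $\langle S\rangle_{\on{WVA}}$ inside it using Theorem \ref{thm:5.5.14} together with Proposition \ref{prop:3.9.3}. First I would arrange that $S$ consists of homogeneous dg fields, by replacing each element of $S$ with its homogeneous components: a degree count shows that these components are again dg vertex operators, that they remain pairwise mutually dg local, and that the enlarged set is still $d_\mathcal{E}$-stable (since $d_\mathcal{E}$ is homogeneous). Then $\on{Span}(S)$ is a subcomplex of $\mathcal{E}^{[*]}(W^{[*]})$ — it is $d_\mathcal{E}$-stable because $S$ is — and it is a dg local subcomplex, since dg locality of a homogeneous spanning set propagates to all homogeneous elements of the span by bilinearity of the super bracket and by taking the maximum of the finitely many exponents that occur.

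Next I would invoke Zorn's lemma. The key point is that being dg local is a condition on \emph{pairs} of homogeneous elements, so the union of an ascending chain of dg local subcomplexes is again a dg local subcomplex; hence there is a maximal dg local subcomplex $A^{[*]}$ of $\mathcal{E}^{[*]}(W^{[*]})$ containing $\on{Span}(S)$. By Theorem \ref{thm:5.5.17}, $A^{[*]}$ is then a dg vertex algebra (with vacuum $\on{id}_W$ and vertex operator map $Y_\mathcal{E}$) and $W^{[*]}$ is a faithful dg $A^{[*]}$-module with $Y_W(a(x),x_0)=a(x_0)$. Since $S\subseteq A^{[*]}$, this already establishes that $S$ embeds into a dg vertex subalgebra of $\mathcal{E}^{[*]}(W^{[*]})$, giving the first assertion.

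It remains to identify $\langle S\rangle_{\on{WVA}}$. As $A^{[*]}$ is in particular a weak dg vertex subalgebra of $\mathcal{E}^{[*]}(W^{[*]})$ containing $S$, we have $\langle S\rangle_{\on{WVA}}\subseteq A^{[*]}$; being a subcomplex of the dg local subcomplex $A^{[*]}$, it is itself a dg local subcomplex, hence a dg local subalgebra, and Theorem \ref{thm:5.5.14} turns it into a dg vertex algebra for which $W^{[*]}$ is a faithful dg module with $Y_W(a(x),x_0)=a(x_0)$. For the explicit description, write $U$ for the stated linear span and let $\langle S\rangle_{\on{VA}}$ be the smallest dg vertex subalgebra of the dg vertex algebra $A^{[*]}$ containing $S$; Proposition \ref{prop:3.9.3}, applied inside $A^{[*]}$ and using $d_\mathcal{E}(S)\subseteq S$, gives $\langle S\rangle_{\on{VA}}=U$ (the creation property $a(x)_{-1}\on{id}_W=a(x)$ from Proposition \ref{prop:5.3.9} shows $S\subseteq U$). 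Since $\langle S\rangle_{\on{WVA}}$ contains $S$ and $\on{id}_W$ and is closed under all $n$-th products, it contains every iterated product, so $U\subseteq\langle S\rangle_{\on{WVA}}$; conversely $\langle S\rangle_{\on{VA}}$ is a weak dg vertex subalgebra of $\mathcal{E}^{[*]}(W^{[*]})$ containing $S$, whence $\langle S\rangle_{\on{WVA}}\subseteq\langle S\rangle_{\on{VA}}=U$. Therefore $\langle S\rangle_{\on{WVA}}=U$, which finishes the proof.

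The heavy lifting — closure under $n$-th products via the dg Dong lemma (Proposition \ref{prop:5.5.15}), the $\mathcal{D}$-bracket and weak-commutativity verifications, and the passage from weak commutativity to the full Jacobi identity — is already contained in Proposition \ref{prop:5.5.15} and Theorems \ref{thm:5.5.14}--\ref{thm:5.5.17}, so the present proof is essentially an assembly. The one place I expect to require genuine care is the reduction to homogeneous dg vertex operators and the attendant check that $\on{Span}(S)$ is really dg local: the degree bookkeeping for the homogeneous components is slightly more delicate when $N\neq 0$, because the formal variables then carry nonzero cohomological degree. The Zorn's lemma step needs only the routine observations that the poset of dg local subcomplexes containing $S$ is nonempty and closed under unions of chains.
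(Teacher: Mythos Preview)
Your proof is correct and follows essentially the same route as the paper's: embed $S$ in a maximal dg local subcomplex via Zorn's lemma, apply Theorem~\ref{thm:5.5.17} to get a dg vertex algebra, observe that $\langle S\rangle_{\on{WVA}}$ sits inside as a dg local subalgebra (hence is itself a dg vertex algebra), and invoke Proposition~\ref{prop:3.9.3} for the spanning description. Your version is more explicit than the paper's in two respects --- the reduction to homogeneous elements and the verification that $\on{Span}(S)$ is a dg local subcomplex before applying Zorn --- which the paper leaves implicit, but the underlying argument is the same.
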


\begin{proof}
By Zorn's lemma, there exists a maximal dg local subcomplex $V^{[*]}$ containing $S$. By Theorem \ref{thm:5.5.17}, we know that $V^{[*]}$ is a dg vertex algebra with $W^{[*]}$ as a natural module. Since $\langle S \rangle_{\on{WVA}}$ is a weak dg vertex subalgebra of $V^{[*]}$, it is necessarily a dg vertex algebra, i.e. $\langle S \rangle_{\on{WVA}}=\langle S \rangle_{\on{VA}}$,  and $W^{[*]}$ is an $\langle S \rangle_{\on{WVA}}$-module. The rest is a consequence of Proposition \ref{prop:3.9.3}.
\end{proof}

\subsection{Construction of dg vertex algebras and their modules}

\begin{theorem}[{\cite[Theorem 5.6.1]{Lepowsky-Li}}] \label{thm:5.6.1}% Computations (71)
Let $S$ be a $d_\mathcal{E}$-stable dg local subset of the weak dg vertex algebra $\mathcal{E}^{[*]}(W^{[*]}, \mathcal{D}_W)$. Then the weak dg vertex subalgebra $\langle S \rangle_{\on{WVA}}$ of $\mathcal{E}^{[*]}(W^{[*]})$ generated by $S$ is a dg vertex subalgebra of $\mathcal{E}^{[*]}(W^{[*]}, \mathcal{D}_W)$, and the tuple $(W^{[*]}, Y_W(\cdot, x_0), \mathcal{D}_W)$ is a faithful module for $\langle S \rangle_{\on{WVA}}$ with $Y_W(a(x), x_0)=a(x_0)$ for $a(x) \in \langle S \rangle_{\on{WVA}}$. In particular,
\[
Y_W(\mathcal{D}a(x), x_0)=[\mathcal{D}_W, Y_W(a(x), x_0)]^s \text{ for all } a(x) \in \langle S \rangle_{\on{WVA}}.
\]
\end{theorem}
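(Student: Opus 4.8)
The plan is to assemble this statement from the machinery already developed in this section; the only genuinely new point is that the weak dg vertex subalgebra $\langle S \rangle_{\on{WVA}}$ cannot leave $\mathcal{E}^{[*]}(W^{[*]}, \mathcal{D}_W)$ once it is built inside it.

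First I would note that since $S$ is dg local, each homogeneous $a(x) \in S$ is in particular dg local with itself, hence a dg vertex operator; combined with the hypothesis that $S$ is $d_\mathcal{E}$-stable, we land exactly in the hypotheses of Theorem \ref{thm:5.5.18}. That theorem immediately yields that $\langle S \rangle_{\on{WVA}}$ is a dg vertex algebra (so in fact $\langle S \rangle_{\on{WVA}} = \langle S \rangle_{\on{VA}}$), that $W^{[*]}$ is a faithful module for it with $Y_W(a(x), x_0) = a(x_0)$, and that $\langle S \rangle_{\on{WVA}}$ is spanned by the iterated products $a^{(1)}(x)_{n_1} \cdots a^{(r)}(x)_{n_r} \on{id}_W$ with $a^{(i)}(x) \in S$.

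Next I would show $\langle S \rangle_{\on{WVA}} \subseteq \mathcal{E}^{[*]}(W^{[*]}, \mathcal{D}_W)$. By Proposition \ref{prop:5.4.1} the latter is a weak dg vertex subalgebra of $\mathcal{E}^{[*]}(W^{[*]})$, and it contains $S$ by hypothesis; since $\langle S \rangle_{\on{WVA}}$ is the smallest weak dg vertex subalgebra of $\mathcal{E}^{[*]}(W^{[*]})$ containing $S$, the inclusion follows. (Alternatively one can verify directly, via the spanning set above and Equation \eqref{eq:d_epsilon} together with the argument of Proposition \ref{prop:5.4.1}, that each iterated product lies in $\mathcal{E}^{[*]}(W^{[*]}, \mathcal{D}_W)$, but the inclusion argument avoids this.) As $\on{id}_W \in \langle S \rangle_{\on{WVA}}$ and, by the previous paragraph, $\langle S \rangle_{\on{WVA}}$ is a dg vertex algebra, it is therefore a dg vertex subalgebra of $\mathcal{E}^{[*]}(W^{[*]}, \mathcal{D}_W)$.

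Finally, for the compatibility of the module structure with $\mathcal{D}_W$, I would observe that the inclusion $\langle S \rangle_{\on{WVA}} \hookrightarrow \mathcal{E}^{[*]}(W^{[*]}, \mathcal{D}_W)$ is a homomorphism of weak dg vertex algebras (it is one into $\mathcal{E}^{[*]}(W^{[*]})$ by the very construction of $\langle S \rangle_{\on{WVA}}$, and its image lands inside the subalgebra $\mathcal{E}^{[*]}(W^{[*]}, \mathcal{D}_W)$), so Theorem \ref{thm:5.4.2} promotes $(W^{[*]}, Y_W(\cdot, x_0))$ to a module $(W^{[*]}, Y_W(\cdot, x_0), \mathcal{D}_W)$ compatible with $\mathcal{D}_W$, and unwinding that compatibility is precisely the asserted identity $Y_W(\mathcal{D}a(x), x_0) = [\mathcal{D}_W, Y_W(a(x), x_0)]^s$. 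The one point to handle carefully is that the module action obtained from Theorem \ref{thm:5.4.2} applied to the inclusion coincides with the one from Theorem \ref{thm:5.5.18}, i.e. both are $a(x) \mapsto a(x_0)$; this is immediate, since in each case the action of a field $a(x) \in \mathcal{E}^{[*]}(W^{[*]})$ on $W^{[*]}$ is its evaluation at the module variable. I do not expect any real obstacle: the substance was already packaged into Theorems \ref{thm:5.5.18}, \ref{thm:5.4.2} and Proposition \ref{prop:5.4.1}, so the proof is a matter of invoking them in the right order.
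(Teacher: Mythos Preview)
Your proposal is correct and follows essentially the same approach as the paper's own proof: invoke Theorem \ref{thm:5.5.18} to obtain the dg vertex algebra structure and faithful module, use Proposition \ref{prop:5.4.1} together with the minimality of $\langle S \rangle_{\on{WVA}}$ to get the inclusion into $\mathcal{E}^{[*]}(W^{[*]}, \mathcal{D}_W)$, and then apply Theorem \ref{thm:5.4.2} for the $\mathcal{D}_W$-compatibility. Your version is slightly more explicit about checking hypotheses and reconciling the two module actions, but the argument is the same.
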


\begin{proof}
By Theorem \ref{thm:5.5.18}, we know that $\langle S \rangle_{\on{WVA}}=\langle S \rangle_{\on{VA}}$ is a dg vertex subalgebra of $\mathcal{E}^{[*]}(W^{[*]})$ and $W^{[*]}$ is a faithful module for $\langle S \rangle_{\on{VA}}$. Since $S \subseteq \mathcal{E}^{[*]}(W^{[*]}, \mathcal{D}_W)$ and $\mathcal{E}^{[*]}(W^{[*]}, \mathcal{D}_W)$ is a weak dg vertex subalgebra of $\mathcal{E}^{[*]}(W^{[*]})$ by Proposition \ref{prop:5.4.1}, it follows that $\langle S \rangle_{\on{VA}}$ is a dg vertex subalgebra of $\mathcal{E}^{[*]}(W^{[*]}, \mathcal{D}_W)$. The rest follows from Theorem \ref{thm:5.4.2}.
\end{proof}

In order to state the next result, we first need to introduce some notations. Let $T$ be a $d_V$-stable subset of a cochain complex $V^{[*]}$ with a distinguished vector $\mathbf{1} \in V^{[0]}$, an endomorphism $\mathcal{D}_V$ of degree $2N$ such that $\mathcal{D}_V(\mathbf{1})=0$, and a chain map $Y_0(\cdot, x):  T  \longrightarrow \mathcal{E}^{[*]}(V^{[*]})$. Such a subset may satisfy different properties, which we name below:
\begin{enumerate}[leftmargin=*, label=Property (\alph*):]
\item For any $a \in T$,
\begin{align}\label{eq:5.7.2}
Y_0(a, x)\mathbf{1} \in V^{[*]}[[x]] \text{ and } \lim_{x \to 0}Y_0(a, x)\mathbf{1}=a.
\end{align}

\item For any $a \in T$,
\begin{align}\label{eq:5.7.3}
[\mathcal{D}_V, Y_0(a, x)]^s=\frac{d}{dx}Y_0(a, x).
\end{align}

\item For $a, b \in T$, there exists $k \in \mathbb{N}$ such that
\begin{align}\label{eq:5.7.4}
(x_1-x_2)^k[Y_0(a, x_1), Y_0(b, x_2)]^s=0.
\end{align}

\item $V^{[*]}$ is linearly spanned by the vectors
\begin{align}\label{eq:5.7.5}
a_{n_1}^{(1)} \cdots a_{n_r}^{(r)}\mathbf{1}
\end{align}
where $r \geq 0$, $a^{(i)} \in T$, $n_i \in \mathbb{Z}$.
\end{enumerate}

\begin{theorem}[{\cite[Theorem 5.7.1]{Lepowsky-Li}}] \label{thm:5.7.1}% Computations (71)
Let $V^{[*]}$ be a cochain complex with a distinguished vector $\mathbf{1} \in V^{[0]}$ such that $d_V(\mathbf{1})=0$, and with an endomorphism $\mathcal{D}_V$ of degree $2N$ such that $\mathcal{D}_V(\mathbf{1})=0$. Let $T$ be a $d_V$-stable subset of $V^{[*]}$ equipped with a chain map
\begin{align}\label{eq:5.7.1}
\begin{array}{rccl}
Y_0(\cdot, x): & T & \longrightarrow & \mathcal{E}^{[*]}(V^{[*]}) \\[5pt]
 & a & \longmapsto & Y_0(a, x)=\displaystyle \sum_{n \in \mathbb{Z}}a_n x^{-n-1}.
 \end{array}
\end{align}
Assume that $T$ satisfies Properties (a), (b), (c) and (d). Then $Y_0(\cdot, x)$ can be extended to a chain map $Y(\cdot, x)$ from $V^{[*]}$ to $\on{Hom}^{[*]}\big(V^{[*]}, V^{[*]}((x))\big)$ such that $(V^{[*]}, Y(\cdot, x), \mathbf{1})$ carries a structure of a dg vertex algebra. The vertex operator $Y(\cdot, x)$ is given by
\begin{align}\label{eq:5.7.6}
Y(a_{n_1}^{(1)} \cdots a_{n_r}^{(r)}\mathbf{1}, x)=a^{(1)}(x)_{n_1} \cdots a^{(r)}(x)_{n_r}\on{id}_V
\end{align}
where for $a \in T$ we write
\begin{align}\label{eq:5.7.7}
a(x)=Y_0(a, x).
\end{align}
The operator $\mathcal{D}_V$ on $V^{[*]}$ agrees with $\mathcal{D}$, i.e.
\begin{align}\label{eq:5.7.8}
\mathcal{D}_V(v)=\mathcal{D}(v)=v_{-2}\mathbf{1}.
\end{align}
Furthermore, by writing
\begin{align}\label{eq:5.7.9}
T(x)=\{a(x) \ | \ a \in T  \} \subseteq \mathcal{E}^{[*]}(V^{[*]}),
\end{align}
the map
\begin{align}\label{eq:5.7.10}
\begin{array}{rccl}
\psi: & \langle T(x) \rangle_{\on{WVA}} & \longrightarrow & V^{[*]} \\[5pt]
 & \alpha(x) & \longmapsto & \on{Res}_x x^{-1}\alpha(x)\mathbf{1}
  \end{array}
\end{align}
is an isomorphism of dg vertex algebras.
\end{theorem}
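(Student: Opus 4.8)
The plan is to construct, inside $\mathcal{E}^{[*]}(V^{[*]})$, the dg vertex algebra generated by the fields attached to $T$, then to transport that structure to $V^{[*]}$ along $\psi$ and read off every assertion from the isomorphism. First I would set $T(x)=\{Y_0(a,x)\mid a\in T\}\subseteq\mathcal{E}^{[*]}(V^{[*]})$. Since $Y_0$ is a chain map and $T$ is $d_V$-stable, $T(x)$ is $d_\mathcal{E}$-stable; Property (c) says the (homogeneous components of the) members of $T(x)$ are pairwise mutually dg local, hence in particular dg vertex operators; and Property (b) says $T(x)\subseteq\mathcal{E}^{[*]}(V^{[*]},\mathcal{D}_V)$. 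Thus Theorem \ref{thm:5.6.1}, together with Theorem \ref{thm:5.5.18}, applies with $S=T(x)$: the weak dg vertex subalgebra $U:=\langle T(x)\rangle_{\on{WVA}}=\langle T(x)\rangle_{\on{VA}}$ is a dg vertex subalgebra of $\mathcal{E}^{[*]}(V^{[*]},\mathcal{D}_V)$; the tuple $(V^{[*]},Y_W(\cdot,x_0),\mathcal{D}_V)$ with $Y_W(a(x),x_0)=a(x_0)$ is a faithful $\mathcal{D}_V$-compatible $U$-module; and $U$ is the linear span of the fields $a^{(1)}(x)_{n_1}\cdots a^{(r)}(x)_{n_r}\on{id}_V$ with $a^{(i)}\in T$, $n_i\in\mathbb{Z}$, $r\ge 0$.

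Next I would recognise $\psi$ as a module isomorphism. Because $\mathcal{D}_V(\mathbf{1})=d_V(\mathbf{1})=0$, Proposition \ref{prop:4.7.4} gives $Y_W(\beta(x),x_0)\mathbf{1}=e^{x_0\mathcal{D}_V}\beta_{-1}\mathbf{1}$ for every $\beta(x)\in U$, where $\beta_{-1}$ denotes the $(-1)$-st mode of the field $\beta$ (since $Y_W(\beta(x),x_0)=\beta(x_0)$); in particular $\beta(x)\mathbf{1}\in V^{[*]}[[x]]$ and $\psi(\beta(x))=\on{Res}_x x^{-1}\beta(x)\mathbf{1}=\beta_{-1}\mathbf{1}$, so $\psi$ is precisely the map $f$ of Proposition \ref{prop:4.7.9} (with $|\mathbf{1}|=0$). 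By Property (d), $V^{[*]}$ is the span of the vectors \eqref{eq:5.7.5}, each of which lies in the $U$-submodule generated by $\mathbf{1}$ as $a^{(i)}(x)\in U$ has modes $a^{(i)}_n$; hence $\mathbf{1}$ generates $V^{[*]}$. Moreover the module $V^{[*]}$ is faithful over $U$ because $U\subseteq\mathcal{E}^{[*]}(V^{[*]})$ acts tautologically. Proposition \ref{prop:4.7.9} then shows that $\psi\colon U\to V^{[*]}$ is a $U$-module isomorphism, in particular a chain isomorphism, with $\psi(\on{id}_V)=\mathbf{1}$ and with the intertwining property $\psi\big(Y_\mathcal{E}(\beta(x),x_0)\gamma(x)\big)=Y_W(\beta(x),x_0)\psi(\gamma(x))=\beta(x_0)\,\psi(\gamma(x))$.

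With $\psi$ available I would define the vertex operator on $V^{[*]}$ by letting $Y(v,x)$ be the field $\psi^{-1}(v)\in U\subseteq\mathcal{E}^{[*]}(V^{[*]})=\on{Hom}^{[*]}(V^{[*]},V^{[*]}((x)))$ itself. This is a chain map (composite of the chain isomorphism $\psi^{-1}$ with the inclusion $U\hookrightarrow\mathcal{E}^{[*]}(V^{[*]})$), and the intertwining property says exactly that $\psi$ carries $(U,Y_\mathcal{E},\on{id}_V)$ isomorphically onto $(V^{[*]},Y,\mathbf{1})$, so $(V^{[*]},Y,\mathbf{1})$ is a dg vertex algebra and $\psi$ an isomorphism of dg vertex algebras. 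To see that $Y$ extends $Y_0$ I would use Property (a): for $a\in T$, $\psi(Y_0(a,x))=a_{-1}\mathbf{1}=\lim_{x\to 0}Y_0(a,x)\mathbf{1}=a$, so $\psi^{-1}(a)=Y_0(a,x)$ and $Y(a,x)=Y_0(a,x)$. Formula \eqref{eq:5.7.6} then follows from $\psi\big(a^{(1)}(x)_{n_1}\cdots a^{(r)}(x)_{n_r}\on{id}_V\big)=a^{(1)}_{n_1}\cdots a^{(r)}_{n_r}\mathbf{1}$, obtained by iterating the mode form $\psi(a(x)_n\gamma(x))=a_n\psi(\gamma(x))$ of the intertwining property starting from $\psi(\on{id}_V)=\mathbf{1}$. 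Finally, comparing coefficients of $x_0$ in $\beta(x_0)\mathbf{1}=e^{x_0\mathcal{D}_V}\beta_{-1}\mathbf{1}$ gives $\mathcal{D}_V(\beta_{-1}\mathbf{1})=\beta_{-2}\mathbf{1}$; writing $v=\psi(\beta(x))=\beta_{-1}\mathbf{1}$ and noting that $v_{-2}=\beta_{-2}$ is the coefficient of $x$ in $Y(v,x)=\beta(x)$, this reads $\mathcal{D}_V(v)=v_{-2}\mathbf{1}=\mathcal{D}(v)$, which is \eqref{eq:5.7.8}.

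I expect the main difficulty to be organisational rather than computational: one must verify all the hypotheses of Proposition \ref{prop:4.7.9} — faithfulness, generation of $V^{[*]}$ by $\mathbf{1}$ via Property (d), and annihilation of $\mathbf{1}$ by both $\mathcal{D}_V$ and $d_V$ — and then unwind the transported operator $Y$ carefully enough to confirm that it genuinely restricts to the prescribed $Y_0$ on $T$. The latter is the step where Property (a) is indispensable: without it $\psi$ would still be an isomorphism but not the one identifying $Y$ with $Y_0$. A secondary point requiring care is the propagation of Koszul signs through the locality estimates feeding Theorems \ref{thm:5.5.18} and \ref{thm:5.6.1}, though those signs are already accounted for in the cited results.
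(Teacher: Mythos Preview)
Your proposal is correct and follows essentially the same route as the paper: build the dg vertex algebra $\langle T(x)\rangle_{\on{WVA}}$ inside $\mathcal{E}^{[*]}(V^{[*]},\mathcal{D}_V)$ via Theorems \ref{thm:5.5.18} and \ref{thm:5.6.1}, invoke Proposition \ref{prop:4.7.9} with $w=\mathbf{1}$ to see that $\psi$ is a module isomorphism, transport the structure, and then use Property (a) to identify $Y$ with $Y_0$ on $T$. The only cosmetic difference is in the verification of $\mathcal{D}_V=\mathcal{D}$: the paper checks this on the spanning set $Y(a^{(1)},x_1)\cdots Y(a^{(r)},x_r)\mathbf{1}$ using the derivation properties of both operators, while you extract it directly from the coefficient of $x_0$ in $\beta(x_0)\mathbf{1}=e^{x_0\mathcal{D}_V}\beta_{-1}\mathbf{1}$; both arguments are valid.
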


\begin{proof}
The uniqueness and Formula~\eqref{eq:5.7.6} follow from Remark \ref{rem:5.2.5} and the spanning hypothesis. By Formulas~\eqref{eq:5.7.3}, \eqref{eq:5.7.4} and \eqref{eq:5.7.1}, $T(x)$ is a dg local subset of $\mathcal{E}^{[*]}(V^{[*]}, \mathcal{D}_V)$. For any $a \in T$, $Y_0(\cdot, x)$ satisfies
\[
d_\mathcal{E}(Y_0(a, x))=Y_0(d_V(a), x)
\]
and $d_V(a) \in T$, so we see that $d_\mathcal{E}(Y_0(a, x)) \in T(x)$, hence $T(x)$ is stable by the differential $d_\mathcal{E}$ of $\mathcal{E}^{[*]}(V^{[*]}, \mathcal{D}_V)$. We can apply Theorem \ref{thm:5.6.1} to show that $\langle T(x) \rangle_{\on{WVA}}$ is a dg vertex subalgebra of $\mathcal{E}^{[*]}(V^{[*]}, \mathcal{D}_V)$ with $(V^{[*]}, \mathcal{D}_V)$ as a faithful module. The action is given by $Y_V(\alpha(x), x_0)=\alpha(x_0)$ for $\alpha(x) \in \langle T(x) \rangle_{\on{WVA}}$.

The map $\psi$ in Formula~\eqref{eq:5.7.10} can be expressed as
\[
\psi(\alpha(x))=\on{Res}_{x_0}(x_0^{-1}\alpha(x_0)\mathbf{1})=\on{Res}_{x_0}(x_0^{-1}Y_V(\alpha(x), x_0)\mathbf{1}).
\]
We then apply Proposition \ref{prop:4.7.9} with $w=\mathbf{1}$ and get a $\langle T(x) \rangle_{\on{WVA}}$-module isomorphism
\[
\begin{array}{rccl}
\psi: & \langle T(x) \rangle_{\on{WVA}} & \longrightarrow & V^{[*]} \\[5pt]
 & \alpha(x) & \longmapsto & (-1)^{|\mathbf{1}|}\alpha(x)_{-1}\mathbf{1}=\on{Res}_{x_0}(x_0^{-1}Y_V(\alpha(x), x_0)\mathbf{1}).
  \end{array}
\]
We then regard $\psi$ as a dg vertex algebra isomorphism, and so the dg vertex algebra structure of $\langle T(x) \rangle_{\on{WVA}}$ is transported to $V^{[*]}$. We write $Y(\cdot, x)$ for the vertex operator map on $V^{[*]}$.

We have
\[
\psi(\on{id}_V)=\on{Res}_{x_0}(x_0^{-1}Y_V(\on{id}_V, x_0)\mathbf{1})=\mathbf{1}.
\]
As $\on{id}_V$ is the vacuum vector of $\langle T(x) \rangle_{\on{WVA}}$ and $\psi$ is a dg vertex algebra isomorphism, it follows that $\mathbf{1}$ is the vacuum vector of $V^{[*]}$.

For $a \in T$, we have
\[
\psi(a(x))  =  \on{Res}_{x_0}(x_0^{-1}Y_V(a(x), x_0)\mathbf{1})= a \text{ by Formula}~\eqref{eq:5.7.2}.
\]
Moreover, we get $\psi(Y_\mathcal{E}(a(x), x_0)b(x))=Y(\psi(a(x)), x_0)\psi(b(x))$ because $\psi$ is a homomorphism of dg vertex algebras. Hence
\[
\begin{array}{rcl}
Y(a, x_0) & = &\psi Y_\mathcal{E}(\psi^{-1}(a), x_0) \psi^{-1} \quad \quad \quad \text{ because }\psi \text{ is invertible} \\[5pt]
& = & Y_V(\psi^{-1}(a), x_0)  \quad \text{ because }\psi \text{ is a homomorphism of }\langle T(x)  \rangle_{\on{WVA}} \text{-modules} \\[5pt]
& = & Y_V(a(x), x_0) \\[5pt]
& = & a(x_0).
\end{array}
\]
Thus the map $Y(\cdot, x): V^{[*]} \longrightarrow \on{Hom}^{[*]}\big(V^{[*]}, V^{[*]}((x))\big)$ is an extension of $Y_0(\cdot, x)$.

By Formula~\eqref{eq:5.7.5} and the fact the $Y(a, x)=Y_0(a, x)$ for $a \in T$, we note that $V^{[*]}$ is linearly spanned by the coefficients of all monomials in the expressions
\[
X=Y(a^{(1)}, x_1) Y(a^{(2)}, x_2) \cdots Y(a^{(r)}, x_r)\mathbf{1}
\]
for $a^{(i)} \in T$. Because of the dg vertex algebra structure of $V^{[*]}$, we know that $[\mathcal{D}, Y(v, x)]^s=\frac{d}{dx}Y(v, x)$ and $\mathcal{D}(\mathbf{1})=0$. Hence 
\[
\mathcal{D}(X)=\sum_{i=1}^r  Y(a^{(1)}, x_1) \cdots \frac{d}{d x_i} Y(a^{(i)}, x_i) \cdots Y(a^{(r)}, x_r)\mathbf{1}.
\]
using that $\mathcal{D}$ is even. But then, by using Formula~\eqref{eq:5.7.3} and $\mathcal{D}_V(\mathbf{1})=0$, we get
\[
\mathcal{D}_V(X)=\sum_{i=1}^r  Y(a^{(1)}, x_1) \cdots \frac{d}{d x_i} Y(a^{(i)}, x_i) \cdots Y(a^{(r)}, x_r)\mathbf{1}.
\]
Therefore, $\mathcal{D}$ and $\mathcal{D}_V$ are equal on the elements that linearly span $V^{[*]}$, so $\mathcal{D}=\mathcal{D}_V$.
\end{proof}

\section{Differential graded vertex Lie algebras}\label{sec:4}
We want to generalise Primc's construction (\cite{Primc}) of vertex algebras using vertex Lie algebras to the dg setting. We first need to introduce a notion of dg vertex Lie algebra.

\delete{
We will follow the notation from \cite{Caradot-Jiang-Lin-4}. We recall that for any two differential complexes $(V^{[*]}, d_V^{[*]})$ and $ (U^{[*]}, d_U^{[*]})$ in $ \Ch$, we have 
\begin{align}
\Hom^{[*]}_{\bb C}(U^{[*]}\otimes \bb C[t, t^{-1}], V^{[*]}) = \Hom^{[*]}_{\bb C}(U^{[*]}, V^{[*]})[[x, x^{-1}]]
\end{align}
under the correspondence $ f \longmapsto \sum_{n\in {\bb Z}}f(-\otimes t^n)x^{-n-1}$. 

Here $f(v\otimes t^n)\in W^{[*]}$ is a homogeneous element if $ v\in V^{[*]}$ is homogeneous element. In particular
\begin{align}
\Hom^{[*]}_{\bb C}(U^{[*]}\otimes {\bb C}[t], V^{[*]})=x^{-1}\Hom^{[*]}_{\bb C}(U^{[*]}, V^{[*]})[[ x^{-1}]].
\end{align}
We note that $ {\bb C}[t]$ is the algebra of regular functions on the affine line $ {\bb A}^1$. But we want the fix a point $0\in {\bb A}^1$  which corresponds to the maximal ideal $\mf m=\langle t \rangle$. The algebra ${\bb C}[t]$ is equipped with an $ \mf m$-adic topology. With the discrete topology on the vector spaces $V^{[*]}$ and $U^{[*]}$, then $ U^{[*]}\otimes {\bb C}[t]$ has a linear topology with neighbourhoods of $0$ of the form $ u\otimes \mf m^i$. A linear map 
 $f\in \Hom^{[*]}_{\bb C}(U^{[*]}\otimes {\bb C}[t], V^{[*]})$ is called continuous if $ f^{-1}(0)$ open in $ V\otimes {\bb C}[t]$. This is equivalent to the factor for each $ v\in V$, $f(u\otimes \mf m^r)=0$ for some $r$ (depending on $ u$. We will use $ \Hom_{{\bb C}, c}^{[*]}(U^{[*]}\otimes {\bb C}[t], V^{[*]})$ to denote the space of continuous linear maps. 

Thus $f$ is continues if any only if $ \sum_{n\in {\bb N}}f(u\otimes t^n)x^{-n-1}\in x^{-1}V[x^{-1}]$. 
}

\begin{definition}\label{def:4.1} %Computations (71)
A \textbf{differential graded (dg) vertex Lie algebra}, or dg VLA, in $\Ch$  is a cochain complex $(U^{[*]}, d_U^{[*]})$ over $ \cc$ equipped with a chain map in $ \Ch$ 
\begin{align*}
\begin{array}{cccc}
Y^-(\cdot, x): & U^{[*]} & \longrightarrow & x^{-1}\End^{[*]}(U^{[*]}))[[x^{-1}]] \\
           & u & \longmapsto      & Y^-(v, x)=\displaystyle \sum_{n \in \mathbb{N}}u_{(n)} x^{-n-1}
\end{array}
\end{align*}
with $x$ of degree $-2N$, $N \in \mathbb{Z}$, and with a chain map
\[
\mathcal{D}:U^{[*]} \longrightarrow U[2N]^{[*]}
\]
such that, for any $u, v \in U^{[*]}$ dg homogeneous
\begin{enumerate}[label=(\roman*).]\setlength\itemsep{5pt}
\item $u_{(n)}v=0$ for $n \gg 0$, i.e., $Y^-(u, x)v \in x^{-1}U^{[*]}[x^{-1}]$;
\item $[\mathcal{D}, Y^-(v, x)]^{s}= Y^-(\mathcal{D}v, x)=\frac{d}{dx}Y^-(v, x)$;
\item $Y^-(u, x)v \simeq (-1)^{|u||v|}e^{x\cal D}Y^-(v,-x)u$ \quad (half skew-symmetry);
\item $\begin{array}[t]{l@{\hspace{0cm}}l}
 x_2^{-1}& \delta(\frac{x_1-x_0}{x_2})Y^-(Y^-(u, x_0)v, x_2) \simeq \hfill  \text{(half Jacobi identity)} \\[7pt]
& x_0^{-1}\delta(\frac{x_1-x_2}{x_0})Y^-(u, x_1)Y^-(v, x_2)- (-1)^{|u||v|}x_0^{-1}\delta(\frac{x_2-x_1}{-x_0})Y^-(v, x_2)Y^-(u, x_1),
\end{array}$

where $\delta(x+y)=\displaystyle \sum_{n \in \mathbb{Z}}\sum_{m=0}^{\infty}\binom{n}{m}x^{n-m}y^m$.
\end{enumerate}
In the above expressions, $\simeq$ means that the singular parts are equal, i.e., the coefficients of strictly negative powers of $x$ are equal.
\end{definition}

\begin{remark}
The reader will notice that the notation for the vertex operator map of the dg vertex Lie algebra is slightly different than the one of the dg vertex algebra. Indeed, the modes of an element $u \in U^{[*]}$ are written as $u_{(n)}$, whereas for a dg vertex algebra $V^{[*]}$, the modes of $v \in V^{[*]}$ are written as $v_{n}$. The purpose of this distinction is to differentiate clearly between the action of the dg vertex Lie algebra and that of the dg vertex algebra we will construct (cf. Section \ref{sec:5}).
\end{remark}

As $Y^-(\cdot, x)$ is a chain map, we see that $u_{(n)} \in \on{End}^{[|u|-2N(n+1)]}(U^{[*]})$, so
\[
|u_{(n)}v|=|u|+|v|-2N(n+1).
\]

It is straightforward to verify that for any $u \in U^{[|u|]}$, $v \in U^{[*]}$, and $n \in \mathbb{N}$,
\[
d_U(u_{(n)}v)=d_U(u)_{(n)}v+(-1)^{|u|-2N(n+1)}u_{(n)}d_U(v).
\]

We can see directly that for a dg vertex algebra $(V^{[*]}, Y(\cdot, x), \mathbf{1})$ with translation operator $\mathcal{D}$, there is a dg vertex Lie algebra structure on $V^{[*]}$ given by
\[
Y^-(v, x)=\sum_{n \geq 0}v_n x^{-n-1}.
\]
In this case, for any $n \in \mathbb{N}$, we have $v_{(n)}=v_n$. Based on this observation and the definition above, a dg vertex Lie algebra can be seen as half of a dg vertex algebra. We write $\on{Restr}^{[*]}(-):\on{dgVA} \longrightarrow \on{dgVLA}$ for the functor from the category of dg vertex algebras to the category of dg vertex Lie algebras sending a dg vertex algebra $(V^{[*]}, Y(\cdot, x), \mathbf{1})$ to the dg vertex Lie algebra $(V^{[*]}, Y^-(\cdot, x), \mathcal{D})$.

\begin{remark}\label{rem:4.3} %Computations (73)
In the case of a dg vertex Lie algebra $U^{[*]}$, we do not have dg locality for the operator $Y^-(\cdot, x)$ as in Proposition \ref{prop:3.2.1}. Because we only have the half Jacobi identity, we get that, for any $u, v \in U^{[*]}$ homogeneous, there exists $k \in \mathbb{N}$ such that
\begin{align*}
(x_1-x_2)^k[Y(u, x_1), Y(v, x_2)]^s \simeq 0,
\end{align*}
i.e., we only have dg locality on the singular part.
\end{remark}

\begin{definition} %Computations (71)
A homomorphism $\varphi: U^{[*]} \longrightarrow W^{[*]}$ of dg VLA is a chain map such that $\varphi \circ \mathcal{D}_U=\mathcal{D}_W \circ \varphi$ and satisfying
\[
\varphi(Y_U^-(u, x)v)=Y_W^-(\varphi(u), x)\varphi(v).
\]
By identifying each coefficient, we get
\[
\varphi(u_{(n)}v)=\varphi(u)_{(n)}\varphi(v) \quad \forall n \geq 0.
\]
\end{definition}

We state the following definition given in \cite{Lu-Wang-Zhang}:

\begin{definition}\label{def:dg_Lie_algebra}
A \textbf{differential graded (dg) Lie algebra of degree }$p$ ($p \in \mathbb{Z}$) is an object $(\mathfrak{g}^{[*]}, d_{\mathfrak{g}^{[*]}}) \in \Ch$ equipped with a chain map $[\cdot, \cdot] : \mathfrak{g}^{[*]} \otimes \mathfrak{g}^{[*]} \longrightarrow \mathfrak{g}^{[*]}[p]$ such that, for all elements $a, b, c \in \mathfrak{g}^{[*]}$:
\begin{enumerate}[label=(\roman*).]\setlength\itemsep{5pt}
\item $[a, b]=-(-1)^{(|a|+p)(|b|+p)}[b, a]$  \quad (antisymmetry);
\item $[a, [b, c]]=[[a,b],c]+(-1)^{(|a|+p)(|b|+p)}[b,[a, c]]$  \quad (Jacobi identity);
\end{enumerate}
\end{definition}

\begin{remark}
The chain map condition is equivalent to the bracket being of degree $p$ and such that for any $a, b \in \mathfrak{g}^{[*]}$, we have $d_{\mathfrak{g}^{[*]}}[a, b] = [d_{\mathfrak{g}^{[*]}}(a), b]+(-1)^{(|a|+p)}[a, d_{\mathfrak{g}^{[*]}}(b)]$. Moreover, saying that $(\mathfrak{g}^{[*]}, [\cdot, \cdot])$ is a dg Lie algebra of degree $p$ is equivalent to saying that $\mathfrak{g}[-p]$ equipped with $[\cdot, \cdot]'=[\cdot, \cdot][-2p] : \mathfrak{g}^{[*]}[-p] \otimes \mathfrak{g}^{[*]}[-p] \longrightarrow \mathfrak{g}^{[*]}[-p]$ is a dg Lie algebra of degree $0$.
\end{remark}

\begin{lemma} %Computations (71)
Let $U^{[*]}$ be a dg VLA. Then $U/\mathcal{D}U$ is a dg Lie algebra with the Lie bracket of degree $-2N$ given by
\[
[u+\mathcal{D}U, v+\mathcal{D}U]=u_{(0)}v+\mathcal{D}U.
\]
\end{lemma}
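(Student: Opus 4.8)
The plan is to transport the classical proof (that $U/\mathcal{D}U$ is a Lie algebra for an ordinary vertex Lie algebra) into the dg setting, keeping careful track of the Koszul signs that appear whenever $u$ and $v$ are permuted, and checking that the bracket is a chain map of degree $-2N$. First I would verify that the bracket is well-defined, i.e.\ that $u_{(0)}v \in \mathcal{D}U$ whenever $u \in \mathcal{D}U$ or $v \in \mathcal{D}U$. For $v = \mathcal{D}w$, axiom (ii) of Definition \ref{def:4.1} gives $Y^-(u,x)\mathcal{D}w = \mathcal{D}\,Y^-(u,x)w - \frac{d}{dx}Y^-(u,x)w$ (using $[\mathcal{D},Y^-(u,x)]^s = Y^-(\mathcal{D}u,x) = \frac{d}{dx}Y^-(u,x)$ and care with the sign, noting $\mathcal{D}$ has even degree $2N$), so the $x^{-1}$-coefficient is $u_{(0)}\mathcal{D}w = \mathcal{D}(u_{(0)}w) + (\text{coefficient of }x^{-1}\text{ in a derivative})$, and the coefficient of $x^{-1}$ in $\frac{d}{dx}Y^-(u,x)w = \sum_{n\geq 0}(-n-1)u_{(n)}w\,x^{-n-2}$ is $0$; hence $u_{(0)}\mathcal{D}w = \mathcal{D}(u_{(0)}w) \in \mathcal{D}U$. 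For $u = \mathcal{D}w$, I would use axiom (ii) again together with half skew-symmetry (iii) to write $\mathcal{D}w_{(0)}v$ modulo $\mathcal{D}U$ in terms of $(\mathcal{D}w)_{(0)}v = (\frac{d}{dx}Y^-(w,x))_{|x^{-1}\text{-coeff reindexed}}$; concretely $Y^-(\mathcal{D}w,x) = \frac{d}{dx}Y^-(w,x)$ has $x^{-1}$-coefficient equal to $0$ (same computation), so $(\mathcal{D}w)_{(0)}v = 0 \in \mathcal{D}U$. Thus the bracket descends to $U/\mathcal{D}U$.

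Next I would check that $[\cdot,\cdot]$ is a chain map of degree $-2N$. Degree: since $Y^-$ is a chain map, $u_{(0)} \in \on{End}^{[|u|-2N]}(U^{[*]})$, so $|u_{(0)}v| = |u| + |v| - 2N$, which is exactly the degree-$(-2N)$ condition. Compatibility with $d_U$: the displayed formula $d_U(u_{(n)}v) = d_U(u)_{(n)}v + (-1)^{|u|-2N(n+1)}u_{(n)}d_U(v)$ stated just after Definition \ref{def:4.1}, specialized to $n=0$, gives $d_U(u_{(0)}v) = d_U(u)_{(0)}v + (-1)^{|u|}u_{(0)}d_U(v)$ (since $(-1)^{-2N} = 1$); passing to the quotient $U/\mathcal{D}U$ (which inherits a differential because $\mathcal{D}$ is a chain map, so $\mathcal{D}U$ is a subcomplex) this is precisely the Leibniz rule required of a degree-$(-2N)$ bracket in the Remark following Definition \ref{def:dg_Lie_algebra}, with $p = -2N$ and $(-1)^{|u|+p} = (-1)^{|u|}$.

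Then I would establish antisymmetry and the Jacobi identity on $U/\mathcal{D}U$. For antisymmetry: half skew-symmetry (iii) reads $Y^-(u,x)v \simeq (-1)^{|u||v|}e^{x\mathcal{D}}Y^-(v,-x)u$; extracting the coefficient of $x^{-1}$ and reducing modulo $\mathcal{D}U$ kills all terms of $e^{x\mathcal{D}}$ except the identity, and the coefficient of $x^{-1}$ in $Y^-(v,-x)u = \sum_{n\geq 0} v_{(n)}u\,(-1)^{-n-1}x^{-n-1}$ is $-v_{(0)}u$, giving $u_{(0)}v \equiv -(-1)^{|u||v|}v_{(0)}u \pmod{\mathcal{D}U}$; since $p = -2N$ is even, $(-1)^{(|u|+p)(|v|+p)} = (-1)^{|u||v|}$, matching axiom (i) of Definition \ref{def:dg_Lie_algebra}. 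For the Jacobi identity: I would take the half Jacobi identity (iv), apply $\on{Res}_{x_0}\on{Res}_{x_1}$ (equivalently extract appropriate $\delta$-function coefficients to isolate the $n=m=0$ modes), which on the classical side produces exactly the relation $[u_{(0)},v_{(0)}] = (u_{(0)}v)_{(0)}$ as operators; in the dg setting the same manipulation yields $u_{(0)}(v_{(0)}w) - (-1)^{|u||v|}v_{(0)}(u_{(0)}w) = (u_{(0)}v)_{(0)}w$ for all $w$, and after reducing the first argument modulo $\mathcal{D}U$ this is the graded Jacobi identity (ii) of Definition \ref{def:dg_Lie_algebra} with $p=-2N$ even.

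The main obstacle I anticipate is purely bookkeeping: making sure every Koszul sign $(-1)^{|u||v|}$ from permuting $u$ and $v$ is reconciled with the signs $(-1)^{(|a|+p)(|b|+p)}$ in Definition \ref{def:dg_Lie_algebra}, which works smoothly precisely because $p = -2N$ is even so $(-1)^{|a|+p} = (-1)^{|a|}$ throughout; and verifying that the ``$\simeq$'' (equality of singular parts only) in axioms (iii) and (iv) is harmless here because extracting the $x^{-1}$-coefficient only ever sees the singular part. No genuinely hard analytic or combinatorial step is involved; the content is that the dg axioms were set up so that this classical argument goes through verbatim up to signs.
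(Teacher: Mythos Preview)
Your proposal is correct and follows essentially the same route as the paper: well-definedness from the $\mathcal{D}$-derivative formula in axiom (ii), antisymmetry from the half skew-symmetry (iii), and the Jacobi identity from the $x_0^{-1}x_1^{-1}x_2^{-1}$-coefficient of the half Jacobi identity (iv). The only cosmetic difference is that for $u_{(0)}\mathcal{D}w \in \mathcal{D}U$ you use the commutator identity $[\mathcal{D},Y^-(u,x)]^s=\frac{d}{dx}Y^-(u,x)$ directly to get the exact relation $u_{(0)}\mathcal{D}w=\mathcal{D}(u_{(0)}w)$, whereas the paper deduces this inclusion via the half skew-symmetry; your route is slightly more economical here.
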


\begin{proof}
We have $\begin{array}[t]{rcl}
Y^-(\mathcal{D}u, x) & = & \displaystyle \sum_{n \geq 0} (\mathcal{D}u)_{(n)} x^{-n-1} \\[15pt]
& = & \displaystyle\frac{d}{dx}Y^-(u, x) \\[10pt]
& = &\displaystyle  \sum_{n \geq 1} -n u_{(n-1)} x^{-n-1}.
\end{array}$ \\
So $(\mathcal{D}u)_{(0)}=0$. Thus $\mathcal{D}U$ is stable by right multiplication $(-)_{(0)} v$. Using the half skew-symmetry
\[
\begin{array}[t]{rcl}
Y^-(u, x)\mathcal{D}v & \simeq & (-1)^{|u||v|}e^{x\cal D}Y^-(\mathcal{D}v,-x)u \\[5pt]
& \simeq & (-1)^{|u||v|}e^{x\cal D}(\frac{d}{d(-x)}Y^-(v,-x)u).
\end{array}
\]
So $\begin{array}[t]{rcl}
u_0 \mathcal{D}v & = & \on{Res}_x \big [(-1)^{|u||v|}e^{x\cal D}\sum_{n \geq 1}(-1)^{n+1} (-n) v_{(n-1)}u x^{-n-1}\big ] \\[5pt]
& = & \on{Res}_x \big [(-1)^{|u||v|}\big(\sum_{m \geq 0}\frac{1}{m!}(x\mathcal{D})^m\big)\big(\sum_{n \geq 1}(-1)^n n v_{(n-1)} u x^{-n-1}\big) \big ] \\[5pt]
& = &  (-1)^{|u||v|}\big(\sum_{n \geq 1}\frac{(-1)^n n}{n!}\mathcal{D}^n(v_{(n-1)}u)\big) \\[5pt]
& \in & \mathcal{D}U.
\end{array}$ \\
Hence $\mathcal{D}U$ is stable by left multiplication by $u_{(0)}$.

We notice from the half skew-symmetry (iii) that the $0$-mode satisfies the relation $u_0v=$ $(-1)^{|u||v|}\sum_{n \geq 0}\frac{1}{n!}\mathcal{D}^n(v_{(n)}u)(-1)^{-n-1}$ $ \in -(-1)^{|u||v|}v_{(0)}u+\mathcal{D}U$. Therefore the Lie bracket is skew-commutative.

By looking at the half Jacobi identity component wise and considering the coefficient for $x_0^{-1}x_1^{-1}x_2^{-1}$, we get $u_{(0)}v_{(0)}w=(-1)^{|u||v|}v_{(0)}u_{(0)}w+(u_{(0)}v)_{(0)}w$, which can be rewritten as
\[
[u, [v, w]]=[[u, v], w]+(-1)^{(|u|-2N)(|v|-2N)}[v, [u, w]].
\]

Finally, as $\mathcal{D}$ is a chain map, we have $d_U(\mathcal{D}(U^{[q]}))=\mathcal{D}(d_U(U^{[q]})) \subset \mathcal{D}(U^{[q+1]})$, so $(\mathcal{D}(U^{[*]}), d_U)$ is a subcomplex of $(U^{[*]}, d_U)$, and $U^{[*]}/\mathcal{D}(U^{[*]})$ is also a complex. Based on what we described above, $U^{[*]}/\mathcal{D}(U^{[*]})$ is a dg Lie algebra of degree $-2N$ with Lie bracket given in the statement.
\end{proof}

Set $\on{dgLA}^{\on{even}}$ the category of dg Lie algebras with even brackets, and $\on{dgVLA}$ the category of dg vertex Lie algebras. We define a functor $\on{VLA}(-): \on{dgLA}^{\on{even}} \longrightarrow \on{dgVLA}$ such that for a dg Lie algebra $(\mathfrak{g}^{[*]}, [\cdot, \cdot])$ with even bracket, we have $\on{VLA}(\mathfrak{g}^{[*]})=\mathfrak{g}^{[*]}$ as vector spaces, and the dg vertex Lie algebra structure on $\on{VLA}(\mathfrak{g}^{[*]})$ is given by $Y^-(g, x)=[g, \cdot] x^{-1}$ and $\mathcal{D}=0$.

\begin{remark}
The functor $\on{VLA}$ starts from the category of even dg Lie algebras because we work with the even loop. Indeed, in the definition of a dg vertex algebra, we impose $|x|=-2N$, which is equivalent to considering the loop algebra $\cc[t, t^{-1}]$ with $|t|=2N$ (see \cite[Section E]{Caradot-Jiang-Lin-4}). Joyce constructed a vertex algebra structure on the cohomology of a moduli stack (see \cite{Joyce}), and it is equipped with an action of the $\cc^*$-equivariant cohomology of a point. However this equivariant cohomology is $\cc[\tau]$ with $|\tau|=2$. As the cohomology of a dg vertex algebra is itself a vertex algebra (see \cite[Theorem III.10]{Caradot-Jiang-Lin-4}), we chose our $t$ to have even degree. An important consequence of having $|t|$ even is that it naturally imposes the differential on $\cc[t, t^{-1}]$ to be zero. In Definition \ref{def:4.1}, we could consider the loop algebra with $|t|$ odd and impose its differential to be zero. All the subsequent results would still work but then the functor $\on{VLA}$ would start from the category of odd dg Lie algebras.
\end{remark}

The next result is a natural consequence of $\mathcal{D}$ commuting with any dg vertex Lie algebra homomorphism.

\begin{proposition}
The functor $U^{[*]} \mapsto U^{[*]}/\mathcal{D}U^{[*]}$ is left adjoint to $\on{VLA}(-)$, i.e., for any dg vertex Lie algebra $(U^{[*]}, Y^-(\cdot, x), \mathcal{D})$ with $|\mathcal{D}|=2N$ and any dg Lie algebra $\mathfrak{g}^{[*]}$ with bracket of degree $-2N$, we have
\[
\on{Hom}_{\on{dgLA}}(U^{[*]}/\mathcal{D}U^{[*]}, \mathfrak{g}^{[*]}) \cong \on{Hom}_{\on{dgVLA}}(U^{[*]}, \on{VLA}(\mathfrak{g}^{[*]})).
\]
\end{proposition}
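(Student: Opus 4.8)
The plan is to exhibit the natural bijection directly, realized by the unit of the would‑be adjunction. Write $\pi_U\colon U^{[*]}\to U^{[*]}/\mathcal D U^{[*]}$ for the canonical projection, and first unwind what each side parametrizes. On the left, a dg Lie algebra homomorphism $U^{[*]}/\mathcal D U^{[*]}\to\mathfrak{g}^{[*]}$, precomposed with $\pi_U$, is the same datum as a degree‑$0$ chain map $\psi\colon U^{[*]}\to\mathfrak{g}^{[*]}$ with $\psi\circ\mathcal D_U=0$ and $\psi(u_{(0)}v)=[\psi u,\psi v]$, since any chain map annihilating $\mathcal D U^{[*]}$ factors uniquely through $\pi_U$ and the bracket on $U^{[*]}/\mathcal D U^{[*]}$ is $[\bar u,\bar v]=\overline{u_{(0)}v}$ by the preceding Lemma. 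On the right, because the translation operator of $\on{VLA}(\mathfrak{g}^{[*]})$ is zero, a dg VLA homomorphism $\psi\colon U^{[*]}\to\on{VLA}(\mathfrak{g}^{[*]})$ is again a degree‑$0$ chain map with $\psi\circ\mathcal D_U=0$; comparing coefficients in $\psi(Y^-_U(u,x)v)=Y^-_{\on{VLA}(\mathfrak{g})}(\psi u,x)\,\psi v=[\psi u,\psi v]\,x^{-1}$ shows it moreover satisfies $\psi(u_{(0)}v)=[\psi u,\psi v]$ and $\psi(u_{(n)}v)=0$ for every $n\ge 1$.

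Granting this description, the map $\varphi\mapsto\varphi\circ\pi_U$ and the map sending $\psi$ to the homomorphism $\bar\psi$ it induces on $U^{[*]}/\mathcal D U^{[*]}$ are mutually inverse, and natural in $U^{[*]}$ and $\mathfrak{g}^{[*]}$ because $\pi_U$ is natural and both functors act on the underlying complexes and their structure maps. The one point with content is that $\varphi\circ\pi_U$ really is a dg VLA homomorphism: being a chain map, commuting with $\mathcal D$, and matching the $0$‑th mode are immediate from the Lemma, so the issue is whether $\varphi\circ\pi_U$ kills $u_{(n)}v$ for all $n\ge 1$. Since $\varphi$ is arbitrary, this reduces to the assertion
\[
u_{(n)}v\in\mathcal D U^{[*]}\qquad\text{for all }u,v\in U^{[*]},\ n\ge 1.
\]

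I expect this identity to be the main obstacle. My plan to prove it is to play the $\mathcal D$‑bracket derivative formula of Definition \ref{def:4.1}(ii) against the half skew‑symmetry (iii). Formula (ii) gives $(\mathcal D u)_{(n+1)}v=-(n+1)\,u_{(n)}v$ for $n\ge 0$, and — once (ii) is rephrased as the derivation rule $\mathcal D(Y^-(u,x)v)=Y^-(\mathcal D u,x)v+Y^-(u,x)\mathcal D v$ (legitimate since $\mathcal D$ is even) — the relation $v_{(k)}(\mathcal D u)=\mathcal D(v_{(k)}u)+k\,v_{(k-1)}u$ for $k\ge 1$; meanwhile (iii) yields $u_{(n)}v\equiv(-1)^{|u||v|}(-1)^{n+1}v_{(n)}u\pmod{\mathcal D U^{[*]}}$. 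The task is to chain these so that, for $n\ge 1$, the reflected mode $v_{(n)}u$ is rewritten as $\mathcal D$ of a strictly lower‑order expression, forcing $u_{(n)}v$ into $\mathcal D U^{[*]}$; the degree bookkeeping $|u_{(n)}v|=|u|+|v|-2N(n+1)$ makes the $2N$‑shifts invisible in all signs, so the signs appearing are exactly those of the dg Lie axioms. Once this is secured, the rest is formal, and the triangle identities for the unit $\pi_U$ together with the identity counit (since $\on{VLA}(\mathfrak{g}^{[*]})/\mathcal D\on{VLA}(\mathfrak{g}^{[*]})=\mathfrak{g}^{[*]}$) hold automatically, completing the adjunction.
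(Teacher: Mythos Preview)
You have correctly isolated the nontrivial content: for the map $\varphi\mapsto\varphi\circ\pi_U$ to land in dg VLA homomorphisms, one needs $u_{(n)}v\in\mathcal{D}U^{[*]}$ for every $n\ge 1$. The paper's one-line justification (``a natural consequence of $\mathcal{D}$ commuting with any dg vertex Lie algebra homomorphism'') addresses only the $\mathcal{D}$-compatibility and says nothing about this point, so you have put your finger on a real issue the paper does not discuss.

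Unfortunately, the claim $u_{(n)}v\in\mathcal{D}U^{[*]}$ for $n\ge 1$ is false in general, and no chaining of the identities you list can establish it. The Virasoro dg vertex Lie algebra of Section~\ref{sec:6.1} already gives a counterexample: there $\mathcal{D}U^{[*]}=\bigoplus_{k\ge 1}\cc\,\mathcal{D}^k\omega$, yet $\omega_{(1)}\omega=2\omega\notin\mathcal{D}U^{[*]}$. Consequently, if $\mathfrak{g}^{[*]}$ is abelian with $\mathfrak{g}^{[4N]}\neq 0$ and $d_{\mathfrak{g}}=0$, there exist dg Lie algebra maps $\varphi\colon U^{[*]}/\mathcal{D}U^{[*]}\to\mathfrak{g}^{[*]}$ with $\varphi(\bar\omega)\neq 0$, and for these the composite $\varphi\circ\pi_U$ fails the required condition $\psi(\omega_{(1)}\omega)=0$, so it is not a dg VLA homomorphism. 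Your identities $(\mathcal{D}u)_{(n+1)}=-(n+1)u_{(n)}$ and $u_{(n)}v\equiv(-1)^{|u||v|+n+1}v_{(n)}u\pmod{\mathcal{D}U^{[*]}}$ are correct, but they only relate the two sides modulo $\mathcal{D}U^{[*]}$ without ever forcing either into it. The same example in fact shows that $\on{VLA}(\mathfrak{g}^{[*]})$ as defined fails axiom~(ii) of Definition~\ref{def:4.1} whenever $\mathfrak{g}^{[*]}$ is nonabelian, since $\frac{d}{dx}([g,\cdot]x^{-1})=-[g,\cdot]x^{-2}\neq 0$ while $Y^-(\mathcal{D}g,x)=0$; so the proposition as stated appears to require correction rather than proof.
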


Consider $\cc[t, t^{-1}]$ as a complex with $|t|=-2N$, $N \in \mathbb{Z}$, so $d=0$. Then $U^{[*]} \otimes \cc[t, t^{-1}]$ becomes a complex with
\[
d(u \otimes t^n)=d_U(u) \otimes t^n
\]
and $|u \otimes t^n|=|u|-2Nn$. Set
\[
\begin{array}{cccc}
\widehat{\mathcal{D}}: & U^{[*]} \otimes \cc[t, t^{-1}] & \longrightarrow & U^{[*]} \otimes \cc[t, t^{-1}] \\[5pt]
 & u \otimes t^n & \longmapsto & \mathcal{D}u \otimes t^n+nu \otimes t^{n-1}.
\end{array}
\]
We can verify that $\widehat{\mathcal{D}}$ is homogeneous of degree $2N$, and so $\widehat{\mathcal{D}}(U^{[*]} \otimes \cc[t, t^{-1}])=\bigoplus_{p \in \mathbb{Z}}\widehat{\mathcal{D}}((U^{[*]} \otimes \cc[t, t^{-1}])^{[p]})$. Furthermore, $\widehat{\mathcal{D}}$ commutes with $d$ on $U^{[*]} \otimes \cc[t, t^{-1}]$, making $\widehat{\mathcal{D}}(U^{[*]} \otimes \cc[t, t^{-1}])$ a subcomplex of $U^{[*]} \otimes \cc[t, t^{-1}]$. Therefore the space
\[
\mathcal{L}(U^{[*]})=U^{[*]} \otimes \cc[t, t^{-1}]/\widehat{\mathcal{D}}(U^{[*]} \otimes \cc[t, t^{-1}])
\]
is itself a complex. We write $u_n$ for the image of $u \otimes t^n$ in $\mathcal{L}(U^{[*]})$ (not to be confused with the notation $u_{(n)} \in \End^{[*]}(U^{[*]}))$). The differential is given by $d_{\mathcal{L}(U)}(u_n)=(d_U(u))_n$.

\begin{theorem}\label{thm:dg_Lie} %Computations (71)
Let $(U^{[*]}, Y^-(\cdot, x), \mathcal{D})$ be a dg vertex Lie algebra. The quotient $\mathcal{L}(U^{[*]})$ is a dg Lie algebra with Lie bracket of degree $-2N$ given by
\begin{align}\label{eq:dg Lie}
[u_n, v_p]=\sum_{i \geq 0}\binom{n}{i}(u_{(i)}v)_{n+p-i}
\end{align}
for $u, v \in U^{[*]}$ and $n, p \in \mathbb{Z}$. Moreover, we have
\begin{align}\label{eq:dg Lie2}
(\mathcal{D}u)_n=-nu_{n-1}
\end{align}
for all $u \in U^{[*]}$, $n \in \mathbb{Z}$, and the map $\mathcal{D}: \mathcal{L}(U^{[*]}) \longrightarrow \mathcal{L}(U^{[*]})$ sending $u_n$ to $(\mathcal{D}u)_n$ is a derivation of degree $2N$ of $\mathcal{L}(U^{[*]})$.
\end{theorem}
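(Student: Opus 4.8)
The plan is to carry Primc's construction \cite{Primc} over to the dg setting, inserting Koszul signs $(-1)^{|u||v|}$ and checking that the degree shifts by multiples of $2N$ never affect a sign. Formula \eqref{eq:dg Lie2} is immediate, since $\widehat{\mathcal D}(u\otimes t^{n})=\mathcal Du\otimes t^{n}+n\,u\otimes t^{n-1}$ lies in the subcomplex we quotient by. To give \eqref{eq:dg Lie} a meaning I would define the bracket first on $U^{[*]}\otimes\cc[t,t^{-1}]$ by that formula — a finite sum, by axiom (i) of Definition \ref{def:4.1} — and then verify that it descends to $\mathcal L(U^{[*]})$; equivalently, that $[(\mathcal Du)_{m},v_{n}]\equiv-m\,[u_{m-1},v_{n}]$ and $[u_{m},(\mathcal Dv)_{n}]\equiv-n\,[u_{m},v_{n-1}]$ modulo $\widehat{\mathcal D}(U^{[*]}\otimes\cc[t,t^{-1}])$. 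Axiom (ii) supplies $(\mathcal Du)_{(i)}=-i\,u_{(i-1)}$ and, since $|\mathcal D|=2N$ is even, $u_{(i)}\mathcal Dv=\mathcal D(u_{(i)}v)+i\,u_{(i-1)}v$; combined with the elementary identities $(i+1)\binom{m}{i+1}=m\binom{m-1}{i}$ and $(m-i)\binom{m}{i}=m\binom{m-1}{i}$, this gives the descent at once, and the second identity also shows that $u_{n}\mapsto(\mathcal Du)_{n}$ obeys the Leibniz rule $\mathcal D[a,b]=[\mathcal Da,b]+[a,\mathcal Db]$. The degree count $|(u_{(i)}v)_{m+n-i}|=|u|+|v|-2N(m+n+1)=|u_{m}|+|v_{n}|-2N$ shows the bracket is homogeneous of degree $-2N$ and $\mathcal D$ of degree $2N$; the differential compatibility $d_{\mathcal L(U)}[u_{m},v_{n}]=[d(u_{m}),v_{n}]+(-1)^{|u_{m}|-2N}[u_{m},d(v_{n})]$ follows from the Leibniz rule for $d_{U}$ on $u_{(i)}v$ stated after Definition \ref{def:4.1}, which (as $2N$ is even) reads $d_{U}(u_{(i)}v)=d_{U}(u)_{(i)}v+(-1)^{|u|}u_{(i)}d_{U}(v)$.

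There remain the graded antisymmetry and the graded Jacobi identity of Definition \ref{def:dg_Lie_algebra}. For antisymmetry I would extract from the half skew-symmetry (iii) the honest identity in $U^{[*]}$, valid for $i\ge 0$,
\[
u_{(i)}v=(-1)^{|u||v|}\sum_{k\ge 0}\frac{(-1)^{k+i+1}}{k!}\,\mathcal D^{k}\bigl(v_{(i+k)}u\bigr),
\]
push it into $\mathcal L(U^{[*]})$ using $(\mathcal D^{k}w)_{\ell}=(-1)^{k}k!\binom{\ell}{k}w_{\ell-k}$ (a consequence of \eqref{eq:dg Lie2}), and reindex by $j=i+k$. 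What survives is the Vandermonde-type identity $\sum_{i=0}^{j}(-1)^{i}\binom{m}{i}\binom{m+n-i}{j-i}=\binom{n}{j}$ — the polynomial identity behind $(1+x)^{m+n}(1+x)^{-m}=(1+x)^{n}$, hence valid for all integers $m,n$ — which yields $[u_{m},v_{n}]=-(-1)^{|u||v|}[v_{n},u_{m}]$; since $2N$ is even, $(-1)^{|u||v|}=(-1)^{(|u_{m}|-2N)(|v_{n}|-2N)}$, so this is the required antisymmetry. It is exactly here, and in the descent of the bracket above, that the quotient by $\widehat{\mathcal D}(U^{[*]}\otimes\cc[t,t^{-1}])$ — rather than by $(1\otimes\tfrac{d}{dt})(U^{[*]}\otimes\cc[t,t^{-1}])$ alone — is indispensable: the operator $e^{x\mathcal D}$ in (iii) and the terms $\mathcal D(u_{(i)}v)$ are precisely what \eqref{eq:dg Lie2} absorbs.

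The graded Jacobi identity is the substantive part. Following Primc, I would first deduce from the half Jacobi identity (iv) its two ``halved'' consequences: taking $\on{Res}_{x_{0}}$ gives the commutator formula
\[
[Y^-(u,x_{1}),Y^-(v,x_{2})]^{s}\simeq\on{Res}_{x_{0}}\left(x_{2}^{-1}\delta\left(\tfrac{x_{1}-x_{0}}{x_{2}}\right)Y^-(Y^-(u,x_{0})v,x_{2})\right),
\]
whose component form is the operator identity $[u_{(m)},v_{(n)}]^{s}=\sum_{i\ge 0}\binom{m}{i}(u_{(i)}v)_{(m+n-i)}$ (for $m,n\ge 0$), and taking $\on{Res}_{x_{1}}$ gives an iterate formula expressing $(u_{(i)}v)_{(j)}w$ ($i,j\ge 0$) through the products $u_{(\cdot)}(v_{(\cdot)}w)$ and $v_{(\cdot)}(u_{(\cdot)}w)$; both are obtained from (iv) by the $\delta$-function calculus of \cite[\S 2.3]{Lepowsky-Li}. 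Substituting the iterate formula into $[[u_{m},v_{n}],w_{\ell}]$, $[u_{m},[v_{n},w_{\ell}]]$ and $[v_{n},[u_{m},w_{\ell}]]$ expanded out of \eqref{eq:dg Lie}, and collapsing the resulting binomial double sums exactly as in the classical argument, produces $[u_{m},[v_{n},w_{\ell}]]=[[u_{m},v_{n}],w_{\ell}]+(-1)^{|u||v|}[v_{n},[u_{m},w_{\ell}]]$; once more $(-1)^{|u||v|}=(-1)^{(|u_{m}|-2N)(|v_{n}|-2N)}$, which is the Jacobi identity of Definition \ref{def:dg_Lie_algebra} for a degree-$(-2N)$ bracket. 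I expect the main obstacle to be organisational rather than conceptual: one must carry every Koszul sign correctly through the $\delta$-function manipulations, and must check that each discrepancy between ``$\simeq$'' and honest equality in (iii)--(iv) is $\mathcal D$-exact and hence dies in $\mathcal L(U^{[*]})$ by \eqref{eq:dg Lie2} — this is the mechanism by which the singular-part axioms of a dg vertex Lie algebra suffice to produce an honest dg Lie algebra.
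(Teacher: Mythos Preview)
Your proof is correct and complete. The main divergence from the paper is methodological: you work component-wise throughout, extracting explicit identities for $u_{(i)}v$ from axioms (ii)--(iv) and reducing the dg Lie axioms to binomial combinatorics (notably the Vandermonde-type identity $\sum_{i=0}^{j}(-1)^{i}\binom{m}{i}\binom{m+n-i}{j-i}=\binom{n}{j}$ for antisymmetry, and the Borcherds iterate formula for Jacobi). The paper instead packages the bracket into a generating series $Y(u,x)=\sum_{n\in\mathbb{Z}}u_{n}x^{-n-1}\in\mathcal{L}(U^{[*]})[[x,x^{-1}]]$, proves the closed form $[Y(u,x_{1}),Y(v,x_{2})]=\on{Res}_{x_{0}}\bigl[x_{2}^{-1}\delta(\tfrac{x_{1}-x_{0}}{x_{2}})Y(Y^-(u,x_{0})v,x_{2})\bigr]$, and then derives both antisymmetry and the Jacobi identity by $\delta$-function manipulations lifted verbatim from Primc (for Jacobi, a specific three-variable $\delta$-identity from \cite[p.~15]{Primc} rather than the iterate formula you invoke). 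Your approach makes more visible the mechanism by which \eqref{eq:dg Lie2} converts the ``$\simeq$'' of axioms (iii)--(iv) into honest identities in $\mathcal{L}(U^{[*]})$, and it isolates exactly which binomial identity does the work at each step; the paper's generating-function route hides all binomial arithmetic inside the $\delta$-calculus, which keeps the triple-sum bookkeeping for the Jacobi identity to a minimum and makes manifest that the dg argument is the classical one with a single Koszul sign $(-1)^{|u||v|}$ inserted.
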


\begin{proof}
Formula~\eqref{eq:dg Lie} defines a bilinear map on $U^{[*]} \otimes \cc[t, t^{-1}]$, and since $|(u_{(i)}v)_{n+p-i}|=|u|+|v|-2N(n+p+1)$, it follows that
\[
\begin{array}{cccc}
\{\cdot, \cdot\}: & (U^{[*]} \otimes \cc[t, t^{-1}]) \otimes (U^{[*]} \otimes \cc[t, t^{-1}]) & \longrightarrow & U^{[*]} \otimes \cc[t, t^{-1}]\\
 & (u \otimes t^m) \otimes (v \otimes t^n) & \longmapsto & \displaystyle \sum_{i \geq 0}\binom{n}{i}(u_{(i)}v)_{n+p-i}
\end{array}
\]
is a linear map of degree $-2N$.

We can verify explicitly that $\{\widehat{\mathcal{D}}(u \otimes t^n), v \otimes t^p\}=0$ and $\{u \otimes t^n, \widehat{\mathcal{D}}(v \otimes t^p)\} \in \widehat{\mathcal{D}}(U^{[*]} \otimes \cc[t, t^{-1}])$, hence $\{\cdot, \cdot\}$ induces a bilinear map $[\cdot, \cdot]$ on the quotient $\mathcal{L}(U^{[*]})$. 

Then we have
\[
\begin{array}{rcl}
\mathcal{D}([u_n, v_p]) & = & \sum_{i \geq 0}\binom{n}{i}\mathcal{D}(u_{(i)}v)\otimes t^{n+p-i} \\[5pt]
& = & \sum_{i \geq 0}\binom{n}{i}\big ( (\mathcal{D}u)_{(i)}v+u_{(i)}\mathcal{D}v \big )\otimes t^{n+p-i} \\[5pt]
& = & [(\mathcal{D}u)_n, v_p]+[u_n, (\mathcal{D}v)_p] \\[5pt]
& = &[\mathcal{D}(u_n), v_p]+[u_n, \mathcal{D}(v_p)].
\end{array}
\]
So $\mathcal{D}: \mathcal{L}(U^{[*]}) \longrightarrow \mathcal{L}(U^{[*]})$ is a derivation of degree $2N$.

We now need to verify the dg Lie algebra relations. By setting 
\begin{align*}
\begin{array}{cccc}
Y(\cdot, x): & U^{[*]} & \longrightarrow & \mathcal{L}(U^{[*]})[[x, x^{-1}]] \\
           & u & \longmapsto      & \displaystyle \sum_{n \in \mathbb{Z}}u_{n} x^{-n-1}
\end{array}
\end{align*}
we can rewrite the relations \eqref{eq:dg Lie} and \eqref{eq:dg Lie2} as
\begin{align}
[Y(u, x_1), Y(v, x_2)] &= \displaystyle \on{Res}_{x_0} \bigg[ x_2^{-1} \delta(\frac{x_1-x_0}{x_2})Y(Y^-(u, x_0)v, x_2) \bigg], \label{eq:commut_Res} \\[5pt]
Y(\mathcal{D}u, x)&=\displaystyle \frac{d}{dx}Y(u, x). \label{eq:commut_Res2}
\end{align}
Here $\mathcal{L}(U^{[*]})[[x, x^{-1}]]$ is seen as a dg Lie algebra through the Lie bracket on $\mathcal{L}(U^{[*]})$, and the bracket in \eqref{eq:commut_Res} is the Lie bracket of this dg Lie algebra structure. 

Equation \eqref{eq:commut_Res2} is obtained as an immediate consequence of the definition of $\widehat{\mathcal{D}}$. Using \cite[(2.2.19), (2.3.17) and (2.3.56)]{Lepowsky-Li} on the right hand side of \eqref{eq:commut_Res}, we can show that $[Y(u, x_1), Y(v, x_2)]=-(-1)^{|u||v|}[Y(v, x_2), Y(u, x_1)]$, and so
\[
[u_n, v_p]=-(-1)^{(|u_n|-2N)(|v_p|-2N)}[v_p, u_n].
\]
For the Jacobi identity, we have
\[
\renewcommand{\arraystretch}{1.5}
\begin{array}{l}
[[Y(u, x_1), Y(v, x_2)], Y(w, x_3)] \\[5pt]
= [\on{Res}_{x_{12}}x_2^{-1} \delta(\frac{x_1-x_{12}}{x_2})Y(Y^-(u, x_{12})v, x_2), Y(w, x_3)] \\[5pt]
= \on{Res}_{x_{23}}x_3^{-1} \delta(\frac{x_2-x_{23}}{x_3}) \on{Res}_{x_{12}} x_2^{-1} \delta(\frac{x_1-x_{12}}{x_2})Y(Y^-(Y^-(u, x_{12})v, x_{23}w), x_3) \\[5pt]
= \on{Res}_{x_{23}}x_3^{-1} \delta(\frac{x_2-x_{23}}{x_3}) \on{Res}_{x_{12}} x_2^{-1} \delta(\frac{x_1-x_{12}}{x_2})  \\[5pt]
 \quad \quad  \times  \on{Res}_{x_{13}} x_{12}^{-1} \delta(\frac{x_{13}-x_{23}}{x_{12}})Y(Y^-(u, x_{13})Y^-(v, x_{23})w, x_3) \\[5pt]
\quad -(-1)^{|u||v|} \on{Res}_{x_{23}}x_3^{-1} \delta(\frac{x_2-x_{23}}{x_3}) \on{Res}_{x_{12}} x_2^{-1} \delta(\frac{x_1-x_{12}}{x_2}) \\[5pt]
\quad \quad   \times  \on{Res}_{x_{13}} x_{12}^{-1} \delta(\frac{x_{23}-x_{13}}{-x_{12}})Y(Y^-(v, x_{23})Y^-(u, x_{13})w, x_3).
\end{array}
\]
But the following formula is known (see \cite[page 15]{Primc}):
\begin{equation*}
\scalebox{0.98}{$ \displaystyle 
 \on{Res}_{x_{12}} x_2^{-1} \delta(\frac{x_3+x_{23}}{x_2}) x_1^{-1} \delta(\frac{x_2+x_{12}}{x_1}) x_{12}^{-1} \delta(\frac{x_{13}-x_{23}}{x_{12}}) 
 = x_2^{-1} \delta(\frac{x_3+x_{23}}{x_2}) x_1^{-1} \delta(\frac{x_3+x_{13}}{x_1}).$}
\end{equation*}
We can thus rewrite the equation as
\[
\renewcommand{\arraystretch}{1.5}
\begin{array}{l}
[[Y(u, x_1), Y(v, x_2)], Y(w, x_3)] \\[5pt]
=  \on{Res}_{x_{13}}\on{Res}_{x_{23}} x_1^{-1} \delta(\frac{x_3+x_{13}}{x_1}) x_2^{-1} \delta(\frac{x_3+x_{23}}{x_2}) Y(Y^-(u, x_{13})Y^-(v, x_{23})w, x_3) \\[5pt]
\quad -(-1)^{|u||v|} \on{Res}_{x_{23}} \on{Res}_{x_{13}}x_1^{-1} \delta(\frac{x_3+x_{13}}{x_1})  x_2^{-1} \delta(\frac{x_3+x_{23}}{x_2}) Y(Y^-(v, x_{23})Y^-(u, x_{13})w, x_3) \\[5pt]
=[Y(u, x_1), [Y(v, x_2), Y(w, x_3)]] -(-1)^{(|u|-2N)([v]-2N)}[Y(v, x_2), [Y(u, x_1), Y(w, x_3)]]. 
\end{array}
\]
Now comparing the coefficients of $x_1^{-n_1-1}x_2^{-n_2-1}x_3^{-n_3-1}$ for $n_1, n_2, n_3 \in \mathbb{Z}$ on both sides yields the Jacobi identity of $\mathcal{L}(U^{[*]})$.

Finally, we check that $d_{\mathcal{L}(U)}([u_n, v_p])=[d_{\mathcal{L}(U)}(u_n), v_p] + (-1)^{(|u_n|-2N)}[u_n, d_{\mathcal{L}(U)}(v_p)]$.
\end{proof}

\begin{proposition}\label{prop:hom-dg Lie} %Computations (71)
Let $\varphi: U^{[*]} \longrightarrow W^{[*]}$ be a homomorphism of dg VLA. Then
\[
\begin{array}{cccc}
\mathcal{L}(\varphi): & \mathcal{L}(U^{[*]}) & \longrightarrow & \mathcal{L}(W^{[*]}) \\[5pt]
 & u_n & \longmapsto & (\varphi(u))_n
\end{array}
\]
is a homomorphism of dg Lie algebras. Moreover, $\mathcal{L}(\varphi) \circ \mathcal{D}_{U} = \mathcal{D}_{W} \circ \mathcal{L}(\varphi)$.
\end{proposition}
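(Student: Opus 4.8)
The plan is to proceed in four short steps: (1) show that $\mathcal{L}(\varphi)$ is a well-defined linear map; (2) check it is a chain map of degree $0$; (3) check it respects the Lie brackets; and (4) check it intertwines the operators $\mathcal{D}$. Throughout I write $\mathcal{D}_U, \mathcal{D}_W$ for the operators $\mathcal{D}$ attached to the dg vertex Lie algebras $U^{[*]}$ and $W^{[*]}$, and $\widehat{\mathcal{D}}_U, \widehat{\mathcal{D}}_W$ for the induced operators on $U^{[*]}\otimes\cc[t,t^{-1}]$ and $W^{[*]}\otimes\cc[t,t^{-1}]$ used to form $\mathcal{L}(U^{[*]})$ and $\mathcal{L}(W^{[*]})$.

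For step (1), the map $\varphi\otimes\on{id}_{\cc[t,t^{-1}]}\colon U^{[*]}\otimes\cc[t,t^{-1}]\longrightarrow W^{[*]}\otimes\cc[t,t^{-1}]$, $u\otimes t^n\mapsto\varphi(u)\otimes t^n$, is a chain map of degree $0$ since $\varphi$ is, and it descends to the quotients as soon as it sends $\widehat{\mathcal{D}}_U(U^{[*]}\otimes\cc[t,t^{-1}])$ into $\widehat{\mathcal{D}}_W(W^{[*]}\otimes\cc[t,t^{-1}])$. This is the only step requiring genuine thought, and it rests precisely on the condition $\varphi\circ\mathcal{D}_U=\mathcal{D}_W\circ\varphi$ from the definition of a dg VLA homomorphism: from the formula for $\widehat{\mathcal{D}}$ one computes
\begin{align*}
(\varphi\otimes\on{id})(\widehat{\mathcal{D}}_U(u\otimes t^n)) &= \varphi(\mathcal{D}_U u)\otimes t^n + n\,\varphi(u)\otimes t^{n-1} \\
&= \mathcal{D}_W(\varphi(u))\otimes t^n + n\,\varphi(u)\otimes t^{n-1} = \widehat{\mathcal{D}}_W(\varphi(u)\otimes t^n),
\end{align*}
so $(\varphi\otimes\on{id})\circ\widehat{\mathcal{D}}_U=\widehat{\mathcal{D}}_W\circ(\varphi\otimes\on{id})$, and hence $\mathcal{L}(\varphi)$ is well defined by $u_n\mapsto(\varphi(u))_n$ and has degree $0$.

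For step (2), since $\varphi$ commutes with the differentials, $d_{\mathcal{L}(W)}(\mathcal{L}(\varphi)(u_n))=(d_W\varphi(u))_n=(\varphi(d_U u))_n=\mathcal{L}(\varphi)(d_{\mathcal{L}(U)}(u_n))$. For step (3), I would apply $\mathcal{L}(\varphi)$ to the bracket formula~\eqref{eq:dg Lie} and use that a dg VLA homomorphism satisfies $\varphi(u_{(i)}v)=\varphi(u)_{(i)}\varphi(v)$ for every $i\geq0$:
\begin{align*}
\mathcal{L}(\varphi)([u_n,v_p]) &= \sum_{i\geq0}\binom{n}{i}\bigl(\varphi(u_{(i)}v)\bigr)_{n+p-i} = \sum_{i\geq0}\binom{n}{i}\bigl(\varphi(u)_{(i)}\varphi(v)\bigr)_{n+p-i} \\
&= [(\varphi(u))_n,(\varphi(v))_p] = [\mathcal{L}(\varphi)(u_n),\mathcal{L}(\varphi)(v_p)],
\end{align*}
which gives the homomorphism property since the $u_n$ span $\mathcal{L}(U^{[*]})$ and both sides are bilinear. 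For step (4), using the description of $\mathcal{D}$ on $\mathcal{L}(-)$ as $u_n\mapsto(\mathcal{D}u)_n$ (Theorem~\ref{thm:dg_Lie}) and again $\varphi\circ\mathcal{D}_U=\mathcal{D}_W\circ\varphi$, one has $\mathcal{L}(\varphi)(\mathcal{D}_U(u_n))=(\varphi(\mathcal{D}_U u))_n=(\mathcal{D}_W\varphi(u))_n=\mathcal{D}_W((\varphi(u))_n)=\mathcal{D}_W(\mathcal{L}(\varphi)(u_n))$. Beyond the descent through the quotient in step (1), every part of the argument is a direct unwinding of the definitions and of Theorem~\ref{thm:dg_Lie}.
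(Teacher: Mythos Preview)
Your proof is correct and follows essentially the same approach as the paper's: both lift $\varphi$ to $U^{[*]}\otimes\cc[t,t^{-1}]$, check it intertwines $\widehat{\mathcal{D}}$ to descend to the quotient, then verify the chain-map, bracket, and $\mathcal{D}$-compatibility conditions directly from the formulas in Theorem~\ref{thm:dg_Lie}. Your step~(1) is a bit more explicit than the paper's treatment of well-definedness, but the logic is identical.
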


\begin{proof}
We know that $\varphi \circ \mathcal{D}_U = \mathcal{D}_W \circ \varphi$. Consider the extension of $\varphi$ to $U^{[*]} \otimes \cc[t, t^{-1}]$ given by $\varphi(u \otimes t^n) \longmapsto \varphi(u) \otimes t^n$. We can verify that $\varphi(u \otimes t^n +\widehat{\mathcal{D}}(U^{[*]} \otimes \cc[t, t^{-1}]))=\varphi(u) \otimes t^n +\widehat{\mathcal{D}}(U^{[*]} \otimes \cc[t, t^{-1}])$, so $\mathcal{L}(\varphi)$ is well-defined.

Using Equation~\eqref{eq:dg Lie2}, we can check that $\mathcal{L}(\varphi) \circ \mathcal{D}_{U} = \mathcal{D}_{W} \circ \mathcal{L}(\varphi)$. Furthermore, with Formula~\eqref{eq:dg Lie} and the fact that $\varphi$ is a homomorphism of dg VLA, we can verify that $ \mathcal{L}(\varphi)([u_n, v_p])=[ \mathcal{L}(\varphi)(u_n), \mathcal{L}(\varphi)(v_p)]$.

As $\varphi$ is a degree $0$ map, then so is $\mathcal{L}(\varphi)$. Moreover, because $\varphi$ is a chain map, then $\mathcal{L}(\varphi) \circ d_{\mathcal{L}(U)} = d_{\mathcal{L}(W)} \circ \mathcal{L}(\varphi)$, so $\mathcal{L}(\varphi)$ is a chain map. As it preserves the Lie bracket, it is a homomorphism of dg Lie algebras.
\end{proof}

Set $\mathcal{L}(U^{[*]})_-=\on{Span}\{u_n \ | \ u \in U^{[*]}, \, n <0  \}$ and $\mathcal{L}(U^{[*]})_+=\on{Span}\{u_n \ | \ u \in U^{[*]}, \, n \geq 0  \}$

\begin{proposition}\label{prop:L(U)_+-} %Computations (71)
The spaces $\mathcal{L}(U^{[*]})_-$ and $\mathcal{L}(U^{[*]})_+$ are $\mathcal{D}$-stable dg Lie subalgebras of $\mathcal{L}(U^{[*]})$, and $\mathcal{L}(U^{[*]})=\mathcal{L}(U^{[*]})_+ \oplus \mathcal{L}(U^{[*]})_-$ as vector spaces. We also have the following equalities:
\[
\left\{\begin{array}{rcl}
\mathcal{L}(U^{[*]})_- & = &  (U^{[*]} \otimes t^{-1}\cc[t^{-1}])/\widehat{\mathcal{D}}(U^{[*]} \otimes t^{-1}\cc[t^{-1}]),\\[5pt]
\mathcal{L}(U^{[*]})_+ & = &  (U^{[*]} \otimes \cc[t])/\widehat{\mathcal{D}}(U^{[*]} \otimes \cc[t]).
\end{array}\right.
\]
\end{proposition}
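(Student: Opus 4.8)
The plan is to exploit the fact that the operator $\widehat{\mathcal{D}}$ respects the obvious splitting of $U^{[*]} \otimes \cc[t,t^{-1}]$ into nonnegative and negative powers of $t$, and then read off all four assertions from this.

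First I would record that $U^{[*]} \otimes \cc[t,t^{-1}] = (U^{[*]} \otimes \cc[t]) \oplus (U^{[*]} \otimes t^{-1}\cc[t^{-1}])$ as subcomplexes, each summand being $d$-stable since $d(u \otimes t^n) = d_U(u) \otimes t^n$. The key observation is that $\widehat{\mathcal{D}}$ is block diagonal for this decomposition: from $\widehat{\mathcal{D}}(u \otimes t^n) = \mathcal{D}u \otimes t^n + nu \otimes t^{n-1}$, for $n \geq 0$ both summands lie in $U^{[*]} \otimes \cc[t]$ (the second vanishes when $n = 0$, so nothing leaks into $U^{[*]} \otimes t^{-1}\cc[t^{-1}]$), and for $n \leq -1$ both lie in $U^{[*]} \otimes t^{-1}\cc[t^{-1}]$ since then $n - 1 \leq -2$. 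Hence $\widehat{\mathcal{D}}(U^{[*]} \otimes \cc[t,t^{-1}]) = \widehat{\mathcal{D}}(U^{[*]} \otimes \cc[t]) \oplus \widehat{\mathcal{D}}(U^{[*]} \otimes t^{-1}\cc[t^{-1}])$, with the first summand contained in $U^{[*]} \otimes \cc[t]$ and the second in $U^{[*]} \otimes t^{-1}\cc[t^{-1}]$.

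From this I would deduce the structural statements. Restricting the quotient map $q : U^{[*]} \otimes \cc[t,t^{-1}] \to \mathcal{L}(U^{[*]})$ to $U^{[*]} \otimes \cc[t]$ gives a surjection onto $\mathcal{L}(U^{[*]})_+$ (its image is spanned by the $u_n$ with $n \geq 0$) whose kernel is $(U^{[*]} \otimes \cc[t]) \cap \widehat{\mathcal{D}}(U^{[*]} \otimes \cc[t,t^{-1}]) = \widehat{\mathcal{D}}(U^{[*]} \otimes \cc[t])$ by the block-diagonality just established; this yields the asserted identification of $\mathcal{L}(U^{[*]})_+$, and likewise for $\mathcal{L}(U^{[*]})_-$. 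The direct sum decomposition $\mathcal{L}(U^{[*]}) = \mathcal{L}(U^{[*]})_+ \oplus \mathcal{L}(U^{[*]})_-$ then follows: the two pieces clearly span, and if $q(a) = q(b)$ with $a \in U^{[*]} \otimes \cc[t]$ and $b \in U^{[*]} \otimes t^{-1}\cc[t^{-1}]$, then $a - b \in \ker q$, and comparing components in the two summands forces $a \in \widehat{\mathcal{D}}(U^{[*]} \otimes \cc[t])$, so the common image is $0$. Each $\mathcal{L}(U^{[*]})_\pm$ is a subcomplex since $d_{\mathcal{L}(U)}(u_n) = (d_U u)_n$ keeps the power of $t$ fixed.

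Finally I would check closure under $\mathcal{D}$ and the bracket. For $\mathcal{D}$, use \eqref{eq:dg Lie2}: $\mathcal{D}(u_n) = (\mathcal{D}u)_n = -nu_{n-1}$, which lies in $\mathcal{L}(U^{[*]})_+$ when $n \geq 0$ (it is $0$ for $n = 0$, and $n - 1 \geq 0$ otherwise) and in $\mathcal{L}(U^{[*]})_-$ when $n \leq -1$ (then $n - 1 \leq -2$). For the bracket, use \eqref{eq:dg Lie}: $[u_n, v_p] = \sum_{i \geq 0}\binom{n}{i}(u_{(i)}v)_{n+p-i}$. If $n, p \geq 0$, then $\binom{n}{i} = 0$ unless $0 \leq i \leq n$, so every surviving index satisfies $n + p - i \geq p \geq 0$ and the bracket lies in $\mathcal{L}(U^{[*]})_+$. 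If $n, p \leq -1$, the sum is finite by the truncation axiom (i), and $n + p - i \leq -2 < 0$ for all $i \geq 0$, so the bracket lies in $\mathcal{L}(U^{[*]})_-$. Combining these, both $\mathcal{L}(U^{[*]})_\pm$ are $\mathcal{D}$-stable dg Lie subalgebras. The only point demanding a little care is the block-diagonality of $\widehat{\mathcal{D}}$ — in particular the $n = 0$ boundary case — since everything else is formal once that is in place; I do not anticipate any genuine obstacle.
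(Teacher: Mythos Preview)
Your proposal is correct and follows essentially the same approach as the paper: the paper's proof also observes that $\widehat{\mathcal{D}}$ preserves the splitting $U^{[*]}\otimes\cc[t,t^{-1}]=(U^{[*]}\otimes\cc[t])\oplus(U^{[*]}\otimes t^{-1}\cc[t^{-1}])$, uses this to pass the quotient through the direct sum, and checks bracket- and $\mathcal{D}$-closure by inspecting indices in \eqref{eq:dg Lie} and \eqref{eq:dg Lie2}. You have simply spelled out the index arguments (the vanishing of $\binom{n}{i}$ for $i>n\geq 0$, the $n=0$ boundary case for $\widehat{\mathcal{D}}$, and the inequality $n+p-i\leq -2$) that the paper leaves as ``by looking at the indices.''
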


\begin{proof}
By looking at the indices in Formula~\eqref{eq:dg Lie}, we can see that $\mathcal{L}(U^{[*]})_-$ and $\mathcal{L}(U^{[*]})_+$ are stable by the Lie bracket. As $\mathcal{D}(u_n)=-nu_{n-1}$, it follows that $\mathcal{L}(U^{[*]})_-$ and $\mathcal{L}(U^{[*]})_+$ are stable by $\mathcal{D}$. We then verify that
\[
\begin{array}{rcl}
\mathcal{L}(U^{[*]}) & = &  (U^{[*]} \otimes \cc[t, t^{-1}])/\widehat{\mathcal{D}}(U^{[*]} \otimes \cc[t, t^{-1}]) \\[5pt]
 & = &  (U^{[*]} \otimes \cc[t] \oplus U^{[*]} \otimes t^{-1}\cc[t^{-1}])/\widehat{\mathcal{D}}(U^{[*]} \otimes \cc[t] \oplus U^{[*]} \otimes t^{-1}\cc[t^{-1}]) \\[5pt]
 & \cong &  (U^{[*]} \otimes \cc[t])/\widehat{\mathcal{D}}(U^{[*]} \otimes \cc[t]) \oplus (U^{[*]} \otimes t^{-1}\cc[t^{-1}])/\widehat{\mathcal{D}}(U^{[*]} \otimes t^{-1}\cc[t^{-1}]) \\[5pt]
 & \cong &  \mathcal{L}(U^{[*]})_+ \oplus \mathcal{L}(U^{[*]})_-.
\end{array}
\]
Finally, it is clear that both $\mathcal{L}(U^{[*]})_+$ and $\mathcal{L}(U^{[*]})_-$ are complexes whose differentials are given by $d_{\mathcal{L}(U)}$. Thus they are dg Lie algebras.
\end{proof}

A direct consequence of Propositions \ref{prop:hom-dg Lie} and \ref{prop:L(U)_+-} is as follows:

\begin{proposition} \label{prop:hom-dg Lie-L(U)_+-}%Computations (71)
Let $\varphi: U^{[*]} \longrightarrow W^{[*]}$ be a homomorphism of dg VLA, and write $\mathcal{L}(\varphi)_\pm$ for the restriction of $\mathcal{L}(\varphi)$ to $\mathcal{L}(U^{[*]})_\pm$. Then $\mathcal{L}(\varphi)_\pm(\mathcal{L}(U^{[*]})_\pm) \subseteq \mathcal{L}(W^{[*]})_\pm$, and
\[
\mathcal{L}(\varphi)_+: \mathcal{L}(U^{[*]})_+ \longrightarrow \mathcal{L}(W^{[*]})_+ \quad , \quad  \mathcal{L}(\varphi)_-: \mathcal{L}(U^{[*]})_- \longrightarrow \mathcal{L}(W^{[*]})_- 
\]
are homomorphisms of dg Lie algebras. Moreover, $\mathcal{L}(\varphi)_\pm \circ \mathcal{D}_{U} = \mathcal{D}_{W} \circ \mathcal{L}(\varphi)_\pm$.
\end{proposition}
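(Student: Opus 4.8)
The plan is to deduce everything from Propositions~\ref{prop:hom-dg Lie} and~\ref{prop:L(U)_+-} with essentially no new computation, the only point deserving attention being that $\mathcal{L}(\varphi)$ is compatible with the $\pm$-decomposition.

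First I would check the inclusion $\mathcal{L}(\varphi)(\mathcal{L}(U^{[*]})_\pm) \subseteq \mathcal{L}(W^{[*]})_\pm$. By Proposition~\ref{prop:hom-dg Lie} the map $\mathcal{L}(\varphi)$ is given on generators by $\mathcal{L}(\varphi)(u_n) = (\varphi(u))_n$, so it visibly sends $u_n$ with $n \geq 0$ to an element of $\mathcal{L}(W^{[*]})_+$ and $u_n$ with $n < 0$ to an element of $\mathcal{L}(W^{[*]})_-$; since $\mathcal{L}(U^{[*]})_\pm$ is spanned by such generators, the inclusion follows. Equivalently, using the identifications in Proposition~\ref{prop:L(U)_+-}, the extension $u \otimes t^n \mapsto \varphi(u) \otimes t^n$ of $\varphi$ to $U^{[*]} \otimes \cc[t, t^{-1}]$ preserves the summands $U^{[*]} \otimes \cc[t]$ and $U^{[*]} \otimes t^{-1}\cc[t^{-1}]$ and commutes with $\widehat{\mathcal{D}}$, hence descends to $\mathcal{L}(W^{[*]})_\pm$ on each piece. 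This is the one verification I expect to be the (very mild) crux; everything else is formal.

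Next, by Proposition~\ref{prop:L(U)_+-} the subspaces $\mathcal{L}(U^{[*]})_\pm$ and $\mathcal{L}(W^{[*]})_\pm$ are subcomplexes and dg Lie subalgebras, with bracket and differential inherited from $\mathcal{L}(U^{[*]})$ and $\mathcal{L}(W^{[*]})$. Since $\mathcal{L}(\varphi)$ is a degree-$0$ chain map preserving the Lie bracket (Proposition~\ref{prop:hom-dg Lie}) and, by the previous paragraph, restricts to a well-defined map $\mathcal{L}(\varphi)_\pm: \mathcal{L}(U^{[*]})_\pm \longrightarrow \mathcal{L}(W^{[*]})_\pm$, this restriction is again a degree-$0$ chain map preserving the bracket, i.e., a homomorphism of dg Lie algebras. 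Concretely, applying $\mathcal{L}(\varphi)$ to the bracket formula~\eqref{eq:dg Lie} for $u, v \in U^{[*]}$ and $n, p$ of the appropriate sign, and using $\varphi(u_{(i)}v) = \varphi(u)_{(i)}\varphi(v)$, gives $\mathcal{L}(\varphi)([u_n, v_p]) = [\mathcal{L}(\varphi)(u_n), \mathcal{L}(\varphi)(v_p)]$, and commutation with the differential is inherited in the same way.

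Finally, for the $\mathcal{D}$-equivariance I would simply restrict the identity $\mathcal{L}(\varphi) \circ \mathcal{D}_{U} = \mathcal{D}_{W} \circ \mathcal{L}(\varphi)$ of Proposition~\ref{prop:hom-dg Lie} to $\mathcal{L}(U^{[*]})_\pm$: since $\mathcal{D}$ preserves $\mathcal{L}(U^{[*]})_\pm$ and $\mathcal{L}(W^{[*]})_\pm$ by Proposition~\ref{prop:L(U)_+-}, both sides of this identity map $\mathcal{L}(U^{[*]})_\pm$ into $\mathcal{L}(W^{[*]})_\pm$ and agree there, which is precisely $\mathcal{L}(\varphi)_\pm \circ \mathcal{D}_{U} = \mathcal{D}_{W} \circ \mathcal{L}(\varphi)_\pm$. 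There is no genuine obstacle: the content is carried entirely by the two cited propositions, and the proof amounts to observing that the restriction of a dg Lie algebra morphism to invariant summands, with codomain suitably restricted, is again such a morphism.
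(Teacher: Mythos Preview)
Your proposal is correct and follows exactly the paper's approach: the paper states this proposition as ``a direct consequence of Propositions~\ref{prop:hom-dg Lie} and~\ref{prop:L(U)_+-}'' with no further argument, and your write-up simply spells out that deduction.
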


\begin{theorem}\label{thm:U_L(U)-}%Computations (71)
The map
\[
\begin{array}{rccl}
\iota_U: & U^{[*]} & \longrightarrow & \mathcal{L}(U^{[*]})[2N]_- \\[5pt]
 & u & \longmapsto & u_{-1}
 \end{array}
 \]
 is an isomorphism of complexes, and $\iota_U \circ \mathcal{D} =\mathcal{D} \circ \iota_U$. Moreover, if $\varphi: U^{[*]} \longrightarrow W^{[*]}$ is a homomorphism of dg VLA, then $\iota_W \circ  \varphi = \mathcal{L}(\varphi)_- \circ \iota_U$. In particular, any dg vertex Lie algebra is automatically a dg Lie algebra.
\end{theorem}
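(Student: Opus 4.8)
The plan is to verify in turn that $\iota_U$ is a degree-$0$ chain map, that it commutes with $\mathcal{D}$, that it is bijective, and then to read off the functoriality statement and the final assertion; the only genuinely non-formal point will be bijectivity.

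\textbf{Degree, chain map, and $\mathcal{D}$.} Since $|u \otimes t^n| = |u| - 2Nn$, the class $u_{-1}$ lies in $\mathcal{L}(U^{[*]})^{[|u|+2N]} = \mathcal{L}(U^{[*]})[2N]^{[|u|]}$, so $\iota_U$ is homogeneous of degree $0$. It is a chain map because $d_{\mathcal{L}(U)}(u_n) = (d_U u)_n$, whence $d_{\mathcal{L}(U)}(\iota_U(u)) = (d_U u)_{-1} = \iota_U(d_U u)$. It commutes with $\mathcal{D}$ directly from the definition of the derivation $\mathcal{D}$ of $\mathcal{L}(U^{[*]})$ in Theorem \ref{thm:dg_Lie}: $\mathcal{D}(\iota_U(u)) = \mathcal{D}(u_{-1}) = (\mathcal{D}u)_{-1} = \iota_U(\mathcal{D}u)$.

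\textbf{Bijectivity.} By Proposition \ref{prop:L(U)_+-}, $\mathcal{L}(U^{[*]})_- = (U^{[*]} \otimes t^{-1}\cc[t^{-1}])/\widehat{\mathcal{D}}(U^{[*]} \otimes t^{-1}\cc[t^{-1}])$ and $\iota_U$ is the composite of $u \mapsto u \otimes t^{-1}$ with the quotient map. So it suffices to prove
\[
U^{[*]} \otimes t^{-1}\cc[t^{-1}] = \big(U^{[*]} \otimes t^{-1}\big) \oplus \widehat{\mathcal{D}}\big(U^{[*]} \otimes t^{-1}\cc[t^{-1}]\big).
\]
I would argue with leading terms: for a nonzero $\xi = \sum_{n=1}^{N} v_n \otimes t^{-n}$ with $v_N \neq 0$ (call $N$ the order and $v_N$ the leading coefficient), one has $\widehat{\mathcal{D}}(\xi) = \sum_{n=1}^{N}(\mathcal{D}v_n \otimes t^{-n} - n\, v_n \otimes t^{-n-1})$, which has order exactly $N+1$ with leading coefficient $-N v_N \neq 0$ — this is the one place where $\operatorname{char}\cc = 0$ is used. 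Consequently $\widehat{\mathcal{D}}$ is injective on $U^{[*]} \otimes t^{-1}\cc[t^{-1}]$ and its image meets $U^{[*]} \otimes t^{-1}$ (the order-$1$ part) only in $0$, which gives injectivity of $\iota_U$ and directness of the sum; and an induction on order, subtracting $\widehat{\mathcal{D}}\big(-\tfrac{1}{M-1}\, w \otimes t^{-(M-1)}\big)$ from an element of order $M \geq 2$ with leading coefficient $w$ to reduce the order, shows the sum exhausts $U^{[*]} \otimes t^{-1}\cc[t^{-1}]$, giving surjectivity. An equivalent packaging: iterating $(\mathcal{D}u)_n = -n u_{n-1}$ yields $(\mathcal{D}^{k}u)_{-1} = k!\, u_{-k-1}$, so $u_{-m} = \iota_U\big(\tfrac{1}{(m-1)!}\mathcal{D}^{m-1}u\big)$ for $m \geq 1$ (surjectivity), while comparing coefficients of powers of $t$ in an equality $u \otimes t^{-1} = \widehat{\mathcal{D}}(\xi)$ forces $\xi = 0$ from the top index downward (injectivity).

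\textbf{Functoriality and conclusion.} For a dg VLA homomorphism $\varphi$, both $\iota_W \circ \varphi$ and $\mathcal{L}(\varphi)_- \circ \iota_U$ send $u$ to $(\varphi(u))_{-1}$ by the definition of $\mathcal{L}(\varphi)$ in Proposition \ref{prop:hom-dg Lie} (and $\mathcal{L}(\varphi)_-$ does land in $\mathcal{L}(W^{[*]})_-$ by Proposition \ref{prop:hom-dg Lie-L(U)_+-}), so the two composites coincide. Finally, since $\mathcal{L}(U^{[*]})_-$ is a dg Lie algebra with bracket of degree $-2N$ (Proposition \ref{prop:L(U)_+-}) and $\iota_U$ identifies $U^{[*]}$ with $\mathcal{L}(U^{[*]})[2N]_-$, transport of structure makes $U^{[*]}$ a dg Lie algebra; tracking the shift conventions, the bracket on $\mathcal{L}(U^{[*]})[2N]_-$ has degree $(-2N) + 2N = 0$, and one computes explicitly $\langle u, v \rangle = \iota_U^{-1}\big([u_{-1}, v_{-1}]\big) = \sum_{i \geq 0} \tfrac{(-1)^i}{(i+1)!}\, \mathcal{D}^{i+1}(u_{(i)}v)$. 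I expect the leading-term/complement argument for bijectivity to be the main (and essentially only) non-formal step; everything else is bookkeeping with the relations already recorded in Theorem \ref{thm:dg_Lie} and Proposition \ref{prop:hom-dg Lie}.
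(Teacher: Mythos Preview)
Your proof is correct and follows essentially the same route as the paper's. Both obtain surjectivity from the identity $u_{-k-1}=\tfrac{1}{k!}(\mathcal{D}^{k}u)_{-1}$ and injectivity by a leading-term argument on $\widehat{\mathcal{D}}$; the only cosmetic difference is that you first invoke Proposition~\ref{prop:L(U)_+-} to work entirely inside $U^{[*]}\otimes t^{-1}\cc[t^{-1}]$ and phrase things as a direct-sum decomposition, while the paper argues directly in $U^{[*]}\otimes\cc[t,t^{-1}]$ and reaches the same contradiction by comparing coefficients. Your added computation of the transported bracket and its degree anticipates exactly what the paper records immediately after the theorem as Equation~\eqref{eq:U_dg Lie}.
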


\begin{proof}
Using Formula~\eqref{eq:dg Lie2}, we see that $u_{-k-1}=\frac{1}{k!}(\mathcal{D}^ku)_{-1}$ for all $k \geq 0$. Hence $\iota_U$ is surjective.

Consider $u \in U^{[*]}$ such that $\iota_U(u)=0$, i.e., $u \otimes t^{-1} \in \widehat{\mathcal{D}}(U^{[*]} \otimes \cc[t, t^{-1}])$. We can write $u \otimes t^{-1}=\sum_{i=-n}^m (\mathcal{D}v^{(i)} \otimes t^i+i v^{(i)} \otimes t^{i-1})$ where $v^{(i)} \in U^{[*]}$ and $m \geq 0, n \geq 1$ minimal, hence $v^{(m)} \neq 0$ and $v^{(-n)} \neq 0$. By comparing the coefficients of $t^k$ for $k \geq 0$, we see that $\sum_{i=0}^m (\mathcal{D}v^{(i)} \otimes t^i+i v^{(i)} \otimes t^{i-1})=0$, implying that $m=0$ by minimality. But then $u \otimes t^{-1}=\sum_{i=-n}^{-1} (\mathcal{D}v^{(i)} \otimes t^i+i v^{(i)} \otimes t^{i-1})$, contradicting the minimality of $n$. Hence $v^{(i)}=0$ for all $i$, and so $u=0$, implying that the map $\iota_U$ is injective. Therefore $\iota_U$ is an isomorphism of vector spaces. As $|t|=-2N$, we see that $\iota_U : U^{[*]} \stackrel{\cong}{\longrightarrow} \mathcal{L}(U^{[*]})[2N]_-$ as graded vector spaces.

As $\iota_U(d_U(u))=(d_U(u))_{-1}=d_{\mathcal{L}(U)}(u_{-1})=d_{\mathcal{L}(U)}(\iota_U(u))$, we see that $\iota_U$ is a chain map. So $\iota_U$ is an isomorphism of complexes. The remaining statements are direct consequences of Theorem \ref{thm:dg_Lie} and Proposition \ref{prop:hom-dg Lie}.
\end{proof}

With the above theorem, we can identify $U^{[*]}$ and $\mathcal{L}(U^{[*]})[2N]_- $, so we can see $U^{[*]}$ as a dg Lie algebra with Lie bracket
\begin{align}\label{eq:U_dg Lie}
[u, v] = \sum_{i \geq 0}\frac{(-1)^i}{(i+1)!}\mathcal{D}^{i+1}(u_{(i)}v).
\end{align}
As it was the case for $\mathcal{D}$ in Theorem \ref{thm:dg_Lie}, the map $\mathcal{D} : U^{[*]} \longrightarrow U^{[*]}$ is a derivation of degree $2N$ for the dg Lie algebra structure \eqref{eq:U_dg Lie} on $U^{[*]}$. Using Proposition \ref{prop:hom-dg Lie-L(U)_+-}, we see that any homomorphism of dg VLA $\varphi: U^{[*]} \longrightarrow W^{[*]}$ gives a homomorphism of dg Lie algebras $U^{[*]} \longrightarrow W^{[*]}$ with the Lie bracket given by Formula~\eqref{eq:U_dg Lie}. This in turn extends to a homomorphism $ \mathcal{L}(U^{[*]}) \longrightarrow  \mathcal{L}(W^{[*]})$ of dg Lie algebras by Proposition \ref{prop:hom-dg Lie}.

\begin{lemma}\label{lem:Jacobi_associator}% Computations (73)
Let $(U^{[*]}, d_U^{[*]})$ be a cochain complex equipped with an endomorphism $\mathcal{D}$ of degree $2N$, $N \in \mathbb{Z}$, and a chain map
\begin{align*}
\begin{array}{cccc}
Y^-(\cdot, x): & U^{[*]} & \longrightarrow & x^{-1}\End^{[*]}(U^{[*]}))[[x^{-1}]] \\
           & u & \longmapsto      & Y^-(u, x)=\displaystyle \sum_{n \in \mathbb{N}}u_{(n)} x^{-n-1}
\end{array}
\end{align*}
such that:
\begin{itemize}\setlength\itemsep{5pt}
\item $u_{(n)} v =0$ for $n \gg 0$,
\item $[\mathcal{D}, Y^-(u, x)]^s=Y^-(\mathcal{D}u, x)=\displaystyle \frac{d}{dx}Y^-(u, x)$.
\end{itemize}
Then $Y^-(\cdot, x)$ satisfies the half Jacobi identity if and only if it satisfies the half associator formula:
\begin{align}\label{eq:half assoc}
\begin{split}
Y^-(Y^-(u, x_0)v, x_2) & \simeq  \on{Res}_{x_1}\big[  x_0^{-1}\delta(\frac{x_1-x_2}{x_0})Y^-(u, x_1)Y^-(v, x_2) \\[5pt] 
 &  \quad \quad  -(-1)^{|u||v|}x_0^{-1}\delta(\frac{x_2-x_1}{-x_0})Y^-(v, x_2)Y^-(u, x_1) \big]. 
 \end{split} 
\end{align}
\end{lemma}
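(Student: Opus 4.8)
The plan is to handle the two implications separately; ``half Jacobi $\Rightarrow$ half associator'' is essentially a one-line residue computation, and ``half associator $\Rightarrow$ half Jacobi'' carries the real content. For the forward implication I would apply $\on{Res}_{x_1}$ to axiom (iv) of Definition~\ref{def:4.1}. Using the elementary identity $\on{Res}_{x_1}\!\big[x_2^{-1}\delta(\tfrac{x_1-x_0}{x_2})\big]=1$ (a special case of the formal calculus of \cite[Ch.~2]{Lepowsky-Li}), and the fact that $Y^-(Y^-(u,x_0)v,x_2)$ involves no $x_1$, the left side collapses to $Y^-(Y^-(u,x_0)v,x_2)$ while the right side becomes, term by term, the right side of \eqref{eq:half assoc}. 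Because $\on{Res}_{x_1}$ only reads off the $x_1^{-1}$-coefficient --- which is part of the data that $\simeq$ compares --- and leaves $x_0$ and $x_2$ untouched, this operation is compatible with $\simeq$, and \eqref{eq:half assoc} follows.

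For the converse I would first pass to components: applying $\on{Res}_{x_0}[\,x_0^{\,n}\cdot\,]$ to \eqref{eq:half assoc} for $n\ge 0$, and using that $(-x_2+x_1)^n=(x_1-x_2)^n$ for $n\ge 0$ (so that both formal delta functions produce the same polynomial, exactly as in Proposition~\ref{prop:5.2.2}), one obtains
\[
Y^-(u_{(n)}v,x_2)\;\simeq\;\on{Res}_{x_1}\big[(x_1-x_2)^n\,[Y^-(u,x_1),Y^-(v,x_2)]^s\big],\qquad n\ge 0 .
\]
By axiom (i) the left side vanishes for $n\gg 0$; combined with the fact that $[Y^-(u,x_1),Y^-(v,x_2)]^s$ sends each element of $U^{[*]}$ into $x_1^{-1}x_2^{-1}U^{[*]}[x_1^{-1},x_2^{-1}]$, these relations form a system that is triangular in the order of the pole in $x_1$ and hence determines $[Y^-(u,x_1),Y^-(v,x_2)]^s$ completely in terms of the operators $Y^-(u_{(m)}v,x_2)$. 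This is the singular-part replacement for weak dg commutativity (cf.\ Remark~\ref{rem:4.3}). I would then reconstruct axiom (iv) from this ``half commutativity'' together with \eqref{eq:half assoc} by the same delta-function bookkeeping that recovers the Jacobi identity from weak (dg) commutativity in Theorem~\ref{thm:3.5.1} and \cite[Proposition~3.4.3]{Lepowsky-Li}, and by which \eqref{eq:commut_Res} and \eqref{eq:commut_Res2} are combined in the proof of Theorem~\ref{thm:dg_Lie}: one rewrites the right side of (iv) using the three-term formal-delta identity $x_0^{-1}\delta(\tfrac{x_1-x_2}{x_0})-x_0^{-1}\delta(\tfrac{x_2-x_1}{-x_0})=x_2^{-1}\delta(\tfrac{x_1-x_0}{x_2})$, splitting it into a commutator part (handled above) and a normally-ordered part, and uses the substitution identities of \cite[Ch.~2]{Lepowsky-Li} to match the $x_1$-dependence against the left side $x_2^{-1}\delta(\tfrac{x_1-x_0}{x_2})Y^-(Y^-(u,x_0)v,x_2)$. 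The essential input at this stage is axiom (ii): it is precisely what controls the $x_1$-dependence of $Y^-(u,x_1)$ --- morally $Y^-(u,x_1)$ ``is'' $Y^-(e^{(x_1-x_2)\mathcal{D}}u,x_2)$ --- and hence allows the purely residue-level information above to be promoted back to the full three-variable identity.

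The step I expect to cost the most care is keeping all of this consistent with the relation $\simeq$. Multiplication by a formal delta function does not commute with the projection onto singular parts, so a term that is regular in one variable can acquire singular components in the others once it is multiplied by $x_2^{-1}\delta(\tfrac{x_1-x_0}{x_2})$; one therefore cannot naively ``substitute \eqref{eq:half assoc} into (iv)''. I would carry out every manipulation as a genuine equality in $\on{End}^{[*]}(U^{[*]})[[x_0^{\pm1},x_1^{\pm1},x_2^{\pm1}]]$, keep the regular error terms explicit, and verify only at the very end that, after the delta multiplications, these errors contribute nothing to the strictly-negative-power coefficients that (iv) actually compares --- the rigidity coming from axiom (ii), which couples all the modes $u_{(n)}$, being exactly what forces this accounting to close.
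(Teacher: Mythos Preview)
Your forward implication is correct and is exactly what the paper does: apply $\on{Res}_{x_1}$ to the half Jacobi identity.

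For the converse, your route is not wrong in spirit, but it is far more elaborate than necessary, and the step you flag as most delicate (compatibility of $\simeq$ with multiplication by delta functions) is precisely the one the paper avoids altogether. Rather than passing through a commutator formula and then trying to rebuild the three-variable identity from it, the paper checks the half Jacobi identity one $x_1$-coefficient at a time: for each $n\ge 0$ it computes $\on{Res}_{x_1}\big[x_1^{\,n}\cdot(\text{both sides of (iv)})\big]$. On the left, the delta substitution property (\cite[Proposition~2.3.8 and Remark~2.3.12]{Lepowsky-Li}) lets one replace $x_1^{\,n}$ by $(x_2+x_0)^n$ inside $x_2^{-1}\delta(\tfrac{x_1-x_0}{x_2})$, after which $\on{Res}_{x_1}$ strips the delta and leaves $(x_2+x_0)^nY^-(Y^-(u,x_0)v,x_2)$. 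Now one invokes \eqref{eq:half assoc} directly, and then pushes $(x_2+x_0)^n$ back to $x_1^{\,n}$ inside the two deltas on the right by the same substitution trick. This matches both sides modulo $\simeq$ for every $n\ge 0$, which is exactly the content of the half Jacobi identity.

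Two points worth noting. First, this argument never uses the $\mathcal{D}$-bracket formula; your claim that axiom (ii) is ``the essential input'' is not borne out --- the equivalence is purely a formal-variable calculation. Second, because the paper works coefficient-by-coefficient in $x_1$ rather than attempting to manipulate the full three-variable expression, the issue you raise about delta multiplication interacting badly with the singular-part projection simply does not arise: at no stage is \eqref{eq:half assoc} multiplied by a delta function, only by the polynomial $(x_2+x_0)^n$, which is harmless for $\simeq$.
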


\begin{proof}
If the half Jacobi identity is satisfied, then by talking $\on{Res}_{x_1}$ of the identity we obtain the half associator. 

We assume the Equation~\eqref{eq:half assoc} is satisfied. We then apply \cite[Proposition 2.3.8 and Remark 2.3.12]{Lepowsky-Li} to see that for any $n \geq 0$
\[
\renewcommand{\arraystretch}{1.2}
\begin{array}{l}
 \on{Res}_{x_1} x_1^n  x_2^{-1}\delta(\frac{x_1-x_0}{x_2})Y^-(Y^-(u, x_0)v, x_2) \\[5pt]
  = \on{Res}_{x_1}(x_2+x_0)^n x_2^{-1}\delta(\frac{x_1-x_0}{x_2})Y^-(Y^-(u, x_0)v, x_2) \\[5pt]
= (x_2+x_0)^n Y^-(Y^-(u, x_0)v, x_2) \\[5pt]
\simeq (x_2+x_0)^n \on{Res}_{x_1}\big[  x_0^{-1}\delta(\frac{x_1-x_2}{x_0})Y^-(u, x_1)Y^-(v, x_2) \\
\hspace{3.7cm}-(-1)^{|u||v|}x_0^{-1}\delta(\frac{x_2-x_1}{-x_0})Y^-(v, x_2)Y^-(u, x_1) \big] \\[5pt]
= \on{Res}_{x_1}\big[  x_1^n x_0^{-1}\delta(\frac{x_1-x_2}{x_0})Y^-(u, x_1)Y^-(v, x_2) \\
\hspace{1.7cm} -(-1)^{|u||v|}x_1^n x_0^{-1}\delta(\frac{x_2-x_1}{-x_0})Y^-(v, x_2)Y^-(u, x_1) \big].
\end{array}
\]
This indicates that the singular part of the coefficient of $x_1^{-n-1}$ on both sides of the Jacobi identity are equal for all $n \geq 0$, i.e., the half Jacobi identity is satisfied.
\end{proof}

\begin{theorem}\label{thm:dg vertex-Lie_dg Lie}% Computations (73)
Let $(U^{[*]}, d_U^{[*]})$ be a cochain complex equipped with an endomorphism $\mathcal{D}$ of degree $2N$ and a chain map
\begin{align*}
\begin{array}{cccc}
Y^-(\cdot, x): & U^{[*]} & \longrightarrow & x^{-1}\End^{[*]}(U^{[*]}))[[x^{-1}]] \\
           & u & \longmapsto      & Y^-(u, x)=\displaystyle \sum_{n \in \mathbb{N}}u_{(n)} x^{-n-1}
\end{array}
\end{align*}
such that:
\begin{itemize}
\item $u_{(n)} v =0$ for $n \gg 0$,
\item $[\mathcal{D}, Y^-(u, x)]^s=Y^-(\mathcal{D}u, x)=\displaystyle \frac{d}{dx}Y^-(u, x)$.
\end{itemize}
Then $U^{[*]}$ is a dg vertex Lie algebra if and only if $\mathcal{L}(U^{[*]})$ is a dg Lie algebra.
\end{theorem}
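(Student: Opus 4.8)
The implication $(\Rightarrow)$ is precisely Theorem \ref{thm:dg_Lie}, so the content of the statement is the converse. Note first that the two hypotheses imposed on $(U^{[*]},\mathcal D,Y^-)$ are exactly axioms (i) and (ii) of Definition \ref{def:4.1}; thus, assuming $\mathcal{L}(U^{[*]})$ is a dg Lie algebra, what has to be extracted is the half skew-symmetry (iii) and the half Jacobi identity (iv), and by Lemma \ref{lem:Jacobi_associator} the latter may be replaced by the half associator \eqref{eq:half assoc}. The plan is to run the generating-function argument of the proof of Theorem \ref{thm:dg_Lie} backwards. I would first record that the bracket \eqref{eq:dg Lie}, together with \eqref{eq:dg Lie2}, is already well defined on $\mathcal{L}(U^{[*]})$ and compatible with the differential using only (i) and (ii): the identities $\{\widehat{\mathcal D}(u\otimes t^n),v\otimes t^p\}=0$ and $\{u\otimes t^n,\widehat{\mathcal D}(v\otimes t^p)\}\in\widehat{\mathcal D}(U^{[*]}\otimes\cc[t,t^{-1}])$ checked inside the proof of Theorem \ref{thm:dg_Lie} only use the consequences $(\mathcal D u)_{(i)}=-i\,u_{(i-1)}$ and $u_{(i)}\mathcal D v=\mathcal D(u_{(i)}v)+i\,u_{(i-1)}v$ of (ii), not (iii) or (iv). Hence ``$\mathcal{L}(U^{[*]})$ is a dg Lie algebra'' unambiguously means that this canonical bracket satisfies antisymmetry and the Jacobi identity. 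Exactly as at the start of the proof of Theorem \ref{thm:dg_Lie}, writing $Y(u,x)=\sum_{n\in\mathbb Z}u_nx^{-n-1}\in\mathcal{L}(U^{[*]})[[x,x^{-1}]]$, formulas \eqref{eq:dg Lie} and \eqref{eq:dg Lie2} are equivalent to the generating-function identities \eqref{eq:commut_Res} and \eqref{eq:commut_Res2}, this equivalence being purely formal. I will also use that $Y(\cdot,x)$ is injective (if $Y(w,x)=0$ then $w_{-1}=\iota_U(w)=0$, so $w=0$ by Theorem \ref{thm:U_L(U)-}), that $\mathcal{L}(U^{[*]})=\mathcal{L}(U^{[*]})_+\oplus\mathcal{L}(U^{[*]})_-$ (Proposition \ref{prop:L(U)_+-}), and that $\iota_U\colon U^{[*]}\xrightarrow{\;\sim\;}\mathcal{L}(U^{[*]})[2N]_-$, $u\mapsto u_{-1}$, with $u_{-k-1}=\tfrac1{k!}(\mathcal D^ku)_{-1}$, identifies $U^{[*]}$ faithfully with the negative part (Theorem \ref{thm:U_L(U)-}).

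For (iii) I would apply the antisymmetry of $\mathcal{L}(U^{[*]})$ to the special brackets $[u_m,v_{-1}]$ for $m\ge0$. On one side $[u_m,v_{-1}]=\sum_{i\ge0}\binom mi(u_{(i)}v)_{m-1-i}$, whose $\mathcal{L}(U^{[*]})_-$-component collapses to the single term $(u_{(m)}v)_{-1}=\iota_U(u_{(m)}v)$, since $\binom mi=0$ for $i>m$. On the other side $-(-1)^{|u||v|}[v_{-1},u_m]=-(-1)^{|u||v|}\sum_{i\ge0}(-1)^i(v_{(i)}u)_{m-1-i}$, whose $\mathcal{L}(U^{[*]})_-$-component is $-(-1)^{|u||v|}\sum_{i\ge m}(-1)^i(v_{(i)}u)_{m-1-i}=\iota_U\!\big({-}(-1)^{|u||v|}\sum_{i\ge m}\tfrac{(-1)^i}{(i-m)!}\mathcal D^{\,i-m}(v_{(i)}u)\big)$. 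Comparing and cancelling the injective $\iota_U$ gives $u_{(m)}v=(-1)^{|u||v|}\sum_{n\ge m}\tfrac{(-1)^{n+1}}{(n-m)!}\mathcal D^{\,n-m}(v_{(n)}u)$ for all $m\ge0$, which is exactly the half skew-symmetry (iii). (Only antisymmetry was used here, not the Jacobi identity.)

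For (iv), i.e. \eqref{eq:half assoc}, I would rerun the long computation in the proof of Theorem \ref{thm:dg_Lie}. There, two applications of \eqref{eq:commut_Res} rewrite $[[Y(u,x_1),Y(v,x_2)],Y(w,x_3)]$ into an expression built from $Y\big(Y^-(Y^-(u,x_{12})v,x_{23})w,x_3\big)$, and (using only \eqref{eq:commut_Res} and a standard $\delta$-identity) $[Y(u,x_1),[Y(v,x_2),Y(w,x_3)]]-(-1)^{|u||v|}[Y(v,x_2),[Y(u,x_1),Y(w,x_3)]]$ is rewritten into an expression built from $Y\big(Y^-(u,x_{13})Y^-(v,x_{23})w,x_3\big)$ and $Y\big(Y^-(v,x_{23})Y^-(u,x_{13})w,x_3\big)$; the only place a vertex-Lie axiom entered was the identification of these two expressions via the half Jacobi identity and Primc's $\delta$-identity (\cite[p.\ 15]{Primc}). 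Since $\mathcal{L}(U^{[*]})$ is now assumed to be a dg Lie algebra, its Jacobi identity forces those two expressions to be equal for free. Primc's identity being a reversible formal identity, I would then strip off the injective operator $Y(\cdot,x_3)$, perform the $\on{Res}_{x_{12}},\on{Res}_{x_{13}},\on{Res}_{x_{23}}$ integrations (which carry out the appropriate geometric-series substitutions of $x_{12},x_{13},x_{23}$ in terms of $x_1-x_2$, $x_1-x_3$, $x_2-x_3$), and rename variables, obtaining \eqref{eq:half assoc}. Then Lemma \ref{lem:Jacobi_associator} yields (iv), and $U^{[*]}$ is a dg vertex Lie algebra.

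The main obstacle is precisely the extraction steps in the last two paragraphs: converting the ``full'' antisymmetry and Jacobi relations on $\mathcal{L}(U^{[*]})$ — which a priori entangle the singular modes $u_n$ ($n\ge0$) with the regular modes $u_n$ ($n<0$) — into the purely singular identities (iii) and \eqref{eq:half assoc} on $U^{[*]}$. The conceptual key making this possible is that $\mathcal{L}(U^{[*]})_-\cong U^{[*]}$ is faithful and that $Y(\cdot,x)$ and $Y(\cdot,x_3)$ are injective, so passing to the negative part (equivalently, comparing coefficients of strictly negative powers of the outermost variable) loses no vertex-Lie information. Everything else is bookkeeping — correctly expanding the formal $\delta$-functions, tracking the $\on{Res}$ substitutions, and keeping the Koszul signs $(-1)^{|u||v|}$ straight — which mirrors the computations already performed in Theorems \ref{thm:dg_Lie} and \ref{thm:U_L(U)-}.
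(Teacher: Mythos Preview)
Your proof is correct and follows the same overall architecture as the paper's: reduce to axioms (iii) and (iv), derive (iii) from antisymmetry of $\mathcal L(U^{[*]})$, derive (iv) from its Jacobi identity via the half associator and Lemma~\ref{lem:Jacobi_associator}. Two points are worth noting.

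For (iii) your argument is genuinely different from the paper's and more elementary. The paper packages the full antisymmetry $[Y(u,x_1),Y(v,x_2)]=-(-1)^{|u||v|}[Y(v,x_2),Y(u,x_1)]$ into a single generating-function identity, rewrites it as $\sum_{i\ge0}x_1^{-i-1}\delta^{(i)}(x_2/x_1)\tfrac{1}{i!}Y(f_i,x_2)=0$ with $f_i$ the coefficient of $x_0^{-i-1}$ in $Y^-(u,x_0)v-(-1)^{|u||v|}e^{x_0\mathcal D}Y^-(v,-x_0)u$, and then invokes \cite[Lemma 2.1.4]{Li} to force each $Y(f_i,x_2)=0$. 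You instead specialise to the brackets $[u_m,v_{-1}]$ for $m\ge0$ and project onto $\mathcal L(U^{[*]})_-$, which isolates exactly the component $(u_{(m)}v)_{-1}$ on one side and $\sum_{i\ge m}(-1)^i(v_{(i)}u)_{m-1-i}$ on the other; this bypasses the formal-calculus lemma entirely. What the paper's route buys is uniformity (the same extraction mechanism is reused for (iv)); what yours buys is that one sees directly that only the instances $[u_m,v_{-1}]=-(-1)^{|u||v|}[v_{-1},u_m]$ of antisymmetry are needed.

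For (iv) your plan coincides with the paper's, but your description of the extraction step is imprecise. Equating the two displayed expressions for $[[Y,Y],Y]$ and $[Y,[Y,Y]]-(-1)^{|u||v|}[Y,[Y,Y]]$ via \eqref{eq:commut_Res} and Primc's $\delta$-identity gives an equality of formal series in $x_1,x_2,x_3$ of the shape $\sum_{i,j\ge0}c_{i,j}(x_1,x_2,x_3)\,Y(g_{i,j},x_3)=0$, where the $c_{i,j}$ are iterated derivatives of $\delta$-functions. One cannot simply ``strip off $Y(\cdot,x_3)$ and perform the $\on{Res}$ integrations'': the operator $Y(\cdot,x_3)$ is inside the residues, so one must first separate the $g_{i,j}$. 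The paper does this by successive applications of \cite[Lemma 2.1.4]{Li}, which is precisely the statement that such a $\delta$-expansion vanishes only if each $Y(g_{i,j},x_3)=0$; injectivity of $Y(\cdot,x)$ then gives $g_{i,j}=0$, i.e.\ the componentwise half associator. Your last paragraph correctly flags this as the crux, so this is a matter of naming the right tool rather than a genuine gap.
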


\begin{proof}
One direction is exactly Theorem \ref{thm:dg_Lie}. We now show that $U^{[*]}$ is a dg vertex Lie algebra, assuming that $\mathcal{L}(U^{[*]})$ is a dg Lie algebra. We only need to prove that the half skew-symmetry (iii) and the half Jacobi identity (iv) in Definition \ref{def:4.1} are satisfied.

By Theorem \ref{thm:U_L(U)-}, we know that $u=0$ if and only if $u_{-1}=0$, and so by using the definition of $\mathcal{L}(U^{[*]})$, we see that $u=0$ if and only if $Y(u, x)=0$. Furthermore, if we use Equation~\eqref{eq:commut_Res}, the skew-symmetry of the Lie bracket on $\mathcal{L}(U^{[*]})$ and \cite[(2.2.19) and (2.3.17)]{Lepowsky-Li}, we can show that
\[
\on{Res}_{x_0}\bigg[ x_1^{-1} \delta(\frac{x_2+x_0}{x_1})Y\big(Y^-(u, x_0)v-(-1)^{|u||v|}e^{x_0 \mathcal{D}}Y^-(v, -x_0)u, x_2\big) \bigg]=0.
\]
We write $f_i$ for the coefficient of $x_0^{-i-1}$ in $Y^-(u, x_0)v-(-1)^{|u||v|}e^{x_0 \mathcal{D}}Y^-(v, -x_0)u$. Then the above equality can be rewritten as
\[
\sum_{i \geq 0} x_1^{-i-1}\delta^{(i)}\bigg(\frac{x_2}{x_1}\bigg) \frac{1}{i!}Y(f_i, x_2)=0
\]
with $\delta^{(i)}(\frac{x_2}{x_1})=\sum_{n \in \mathbb{Z}}\frac{n!}{(n-i)!}x_2^{n-i}x_1^{-n+i}$. The sum over $i$ is finite because $f_i=0$ for $i \gg 0$. By \cite[Lemma 2.1.4]{Li}, we get that $Y(f_i, x_2)=0$ for all $i$, and so $f_i=0$ for all $i$. But this means that the singular part of $Y^-(u, x_0)v-(-1)^{|u||v|}e^{x_0 \mathcal{D}}Y^-(v, -x_0)u$ is zero, which is equivalent to the half skew-symmetry being satisfied.

As $\mathcal{L}(U^{[*]})$ is assumed to be a dg Lie algebra, the Jacobi identity for $\mathcal{L}(U^{[*]})$ is:
\begin{align*}
[[Y(u, x_1), Y(v, x_2)], Y(w, x_3)]=[Y(u, x_1), [Y(v, x_2), Y(w, x_3)]] \\[5pt]
-(-1)^{|u||v|}[Y(v, x_2], [Y(u, x_1), Y(w, x_3)]].
\end{align*}
Using Equation~\eqref{eq:commut_Res} to rewrite the Lie brackets in the above equation, it becomes an equality between power series in $x_1, x_2$, and $x_3$. We then apply \cite[Lemma 2.1.4]{Li} successively and use that fact the $Y(\cdot, x)$ is injective to show that, for all $i, j \geq 0$, 
\begin{equation*}
\scalebox{0.93}{$
\begin{split}
&(u_{(j)} v)_{(i)} w  \text{ is the coefficient of } x_0^{-j-1}x_2^{-i-1} \text{ in } \\[5pt]
& \on{Res}_{x_{1}}\big[  x_0^{-1} \delta(\frac{x_{1}-x_2}{x_0})Y^-(u, x_{1})Y^-(v, x_2)w  -(-1)^{(|u||v|} x_0^{-1} \delta(\frac{x_2-x_{1}}{-x_0})Y^-(v, x_2)Y^-(u, x_{1})w \big],
 \end{split}
 $}
 \end{equation*}
 which is the component version of the half associator formula~\eqref{eq:half assoc} applied to $w$. It then follows from Lemma \ref{lem:Jacobi_associator} that the half Jacobi identity is satisfied, which concludes the proof.
\end{proof}

\begin{remark}
We know from Theorems \ref{thm:dg_Lie} and \ref{thm:U_L(U)-} that if $(U^{[*]}, Y^-(\cdot, x), \mathcal{D})$ is a dg vertex Lie algebra, then $Y^-(\cdot, x)$ gives a dg Lie algebra structure on $U^{[*]}$ given by Equation \eqref{eq:U_dg Lie}. An interesting question raised by the correspondance in Theorem \ref{thm:dg vertex-Lie_dg Lie} between a dg vertex Lie algebra structure on $U^{[*]}$ and a dg Lie algebra structure on $\mathcal{L}(U^{[*]})$ is as follows: if given $(U^{[*]}, Y_U(\cdot, x), \mathcal{D})$ such that Equation \eqref{eq:U_dg Lie} is a Lie bracket, does $Y_U(\cdot, x)$ provide a dg vertex Lie algebra structure to $U^{[*]}$? 
\end{remark}

\begin{remark}\label{rem:IV18}
\begin{enumerate}[wide, nosep]
\item Similarly to the classical case, we can define dg ideals of dg vertex Lie algebras as well as quotients of dg vertex Lie algebras. Such quotients are once again dg vertex Lie algebras.
\item There are two functors from the category of dg vertex Lie algebras to the category of super vertex Lie algebras (resp. from the category of dg Lie algebras to the category of super Lie algebras): $\on{VLA}_{\on{cohom}}$ and $\on{VLA}_{\on{forget}}$ (resp. $\on{LA}_{\on{cohom}}$ and $\on{LA}_{\on{forget}}$). For a dg vertex Lie algebra $U^{[*]}$, the super vertex Lie algebra $\on{VLA}_{\on{cohom}}(U^{[*]})$ is obtained by taking the cohomology of the dg complex $U^{[*]}$ and then grouping the odd and even degrees, and $\on{VLA}_{\on{forget}}(U^{[*]})$ is the super vertex Lie algebra obtained from $U^{[*]}$ by ignoring the differential and grouping the odd and even degrees. We proceed similarly to define $\on{LA}_{\on{cohom}}(\mathfrak{g}^{[*]})$ and $\on{LA}_{\on{forget}}(\mathfrak{g}^{[*]})$ for a dg Lie algebra $\mathfrak{g}^{[*]}$. A dg vertex Lie algebra $U^{[*]}$ (resp. dg Lie algebra $\mathfrak{g}^{[*]}$) is called cohomologically simple if $ \on{VLA}_{\on{cohom}}(U^{[*]})$ (resp. $\on{LA}_{\on{cohom}}(\mathfrak{g}^{[*]})$) is simple as a super vertex Lie algebra (resp. super Lie algebra). If $\on{VLA}_{\on{forget}}(U^{[*]})$ (resp. $\on{LA}_{\on{forget}}(\mathfrak{g}^{[*]})$) is simple, then it implies that $U^{[*]}$ (resp. $\mathfrak{g}^{[*]}$) is simple, as dg ideals are super ideals, but we expect the reciprocal not to be true. Hence for a dg vertex Lie algebra (resp. dg Lie algebra), we have three non equivalent notions of simplicity: simple as dg vertex Lie algebra (resp. dg Lie algebra), simple as super vertex Lie algebra (resp. super Lie algebra), and cohomologically simple.
\end{enumerate}
\end{remark}

\section{The enveloping dg vertex algebra of a dg vertex Lie algebra}\label{sec:5}
Recall that for a dg vertex algebra $(V^{[*]}, Y(\cdot, x), \mathbf{1})$, the tuple $(V^{[*]}, Y^-(\cdot, x), \mathcal{D})$ is a dg vertex Lie algebra and this defines a forgetful functor $\on{Restr}^{[*]}(-): \on{dgVA} \longrightarrow \on{dgVLA}$. In this section, we construct a left adjoint functor for $\on{Restr}^{[*]}(-)$.

% Computations (71)
Let $U^{[*]}$ be a dg vertex Lie algebra with derivation $\mathcal{D}$ and let $\mathcal{L}(U^{[*]})= \mathcal{L}(U^{[*]})_+ \oplus \mathcal{L}(U^{[*]})_-$ be the corresponding dg Lie algebra with derivation $\mathcal{D}$. Consider $\cc$ as the trivial $\mathcal{L}(U^{[*]})_+$-module. Following \cite{Primc}, we want to define a space $\mathcal{V}^{[*]}(U^{[*]})$ through the universal enveloping algebra $\mathcal{U}(\mathcal{L}(U^{[*]}))$. The problem is that by definition, $\mathcal{U}(\mathcal{L}(U^{[*]}))$ is the quotient of the tensor algebra $T(\mathcal{L}(U^{[*]}))$ by the ideal $I$ generated by the elements of the form $u \otimes v -(-1)^{|u||v|}v \otimes u -[u, v]$. But as $[ \cdot, \cdot]$ is of degree $-2N$, the previous sum is not homogeneous, and so $I$ is not a subcomplex of $T(\mathcal{L}(U^{[*]}))$. But if we understood the tensor product as a map of degree $-2N$, then $\mathcal{U}(\mathcal{L}(U^{[*]}))$ becomes an associative dg algebra with a product of degree $-2N$ (see \eqref{point:2} in Lemma \ref{lem:5.1}).

The proof of the next lemma is straightforward.

\begin{lemma}\label{lem:5.1}
\begin{enumerate}
\item If $(\mathfrak{g}^{[*]}, [\cdot, \cdot])$ is a dg Lie algebra with bracket of degree $n$, then the shift $(\mathfrak{g}^{[*]}[-n], [\cdot, \cdot]_{[-n]})$ is a dg Lie algebra with bracket of degree $0$.
\item\label{point:2} If $(\mathfrak{g}^{[*]}, [\cdot, \cdot])$ is a dg Lie algebra with bracket of degree $n$, then the enveloping algebra $\mathcal{U}(\mathfrak{g}^{[*]}[-n])$ of $\mathfrak{g}^{[*]}[-n]$ is an associative dg algebra where the multiplication is a chain map. Moreover, the enveloping algebra $\mathcal{U}(\mathfrak{g}^{[*]})=\mathcal{U}(\mathfrak{g}^{[*]}[-n])[n]$ of $\mathfrak{g}^{[*]}$ is an associative dg algebra of degree $n$, i.e., the multiplication is of degree $n$.
\item If $\varphi: (\mathfrak{g}^{[*]}, [\cdot, \cdot]) \longrightarrow (\mathfrak{g}'^{[*]}, [\cdot, \cdot]')$ is a homomorphism of dg Lie algebras with $\on{deg}([\cdot, \cdot])=\on{deg}([\cdot, \cdot]')=n$, then $\mathcal{U}(\varphi): \mathcal{U}(\mathfrak{g}^{[*]}) \longrightarrow \mathcal{U}(\mathfrak{g}'^{[*]})$ is a homomorphism of associative dg algebras of degree $n$.
\end{enumerate}
\end{lemma}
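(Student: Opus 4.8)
The plan is to reduce all three statements to the classical theory of universal enveloping algebras of ordinary (degree-$0$) dg Lie algebras, by tracking the shift functor and the Koszul signs it produces. Statement (1) is then just bookkeeping of degrees: writing $|a|' = |a| + n$ for the degree of $a \in \mathfrak{g}^{[*]}$ viewed inside $\mathfrak{g}^{[*]}[-n]$, the signs $(-1)^{(|a|+n)(|b|+n)}$ in the antisymmetry and Jacobi identities of the degree-$n$ bracket become the standard Koszul signs $(-1)^{|a|'|b|'}$, and the degree-$n$ bracket, reshifted, is a chain map $\mathfrak{g}^{[*]}[-n] \otimes \mathfrak{g}^{[*]}[-n] \to \mathfrak{g}^{[*]}[-n]$ of degree $0$. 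This is precisely the equivalence already recorded in the remark following Definition~\ref{def:dg_Lie_algebra}.

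For (2), first invoke (1) so that $\mathfrak{g}^{[*]}[-n]$ is an ordinary dg Lie algebra, and then form $\mathcal{U}(\mathfrak{g}^{[*]}[-n]) = T(\mathfrak{g}^{[*]}[-n])/I$ in the usual way, where $T(-)$ carries the graded Leibniz differential and $I$ is the two-sided ideal generated by the homogeneous elements $g(x,y) = x \otimes y - (-1)^{|x|'|y|'} y \otimes x - [x,y]$. The one point that needs a short verification is that $I$ is a subcomplex: using the Leibniz rule on $T(\mathfrak{g}^{[*]}[-n])$ and the fact that $d$ is a derivation of the bracket, one computes $d\, g(x,y) = g(dx,y) + (-1)^{|x|'} g(x,dy)$, again a combination of generators, and two-sidedness then gives $d(I) \subseteq I$. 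Hence $\mathcal{U}(\mathfrak{g}^{[*]}[-n])$ is an associative dg algebra whose multiplication is a chain map. Unshifting, $\mathcal{U}(\mathfrak{g}^{[*]}) := \mathcal{U}(\mathfrak{g}^{[*]}[-n])[n]$ has the same differential, while the multiplication, degree $0$ on $\mathcal{U}(\mathfrak{g}^{[*]}[-n])$, becomes degree $n$ after the shift $[n]$: an element of old degree $d$ sits in new degree $d - n$, so a product $xy$ of old degree $|x|' + |y|'$ has new degree $(|x|'-n) + (|y|'-n) + n$. Associativity, the unit axiom and the Leibniz identity are shift-independent and carry over.

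For (3), a homomorphism $\varphi$ of dg Lie algebras with degree-$n$ brackets is in particular a degree-$0$ chain map intertwining the brackets, so $\varphi[-n] \colon \mathfrak{g}^{[*]}[-n] \to \mathfrak{g}'^{[*]}[-n]$ is a homomorphism of ordinary dg Lie algebras; functoriality of $\mathcal{U}(-)$ on ordinary dg Lie algebras (it sends generators to generators and relations to relations, and commutes with $d$) yields a dg algebra homomorphism $\mathcal{U}(\varphi[-n])$, and $\mathcal{U}(\varphi) := \mathcal{U}(\varphi[-n])[n]$ is then a degree-$0$ chain map respecting the two degree-$n$ multiplications and the units. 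The only part of the argument that is not purely formal is the sign and degree bookkeeping under the shifts $[-n]$ and $[n]$ together with the verification that the relation ideal $I$ is $d$-stable; everything else is the standard enveloping-algebra formalism.
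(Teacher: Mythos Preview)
Your proposal is correct and matches the paper's treatment: the paper simply declares the proof ``straightforward'' and gives no details, and your argument is precisely the natural straightforward verification one would supply---shift to degree $0$ via part~(1) (already recorded in the remark after Definition~\ref{def:dg_Lie_algebra}), check that the relation ideal is $d$-stable so the quotient is a dg algebra, and then unshift and track degrees.
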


To make $I$ into a subcomplex, we shift the gradation on $\mathcal{L}(U^{[*]})$ and consider 
\[
\mathcal{U}(\mathcal{L}(U^{[*]})[2N])=T(\mathcal{L}(U^{[*]})[2N])/\big\langle u \otimes v -(-1)^{|u|_{[2N]}|v|_{[2N]}}v \otimes u -[u, v]_{[2N]} \big\rangle.
\]

Based on Lemma \ref{lem:5.1}, the bracket $[\cdot, \cdot]_{[2N]}$ is a chain map and therefore $\langle u \otimes v -(-1)^{|u|_{[2N]}|v|_{[2N]}}v \otimes u -[u, v]_{[2N]} \big\rangle$ is a graded subspace of the associative dg algebra $T(\mathcal{L}(U^{[*]})[2N])$. With the same lemma, it follows that $\mathcal{U}(\mathcal{L}(U^{[*]})[2N])$ is an associative dg algebra with a multiplication of degree $0$.

We define
\[
\mathcal{V}^{[*]}(U^{[*]})=\mathcal{U}(\mathcal{L}(U^{[*]})[2N]) \otimes_{\mathcal{U}(\mathcal{L}(U^{[*]})[2N]_+)} \cc.
\]
with $\cc$ the trivial $\mathcal{U}(\mathcal{L}(U^{[*]})[2N]_+)$-module. The derivation $\mathcal{D}$ on $\mathcal{L}(U^{[*]})$ extends to a derivation on $\mathcal{U}(\mathcal{L}(U^{[*]})[2N])$, which then defines a linear map of degree $2N$ of $\mathcal{V}^{[*]}(U^{[*]})$. The degree $0$ element $\mathbf{1}=1 \otimes 1 \in \mathcal{V}^{[*]}(U^{[*]})$ satisfies $\mathcal{D}(\mathbf{1})=\mathcal{D}(1) \otimes 1=0$.

Using the identification given by $\iota_U$, we have an injective chain map
\[
\begin{array}{rccl}
\kappa_U: & U^{[*]} & \longrightarrow & \mathcal{V}^{[*]}(U^{[*]}) \\[5pt]
 & u & \longmapsto & u_{-1}\mathbf{1}.
 \end{array}
 \]

\begin{lemma}\label{lem:5.2} % Computations (71)
Consider $\mathcal{V}^{[*]}(U^{[*]})$ as a module over $\mathcal{L}(U^{[*]})[2N]$. Then for any $u_n \in \mathcal{L}(U^{[*]})[2N]$, we have
\[
[\mathcal{D}, u_n]^s=(\mathcal{D}u)_n = -n u_{n-1}
\]
and
\[
\mathcal{D} \circ \kappa_U=\kappa_U \circ \mathcal{D}.
\]
\end{lemma}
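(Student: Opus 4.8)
The plan is to reduce both identities to the single fact that $\mathcal{D}$ is an \emph{even} derivation of the associative dg algebra $\mathcal{U}(\mathcal{L}(U^{[*]})[2N])$, which is what is recorded in the paragraph preceding the statement (via Lemma \ref{lem:5.1} together with the observation that $\mathcal{D}$ has degree $2N$, so every Koszul sign $(-1)^{2N\,\cdot\,}$ involving $\mathcal{D}$ is trivial). First I would spell out what this gives on $\mathcal{V}^{[*]}(U^{[*]}) = \mathcal{U}(\mathcal{L}(U^{[*]})[2N]) \otimes_{\mathcal{U}(\mathcal{L}(U^{[*]})[2N]_+)}\cc$: since $\mathcal{D}$ stabilises $\mathcal{L}(U^{[*]})[2N]_+$ by Proposition \ref{prop:L(U)_+-}, it stabilises the left ideal $\mathcal{U}(\mathcal{L}(U^{[*]})[2N])\cdot\mathcal{L}(U^{[*]})[2N]_+$ defining the quotient, hence descends to a degree-$2N$ map of $\mathcal{V}^{[*]}(U^{[*]})$ (already asserted in the text), and the induced map satisfies
\[
\mathcal{D}(b\cdot v) = \mathcal{D}(b)\cdot v + b\cdot \mathcal{D}(v), \qquad b \in \mathcal{U}(\mathcal{L}(U^{[*]})[2N]),\ v \in \mathcal{V}^{[*]}(U^{[*]}),
\]
with no sign because $\mathcal{D}$ is even. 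I would also note that, $\mathcal{D}$ being even, the super-commutator $[\mathcal{D}, u_n]^s$ coincides with the ordinary commutator $[\mathcal{D}, u_n]$.

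For the first identity, I would apply the displayed derivation rule with $b = u_n \in \mathcal{L}(U^{[*]})[2N]$ acting by left multiplication on an arbitrary $v \in \mathcal{V}^{[*]}(U^{[*]})$: this gives $\mathcal{D}(u_n\cdot v) = \mathcal{D}(u_n)\cdot v + u_n\cdot\mathcal{D}(v)$, whence $[\mathcal{D}, u_n](v) = \mathcal{D}(u_n)\cdot v$. Since $\mathcal{D}$ on $\mathcal{U}(\mathcal{L}(U^{[*]})[2N])$ extends the derivation $\mathcal{D}$ of $\mathcal{L}(U^{[*]})$ from Theorem \ref{thm:dg_Lie}, which by construction sends $u_n$ to $(\mathcal{D}u)_n$, and since Equation \eqref{eq:dg Lie2} gives $(\mathcal{D}u)_n = -n\,u_{n-1}$, I conclude $[\mathcal{D}, u_n]^s = (\mathcal{D}u)_n = -n\,u_{n-1}$ as operators on $\mathcal{V}^{[*]}(U^{[*]})$.

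For the second identity, recall $\kappa_U(u) = u_{-1}\mathbf{1}$ and $\mathcal{D}(\mathbf{1}) = 0$ (noted when $\mathbf{1}$ was introduced). Taking $b = u_{-1}$, $v = \mathbf{1}$ in the derivation rule — equivalently, evaluating the first identity at $v = \mathbf{1}$ — yields $\mathcal{D}(\kappa_U(u)) = \mathcal{D}(u_{-1}\mathbf{1}) = (\mathcal{D}u)_{-1}\mathbf{1} + u_{-1}\mathcal{D}(\mathbf{1}) = (\mathcal{D}u)_{-1}\mathbf{1} = \kappa_U(\mathcal{D}u)$, i.e.\ $\mathcal{D}\circ\kappa_U = \kappa_U\circ\mathcal{D}$. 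The only mildly delicate point — and it is dealt with in the discussion surrounding the statement rather than inside the proof — is verifying that $\mathcal{D}$ genuinely passes to $\mathcal{V}^{[*]}(U^{[*]})$ and remains a derivation for the $\mathcal{U}(\mathcal{L}(U^{[*]})[2N])$-action; granting that, the rest is a one-line computation, so I do not expect a real obstacle here.
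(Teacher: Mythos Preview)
Your proposal is correct and follows essentially the same approach as the paper: both arguments use that $\mathcal{D}$ is an even derivation of $\mathcal{U}(\mathcal{L}(U^{[*]})[2N])$ to obtain $\mathcal{D}(u_n w) = (\mathcal{D}u_n)w + u_n(\mathcal{D}w)$, whence $[\mathcal{D},u_n]^s = (\mathcal{D}u)_n = -nu_{n-1}$, and then deduce $\mathcal{D}\circ\kappa_U = \kappa_U\circ\mathcal{D}$ from $\mathcal{D}(\mathbf{1})=0$. Your write-up is somewhat more careful in spelling out why $\mathcal{D}$ descends to the quotient and why the super-commutator reduces to the ordinary commutator, but the substance is identical.
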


\begin{proof}
Set $w=w^{(1)} \cdots w^{(k)}\mathbf{1} \in \mathcal{V}^{[*]}(U^{[*]})$ with $w^{(i)} \in \mathcal{L}(U^{[*]})[2N]$. We know that $\mathcal{D}$ acts on $\mathcal{U}(\mathcal{L}(U^{[*]})[2N])$ as a derivation of degree $2N$, so $\mathcal{D}(u_n w)=(\mathcal{D}u_n)w+u_n(\mathcal{D}w)$. Hence $[\mathcal{D}, u_n]^s w=(\mathcal{D}u_n)w=(\mathcal{D}u)_n w=-nu_{n-1}w$. The last equation of the lemma is a direct consequence of $\mathcal{D}(\mathbf{1})=0$.
\end{proof}

We define the following formal Laurent series
\[
Y_{\mathcal{V}(U)}(u, x)=Y_{\mathcal{V}(U)}(u_{-1}\mathbf{1}, x)=\sum_{n \in \mathbb{Z}}u_n x^{-n-1} \in \mathcal{L}(U^{[*]})[2N][[x, x^{-1}]]
\]
for $u \in U^{[*]}$ and $|x|=-2N$. Since $u_n \in \mathcal{L}(U^{[*]})[2N]$ is acting on $\mathcal{V}^{[*]}(U^{[*]})$ by left multiplication, we can see $u_n$ as an element of $\on{End}^{[*]}(\mathcal{V}^{[*]}(U^{[*]}))$, and thus we have a map
\begin{align*}
\begin{array}{cccc}
Y_{\mathcal{V}(U)}(\cdot, x): & U^{[*]} & \longrightarrow &\on{End}^{[*]}(\mathcal{V}^{[*]}(U^{[*]}))[[x, x^{-1}]] \\
           & u & \longmapsto      & \displaystyle \sum_{n \in \mathbb{Z}}u_{n} x^{-n-1}.
\end{array}
\end{align*}

We can restate the previous lemma as
\[
[\mathcal{D}, Y_{\mathcal{V}(U)}(u, x)]^s=Y_{\mathcal{V}(U)}(\mathcal{D}u, x)= \frac{d}{dx}Y_{\mathcal{V}(U)}(u, x)
\]
because $|Y_{\mathcal{V}(U)}(u, x)|=|u_n x^{-n-1}|=|u_n|_{[2N]}+2N(n+1)=|u_n|+2Nn=|u|$.

\begin{lemma} % Computations (71)
The set $\{Y_{\mathcal{V}(U)}(u, x) \ | \  u \in U^{[*]} \}$ is a set of mutually dg local dg vertex operators on $\mathcal{V}^{[*]}(U^{[*]})$.
\end{lemma}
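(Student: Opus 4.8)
The plan is to verify the defining dg-locality inequality directly from the explicit bracket relation in $\mathcal{L}(U^{[*]})$ established in Theorem~\ref{thm:dg_Lie}, namely Equation~\eqref{eq:commut_Res}. Since $Y_{\mathcal{V}(U)}(u,x)=\sum_{n\in\mathbb{Z}}u_nx^{-n-1}$ with the $u_n$ acting by left multiplication on $\mathcal{V}^{[*]}(U^{[*]})$, the super-commutator $[Y_{\mathcal{V}(U)}(u,x_1),Y_{\mathcal{V}(U)}(v,x_2)]^s$ equals the image in $\on{End}^{[*]}(\mathcal{V}^{[*]}(U^{[*]}))$ of the dg Lie bracket $[Y(u,x_1),Y(v,x_2)]$ in $\mathcal{L}(U^{[*]})[[x_1,x_2,x_1^{-1},x_2^{-1}]]$. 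By Equation~\eqref{eq:commut_Res} this bracket is $\on{Res}_{x_0}\bigl[x_2^{-1}\delta\bigl(\tfrac{x_1-x_0}{x_2}\bigr)Y(Y^-(u,x_0)v,x_2)\bigr]$, a finite sum over the modes $u_{(i)}v$ for $0\le i$ less than the truncation bound, each of which is a dg field on $\mathcal{V}^{[*]}(U^{[*]})$.

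First I would fix homogeneous $u,v\in U^{[*]}$ and choose $k\in\mathbb{N}$ such that $u_{(i)}v=0$ for $i\ge k$ (truncation axiom (i) of Definition~\ref{def:4.1}). Then I multiply Equation~\eqref{eq:commut_Res} by $(x_1-x_2)^k$. Using the standard delta-function identity (e.g.\ \cite[(2.3.?)]{Lepowsky-Li}) that $(x_1-x_2)^k x_2^{-1}\delta\bigl(\tfrac{x_1-x_0}{x_2}\bigr)=x_0^k x_2^{-1}\delta\bigl(\tfrac{x_1-x_0}{x_2}\bigr)$ — because on the support of the delta-function $x_1-x_2=x_0$ — the factor $(x_1-x_2)^k$ becomes $x_0^k$ inside the residue. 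Hence
\[
(x_1-x_2)^k[Y_{\mathcal{V}(U)}(u,x_1),Y_{\mathcal{V}(U)}(v,x_2)]^s
=\on{Res}_{x_0}\Bigl[x_0^k\,x_2^{-1}\delta\Bigl(\tfrac{x_1-x_0}{x_2}\Bigr)Y_{\mathcal{V}(U)}(Y^-(u,x_0)v,x_2)\Bigr].
\]
Since $Y^-(u,x_0)v=\sum_{i\ge 0}u_{(i)}v\,x_0^{-i-1}$ involves only powers $x_0^{-i-1}$ with $i<k$, i.e.\ only powers $x_0^{m}$ with $m\ge -k$... wait, $m=-i-1\ge -k$ means $x_0^k\cdot x_0^{-i-1}=x_0^{k-i-1}$ with $k-i-1\ge 0$; taking $\on{Res}_{x_0}$ of a nonnegative power of $x_0$ against $x_2^{-1}\delta(\tfrac{x_1-x_0}{x_2})$ gives zero unless the total power of $x_0$ is $-1$, which cannot happen. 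Therefore the right-hand side vanishes, giving $(x_1-x_2)^k[Y_{\mathcal{V}(U)}(u,x_1),Y_{\mathcal{V}(U)}(v,x_2)]^s=0$. Taking $u=v$ shows each $Y_{\mathcal{V}(U)}(u,x)$ is a dg vertex operator, and the general case shows they are pairwise mutually dg local.

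I would also remark briefly that each $Y_{\mathcal{V}(U)}(u,x)$ genuinely lies in $\mathcal{E}^{[*]}(\mathcal{V}^{[*]}(U^{[*]}))=\on{Hom}^{[*]}(\mathcal{V}^{[*]}(U^{[*]}),\mathcal{V}^{[*]}(U^{[*]})((x)))$: the truncation from below follows because $\mathcal{V}^{[*]}(U^{[*]})$ is generated over $\mathcal{U}(\mathcal{L}(U^{[*]})[2N]_+)$-trivially, so for any $w\in\mathcal{V}^{[*]}(U^{[*]})$ the element $u_n w$ vanishes for $n$ sufficiently large by the PBW-type filtration on $\mathcal{V}^{[*]}(U^{[*]})$ together with the bracket relations pushing positive modes to the right where they annihilate $\mathbf{1}$; and the homogeneity condition $|u_nx^{-n-1}|=|u|$ was already checked in the text just before the statement. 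The main obstacle is purely bookkeeping: getting the delta-function manipulation and the exponent count exactly right, and making sure the sign conventions in the super-commutator match the $(-1)^{|u||v|}$ appearing in Equation~\eqref{eq:dg Lie} and in the definition of $[\cdot,\cdot]^s$; once those are aligned the vanishing is immediate. No deeper difficulty is expected, since all the structural work was done in Theorem~\ref{thm:dg_Lie}.
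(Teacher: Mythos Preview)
Your proposal is correct and follows essentially the same approach as the paper: both arguments establish the commutator formula $[Y_{\mathcal{V}(U)}(u,x_1),Y_{\mathcal{V}(U)}(v,x_2)]^s=\on{Res}_{x_0}\bigl[x_2^{-1}\delta\bigl(\tfrac{x_1-x_0}{x_2}\bigr)Y_{\mathcal{V}(U)}(Y^-(u,x_0)v,x_2)\bigr]$ from Equation~\eqref{eq:commut_Res}, invoke truncation to bound the number of nonzero $u_{(i)}v$, and then kill the expression with a sufficiently high power of $(x_1-x_2)$ via a delta-function identity. The only cosmetic difference is that the paper first expands the residue as $\sum_i \tfrac{(-1)^i}{i!}\bigl(\tfrac{\partial}{\partial x_1}\bigr)^i x_2^{-1}\delta\bigl(\tfrac{x_1}{x_2}\bigr)Y_{\mathcal{V}(U)}(u_{(i)}v,x_2)$ and then uses $(x_1-x_2)^k\bigl(\tfrac{\partial}{\partial x_1}\bigr)^i\delta=0$ for $k>i$, whereas you substitute $(x_1-x_2)\mapsto x_0$ on the delta support and observe that only nonnegative powers of $x_0$ survive; these are equivalent manipulations. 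Your treatment of the truncation condition (that each $Y_{\mathcal{V}(U)}(u,x)$ is a genuine dg field) via pushing positive modes to the right matches the paper's inductive argument using that $[u_n,\cdot]^s$ is a derivation of $\mathcal{U}(\mathcal{L}(U^{[*]})[2N])$.
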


\begin{proof}
We know that $u_n \mathbf{1}=0$ for $n \geq 0$, $u \in U^{[*]}$ by construction. Set $v, w \in U^{[*]}$. We verify that, in $\mathcal{U}(\mathcal{L}(U^{[*]})[2N])$, we have the equality $[u_n, v_mw_k]^s=[u_n, v_m]^s w_k+(-1)^{|u_n|_{[2N]}|v_m]_{[2N]}}v_m[u_n, w_k]^s$. By induction, $[u_n, \cdot]^s$ is a derivation of degree $|u_n|_{[2N]}=|u_n|-2N$ of $\mathcal{U}(\mathcal{L}(U^{[*]})[2N])$. Thus, using Formula~\eqref{eq:dg Lie}, we can show that for any $w \in \mathcal{V}^{[*]}(U^{[*]})$ with a decomposition as in Lemma \ref{lem:5.2}, we have $u_n w=0$ when $n$ is large enough.

We can also rewrite Formula~\eqref{eq:dg Lie} as: for any $u, v \in U^{[*]}$, we have
\[
[Y_{\mathcal{V}(U)}(u, x_1), Y_{\mathcal{V}(U)}(v, x_2)]^s=\on{Res}_{x_0}x_2^{-1} \delta\bigg(\frac{x_1-x_0}{x_2}\bigg) Y_{\mathcal{V}(U)}(Y^-(u, x_0)v, x_2).
\]
With the same reasoning as in \cite[(3.1)]{Primc}, we get
\begin{align*}
(x_1-x_2)^k[Y_{\mathcal{V}(U)}(u, x_1),& Y_{\mathcal{V}(U)}(v, x_2)]^s \\[5pt]
&=\sum_{i \geq 0}^{N_0}\frac{(-1)^i}{i!}(x_1-x_2)^k\bigg(\frac{\partial}{\partial x_1}\bigg)^i x_2^{-1} \delta\bigg(\frac{x_1}{x_2}\bigg) Y_{\mathcal{V}(U)}(u_{(i)}v, x_2)
\end{align*}
where the sum is finite because of the truncation condition. The integer $N_0$ depends only on $u$ and $v$, not on $k$. Moreover, we know from \cite[(2.3.13)]{Lepowsky-Li} that
\[
(x_1-x_2)^k\bigg(\frac{\partial}{\partial x_1}\bigg)^i x_2^{-1} \delta\bigg(\frac{x_1}{x_2}\bigg) =0 \text{ if } k >i.
\]
By taking $k > N_0$, we get the desired result.
\end{proof}

We now apply Theorem \ref{thm:5.7.1} to construct a dg vertex algebra structure on $\mathcal{V}^{[*]}(U^{[*]})$. The roles are as follows:
\[
\begin{array}{rccl}
\text{the role of}  & V^{[*]} & \text{is played by} & \mathcal{V}^{[*]}(U^{[*]}), \\[5pt]
\text{the role of}  & \mathbf{1} & \text{is played by} & \mathbf{1}, \\[5pt]
\text{the role of}  & \mathcal{D}_V & \text{is played by} & \mathcal{D}, \\[5pt]
\text{the role of}  & T & \text{is played by} & U^{[*]}=\{u_{-1}\mathbf{1} \ | \ u \in U^{[*]} \}, \\[5pt]
\text{the role of}  & Y_0(\cdot, x) & \text{is played by} & Y_{\mathcal{V}(U)}(\cdot, x).
\end{array}
\]
We obtain the following result:

\begin{theorem}\label{thm:envelop_dg_vertex} % Computations (71)
The set $\{Y_{\mathcal{V}(U)}(u, x) \ | \  u \in U^{[*]} \}$ of mutually dg local dg vertex operators on $\mathcal{V}^{[*]}(U^{[*]})$ generates a dg vertex algebra structure on $\mathcal{V}^{[*]}(U^{[*]})$ with vacuum vector $\mathbf{1}$ and derivation $\mathcal{D}$.
\end{theorem}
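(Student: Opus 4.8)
The plan is to apply Theorem \ref{thm:5.7.1} directly, with the identifications listed just before the statement, and to verify that the hypotheses (Properties (a)--(d) together with the preliminary assumptions on $V^{[*]}$, $\mathbf{1}$, $\mathcal{D}_V$, and the chain map $Y_0(\cdot,x)$) are all satisfied by the data $(\mathcal{V}^{[*]}(U^{[*]}), \mathbf{1}, \mathcal{D}, U^{[*]}, Y_{\mathcal{V}(U)}(\cdot,x))$. First I would record the preliminary requirements: $\mathcal{V}^{[*]}(U^{[*]})$ is a cochain complex (established when constructing $\mathcal{U}(\mathcal{L}(U^{[*]})[2N])$), $\mathbf{1}=1\otimes 1 \in \mathcal{V}^{[0]}(U^{[*]})$ with $d(\mathbf{1})=0$ and $\mathcal{D}(\mathbf{1})=0$ (both noted earlier), $\mathcal{D}$ is an endomorphism of degree $2N$, and $U^{[*]}$ — identified with its image $\{u_{-1}\mathbf{1}\mid u\in U^{[*]}\} = \kappa_U(U^{[*]}) \subseteq \mathcal{V}^{[*]}(U^{[*]})$ via the injective chain map $\kappa_U$ — is a $d$-stable subset, with $Y_{\mathcal{V}(U)}(\cdot,x): U^{[*]}\to\mathcal{E}^{[*]}(\mathcal{V}^{[*]}(U^{[*]}))$ a chain map (the chain-map property follows from $[\mathcal{D},Y_{\mathcal{V}(U)}(u,x)]^s=Y_{\mathcal{V}(U)}(\mathcal{D}u,x)$ together with $\kappa_U$ being a chain map and the description $d_{\mathcal{L}(U)}(u_n)=(d_U u)_n$).

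Next I would check the four Properties in turn. Property (a), the creation property $Y_{\mathcal{V}(U)}(u,x)\mathbf{1}\in\mathcal{V}^{[*]}(U^{[*]})[[x]]$ with limit $u_{-1}\mathbf{1}$: this is immediate from $u_n\mathbf{1}=0$ for $n\geq 0$ (which holds by construction of $\mathcal{V}^{[*]}(U^{[*]})$ as an induced module from the trivial $\mathcal{L}(U^{[*]})[2N]_+$-module), so $Y_{\mathcal{V}(U)}(u,x)\mathbf{1}=\sum_{n<0}u_n\mathbf{1}\,x^{-n-1}$ and the constant term is $u_{-1}\mathbf{1}$. Property (b), $[\mathcal{D},Y_{\mathcal{V}(U)}(u,x)]^s=\frac{d}{dx}Y_{\mathcal{V}(U)}(u,x)$, is exactly the restated form of Lemma \ref{lem:5.2}, established just above the statement. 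Property (c), the dg locality $(x_1-x_2)^k[Y_{\mathcal{V}(U)}(u,x_1),Y_{\mathcal{V}(U)}(v,x_2)]^s=0$ for homogeneous $u,v$, is precisely the content of the preceding lemma (``The set $\{Y_{\mathcal{V}(U)}(u,x)\}$ is a set of mutually dg local dg vertex operators''). Property (d), that $\mathcal{V}^{[*]}(U^{[*]})$ is linearly spanned by vectors $u^{(1)}_{n_1}\cdots u^{(r)}_{n_r}\mathbf{1}$ with $u^{(i)}\in U^{[*]}$, $n_i\in\mathbb{Z}$, follows from the PBW-type description of the induced module $\mathcal{V}^{[*]}(U^{[*]})=\mathcal{U}(\mathcal{L}(U^{[*]})[2N])\otimes_{\mathcal{U}(\mathcal{L}(U^{[*]})[2N]_+)}\cc$: since $\mathcal{L}(U^{[*]})[2N]_-$ is spanned by the $u_n$ with $n<0$ and $\mathcal{L}(U^{[*]})[2N]_+$ annihilates $\mathbf{1}$, the module is spanned by ordered monomials in the negative modes applied to $\mathbf{1}$, hence in particular by all such monomials.

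Once Properties (a)--(d) are verified, Theorem \ref{thm:5.7.1} yields directly that $Y_{\mathcal{V}(U)}(\cdot,x)$ extends to a chain map $Y(\cdot,x):\mathcal{V}^{[*]}(U^{[*]})\to\on{Hom}^{[*]}(\mathcal{V}^{[*]}(U^{[*]}),\mathcal{V}^{[*]}(U^{[*]})((x)))$ making $(\mathcal{V}^{[*]}(U^{[*]}),Y(\cdot,x),\mathbf{1})$ a dg vertex algebra, with the extension given by Formula~\eqref{eq:5.7.6}, and moreover the operator $\mathcal{D}$ introduced on $\mathcal{V}^{[*]}(U^{[*]})$ agrees with the canonical $\mathcal{D}$ of the resulting vertex algebra ($\mathcal{D}v=v_{-2}\mathbf{1}$), so the vacuum and derivation are as claimed. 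I expect the only genuinely nontrivial point — and it has already been discharged in the two lemmas immediately preceding — to be Property (c): establishing dg locality requires the $\delta$-function manipulation à la \cite[(3.1)]{Primc} combined with $(x_1-x_2)^k(\partial/\partial x_1)^i x_2^{-1}\delta(x_1/x_2)=0$ for $k>i$ and the truncation condition, so that a single $k>N_0$ works uniformly. Everything else is a routine bookkeeping check that the degree shifts by $2N$ are consistent, which the earlier discussion (Lemma \ref{lem:5.1} and the degree computations $|Y_{\mathcal{V}(U)}(u,x)|=|u|$) has already set up; I would simply invoke those and conclude.
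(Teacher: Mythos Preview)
Your proposal is correct and follows exactly the paper's approach: the paper's proof consists solely of the role-assignment table immediately preceding the theorem, invoking Theorem~\ref{thm:5.7.1} directly, and your verification of Properties~(a)--(d) simply spells out what the paper leaves implicit (with Properties~(b) and~(c) already discharged by the two preceding lemmas, as you note).
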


\begin{remark}\label{rem:V5}
\begin{enumerate}[wide, nosep]
\item Similarly to the classical case, we can define dg ideals of dg vertex algebras as well as quotients of dg vertex algebras. Such quotients are once again dg vertex algebras. Hence, using Theorem \ref{thm:envelop_dg_vertex} and maximal dg ideals, we can construct simple dg vertex algebras (i.e. without nontrivial dg ideals).
\item Similarly to what we said in Remark \ref{rem:IV18}, there are two functors from the category of dg vertex algebras to the category of super vertex algebras: $\on{VA}_{\on{cohom}}$ and $\on{VA}_{\on{forget}}$. For a dg vertex algebra $V^{[*]}$, the super vertex algebra $\on{VA}_{\on{cohom}}(V^{[*]})$ is obtained by taking the cohomology of the dg complex $V^{[*]}$ and then grouping the odd and even degrees, and $\on{VA}_{\on{forget}}(V^{[*]})$ is the super vertex algebra obtained from $V^{[*]}$ by ignoring the differential and grouping the odd and even degrees. A dg vertex algebra $V^{[*]}$ is called cohomologically simple if $\on{VA}_{\on{cohom}}(V^{[*]})$ is simple as a super vertex algebra. If $\on{VA}_{\on{forget}}(V^{[*]})$ is simple, then it implies that $V^{[*]}$ is simple but we expect the reciprocal not to be true. As for dg vertex Lie algebras, a dg vertex algebra has then three non equivalent notions of simplicity.
\end{enumerate}
\end{remark}

\begin{remark}
The notion of vertex Lie algebra has an equivalent formulation as Lie conformal algebra (see \cite{Kac}). A generalisation to non-linear Lie conformal algebras has been developed (see \cite{DeSole-Kac}) and a universal construction of a vertex algebra from a non-linear Lie conformal algebra, similar to Theorem \ref{thm:envelop_dg_vertex}, has been obtained. We expect that those results can be routinely generalised to the dg setting in a natural fashion, which we will not carry out in the present paper. Moreover, Theorem \ref{thm:envelop_dg_vertex} enables us to construct a dg vertex algebra from a dg vertex Lie algebra, and from a dg Lie algebra (seen in Theorem \ref{thm:dg_conform}). This raises the following question: what are the properties that characterise the dg vertex algebras arising from such a construction? This will be explored in a future paper.
\end{remark}

The following results are obtained by generalising \cite[Proposition 5.4 and Theorem 5.5]{Primc} to the dg setting. The reasoning is similar to the classical case, but the fact that homomorphisms of dg vertex algebras are chain maps needs to be taken into account. The functor $\on{Restr}^{[*]}(-)$ was defined right before Remark \ref{rem:4.3}.

\begin{proposition}\label{prop:morphism_dgLVA} % Computations (80)
The map $\kappa_U:  U^{[*]}  \longrightarrow  \on{Restr}^{[*]}(\mathcal{V}^{[*]}(U^{[*]}))$ is an injective homomorphism of dg vertex Lie algebras.
\end{proposition}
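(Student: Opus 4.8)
The plan is to reduce the statement to a single identity on modes, since the other requirements on a dg vertex Lie algebra homomorphism are already in place. Indeed, $\kappa_U$ has been shown to be an injective chain map, and Lemma \ref{lem:5.2} gives $\mathcal{D}\circ\kappa_U = \kappa_U\circ\mathcal{D}$; by the definition of a homomorphism of dg VLA it therefore remains only to prove that for all homogeneous $u, v\in U^{[*]}$ and all $n\geq 0$,
\[
\kappa_U(u_{(n)}v) \;=\; (\kappa_U u)_n\,(\kappa_U v),
\]
where $(\kappa_U u)_n$ denotes the $n$-th mode of the dg vertex algebra $\mathcal{V}^{[*]}(U^{[*]})$ built in Theorem \ref{thm:envelop_dg_vertex}, acting on $\kappa_U v = v_{-1}\mathbf{1}$. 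Equivalently, in terms of series, $\kappa_U(Y^-_U(u,x)v) = Y^-_{\mathcal{V}(U)}(\kappa_U u, x)\kappa_U v$ once one remembers that $\on{Restr}^{[*]}$ retains only the singular part.

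First I would identify $(\kappa_U u)_n$ explicitly: since $\kappa_U(u)=u_{-1}\mathbf{1}$ lies in the generating set $T$ of Theorem \ref{thm:5.7.1}, the vertex operator of $\mathcal{V}^{[*]}(U^{[*]})$ on it coincides with $Y_{\mathcal{V}(U)}(u_{-1}\mathbf{1},x)=\sum_{m\in\mathbb{Z}}u_m x^{-m-1}$, so $(\kappa_U u)_n = u_n$, viewed in $\on{End}^{[*]}(\mathcal{V}^{[*]}(U^{[*]}))$ via left multiplication from $\mathcal{L}(U^{[*]})[2N]$. Thus the right-hand side is the product $u_n\cdot v_{-1}\mathbf{1}$ computed in $\mathcal{U}(\mathcal{L}(U^{[*]})[2N])$ and applied to $\mathbf{1}$. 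Next I would push $u_n$ past $v_{-1}$ using the defining relation of the enveloping algebra, $u_n v_{-1} = (-1)^{|u_n|_{[2N]}|v_{-1}|_{[2N]}}\,v_{-1}u_n + [u_n,v_{-1}]$, apply it to $\mathbf{1}$, and use that $u_n\mathbf{1}=0$ for $n\geq 0$ (because $u_n\in\mathcal{L}(U^{[*]})[2N]_+\subseteq\mathcal{U}(\mathcal{L}(U^{[*]})[2N]_+)$, which acts as zero on the trivial module $\cc$). The Koszul sign term then disappears and we get $u_n v_{-1}\mathbf{1} = [u_n,v_{-1}]\,\mathbf{1}$, the bracket being the one of $\mathcal{L}(U^{[*]})$. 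Finally I would expand this bracket with Formula \eqref{eq:dg Lie}, $[u_n,v_{-1}] = \sum_{i\geq 0}\binom{n}{i}(u_{(i)}v)_{n-1-i}$, and apply it to $\mathbf{1}$: for $0\leq i\leq n-1$ the mode $n-1-i$ is $\geq 0$ and annihilates $\mathbf{1}$, for $i>n$ the binomial coefficient vanishes, so only $i=n$ contributes, leaving $(u_{(n)}v)_{-1}\mathbf{1} = \kappa_U(u_{(n)}v)$. This is precisely the desired identity; the half-skew-symmetry and half-Jacobi identity for $\on{Restr}^{[*]}(\mathcal{V}^{[*]}(U^{[*]}))$ are then automatic since it is itself a dg vertex Lie algebra. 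Injectivity of $\kappa_U$ is already available, being the composite of the isomorphism $\iota_U$ with the embedding of $\mathcal{L}(U^{[*]})[2N]_-$ into $\mathcal{V}^{[*]}(U^{[*]})$ as (part of) a free module.

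The computation is essentially formal, so I do not expect a conceptual obstacle; the main thing to be careful with is the bookkeeping around the degree shift by $2N$ in passing to $\mathcal{U}(\mathcal{L}(U^{[*]})[2N])$ and the check that no Koszul sign survives — which works out precisely because the summand $v_{-1}u_n\mathbf{1}$ vanishes before any sign can matter. I would verify these grading conventions carefully, since they are where an error would most plausibly creep in.
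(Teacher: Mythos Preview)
Your proof is correct and follows precisely the classical approach of Primc that the paper defers to; in fact the key identity you establish, $\kappa_U(u_{(n)}v)=[u_n,v_{-1}]\mathbf{1}$, is exactly what the paper later isolates as Lemma~\ref{lem:construct_dg Lie}, proved by the same binomial expansion of \eqref{eq:dg Lie} and the same vanishing argument $u_m\mathbf{1}=0$ for $m\geq 0$.
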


\begin{theorem}\label{thm:unique_extension} % Computations (80)
Let $V^{[*]}$ be a dg vertex algebra and $\varphi : U^{[*]} \longrightarrow \on{Restr}^{[*]}(V^{[*]})$ a homomorphism of dg vertex Lie algebras. Then $\varphi$ extends uniquely to a dg vertex algebra homomorphism $\widetilde{\varphi} : \mathcal{V}^{[*]}(U^{[*]}) \longrightarrow V^{[*]}$.
\end{theorem}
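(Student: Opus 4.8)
The plan is to prove the theorem by combining the universal property of the enveloping algebra $\mathcal U(\mathcal L(U^{[*]})[2N])$ with the reconstruction machinery of Theorem \ref{thm:5.7.1}. First I would establish existence. Given a homomorphism $\varphi: U^{[*]} \longrightarrow \on{Restr}^{[*]}(V^{[*]})$ of dg vertex Lie algebras, Proposition \ref{prop:hom-dg Lie} (together with the identification $U^{[*]} \cong \mathcal L(U^{[*]})[2N]_-$ from Theorem \ref{thm:U_L(U)-}) produces a homomorphism of dg Lie algebras $\mathcal L(\varphi): \mathcal L(U^{[*]}) \longrightarrow \mathcal L(\on{Restr}^{[*]}(V^{[*]}))$. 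Now $V^{[*]}$ itself is a module for the dg Lie algebra $\mathcal L(\on{Restr}^{[*]}(V^{[*]}))$ via $v_n \mapsto (\text{the }n\text{-th mode of }Y(v,x))$; pulling back along $\mathcal L(\varphi)$ makes $V^{[*]}$ a module over $\mathcal L(U^{[*]})$, and hence over $\mathcal L(U^{[*]})[2N]$ after the shift. Since $\varphi$ is a homomorphism of dg vertex Lie algebras, $\varphi(u)_n \mathbf 1_V = 0$ for $n \geq 0$, so the action of $\mathcal L(U^{[*]})[2N]_+$ on the cyclic vector $\mathbf 1_V$ is trivial. Therefore the $\mathcal L(U^{[*]})[2N]$-module map $\mathcal U(\mathcal L(U^{[*]})[2N]) \otimes_{\mathcal U(\mathcal L(U^{[*]})[2N]_+)} \cc \longrightarrow V^{[*]}$ sending $1 \otimes 1 \mapsto \mathbf 1_V$ is well-defined; call it $\widetilde\varphi$. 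By construction $\widetilde\varphi$ is a chain map (it intertwines $\mathcal D$ by Lemma \ref{lem:5.2} and the fact that $\varphi$ commutes with $\mathcal D$, and commutes with the differentials because $\mathcal L(\varphi)$ does), it sends $\mathbf 1$ to $\mathbf 1_V$, and it satisfies $\widetilde\varphi(u_n w) = \varphi(u)_n \widetilde\varphi(w)$ for all $u \in U^{[*]}$, $n \in \mathbb Z$, $w \in \mathcal V^{[*]}(U^{[*]})$.

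The next step is to upgrade $\widetilde\varphi$ from a chain map respecting the $\mathcal L(U^{[*]})$-action to a homomorphism of dg vertex algebras, i.e. to check $\widetilde\varphi(a_n b) = \widetilde\varphi(a)_n \widetilde\varphi(b)$ for all $a, b \in \mathcal V^{[*]}(U^{[*]})$. Here I would use the explicit spanning description of $\mathcal V^{[*]}(U^{[*]})$: by Theorem \ref{thm:envelop_dg_vertex} and Formula \eqref{eq:5.7.6}, every element of $\mathcal V^{[*]}(U^{[*]})$ is a linear combination of vectors $u^{(1)}_{n_1} \cdots u^{(r)}_{n_r}\mathbf 1$ with $u^{(i)} \in U^{[*]}$, and the vertex operator is $Y_{\mathcal V(U)}(u^{(1)}_{n_1} \cdots u^{(r)}_{n_r}\mathbf 1, x) = u^{(1)}(x)_{n_1}\cdots u^{(r)}(x)_{n_r}\on{id}$. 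Applying $\widetilde\varphi$ mode-by-mode and using $\widetilde\varphi(u_n w) = \varphi(u)_n \widetilde\varphi(w)$ repeatedly, one reduces the identity $\widetilde\varphi(a_n b) = \widetilde\varphi(a)_n\widetilde\varphi(b)$ to the corresponding identity in $V^{[*]}$ with $a,b$ replaced by iterated applications of $\varphi$; this last identity holds because $V^{[*]}$ is an honest dg vertex algebra and $Y$ there satisfies the iterate formula (the same iteration that produced \eqref{eq:5.2.16}). One should be a little careful to track the Koszul signs $(-1)^{|u||v|}$, but these match on both sides because $\varphi$ is degree-$0$.

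Finally I would prove uniqueness. Suppose $\widetilde\varphi_1, \widetilde\varphi_2$ are both dg vertex algebra homomorphisms extending $\varphi$. They agree on $\mathbf 1$ (both must send it to $\mathbf 1_V$) and on $\kappa_U(U^{[*]}) = \{u_{-1}\mathbf 1\}$, since $\widetilde\varphi_i(u_{-1}\mathbf 1) = \widetilde\varphi_i(\kappa_U(u)) = \varphi(u)$ because $\kappa_U$ is the canonical embedding of $U^{[*]}$ into $\on{Restr}^{[*]}(\mathcal V^{[*]}(U^{[*]}))$ (Proposition \ref{prop:morphism_dgLVA}) and $\widetilde\varphi_i \circ \kappa_U = \varphi$. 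A dg vertex algebra homomorphism $\psi$ satisfies $\psi(a_n b) = \psi(a)_n \psi(b)$, so $\widetilde\varphi_1$ and $\widetilde\varphi_2$ agree on the span of all $u^{(1)}_{n_1}\cdots u^{(r)}_{n_r}\mathbf 1$; by Proposition \ref{prop:3.9.3} (with $S = \kappa_U(U^{[*]})$, which is already $d_V$-stable since $\kappa_U$ is a chain map and $U^{[*]}$ is a complex) this span is all of $\langle \kappa_U(U^{[*]})\rangle_{\on{VA}} = \mathcal V^{[*]}(U^{[*]})$, the last equality because $U^{[*]}$ generates $\mathcal V^{[*]}(U^{[*]})$ as a dg vertex algebra by Theorem \ref{thm:envelop_dg_vertex}. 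Hence $\widetilde\varphi_1 = \widetilde\varphi_2$.

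The main obstacle I anticipate is not any single deep step but the bookkeeping in the middle step: one must make the shift $[2N]$, the passage $U^{[*]} \leftrightarrow \mathcal L(U^{[*]})[2N]_-$, the left-multiplication action on $\mathcal V^{[*]}(U^{[*]})$, and the Koszul signs all fit together consistently, and verify that the module map out of the induced module is simultaneously a chain map, intertwines $\mathcal D$, and is a vertex algebra morphism. Everything is forced by the universal properties, but writing it cleanly requires care; this is exactly the point where the paper says ``the fact that homomorphisms of dg vertex algebras are chain maps needs to be taken into account.''
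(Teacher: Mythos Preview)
Your proposal is correct and follows essentially the same approach as the paper, which explicitly defers to Primc's Theorem 5.5: construct the $\mathcal L(U^{[*]})[2N]$-module map from the induced module $\mathcal V^{[*]}(U^{[*]})$ to $V^{[*]}$ via the universal property, then use the spanning set and the iterate formula to verify it is a dg vertex algebra homomorphism, with uniqueness forced by generation. Your write-up is in fact more detailed than what the paper provides; the only cosmetic point is that $\varphi(u)_n\mathbf 1_V=0$ for $n\geq 0$ is just the creation property in $V^{[*]}$ and does not require $\varphi$ to be a dg VLA homomorphism.
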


As a direct consequence of Proposition \ref{prop:morphism_dgLVA} and Theorem \ref{thm:unique_extension}, we see that $\mathcal{V}^{[*]}(-)$ and $\on{Restr}^{[*]}(-)$ form an adjoint pair:

\begin{proposition} % Computations (80)
The functor $\mathcal{V}^{[*]}(-)$ is the left adjoint of $\on{Restr}^{[*]}(-)$, i.e., for any dg vertex Lie algebra $U^{[*]}$ and any dg vertex algebra $V^{[*]}$, we have
\[
\on{Hom}_{\on{dgVLA}}(U^{[*]}, \on{Restr}^{[*]}(V^{[*]})) \cong \on{Hom}_{\on{dgVA}}(\mathcal{V}^{[*]}(U^{[*]}), V^{[*]}).
\]
\end{proposition}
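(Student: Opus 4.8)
The plan is to deduce the adjunction from the pair of results already established: Proposition~\ref{prop:morphism_dgLVA}, which provides the unit $\kappa_U : U^{[*]} \to \on{Restr}^{[*]}(\mathcal{V}^{[*]}(U^{[*]}))$, and Theorem~\ref{thm:unique_extension}, which is precisely the universal property of this unit. Concretely, to establish the natural isomorphism
\[
\on{Hom}_{\on{dgVA}}(\mathcal{V}^{[*]}(U^{[*]}), V^{[*]}) \cong \on{Hom}_{\on{dgVLA}}(U^{[*]}, \on{Restr}^{[*]}(V^{[*]})),
\]
I would define the map from left to right by sending a dg vertex algebra homomorphism $\Psi : \mathcal{V}^{[*]}(U^{[*]}) \to V^{[*]}$ to the composite $\on{Restr}^{[*]}(\Psi) \circ \kappa_U : U^{[*]} \to \on{Restr}^{[*]}(V^{[*]})$; this is a homomorphism of dg vertex Lie algebras because $\kappa_U$ is one by Proposition~\ref{prop:morphism_dgLVA} and because applying $\on{Restr}^{[*]}(-)$ to a dg vertex algebra homomorphism yields a dg vertex Lie algebra homomorphism (this is what it means for $\on{Restr}^{[*]}$ to be a functor, as noted right before Remark~\ref{rem:4.3}).

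The inverse map sends $\varphi : U^{[*]} \to \on{Restr}^{[*]}(V^{[*]})$ to the unique extension $\widetilde{\varphi} : \mathcal{V}^{[*]}(U^{[*]}) \to V^{[*]}$ guaranteed by Theorem~\ref{thm:unique_extension}. That these two assignments are mutually inverse is immediate: starting from $\varphi$, the composite $\on{Restr}^{[*]}(\widetilde{\varphi}) \circ \kappa_U$ equals $\varphi$ by the ``extends $\varphi$'' clause of Theorem~\ref{thm:unique_extension}; conversely, starting from $\Psi$, the element $\on{Restr}^{[*]}(\Psi) \circ \kappa_U$ has $\Psi$ as an extension, and then the \emph{uniqueness} clause of Theorem~\ref{thm:unique_extension} forces the extension of $\on{Restr}^{[*]}(\Psi)\circ\kappa_U$ to coincide with $\Psi$. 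So the bijection is established.

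Finally I would check naturality in both variables. For naturality in $V^{[*]}$, given a dg vertex algebra homomorphism $g : V^{[*]} \to V'^{[*]}$, one must verify that post-composition with $g$ on the left side corresponds, under the bijection, to post-composition with $\on{Restr}^{[*]}(g)$ on the right side; this follows by functoriality of $\on{Restr}^{[*]}$ and associativity of composition, since $\on{Restr}^{[*]}(g \circ \Psi) \circ \kappa_U = \on{Restr}^{[*]}(g) \circ (\on{Restr}^{[*]}(\Psi) \circ \kappa_U)$. For naturality in $U^{[*]}$, given a dg vertex Lie algebra homomorphism $h : U'^{[*]} \to U^{[*]}$, one uses that $\kappa$ is a natural transformation — i.e. $\kappa_U \circ h = \on{Restr}^{[*]}(\mathcal{V}^{[*]}(h)) \circ \kappa_{U'}$, which itself follows from the uniqueness in Theorem~\ref{thm:unique_extension} applied to $\kappa_U \circ h$ — together with functoriality of $\mathcal{V}^{[*]}(-)$ on morphisms (which in turn is built from Propositions~\ref{prop:hom-dg Lie}, \ref{prop:hom-dg Lie-L(U)_+-} and Lemma~\ref{lem:5.1}). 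None of this is a genuine obstacle: the entire content of the adjunction has already been absorbed into Proposition~\ref{prop:morphism_dgLVA} and Theorem~\ref{thm:unique_extension}, so the proof is a formal unwinding. If anything, the one point requiring a moment's care is confirming that $\mathcal{V}^{[*]}(-)$ is genuinely functorial with the degree shifts and the $\widehat{\mathcal D}$-quotient behaving compatibly under morphisms, but this is exactly what the cited propositions in Sections~\ref{sec:4} and \ref{sec:5} supply.
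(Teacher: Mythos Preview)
Your proposal is correct and follows exactly the approach of the paper, which simply states the proposition as ``a direct consequence of Proposition~\ref{prop:morphism_dgLVA} and Theorem~\ref{thm:unique_extension}'' without spelling out the bijection or naturality. You have merely filled in the routine details that the paper omits.
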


We finish this section with a lemma that will help in the construction of dg vertex Lie algebra. 

\begin{lemma}\label{lem:construct_dg Lie}
Let $(U^{[*]}, Y^-(\cdot, x), \mathcal{D})$ be a dg vertex Lie algebra. For any $u, v \in U^{[*]}$ and $n \geq 0$, we have
\[
\kappa_U(u_{(n)}v)= [u_n ,v_{-1}]\mathbf{1}.
\]
\end{lemma}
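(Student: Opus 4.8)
The plan is to unwind the definitions on both sides and reduce everything to the commutator formula \eqref{eq:dg Lie} for the Lie bracket on $\mathcal{L}(U^{[*]})$, which is valid once we pass to the shifted Lie algebra $\mathcal{L}(U^{[*]})[2N]$. Recall that $\kappa_U(w)=w_{-1}\mathbf{1}$ for $w\in U^{[*]}$, so the left-hand side is $(u_{(n)}v)_{-1}\mathbf{1}$, while on the right-hand side $[u_n,v_{-1}]$ is the Lie bracket in $\mathcal{L}(U^{[*]})[2N]$ of the modes $u_n$ and $v_{-1}$, acting on $\mathbf 1\in\mathcal{V}^{[*]}(U^{[*]})$ by left multiplication.

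First I would apply Formula~\eqref{eq:dg Lie} with the specific index $p=-1$:
\[
[u_n, v_{-1}]=\sum_{i\geq 0}\binom{n}{i}(u_{(i)}v)_{n-1-i}.
\]
Then I would act on the vacuum vector $\mathbf 1$ and use the key structural fact that for any $w\in U^{[*]}$ and any $m\geq 0$ one has $w_m\mathbf 1=0$ in $\mathcal{V}^{[*]}(U^{[*]})$ (this is the defining property of the induced module $\mathcal{U}(\mathcal{L}(U^{[*]})[2N])\otimes_{\mathcal{U}(\mathcal{L}(U^{[*]})[2N]_+)}\cc$, since $\mathcal{L}(U^{[*]})[2N]_+$ is spanned by the $w_m$ with $m\geq 0$ and acts trivially on $\cc$). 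Therefore the only term in the sum that survives is the one for which $n-1-i<0$, i.e.\ $i>n-1$, which combined with $0\le i\le n$ (needed for $\binom{n}{i}\neq 0$ when $n\ge 0$) forces $i=n$. The surviving term is $\binom{n}{n}(u_{(n)}v)_{-1}\mathbf 1=(u_{(n)}v)_{-1}\mathbf 1=\kappa_U(u_{(n)}v)$, which is exactly the claim.

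The only point requiring mild care is the degenerate-looking case analysis in the binomial sum: I should note that $\binom{n}{i}$ is a polynomial in $n$ and for $n\ge 0$ an integer it vanishes unless $0\le i\le n$, so there is genuinely only one surviving index; and I should double-check that the degree bookkeeping is consistent (the shift to $\mathcal{L}(U^{[*]})[2N]$ is precisely what makes $[u_n,v_{-1}]\mathbf 1$ live in the correct cohomological degree, matching $|u_{(n)}v|=|u|+|v|-2N(n+1)$ and hence $|(u_{(n)}v)_{-1}\mathbf 1|=|u_{(n)}v|$, though this is automatic since \eqref{eq:dg Lie} was already established as a degree $-2N$ map). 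I do not anticipate a real obstacle here — the statement is essentially a one-line consequence of \eqref{eq:dg Lie} together with the vanishing $w_m\mathbf 1=0$ for $m\ge 0$; the ``hard part'' is merely making sure the reader sees why all but one term of the sum drops out.
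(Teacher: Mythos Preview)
Your proposal is correct and follows essentially the same approach as the paper's proof: apply the commutator formula~\eqref{eq:dg Lie} with $p=-1$, act on $\mathbf{1}$, and observe that all terms with $0\le i\le n-1$ vanish since $(u_{(i)}v)_{n-1-i}\mathbf{1}=0$ for $n-1-i\ge 0$, leaving only the $i=n$ term.
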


\begin{proof}
Set $u, v \in U^{[*]}$ homogeneous and $n \geq 0$. Then by Theorem \ref{thm:dg_Lie}, we have
\begin{align*}
[u_n, v_{-1}]\mathbf{1}=\sum_{i =0}^n\binom{n}{i}(u_{(i)}v)_{n-1-i}\mathbf{1}.
\end{align*}
But $n-1-i \geq 0$ when $0 \leq i \leq n-1$. So in $\mathcal{V}^{[*]}(U^{[*]})$ we have $(u_{(i)}v)_{n-1-i}\mathbf{1}=0$. Hence $\kappa_U(u_{(n)}v)=(u_{(n)}v)_{-1}\mathbf{1}=[u_n, v_{-1}]\mathbf{1}$.
\end{proof}

\section{Examples of dg vertex algebras} \label{sec:6}

\subsection{The Virasoro dg vertex algebra}\label{sec:6.1} % Computations (72), (73)
We set $U^{[*]}=\cc[\mathcal{D}] \otimes \omega \oplus \cc \mathbf{c}$ with $|\mathbf{c}|=0$, $|\mathcal{D}|=2N$, $|\omega|=4N$ ($N \in \mathbb{Z}$) and $d_U^{[*]}=0$. We turn $\mathcal{D}$ into a chain map $U^{[*]} \longrightarrow U^{[*]}[2N]$ by setting $\mathcal{D}(\mathbf{c})=0$, and we write $\mathcal{D}^k \omega$ for $\mathcal{D}^k \otimes \omega$. We then define the map $Y^-(\cdot, x)$ on the basis of $U^{[*]}$ as follows:
\begin{itemize}\setlength\itemsep{5pt}
\item $\omega_{(n)} \omega= \left\{
\begin{array}{cl}
\mathcal{D} \omega & \text{if } n=0, \\[5pt]
2 \omega&  \text{if } n=1, \\[5pt]
\frac{1}{2} \mathbf{c} & \text{if } n=3, \\[5pt]
0   &\text{otherwise},
\end{array}
\right.$
\item $\mathbf{c}_{(n)}=0$ for all $n \geq 0$,
\item $\omega_{(n)} \mathbf{c}=0$ for all $n \geq 0$,
\item $\omega_{(n)}((-\mathcal{D})^{(k)}\omega)=\displaystyle \sum_{i=0}^k\binom{n}{i}(-\mathcal{D})^{(k-i)}(\omega_{(n-i)}\omega)$ for all $n, k \geq 0$,
\item $((-\mathcal{D})^{(k)}\omega)_{(n)}=\displaystyle \binom{n}{k}\omega_{(n-k)}$ for all $n, k \geq 0$.
\end{itemize}
where $D^{(k)}=\frac{1}{k!}\mathcal{D}^k$. With this setting, we can verify that the hypothesis in Theorem \ref{thm:dg vertex-Lie_dg Lie} are satisfied. So in order to prove that there is a dg vertex algebra structure on $U^{[*]}$, we only have to show that there is a dg Lie algebra structure on $\mathcal{L}(U^{[*]})$. Using the assumptions above, we check explicitly that the bracket in Formula~\eqref{eq:dg Lie} gives $\mathcal{L}(U^{[*]})$ a dg Lie algebra structure with $|t|=-2N$, and so $U^{[*]}$ is a dg vertex Lie algebra.

According to Theorem \ref{thm:dg_Lie}, we have $n\mathbf{c}_{n-1}=-(\mathcal{D}\mathbf{c})_n=0$, so $\mathbf{c}_n=0$ if $n \neq -1$. Then by applying Formula~\eqref{eq:dg Lie}, we can see that $\mathbf{c}_{-1}$ is central in $\mathcal{L}(U^{[*]})$. Finally, applying Formula~\eqref{eq:dg Lie} for $\omega_m$ and $\omega_n$ leads to
\[
[\omega_m, \omega_n]=(m-n)\omega_{m+n-1}+\frac{m(m-1)(m-2)}{12}\delta_{m+n-2, 0}\mathbf{c}_{-1}.
\]
We then have an isomorphism
\[
\begin{array}{ccc}
\mathcal{L}(U^{[*]})[2N] & \stackrel{\cong}{\longrightarrow} & \on{dgVir} \\[5pt]
\omega_n & \longmapsto & L_{n-1}, \\[5pt]
\mathbf{c}_{-1} & \longmapsto & \mathbf{c}.
\end{array}
\]
where $\on{dgVir}$ is the Virasoro dg Lie algebra with $|L_n|=-2Nn$, $|\mathbf{c}|=0$, $d_{\on{dgVir}}^{[*]}=0$ and Lie bracket of degree $0$.

\begin{remark}\label{rem:construct_Vir}
The relations defining $U^{[*]}$ are obtained by backtracking. For the first three equalities, we use the fact that we want the dg Lie algebra $\mathcal{L}(U^{[*]})$ to be $\on{dgVir}$. By applying Lemma \ref{lem:construct_dg Lie}, we know that $\omega_{(n)}\omega$ can be identified with $[\omega_n, \omega_{-1}]\mathbf{1}$. Finally, we compute the Lie bracket in $\mathcal{L}(U^{[*]})[2N]$ (which by Lemma \ref{lem:5.1} is of degree $0$) by using the relations of $\on{dgVir}$, and we determine what should be $\omega_{(n)}\omega$. The last two equalities will force $[\mathcal{D}, Y^-(u, x)]^s=Y^-(\mathcal{D}u, x)= \frac{d}{dx}Y^-(u, x)$ using $|x|=-2N$.
\end{remark}

By Theorem \ref{thm:envelop_dg_vertex}, we know that $\mathcal{V}^{[*]}(U^{[*]})$ is a dg vertex algebra. For any $c \in \cc$, define
\[
V^{[*]}_{\on{dgVir}}(c, 0)=\mathcal{V}^{[*]}(U^{[*]})/\langle \mathbf{c}_{-1}-c\mathbf{1}\rangle.
\]
The dg ideal $\langle \mathbf{c}_{-1}-c\mathbf{1}\rangle$ is a subcomplex of $\mathcal{V}^{[*]}(U^{[*]})$ because $|\mathbf{c}_{-1}|_{[2N]}=|\mathbf{c}_{-1}|-2N=|\mathbf{c}|=0=|\mathbf{1}|$, so the quotient is a well-defined cochain complex. Furthermore, the dg vertex algebra structure of $\mathcal{V}^{[*]}(U^{[*]})$ passes down to $V^{[*]}_{\on{dgVir}}(c, 0)$. We can thus call $V^{[*]}_{\on{dgVir}}(c, 0)$ the universal Virasoro dg vertex algebra of central charge $c$. Any element of $V^{[*]}_{\on{dgVir}}(c, 0)$ is a linear combination of elements of the form
\[
\omega_{n_1} \cdots \omega_{n_r} \mathbf{1},
\]
with $r \geq 0$, $n_i \leq -1$. The degree of such an element is
\[
|\omega_{n_1} \cdots \omega_{n_r} \mathbf{1}|_{\on{dgVir}}=\sum_{i=1}^r|\omega_{n_i}|_{[2N]}=2N(r-\sum_{i=1}^r n_i),
\]
and the differential on $V^{[*]}_{\on{dgVir}}(c, 0)$ is zero.

\subsection{The Neveu-Schwarz dg vertex algebra} % Computations (72), (73)
We use \cite{Kac-Wang} for the definition of the Neveu-Schwarz Lie algebra. By changing the indices, the algebra is generated by $\mathbf{c}$, $G_m$, $L_n$, with $m, n \in \mathbb{Z}$ with the relations:
\[
\begin{cases}
[L_m, L_n]=\displaystyle (m-n)L_{m+n}+\frac{m^3-m}{12}\delta_{m+n, 0}\mathbf{c}, \\[5pt]
[G_m, L_n]=\displaystyle (m+\frac{1-n}{2})G_{m+n}, \\[5pt]
[G_m, G_n]=\displaystyle  2L_{m+n+1}+\frac{m(m+1)}{3}\delta_{m+n+1, 0}\mathbf{c}, \\[5pt]
[L_m, \mathbf{c}]=[G_m, \mathbf{c}]=0.
\end{cases}
\]
Define $U^{[*]}=\cc[\mathcal{D}] \otimes (\cc \omega \oplus \cc \tau) \oplus \cc \mathbf{c}$ with $|\mathbf{c}|=0$, $|\mathcal{D}|=2N$, $|\tau|=3N$, $|\omega|=4N$ ($N \in \mathbb{Z}$), and $d_U^{[*]}=0$. Similarly to Section \ref{sec:6.1}, we extend $\mathcal{D}$ to $U^{[*]}$ by setting $\mathcal{D}\mathbf{c}=0$. Using the same method as the one described in Remark \ref{rem:construct_Vir}, we set
\smallskip

\setlength{\tabcolsep}{12pt}
\begin{tabular}{ll}
$\bullet$ $\omega_{(n)} \omega=
\left\{\begin{array}{cl}
\mathcal{D} \omega & \text{if } n=0, \\[5pt]
2 \omega&  \text{if } n=1, \\[5pt]
\frac{1}{2} \mathbf{c} & \text{if } n=3, \\[5pt]
0   &\text{otherwise},
\end{array}\right.$  & $\bullet$ $\omega_{(n)} \tau= \left\{
\begin{array}{cl}
\mathcal{D} \tau & \text{if } n=0, \\[5pt]
\frac{3}{2} \tau &  \text{if } n=1, \\[5pt]
0   &\text{otherwise},
\end{array}
\right.$  \\[35pt]
 $\bullet$ $\tau_{(n)} \omega= \left\{
\begin{array}{cl}
\frac{1}{2} \mathcal{D} \tau & \text{if } n=0, \\[5pt]
\frac{3}{2} \tau &  \text{if } n=1, \\[5pt]
0   &\text{otherwise},
\end{array}
\right.$ & $\bullet$ $\tau_{(n)} \tau= \left\{
\begin{array}{cl}
2 \omega & \text{if } n=0, \\[5pt]
\frac{2}{3} \mathbf{c} &  \text{if } n=2, \\[5pt]
0   &\text{otherwise},
\end{array}
\right.$ \\[25pt]
$\bullet$ $\omega_{(n)} \mathbf{c}=\tau_{(n)}\mathbf{c}=0$ for all $n \geq 0$, & $\bullet$ $\mathbf{c}_{(n)}=0$ for all $n \geq 0$, \\
 \end{tabular}

 \begin{itemize}[leftmargin=33.5pt]\setlength\itemsep{5pt}\setlength\itemindent{-5pt}
\item $u_{(n)}((-\mathcal{D})^{(k)}v)=\displaystyle \sum_{i=0}^k\binom{n}{i}(-\mathcal{D})^{(k-i)}(u_{(n-i)}v)$ for all $n, k \geq 0$ and $u, v= \omega, \tau$,
\item $((-\mathcal{D})^{(k)}u)_{(n)}=\displaystyle \binom{n}{k} u_{(n-k)}$ for all $n, k \geq 0$ and $u, v= \omega, \tau$.\end{itemize}

As we did for the Virasoro case, we verify that $\mathcal{L}(U^{[*]})$ with $|t|=-2N$ is the Neveu-Schwarz dg Lie algebra, with an isomorphism
\[
\begin{array}{ccc}
\mathcal{L}(U^{[*]})[2N] & \stackrel{\cong}{\longrightarrow} & \on{dgNS} \\[5pt]
\omega_n & \longmapsto & L_{n-1}, \\[5pt]
\tau_n & \longmapsto & G_{n-1}, \\[5pt]
\mathbf{c}_{-1} & \longmapsto & \mathbf{c}.
\end{array}
\]
where $|L_n|=-2Nn$, $|G_n|=N(-1-2n)$, $|\mathbf{c}|=0$, $d_{\on{dgNS}}^{[*]}=0$, and the Lie bracket is of degree $0$.

By Theorem \ref{thm:dg vertex-Lie_dg Lie}, $U^{[*]}$ is a dg vertex Lie algebra and then by Theorem \ref{thm:envelop_dg_vertex}, $\mathcal{V}^{[*]}(U^{[*]})$ is a dg vertex algebra. Based on the relations in $U^{[*]}$, this dg vertex algebra is the linear span of elements of the form
\[
\omega_{m_1} \cdots \omega_{m_r} \tau_{n_1} \cdots \tau_{n_s} \mathbf{c}_{-1}^{k} \mathbf{1},
\]
with $r, s, k \geq 0$, $m_i, n_i \leq -1$. The degree of such an element is
\begin{align*}
|\omega_{m_1} \cdots \omega_{m_r} \tau_{n_1} & \cdots \tau_{n_s} \mathbf{c}_{-1}^{k} \mathbf{1}|_{\mathcal{V}^{[*]}(U^{[*]})} \\[5pt]
& =\sum_{i=1}^r|\omega_{n_i}|_{[2N]}+\sum_{j=1}^s|\tau_{n_j}|_{[2N]}=2Nr+Ns-2N\big(\sum_{i=1}^r m_i+\sum_{j=1}^s n_j\big),
\end{align*}
and its differential is zero.

\subsection{The affine dg vertex algebra} % Computations (76)
\subsubsection{Affine dg Lie algebra}

Let $(\mathfrak{g}^{[*]}, d_{\mathfrak{g}}^{[*]})$ be a dg Lie algebra with $\on{deg}([\cdot, \cdot])=p$ and assume that $(\mathfrak{g}^{[*]}, d_{\mathfrak{g}}^{[*]})$ is equipped with a homogeneous linear map
\begin{align}\label{bilinear_map}
\begin{array}{rccc}
\langle \cdot, \cdot \rangle : & \mathfrak{g}^{[*]} \otimes \mathfrak{g}^{[*]} & \longrightarrow & \cc[2p] \\[5pt]
 & a \otimes b & \longmapsto & \langle a, b \rangle
 \end{array}
 \end{align}
 such that for any homogeneous $a, b \in \mathfrak{g}^{[*]}$:
\begin{empheq}[left = \empheqlbrace]{align}
& \langle d_{\mathfrak{g}}(a), b \rangle +(-1)^{|a|+2p} \langle a, d_{\mathfrak{g}}(b) \rangle=0, \label{condition_1} \\[5pt]
&\langle a, b \rangle=(-1)^{|a|+2p}\langle b, a \rangle, \label{condition_2} \\[5pt]
&\langle [a, b], c \rangle=\langle a, [b, c] \rangle. \label{condition_3}
\end{empheq}
Here $\cc$ is seen as a cochain complex concentrated in degree $0$. 

\begin{remark}
By writing \eqref{bilinear_map} as
\begin{align*}
\begin{array}{rccc}
f : & \mathfrak{g}^{[*]}& \longrightarrow & \on{Hom}^{[*]}(\mathfrak{g}^{[*]}, \cc[2p])=(\mathfrak{g}^{[*]})'[2p] \\[5pt]
 & a & \longmapsto & f_a=\langle a, \cdot \rangle
 \end{array}
 \end{align*}
where $(\mathfrak{g}^{[*]})'=\on{Hom}(\mathfrak{g}^{[*]}, \cc)$ is the dual of $\mathfrak{g}^{[*]}$, we see that \eqref{condition_1} is equivalent to $f$ being a chain map. Moreover, \eqref{condition_2} expresses the symmetry of $\langle \cdot, \cdot \rangle$ and \eqref{condition_3} its invariance.
\end{remark}

\begin{remark}\label{rem:6.2}
Because of the chain map condition, we know that for $a$ and $b$ dg homogeneous, we have $\langle a, b \rangle \in \cc[2p]^{[|a|+|b|]}=\cc^{[|a|+|b|+2p]}$. Hence $\langle a, b \rangle \neq 0$ only if
\[
|a|+|b|+2p=0.
\] 
\end{remark}

We set
\[
\widehat{\mathfrak{g}}^{[*]}=\mathfrak{g}^{[*]} \otimes \cc[t, t^{-1}] \oplus \cc K
\]
where $\cc[t, t^{-1}]$ is seen as a cochain complex with $|t|=-2N$, $N \in \mathbb{Z}$, and $|K|=-p$. We also impose $d_{\widehat{\mathfrak{g}}}(K)=0$. Then $\widehat{\mathfrak{g}}^{[*]}$ is a cochain complex with $d_{\widehat{\mathfrak{g}}}(a \otimes t^n)=d_{\mathfrak{g}}(a) \otimes t^n$. We define a bracket on $\widehat{\mathfrak{g}}^{[*]}$ as follows:
  \begin{itemize}\setlength\itemsep{5pt}
\item $[a \otimes t^m, b \otimes t^n]=[a, b] \otimes t^{m+n}+m \langle a, b \rangle \delta_{m+n, 0} K$,
\item $1 \otimes K$ central.
\end{itemize}

\begin{proposition}\label{prop:dg g_hat}% Computations (76) (Correction)
The cochain complex $(\widehat{\mathfrak{g}}^{[*]}, d_{\widehat{\mathfrak{g}}}^{[*]})$ with the Lie bracket defined above is a dg Lie algebra and the Lie bracket is of degree $p$.
\end{proposition}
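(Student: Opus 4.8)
The plan is to verify the three defining properties of a dg Lie algebra of degree $p$ (Definition \ref{def:dg_Lie_algebra}) for $\widehat{\mathfrak g}^{[*]}$ directly from the formulas, splitting each check into the part living in $\mathfrak g^{[*]}\otimes\cc[t,t^{-1}]$ and the part involving the central element $K$. First I would confirm that the bracket is a chain map of degree $p$: since $K$ is central, $d_{\widehat{\mathfrak g}}(K)=0$, $|K|=-p$, and $\delta_{m+n,0}$ is scalar, the only two things to check are (i) that $[a\otimes t^m,\,b\otimes t^n]$ has degree $|a|+|b|-2N(m+n)+p$, which for the first summand is immediate from $\on{deg}[\cdot,\cdot]=p$ and $|t|=-2N$, and for the central summand uses Remark \ref{rem:6.2} (so $\langle a,b\rangle\neq 0$ forces $|a|+|b|+2p=0$) together with $\delta_{m+n,0}$ (so $t$-degrees vanish) and $|K|=-p$ to land in degree $-p$, which equals $|a|+|b|+p$ exactly when $\langle a,b\rangle\ne0$; and (ii) the graded Leibniz rule for $d_{\widehat{\mathfrak g}}$, which follows from the corresponding rule for $d_{\mathfrak g}$ on the first summand and from \eqref{condition_1} (the compatibility of $d_{\mathfrak g}$ with $\langle\cdot,\cdot\rangle$) on the central summand, since $d_{\widehat{\mathfrak g}}K=0$.

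Next I would check the graded antisymmetry (i) of Definition \ref{def:dg_Lie_algebra}. On the $[a,b]\otimes t^{m+n}$ part this is inherited from the antisymmetry of $[\cdot,\cdot]$ in $\mathfrak g^{[*]}$, after matching the sign $(-1)^{(|a\otimes t^m|+p)(|b\otimes t^n|+p)}$ with $(-1)^{(|a|+p)(|b|+p)}$ — they agree because the extra $t$-degrees $-2Nm,-2Nn$ are even. On the central part one needs $m\langle a,b\rangle\delta_{m+n,0}=-(-1)^{(|a|+p)(|b|+p)}\,n\langle b,a\rangle\delta_{m+n,0}$; using $\delta_{m+n,0}$ to replace $n$ by $-m$, using \eqref{condition_2} to write $\langle b,a\rangle=(-1)^{|a|+2p}\langle a,b\rangle=(-1)^{|a|}\langle a,b\rangle$, and using Remark \ref{rem:6.2} so that $|b|\equiv |a|\pmod 2$ when the form is nonzero, one reduces the sign $(-1)^{(|a|+p)(|b|+p)}$ to $(-1)^{|a|}$ (since $(|a|+p)(|b|+p)\equiv(|a|+p)^2\equiv|a|+p\pmod 2$, and a compensating $(-1)^p$ appears — I would track this parity carefully), and the two sides match.

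Finally, the graded Jacobi identity (ii). The bracket-only contributions reduce to the Jacobi identity in $\mathfrak g^{[*]}$ with the same sign-degree identification as above. The terms producing $K$ are the ones to watch: expanding $[a\otimes t^\ell,[b\otimes t^m,c\otimes t^n]]$ and its two partners, the $K$-coefficient is a sum of expressions of the form (integer)$\cdot\langle x,[y,z]\rangle\,\delta_{\ell+m+n,0}$; invariance \eqref{condition_3} lets one rewrite all of them in terms of a single bracketed pairing, and then the combinatorial identity $\ell\langle a,[b,c]\rangle = m\langle[a,b],c\rangle + (\text{sign})\,n\langle b,[a,c]\rangle$ under the constraint $\ell+m+n=0$ — which is exactly the classical affine-algebra cocycle computation — finishes it, modulo bookkeeping of the Koszul signs coming from permuting $a,b,c$. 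The main obstacle is precisely this sign bookkeeping: ensuring that every place where two homogeneous elements are transposed carries the correct factor $(-1)^{(|\cdot|+p)(|\cdot|+p)}$ and that these are consistent with the parity constraints forced by Remark \ref{rem:6.2}; the underlying algebra is the standard affinization, so once the signs are pinned down the verification is routine. I would also remark at the end that this is the degree-$p$ analogue of shifting to a degree-$0$ dg Lie algebra via Lemma \ref{lem:5.1}(1), which gives an alternative, sign-cleaner route if desired.
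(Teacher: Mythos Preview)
Your proposal is correct and follows essentially the same approach as the paper: the paper's proof explicitly carries out the degree computation for the bracket (using Remark \ref{rem:6.2} exactly as you do) and then simply states that one ``verif[ies] explicitly the relations (i), (ii) and (iii) of Definition \ref{def:dg_Lie_algebra}'', without writing out the antisymmetry, Leibniz, or Jacobi checks. Your outline of those checks --- splitting into the loop part and the central $K$-part, and invoking \eqref{condition_1}--\eqref{condition_3} where needed --- is precisely what that explicit verification amounts to, so there is no substantive difference in strategy.
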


\begin{proof}
In this proof we will write $a_m$ instead of $a \otimes t^m$ to simplify notations. Then $|a_m|=|a|-2Nm$.

We have $m \langle a, b \rangle \delta_{m+n, 0} K \neq 0$ only if $m+n=0$ and $|a|+|b|+2p=0$. Then $|[a, b]_{m+n}|=-p=|m \langle a, b \rangle \delta_{m+n, 0} K|$, so $|[a_m, b_n]|=-p=|a|-2Nm+|b|-2Nn+p=|a_m|+|b_n|+p$. Otherwise $|[a_m, b_n]|=|[a, b]_{m+n}|=|a|+|b|+p-2N(m+n)=|a_m|+|b_n|+p$. We see that the bracket $[\cdot, \cdot]: \widehat{\mathfrak{g}}^{[*]} \times \widehat{\mathfrak{g}}^{[*]} \longmapsto \widehat{\mathfrak{g}}^{[*]}$ is a bilinear map of degree $p$. In order to finish the proof, we verify explicitly the relations (i), (ii) and (iii) of Definition \ref{def:dg_Lie_algebra}.
\end{proof}

\begin{remark}
We can define a chain map 
\[
\begin{array}[t]{rccc}
\langle \cdot, \cdot \rangle : &  \cc[t, t^{-1}] \otimes \cc[t, t^{-1}] & \longrightarrow & \cc \\[5pt]
 & t^m \otimes t^n & \longmapsto & \on{Res}_t(\frac{d}{dt}(t^m)t^n)
 \end{array}
 \]
 and then $m\delta_{m+n, 0} =  \langle t^m, t^n \rangle$. So we could write $[a \otimes t^m, b \otimes t^n]=[a, b] \otimes t^{m+n}+ \langle a, b \rangle \langle t^m, t^n \rangle \otimes K$.
\end{remark}

\subsubsection{Constructing the dg vertex Lie algebra}\label{subsubsec:6.3.2}

Let $(\mathfrak{g}^{[*]}, d_{\mathfrak{g}}^{[*]})$ be a dg Lie algebra with Lie bracket of degree $p=-2N$ and with a chain map $\langle \cdot, \cdot \rangle :  \mathfrak{g}^{[*]} \otimes \mathfrak{g}^{[*]}  \longrightarrow  \cc[-4N]$ satisfying the relations \eqref{condition_1}, \eqref{condition_2} and \eqref{condition_3}. Set $U^{[*]}=\cc[\mathcal{D}] \otimes \mathfrak{g}^{[*]}\oplus \cc\bold{K}$ with $|\bold{K}|=0$. We see $\cc[\mathcal{D}]$ as a cochain complex with $|\mathcal{D}|=2N$, and we write $\mathcal{D}^k a=\mathcal{D}^k \otimes a$ for $a \in \mathfrak{g}^{[*]}$ and $k \geq 0$. We verify that $d_{U}(\mathcal{D}^k a)=\mathcal{D}^k(d_{\mathfrak{g}}(a))$. We then extend $\mathcal{D}$ to $U^{[*]}$ by setting $\mathcal{D}( \bold{K})=0$ and give $U^{[*]}$ a cochain complex structure by imposing $d_U( \bold{K})=0$. For any $a, b \in \mathfrak{g}^{[*]}$, we then define the map:
\begin{align*}
\begin{array}{cccc}
Y^-(\cdot, x): & U^{[*]} & \longrightarrow & x^{-1}\End^{[*]}(U^{[*]}))[[x^{-1}]] \\
           & u & \longmapsto      & Y^-(u, x)=\displaystyle \sum_{n \in \mathbb{N}}u_{(n)} x^{-n-1}
\end{array}
\end{align*}
by:
\begin{itemize}\setlength\itemsep{5pt}
\item $a_{(n)}b= \left\{
\begin{array}{cl}
[a, b] & \text{if } n=0, \\[5pt]
\langle a, b \rangle \bold{K} &  \text{if } n=1, \\[5pt]
0   & \text{if }n \geq 2,
\end{array}
\right.$
\item $a_{(n)}(\bold{K})=0$ for all $n \geq 0$,
\item $(\bold{K})_{(n)}=0$ for all $n \geq 0$,
\item $a_{(n)}((-\mathcal{D})^{(k)}b)=\displaystyle \sum_{i=0}^k\binom{n}{i}(-\mathcal{D})^{(k-i)}(a_{(n-i)}b)$ for all $n, k \geq 0$,
\item $((-\mathcal{D})^{(k)}a)_{(n)}=\displaystyle \binom{n}{k} a_{(n-k)}$ for all $n, k \geq 0$.
\end{itemize}

\begin{proposition}\label{prop:6.6}% Computations (76) (Correction)
There is an isomorphism of dg Lie algebras
\[
\begin{array}[t]{ccc}
\mathcal{L}(U^{[*]})   &\stackrel{\cong}{\longrightarrow} & \widehat{\mathfrak{g}}^{[*]} \\[5pt]
a_n & \longmapsto & a \otimes t^n, \\[5pt]
\bold{K}_{-1} & \longmapsto & K.
 \end{array}
 \]
and $U^{[*]}$ is a dg vertex Lie algebra. 
\end{proposition}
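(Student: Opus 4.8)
The plan is to verify the claimed isomorphism of complexes first, and then bootstrap the dg vertex Lie algebra structure via Theorem \ref{thm:dg vertex-Lie_dg Lie}. Concretely, I would first check that the two hypotheses of Theorem \ref{thm:dg vertex-Lie_dg Lie} hold for $(U^{[*]}, Y^-(\cdot, x), \mathcal{D})$: the truncation $u_{(n)}v = 0$ for $n \gg 0$ is immediate from the defining formulas (for $a, b \in \mathfrak{g}^{[*]}$ one has $a_{(n)}b = 0$ for $n \geq 2$, and the formulas for $(-\mathcal{D})^{(k)}a$ and $(-\mathcal{D})^{(k)}b$ only involve finitely many terms), and the relation $[\mathcal{D}, Y^-(u,x)]^s = Y^-(\mathcal{D}u, x) = \frac{d}{dx}Y^-(u,x)$ is exactly what the last two bullet points in the definition of $Y^-$ are engineered to produce, as explained in Remark \ref{rem:construct_Vir}; this is a direct computation with the binomial identities $\binom{n}{i} = \binom{n-1}{i} + \binom{n-1}{i-1}$ and the fact that $|x| = -2N$. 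Once these hypotheses are in place, it suffices by Theorem \ref{thm:dg vertex-Lie_dg Lie} to show that $\mathcal{L}(U^{[*]})$ is a dg Lie algebra, and the cleanest way to do this is to exhibit the isomorphism with $\widehat{\mathfrak{g}}^{[*]}$, which is already known to be a dg Lie algebra by Proposition \ref{prop:dg g_hat}.

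Next I would construct the map $\Phi: \mathcal{L}(U^{[*]}) \longrightarrow \widehat{\mathfrak{g}}^{[*]}$ sending $a_n \mapsto a \otimes t^n$ for $a \in \mathfrak{g}^{[*]}$ and $\mathbf{K}_{-1} \mapsto K$. To see this is well-defined, recall $\mathcal{L}(U^{[*]}) = (U^{[*]} \otimes \cc[t,t^{-1}])/\widehat{\mathcal{D}}(U^{[*]} \otimes \cc[t,t^{-1}])$, so I must check that $\widehat{\mathcal{D}}$ applied to a basis element maps into the kernel of the naive assignment $U^{[*]} \otimes \cc[t,t^{-1}] \to \widehat{\mathfrak{g}}^{[*]}$. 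Using the defining relations of $Y^-$, formula \eqref{eq:dg Lie2} gives $(\mathcal{D}u)_n = -n u_{n-1}$ in $\mathcal{L}(U^{[*]})$, and one checks directly that $((-\mathcal{D})^{(k)}a)_n = \binom{n}{k} a_{n-k}$ collapses so that every element $(\mathcal{D}^k a)_n$ is a scalar multiple of $a_{n-k}$ (and similarly $\mathbf{K}_n = 0$ for $n \neq -1$, since $\mathcal{D}\mathbf{K} = 0$ forces $n\mathbf{K}_{n-1} = -(\mathcal{D}\mathbf{K})_n = 0$). This shows $\Phi$ is well-defined on the quotient; bijectivity follows because $\{a_n : a \in \text{basis of }\mathfrak{g}^{[*]}, n \in \mathbb{Z}\} \cup \{\mathbf{K}_{-1}\}$ descends to a basis of $\mathcal{L}(U^{[*]})$ matching the basis of $\widehat{\mathfrak{g}}^{[*]}$, and $\Phi$ is clearly a chain map of degree $0$ since $d_{\mathcal{L}(U)}(a_n) = (d_U a)_n = (d_{\mathfrak{g}} a)_n$ matches $d_{\widehat{\mathfrak{g}}}(a \otimes t^n) = d_{\mathfrak{g}}(a) \otimes t^n$.

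The heart of the matter is checking that $\Phi$ intertwines the brackets, i.e. that the bracket \eqref{eq:dg Lie} on $\mathcal{L}(U^{[*]})$, namely $[u_n, v_p] = \sum_{i \geq 0}\binom{n}{i}(u_{(i)}v)_{n+p-i}$, transports to the bracket on $\widehat{\mathfrak{g}}^{[*]}$. For $a, b \in \mathfrak{g}^{[*]}$ this reads
\[
[a_n, b_p] = \binom{n}{0}(a_{(0)}b)_{n+p} + \binom{n}{1}(a_{(1)}b)_{n+p-1} = [a,b]_{n+p} + n\langle a, b\rangle \mathbf{K}_{n+p-1},
\]
and since $\mathbf{K}_{n+p-1}$ is nonzero (and equals $\mathbf{K}_{-1} \mapsto K$) only when $n + p = 0$, this is exactly $[a, b] \otimes t^{n+p} + n\langle a, b\rangle \delta_{n+p,0} K$, the bracket in $\widehat{\mathfrak{g}}^{[*]}$; the brackets involving $\mathbf{K}$ vanish on both sides by the centrality relations $\mathbf{K}_{(n)} = a_{(n)}\mathbf{K} = 0$. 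One should also confirm that the bracket on $\mathcal{L}(U^{[*]})$ is genuinely well-defined for the full tensor algebra before passing to the quotient, but this is the routine verification $\{\widehat{\mathcal{D}}(u \otimes t^n), v \otimes t^p\} = 0$ and $\{u \otimes t^n, \widehat{\mathcal{D}}(v \otimes t^p)\} \in \widehat{\mathcal{D}}(U^{[*]} \otimes \cc[t,t^{-1}])$ from the proof of Theorem \ref{thm:dg_Lie}. The main obstacle is purely bookkeeping: one must be careful that the degree shift conventions ($|t| = -2N$, $|\mathbf{K}| = 0$, $|\mathcal{D}| = 2N$, and the degree-$(-2N)$ bracket on $\mathcal{L}(U^{[*]})$ matching the degree-$p = -2N$ bracket on $\widehat{\mathfrak{g}}^{[*]}$) are consistent under $\Phi$, and that the sign conventions in the antisymmetry and Jacobi identity survive the translation — but since $\Phi$ is an isomorphism of complexes carrying one bracket to the other, the dg Lie axioms for $\mathcal{L}(U^{[*]})$ are inherited from those for $\widehat{\mathfrak{g}}^{[*]}$ proved in Proposition \ref{prop:dg g_hat}. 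Finally, with $\mathcal{L}(U^{[*]}) \cong \widehat{\mathfrak{g}}^{[*]}$ a dg Lie algebra, Theorem \ref{thm:dg vertex-Lie_dg Lie} yields that $U^{[*]}$ is a dg vertex Lie algebra, completing the proof.
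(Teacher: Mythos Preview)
Your proposal is correct and follows essentially the same approach as the paper: verify the hypotheses of Theorem~\ref{thm:dg vertex-Lie_dg Lie}, compute the bracket \eqref{eq:dg Lie} on $\mathcal{L}(U^{[*]})$ explicitly for the generators, match it with the bracket on $\widehat{\mathfrak{g}}^{[*]}$ from Proposition~\ref{prop:dg g_hat}, and conclude via Theorem~\ref{thm:dg vertex-Lie_dg Lie}. The only cosmetic difference is that the paper phrases it as first proving $\mathcal{L}(U^{[*]})$ is a dg Lie algebra directly (mimicking the computations in Proposition~\ref{prop:dg g_hat}) and then reading off the isomorphism, whereas you transport the dg Lie structure back from $\widehat{\mathfrak{g}}^{[*]}$ via the map $\Phi$; since the bracket computations are identical, this is the same argument and your version is in fact more carefully spelled out.
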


\begin{proof}
With an approach similar to the proof of Proposition \ref{prop:dg g_hat}, we prove that $\mathcal{L}(U^{[*]})$ is a dg Lie algebra with a Lie bracket of degree $-2N$. With the relations we found for the Lie bracket in $\mathcal{L}(U^{[*]})$, the isomorphism above becomes clear.

We can verify that the hypothesis of Theorem \ref{thm:dg vertex-Lie_dg Lie} are satisfied. As we just proved that $\mathcal{L}(U^{[*]})$ is a dg Lie algebra, it follows that $(U^{[*]}, Y^-(\cdot, x), \mathcal{D})$ is a dg vertex Lie algebra.
\end{proof}

A consequence of Theorem \ref{thm:envelop_dg_vertex} is that $\mathcal{V}^{[*]}(U^{[*]})$ is a dg vertex algebra. For any $k \in \cc$, define
\[
V^{[*]}_{\widehat{\mathfrak{g}}}(k, 0)=\mathcal{V}^{[*]}(U^{[*]})/\langle \bold{K}_{-1}-k\mathbf{1}\rangle.
\]
It is a well-defined dg vertex algebra and we call it the universal affine dg vertex algebra of level $k$ based on $\mathfrak{g}^{[*]}$. The differential on $V^{[*]}_{\widehat{\mathfrak{g}}}(k, 0)$ is non trivial. For example:
\[
\begin{array}{rcl}
d_{V_{\widehat{\mathfrak{g}}}(k, 0)}(a_{-2}b) & = & d_{\mathcal{L}(U)[2N]}(a_{-2})b+(-1)^{|a_{-2}|_{[2N]}}a_{-2}d_{\mathcal{U}(U)[2N]}(b_{-1}\mathbf{1}) \\[10pt]
 & = & d_{\mathfrak{g}}(a)_{-2}b+(-1)^{|a|}a_{-2}d_{\mathfrak{g}}(b).
\end{array}
\]

In particular, if we start the construction with $\mathfrak{h}^{[*]}$ a graded commutative dg Lie algebra, then $V^{[*]}_{\widehat{\mathfrak{h}}}(k, 0)$ is a dg version of the Heisenberg vertex algebra.

In fact, the dg vertex algebra $V^{[*]}_{\widehat{\mathfrak{g}}}(k, 0)$ has some extra structure. Given a finite dimensional dg Lie algebra $\mathfrak{g}^{[*]}$ with bracket of degree $p=-2N$ and a non degenerate bilinear map $\langle \cdot, \cdot \rangle$ as in \eqref{bilinear_map}, set $\{a^{(i)}\}_{1 \leq i \leq \on{dim} \mathfrak{g}^{[*]}}$ and $\{b^{(i)}\}_{1 \leq i \leq \on{dim} \mathfrak{g}^{[*]}}$ dual bases of $\mathfrak{g}^{[*]}$ with respect to $\langle \cdot, \cdot \rangle$, i.e.,
\[
\langle b^{(i)}, a^{(j)} \rangle=\delta_{i, j}
\]
for all $1 \leq i, j \leq \on{dim} \mathfrak{g}^{[*]}$. We then write $\Omega=\displaystyle \sum_{i=1}^{\on{dim} \mathfrak{g}^{[*]}}a^{(i)}b^{(i)} \in \mathcal{U}(\mathfrak{g}^{[*]}[2N])$.

\begin{theorem}\label{thm:dg_conform}% Computations (79) (Correction)
Let $\mathfrak{g}^{[*]}$ be a finite dimensional dg Lie algebra with Lie bracket of degree $-2N$, $N \in \mathbb{Z}$, and equipped with a non degenerate bilinear form $\langle \cdot, \cdot \rangle$ satisfying \eqref{bilinear_map} and the relations \eqref{condition_1}, \eqref{condition_2}, \eqref{condition_3}. Set $\{a^{(i)}\}_{1 \leq i \leq \on{dim} \mathfrak{g}^{[*]}}$ and $\{b^{(i)}\}_{1 \leq i \leq \on{dim} \mathfrak{g}^{[*]}}$ dual bases of $\mathfrak{g}^{[*]}$ with respect to $\langle \cdot, \cdot \rangle$. Assume that $\Omega$ acts on $\mathfrak{g}^{[*]}$ as a scalar $2h^\vee \in \cc$. Then for any $k \neq -h^\vee$, the affine dg vertex algebra $V^{[*]}_{\widehat{\mathfrak{g}}}(k, 0)$ of level $k$ is a dg vertex operator algebra of central charge 
\[
c_{V^{[*]}_{\widehat{\mathfrak{g}}}(k, 0)}=\frac{k \on{sdim} \mathfrak{g}^{[*]}}{k+h^\vee}
\]
with conformal vector
\[
\omega=\frac{1}{2(k+h^\vee)}\sum_{i=1}^{\on{dim} \mathfrak{g}^{[*]}} a^{(i)}_{-1}b^{(i)}_{-1}\mathbf{1} \in V^{[4N]}_{\widehat{\mathfrak{g}}}(k, 0),
\]
where $\displaystyle \on{sdim} \mathfrak{g}^{[*]}=\sum_{n \in \mathbb{Z}} \on{dim} \mathfrak{g}^{[2n]}-\sum_{n \in \mathbb{Z}} \on{dim} \mathfrak{g}^{[2n+1]}$.
\end{theorem}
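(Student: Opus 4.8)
The plan is to carry out the differential-graded/super analogue of the classical Sugawara construction (compare \cite{Kac} and \cite[\S 6]{Lepowsky-Li}); most of the work lies in arranging the Koszul signs so that the classical computation survives. First I would fix a homogeneous pair of dual bases $\{a^{(i)}\}$, $\{b^{(i)}\}$ of $\mathfrak g^{[*]}$ with $\langle b^{(i)},a^{(j)}\rangle=\delta_{i,j}$, set $\omega=\frac{1}{2(k+h^\vee)}\sum_i a^{(i)}_{-1}b^{(i)}_{-1}\mathbf{1}$, and write $Y(\omega,x)=\sum_{n\in\mathbb Z}L_n x^{-n-2}$. One records that $\omega$ is independent of the chosen dual bases (the usual argument, using the invariance \eqref{condition_3}) and that it is homogeneous of degree $4N$: since $|a^{(i)}_{-1}b^{(i)}_{-1}\mathbf{1}|=|a^{(i)}|+|b^{(i)}|$, and Remark \ref{rem:6.2} applied to $\langle b^{(i)},a^{(i)}\rangle=1$ forces $|a^{(i)}|+|b^{(i)}|=-2p=4N$ for every $i$, we get $\omega\in V^{[4N]}_{\widehat{\mathfrak g}}(k,0)$. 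Under the identification of Proposition \ref{prop:6.6} (with $\mathbf{K}_{-1}$ acting as $k$), each $L_n$ is the usual normal-ordered quadratic expression $\frac{1}{2(k+h^\vee)}\sum_i\sum_{j\in\mathbb Z}{:}a^{(i)}_{j}b^{(i)}_{n-j}{:}$ in the modes of $\widehat{\mathfrak g}^{[*]}$, and $|L_n|=-2Nn$, in particular even.

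The first dg-specific point is that $\omega$ is a cocycle. Using the Leibniz rule $d_V(v_nw)=(d_Vv)_nw+(-1)^{|v|}v_n d_V(w)$ (Section \ref{sec:2}) and $d_V\kappa_U=\kappa_U d_{\mathfrak g}$, and expanding $d_{\mathfrak g}(a^{(i)})=\sum_l\langle b^{(l)},d_{\mathfrak g}(a^{(i)})\rangle a^{(l)}$ and $d_{\mathfrak g}(b^{(i)})=\sum_l\langle d_{\mathfrak g}(b^{(i)}),a^{(l)}\rangle b^{(l)}$, the coefficient of $a^{(l)}_{-1}b^{(i)}_{-1}\mathbf{1}$ in $d_V\bigl(\sum_i a^{(i)}_{-1}b^{(i)}_{-1}\mathbf{1}\bigr)$ is $\langle b^{(l)},d_{\mathfrak g}(a^{(i)})\rangle+(-1)^{|a^{(l)}|}\langle d_{\mathfrak g}(b^{(l)}),a^{(i)}\rangle$. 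By \eqref{condition_1} (note $2p=-4N$ is even) this equals $\bigl(1-(-1)^{|a^{(l)}|+|b^{(l)}|}\bigr)\langle b^{(l)},d_{\mathfrak g}(a^{(i)})\rangle$, which vanishes because $|a^{(l)}|+|b^{(l)}|=4N$ is even. Hence $d_V\omega=0$; since $Y(\cdot,x)$ is a chain map this gives $[d_V,Y(\omega,x)]^s=Y(d_V\omega,x)=0$, so each $L_n$ commutes with $d_V$ and is a chain map on $V^{[*]}_{\widehat{\mathfrak g}}(k,0)$.

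The core computation is the primary-field relation: for homogeneous $a\in\mathfrak g^{[*]}$ and $n\in\mathbb Z$ one shows $[L_m,a_n]=-n\,a_{m+n}$, i.e.\ the fields $a(x)=Y(a_{-1}\mathbf{1},x)$ are primary of conformal weight $1$. This is where both hypotheses enter. Starting from $[a\otimes t^r,b\otimes t^s]=[a,b]\otimes t^{r+s}+r\langle a,b\rangle\delta_{r+s,0}K$, one computes $[L_m,a_n]$ term by term; the $[\cdot,\cdot]_{\mathfrak g}$-contributions reorganise — using the graded skew-symmetry of the bracket, the invariance \eqref{condition_3}, and the hypothesis that $\Omega=\sum_i a^{(i)}b^{(i)}$ acts on $\mathfrak g^{[*]}$ by $2h^\vee$ — into $h^\vee$ times the adjoint action of $a$, while the central terms give $k$ times the adjoint action, so the two combine into a factor $k+h^\vee$ that cancels the denominator (legitimate precisely because $k\neq-h^\vee$). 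Formally this is the classical computation, but every transposition of two homogeneous elements produces a Koszul sign $(-1)^{|\cdot||\cdot|}$; these organise consistently because the nonzero structure constants $\langle a^{(i)},b^{(j)}\rangle$ are supported in even total degree (Remark \ref{rem:6.2}). I expect this step, together with the identification of the central term below, to be the main obstacle — not conceptually hard, but the sign accounting must be done with care.

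Finally I would assemble the conformal structure. From $[L_m,a_n]=-n\,a_{m+n}$ and $L_m\mathbf{1}=0$ for $m\geq-1$ one obtains $[L_{-1},Y(a,x)]^s=\frac{d}{dx}Y(a,x)$ on generators, hence $L_{-1}=\mathcal{D}$ on all of $V^{[*]}_{\widehat{\mathfrak g}}(k,0)$, which is generated by $\mathfrak g^{[*]}$ from $\mathbf{1}$ (Proposition \ref{prop:3.9.3}). Acting with $L_m$ on $\omega$ and reducing by the primary-field relation gives $L_0\omega=2\omega$, $L_1\omega=0$, $L_m\omega=0$ for $m\geq3$, and $L_2\omega=\tfrac{1}{2}\bigl(\sum_i\langle a^{(i)},b^{(i)}\rangle\bigr)\tfrac{k}{k+h^\vee}\,\mathbf{1}$; by \eqref{condition_2} and $\langle b^{(i)},a^{(i)}\rangle=1$ we have $\langle a^{(i)},b^{(i)}\rangle=(-1)^{|a^{(i)}|}$, so $\sum_i\langle a^{(i)},b^{(i)}\rangle=\sum_n(-1)^n\on{dim}\mathfrak g^{[n]}=\on{sdim}\mathfrak g^{[*]}$, and therefore $L_2\omega=\tfrac{1}{2}c\,\mathbf{1}$ with $c=\frac{k\,\on{sdim}\mathfrak g^{[*]}}{k+h^\vee}$ — the superdimension, rather than the ordinary dimension, being forced by the Koszul signs. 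The commutator formula \eqref{eq:commut_Res}, applied to the chain map $Y(\cdot,x)$, then propagates the Virasoro relations $[L_m,L_n]=(m-n)L_{m+n}+\tfrac{m^3-m}{12}\delta_{m+n,0}c$ from $\omega$ to all of $V^{[*]}_{\widehat{\mathfrak g}}(k,0)$, exactly as in the ungraded case ($\omega$ even, no further signs intervene). Since $L_0$ assigns to the spanning vector $a^{(1)}_{n_1}\cdots a^{(r)}_{n_r}\mathbf{1}$ the conformal weight $-\sum_i n_i\in\mathbb Z_{\geq0}$, it acts semisimply with finite-dimensional homogeneous subspaces, and this grading is compatible with the cohomological one; hence $\bigl(V^{[*]}_{\widehat{\mathfrak g}}(k,0),\omega\bigr)$ is a dg vertex operator algebra of central charge $c$, as claimed.
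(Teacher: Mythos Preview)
Your proposal is correct and follows the same approach as the paper: both defer to the classical Sugawara construction (the paper simply cites \cite[Theorem 5.7]{Kac} and \cite[Theorem 6.2.16]{Lepowsky-Li} and checks $|\omega|=4N$), with the Koszul signs tracked so that the central charge involves $\on{sdim}\mathfrak g^{[*]}$. Your argument that $d_V\omega=0$ is a pleasant bonus, but note that the paper explicitly drops this requirement from the definition of conformal vector (see the remark immediately following the theorem), so that step, while correct, is not needed for the statement as formulated.
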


\begin{proof}
The reasoning is similar to \cite[Theorem 5.7]{Kac} and \cite[Theorem 6.2.16]{Lepowsky-Li}. For the degree of $\omega$, we see that in $V^{[*]}_{\widehat{\mathfrak{g}}}(k, 0)$, for any $1 \leq i \leq \on{dim} \mathfrak{g}^{[*]}$, we have
\[
\begin{array}{rcl}
|a^{(i)}_{-1}b^{(i)}_{-1}\mathbf{1}|_{V^{[*]}_{\widehat{\mathfrak{g}}}(k, 0)} & = & |a^{(i)}_{-1}|_{\mathcal{L}(U^{[*]})[2N]}+|b^{(i)}_{-1}|_{\mathcal{L}(U^{[*]})[2N]} \\[5pt]
 & = & |a^{(i)}|_{\mathfrak{g}^{[*]}}-(-2N)-2N+|b^{(i)}|_{\mathfrak{g}^{[*]}}-(-2N)-2N \\[5pt]
  & = &|a^{(i)}|_{\mathfrak{g}^{[*]}}+|b^{(i)}|_{\mathfrak{g}^{[*]}} \\[5pt]
   & = &4N,
\end{array}
\]
by Remark \ref{rem:6.2}. Hence $|\omega|=4N$.
\end{proof}

\begin{remark}
If the super Lie algebra $\on{LA}_{forget}(\mathfrak{g}^{[*]})$ mentionned in Remark \ref{rem:IV18} is basic and simple, then a result of Kac (see \cite[Section 3.4]{Kac2}) states that there exists a supersymmetric non-degenerate invariant bilinear form on $\on{LA}_{forget}(\mathfrak{g}^{[*]})$ (see \cite[Proposition 4.3]{Fattori-Kac} for a detailed list). However it is not clear if this form satisfies  \eqref{bilinear_map} as well as Relation \eqref{condition_1}  for $\mathfrak{g}^{[*]}$. The possibility of lifting the results in \cite[Theorem 4.2 and Proposition 4.3]{Fattori-Kac} to the dg setting is an interesting question but would require more work.
\end{remark}

\begin{remark}
We know that for a dg vertex algebra $V^{[*]}$, the vertex operator $Y(\cdot, x)$ is homogeneous of degree $0$ so $|v_n|=|v|-2N(n+1)$ for any $v \in V^{[*]}$ and $n \in \mathbb{Z}$. It follows that in $V^{[*]}_{\widehat{\mathfrak{g}}}(k, 0)$, we have $|L(n)|=|\omega_{n+1}|=|\omega|-2N(n+2)=-2Nn$, and thus $L(n) \in \on{End}^{[-2nN]}(V^{[*]}_{\widehat{\mathfrak{g}}}(k, 0))$. In particular, $L(0) \in \on{End}^{[0]}(V^{[*]}_{\widehat{\mathfrak{g}}}(k, 0))$.
\end{remark}

\begin{remark}
In Theorem \ref{thm:dg_conform}, we modified the definition of a conformal vector of a dg vertex operator algebra given in \cite[Section 3.2]{Caradot-Jiang-Lin-4} by not requiring $d_V(\omega)=0$. Furthermore, the degree of $\omega$ is not $4$ but $4N$, as we only considered the case $N=1$ in \cite{Caradot-Jiang-Lin-4}.
\end{remark}

\begin{remark}
Let $(\mathfrak{g}^{[*]}, d_{\mathfrak{g}^{[*]}})$ be a dg Lie algebra. Then its cohomology $H^{[*]}(\mathfrak{g}^{[*]}, d_{\mathfrak{g}^{[*]}})$ is a graded Lie algebra. Let $\langle \cdot, \cdot \rangle$ be a bilinear map satisfying \eqref{bilinear_map}--\eqref{condition_3}. We can verify that for any $a, b \in \on{Ker}d_{\mathfrak{g}^{[*]}}$, we have $\langle a+\on{Im}d_{\mathfrak{g}^{[*]}}, b+\on{Im}d_{\mathfrak{g}^{[*]}} \rangle=\langle a, b \rangle$, so $\langle \cdot, \cdot \rangle$ defines a bilinear map on $H^{[*]}(\mathfrak{g}^{[*]}, d_{\mathfrak{g}^{[*]}})$. For any $k \in \cc$, we can define the dg vertex algebra $V^{[*]}_{\widehat{\mathfrak{g}}}(k, 0)$ and its $C_2$-dg algebra $R(V^{[*]}_{\widehat{\mathfrak{g}}}(k, 0))$ (see \cite{Caradot-Jiang-Lin-4}), as well as the graded vertex algebra $V^{[*]}_{\widehat{H(\mathfrak{g}, d_{\mathfrak{g}})}}(k, 0)$ (the differential is zero). It then becomes natural to wonder if there are isomorphisms $V^{[*]}_{\widehat{H(\mathfrak{g}, d_{\mathfrak{g}})}}(k, 0) \cong H^{[*]}(V^{[*]}_{\widehat{\mathfrak{g}}}(k, 0))$ and $R(V^{[*]}_{\widehat{H(\mathfrak{g}, d_{\mathfrak{g}})}}(k, 0)) \cong H^{[*]}(R(V^{[*]}_{\widehat{\mathfrak{g}}}(k, 0)))$.
\end{remark}

\begin{remark}
It is likely that, just as in the classical case, we have $R(V^{[*]}_{\widehat{\mathfrak{g}}}(k, 0))=\on{Sym}(\mathfrak{g}^{[*]}, d_{\mathfrak{g}^{[*]}})$ as dg Poisson algebras. It would then lead to a differential graded version of the Koszul duality, i.e. $\on{Ext}_{R(V^{[*]}_{\widehat{\mathfrak{g}}}(k, 0))}^{[*]}(\mathbf{k}, \mathbf{k})=\Lambda^{[*]}(\mathfrak{g}^{[*]}, d_{\mathfrak{g}^{[*]}})$.
\end{remark}

\begin{remark}
For any $N \in \mathbb{Z}$, we write $\on{dgLA}_{-2N}$ for the category of dg Lie algebras with bracket of degree $-2N$, $\on{dgVLA}_{-2N}$ for the category of dg vertex Lie algebra with $|x|=-2N$, and $(\on{dgLA}_{-2N}, \langle \cdot, \cdot \rangle_{-4N})$ for the category of dg Lie algebras with Lie bracket of degree $-2N$ and with a bilinear form of degree $-4N$ satisfying Equations \eqref{condition_1}, \eqref{condition_2} and \eqref{condition_3}. In the proof of Proposition \ref{prop:6.6}, we have established a commutative diagram of functors
 \begin{center}
 \begin{tikzpicture}[scale=.9, transform shape]
\tikzset{>=stealth}
\node (1) at (0,0) []{$(\on{dgLA}_{-2N}, \langle \cdot, \cdot \rangle_{-4N})$};
\node (2) at (4,0) []{$\on{dgVLA}_{N}$};
\node (3) at (4,-2) []{$\on{dgLA}_{-2N}$};
\draw[->]  (1) -- node[] {} (2);
\draw[->]  (2) -- node[right] {$\mathcal{L}(-)$} (3);
\draw[->]  (1) -- node[left, below] {$\widehat{(-)}$} (3);
\end{tikzpicture}
\end{center}
where the horizontal functor corresponds to the construction $\mathfrak{g}^{[*]} \longrightarrow U^{[*]}$ described in Section \ref{subsubsec:6.3.2}.
\end{remark}

\section*{Acknowledgements}
The first author was supported by the NFSC Grant No. 12250410252. He also appreciated the hospitality of the Department of Mathematics at Kansas State University during his visit in the fall of 2023. 

The second author was supported by the NSFC Grant No. 12171312.

%\phantomsection
%\addcontentsline{toc}{section}{References}


\begin{thebibliography}{99999}\frenchspacing
\label{reference}
\bibitem[\sf An]{Andre}M. Andr\'e, {\em Homologie des Alg\`{e}bres Commutatives}, Springer, Berlin 1974.

\bibitem[\sf Ar]{Arakawa} T. Arakawa. {\em  Representation theory of W-algebras}. Invent. Math. {\bf 169} (2007), no. 2, 219--320.

\bibitem[\sf AH]{Avramov-Halperin} L. Avramov and S. Halperin, {\em  Through the looking glass: a dictionary between rational homotopy theory and local algebra}, in  Algebra, algebraic topology and their interactions (Stockholm, 1983), 1--27, Lecture Notes in Math., {\bf 1183}, Springer, Berlin, 1986.

\bibitem[\sf B1]{Butson1} D. Butson, {\em Equivariant localization in factorization homology and applications in mathematical physics I: Foundations}, arXiv:2011.14988 (2000).

\bibitem[\sf B2]{Butson2} D. Butson, {\em Vertex algebras from divisors on Calabi-Yau threefolds}, arXiv:2312.03648 (2023).

\bibitem[\sf BLM]{BLM} A. Bojko, W. Lim, and M. Moreira, {\em Virasoro constraints on moduli of sheaves and vertex algebras}. arXiv:2210.05266 (2022).

\bibitem[\sf CJL1]{Caradot-Jiang-Lin-1} A. Caradot, C. Jiang, and Z. Lin, {\em Yoneda algebras of the triplet vertex operator algebra.} J. Algebra {\bf 633} (2023), 425--463.

\bibitem[\sf CJL2]{Caradot-Jiang-Lin-2} A. Caradot, C. Jiang, and Z. Lin, {\em Cohomological varieties  associated to vertex operator algebras}, Adv. in Math. {\bf 447} (2024), 109699.

\bibitem[\sf CJL3]{Caradot-Jiang-Lin-4} A. Caradot, C. Jiang, and Z. Lin, {\em Differential graded vertex operator algebras and their Poisson algebras}. J. Math. Phys. {\bf 64} (2023), no. 12, Paper No. 121702, 35 pp.


\bibitem[\sf DK]{DeSole-Kac} A. De Sole and V.G. Kac, {\em Freely generated vertex algebras and non-linear lie conformal algebras}. Comm. Math. Phys. {\bf 254} (2005), 659--694.

\bibitem[\sf FK]{Fattori-Kac} D. Fattori and V.G. Kac, {\em Classification of finite simple lie conformal superalgebras}. J. Algebra {\bf 258} (2002), no. 1, 23--59. Special Issue in Celebration of Claudio Procesi's 60th Birthday.

\bibitem[\sf FBZ]{Frenkel-Ben-Zvi} E. Frenkel and D. Ben-Zvi, {\em Vertex Algebras and Algebraic Curves},  2nd Edition, Mathematical Surveys and monographs, {\bf 88} (2004), Amer. Math. Soc. 

\bibitem[\sf J]{Joyce} D. Joyce,  {\em Ringel-Hall style vertex algebra and Lie algebra structures on the homology of moduli spaces}  (unpublished).

\bibitem[\sf K1]{Kac} V.G. Kac, {\em Vertex algebras for beginners}. University Lecture Series, {\bf 10}. American Mathematical Society, Providence, RI, 1997. viii+141 pp.

\bibitem[\sf K2]{Kac2} V.G. Kac, {\em Classification of supersymmetries}. Proceedings of the International Congress of Mathematicians
(Beijing, 2002), Higher Ed. Press, Beijing, pages 319--344, 2002.

\bibitem[\sf KW]{Kac-Wang} V.G. Kac and W. Wang, {\em Vertex operator superalgebras and their representations}. Mathematical aspects of conformal and topological field theories and quantum groups (South Hadley, MA, 1992), 161--191, Contemp. Math., {\bf 175}, Amer. Math. Soc., Providence, RI, 1994. 

\bibitem[\sf LL]{Lepowsky-Li} J. Lepowsky and H. Li, {\em Introduction to Vertex Operator Algebras and Their Representations}, Progress in Mathematics, Vol. {\bf 227}, Birkh\"user Boston, Inc., Boston, MA, 2004.

\bibitem[\sf L]{Li} H. Li, {\em Local systems of vertex operators, vertex superalgebras and modules}. J. Pure Appl. Algebra {\bf 109} (1996), no. 2, 143--195. 

\bibitem[\sf LWZ]{Lu-Wang-Zhang}J. L\"u, X. Wang, and G. Zhuang, {\em DG Poisson algebra and its universal enveloping algebra}, Sci. China Math. {\bf 59} (2016), no. 5, 849-860.

\bibitem[\sf P]{Primc} M. Primc, {\em Vertex algebras generated by Lie algebras}, J. Pure Appl. Algebra {\bf 135} (1999), no. 3, 253--293. 

\bibitem[\sf Q]{Quillen} D.G. Quillen, {\em Homotopical algebra} Lecture Notes in Math. {\bf 43} Springer-Verlag, Berlin, Heidelberg-New York, 1967.

\bibitem[\sf T]{Tate} J. Tate, {\em Homology of Noetherian rings and local rings}, Illinois J. Math. {\bf 1} (1957), 14--27. 

\bibitem[W]{Weibel} C. Weibel, {\em An Introduction to Homological Algebra}. Cambridge Studies in Advanced Mathematics, {\bf 38}. Cambridge University Press, Cambridge, 1994.
\end{thebibliography}
\end{document}